\newtheorem{theorem}{Theorem}[section] % 1st argument is your name for it
\newtheorem{lemma}[theorem]{Lemma}     % 2nd argument is what is printed
\newtheorem{corollary}[theorem]{Corollary}
\newtheorem{proposition}[theorem]{Proposition}
\numberwithin{equation}{section}
\DeclareMathOperator{\dimension}{dim}%%dimension
\DeclareMathOperator{\modulo}{mod}%%R-mod 
\DeclareMathOperator{\rep}{rep}
\DeclareMathOperator{\Hom}{Hom}
\DeclareMathOperator{\ident}{id}
\DeclareMathOperator{\nilp}{nil}
\DeclareMathOperator{\module}{-mod}
\DeclareMathOperator{\soc}{soc}
\DeclareMathOperator{\Matriz}{Mat}
\DeclareMathOperator{\diagonal}{diag}
\DeclareMathOperator{\flip}{flip}
\DeclareMathOperator{\Ext}{Ext}
\DeclareMathOperator{\ext}{ext}
\DeclareMathOperator{\Grass}{Gr}
\DeclareMathOperator{\Caminos}{Cam}
\DeclareMathOperator{\Supp}{Supp}
\DeclareMathOperator{\decrep}{decrep}
\DeclareMathOperator{\Endo}{End}
\DeclareMathOperator{\pd}{pd}
\DeclareMathOperator{\injd}{id}
\DeclareMathOperator{\firr}{decIrr^{s.r}}
\DeclareMathOperator{\Irr}{Irr}
\DeclareMathOperator{\DIrr}{decIrr}
\DeclareMathOperator{\Auto}{Aut}
\DeclareMathOperator{\rnk}{rank}
\DeclareMathOperator{\upp}{up}
\DeclareMathOperator{\bpp}{bp}
\title[On a family of Caldero-Chapoton algebras]%
{On a family of Caldero-Chapoton algebras that have the Laurent phenomenon}
\author{Daniel Labardini-Fragoso and Diego Velasco}
\begin{document}
\maketitle
\begin{abstract}
We realize a family of generalized cluster algebras as Caldero-Chapoton algebras of quivers with relations. Each member of this family arises from an unpunctured polygon with one orbifold point of order 3, and is realized as a Caldero-Chapoton algebra of a quiver with relations naturally associated to any triangulation of the alluded polygon. The realization is done by defining for every arc $j$ on the polygon with orbifold point a representation $M(j)$ of the referred quiver with relations, and by proving that for every triangulation $\tau$ and every arc $j\in\tau$, the product of the Caldero-Chapoton functions of $M(j)$ and $M(j')$, where $j'$ is the arc that replaces $j$ when we flip $j$ in $\tau$, equals the corresponding exchange polynomial of Chekhov-Shapiro in the generalized cluster algebra. Furthermore, we show that there is a bijection between the set of generalized cluster variables and the isomorphism classes of $E$-rigid indecomposable decorated representations of $\Lambda$.
\end{abstract}
\section{Introduction}

The \emph{cluster algebras} of Sergey Fomin and Andrei Zelevinsky have pervaded several 
areas of Mathematics and even Physics in the past 15 years. They are defined through a 
recursive process by iterating an algebraic-combinatorial operation called 
\emph{cluster mutation}. Each cluster mutation produces a rational function by means of 
an \emph{exchange binomial} dictated by a skew-symmetrizable matrix. Cluster algebras 
satisfy the remarkable \emph{Laurent phenomenon}: all of the rational functions 
obtained during the process can be expressed as Laurent polynomials, cf 
\cite{FZ-02,FZ-07}. 

In \cite{CS-14}, Leonid Chekhov and Michael Shapiro have defined the notion of 
\emph{generalized cluster algebra}. The cluster mutation rule inside a generalized 
cluster algebra allows exchange polynomials to not be binomials, in contrast to cluster 
algebras, where all exchange polynomials are required to be binomials. Generalized 
cluster algebras, possess the remarkable Laurent phenomenon. 
Chekhov-Shapiro have shown that  the ``\emph{Lambda length} coordinate ring'' of a 
surface with marked points and arbitrary-order orbifold points carries a natural 
structure of generalized cluster algebra, thus generalizing previous works of Penner, 
Fomin-Shapiro-Thurston and Felikson-Shapiro-Tumarkin. Thomas Lam and Pavlo Pylyavskyy 
have shown that the generalized cluster algebras of Chekhov-Shapiro fit into a more 
general framework of \emph{Laurent phenomenon algebras} (\emph{LP-algebras} for short).

Works of Caldero-Chapoton, Caldero-Chapoton-Schiffler, Derksen-Weyman-Zelevinsky, Palu, 
Plamondon and Schiffler, among others, have established a very fruitful 
representation-theoretic approach to cluster algebras through representations of 
quivers. By now, it is very well known that every \emph{cluster monomial} in a 
skew-symmetric cluster 
algebra can be expressed as the \emph{Caldero-Chapoton function} of a representation of 
a suitable quiver, and that, for a non-degenerate quiver with potential $(Q,S)$, the 
\emph{Caldero-Chapoton algebra} of the Jacobian algebra of $(Q,S)$ sits between the 
cluster algebra and the \emph{upper cluster algebra} of $Q$, see \cite[Proposición 7.1]{CLS-15}. In this paper we prove 
that the generalized cluster algebra associated by Chekhov-Shapiro to a polygon with 
one orbifold point of order 3 is equal to the Caldero-Chapoton algebra of a quiver with 
relations naturally arising from the referred polygon.

Let us  give a more specific context for the paper.  Let $Q$ be a quiver of type ADE, 
and let $\Lambda=\mathbb{C}\langle Q \rangle$ be  the path algebra of $Q$ over $
\mathbb{C}$. In \cite{CCh-05}, Caldero-Chapoton defined a Laurent polynomial $
\mathcal{C}_{\Lambda}(M)$ (Caldero-Chapoton function for us) for every representation 
$M$ of $\Lambda$ and show that there is a bijection, given by $\mathcal{C}_{\Lambda}(-)
$, between the cluster variables of the cluster algebra  $\mathcal{A}_{Q}$ and the 
indecomposable representations of $\Lambda$. Recall that the cluster algebras are 
defined by an inductive procedure called cluster mutation, see \cite{FZ-02,FZ-07}, but 
the Caldero-Chapoton functions are defined without  an inductive procedure. The 
coefficients of $\mathcal{C}_{\Lambda}(M)$ are given by the complex Euler 
charateristics of Grassmannians of submodules of quiver representations. The class of 
cluster algebras which cluster variables have been described with this idea has been 
extended,  for example see \cite{DWZ-10,GLS-12,Pa-08,Pla-10}. 

In \cite{CLS-15}, the authors define the Caldero-Chapoton algebra associated to a 
\emph{basic algebra} $\Lambda$ and take account the \emph{strongly reduced irreducible 
components } $\firr(\Lambda)$ of decorated representations of $\Lambda$. With this data 
they define a  \emph{generic cluster structure} on $\firr(\Lambda)$, namely they define 
the  	\emph{component clusters} and $CC$-\emph{clusters} of $\Lambda$, as 
generalizations of clusters and cluster variables in cluster algebras, respectively.
The strongly reduced components were introduced in \cite{GLS-12}. In \cite{Pla-13} the 
strongly reduced components are  parametrized in terms of the $g$-vector of 
irreducible components for finite-dimensional algebras. In \cite{CLS-15}  the results 
of \cite{Pla-13} are generalized for \emph{basic algebras}. A comment deserve to be 
done about \emph{basic algebras}. The definition given in \cite{CLS-15} is not the 
usual one, we kindly ask to the reader to  see Definition \ref{DefBasicAlg} and be 
cautious.

These algebras with cluster structures have been related with surfaces in different 
contexts. Recently Charles Paquette and Ralf Schiffler introduced some notion of 
\emph{generalized cluster algebra} that is different from the one mentioned above.
In \cite{FeST-14}, cluster algebras are related with surfaces with marked points and 
orbifold points of order two. In \cite{W-16}, non-orientable surfaces are related with 
LP-algebras. In \cite{CS-14}, some surfaces with arbitrary many orbifold points of 
arbitrary order  are related with cluster generalized algebras. The interaction between 
the cluster algebraic structures and cluster  combinatorial structures have been 
extensively studied, for instance  \cite{PL-12,S-08}. In this paper we show one more of those interactions. 

In order to write our main result, see Theorem \ref{MainResult}, let us introduce some 
notation. Let  $\Sigma_n$ be a  $(n+1)$-polygon with one orbifold point of order three. 
For any triangulation $\tau$ of $\Sigma_n$ we define an algebra $\Lambda(\tau)$. This 
algebra is given by a quiver associated to $\tau$ and relations given by the internal 
triangles of the triangulation. Our main result is:

\begin{theorem}\label{Teo1.1}
For any triangulation $\tau$ of $\Sigma_n$  the Caldero-Chapoton algebra of $
{\Lambda(\tau)}$ is a generalized cluster algebra naturally associated to $\Sigma_n$.
\end{theorem}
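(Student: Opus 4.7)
The plan is to follow the strategy outlined in the abstract: produce a representation-theoretic realization of the Chekhov-Shapiro cluster variables of $\Sigma_n$ as Caldero-Chapoton functions, and match exchange relations under flips.

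First I would fix a triangulation $\tau$ of $\Sigma_n$ and describe the quiver with relations $\Lambda(\tau)$ explicitly: the quiver is the adjacency quiver of $\tau$ with the extra vertex/arrows created by the order-3 orbifold point, and the relations come from the internal triangles, exactly as in the unpunctured non-orbifold case treated by Assem-Br\"ustle-Charbonneau-Plamondon and in Labardini's work on surfaces. Then, for every arc $j$ of $\Sigma_n$, I would define a decorated representation $M(j)$ of $\Lambda(\tau)$ by reading off its crossing pattern with the arcs of $\tau$: to each $k\in\tau$ crossed transversally by $j$ assign a 1-dimensional space at the vertex $k$, to each pair of consecutive crossings assign the identity map along the corresponding arrow, and define the component at the vertex associated to the orbifold point by recording whether $j$ skirts the orbifold point once, twice, or not at all. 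When $j\in\tau$, $M(j)$ is taken to be the negative simple (decoration) at the vertex $j$, so that $\mathcal{C}_{\Lambda(\tau)}(M(j))=x_j$ and the initial cluster is recovered tautologically.

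The next and central step is to establish the flip-exchange compatibility: for every arc $j\in\tau$ with flip $j'$, compute
\[
\mathcal{C}_{\Lambda(\tau)}(M(j))\cdot\mathcal{C}_{\Lambda(\tau)}(M(j'))\;=\;P_j^{(\tau)}(x),
\]
where $P_j^{(\tau)}$ is the Chekhov-Shapiro exchange polynomial dictated by the local geometry of $\tau$ around $j$. This reduces to a case analysis by the type of the quadrilateral (or, when $j$ bounds a triangle containing the orbifold point, pseudo-quadrilateral) surrounding $j$: an ordinary interior edge between two ordinary triangles behaves exactly like in the skew-symmetric surface case and produces a binomial exchange; the cases where $j$ is incident to the orbifold triangle are the genuinely new ones and must give the cubic-type trinomial predicted by Chekhov-Shapiro for an order-3 orbifold. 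Here I would compute the relevant Caldero-Chapoton products via Euler characteristics of quiver Grassmannians of the representations $M(j)$ and $M(j')$, checking the $g$-vector contribution separately and then identifying each term with the correct monomial of $P_j^{(\tau)}$.

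Once these exchange identities are in place, the theorem follows by the standard flip/mutation inductive argument. Since any two triangulations of $\Sigma_n$ are connected by a sequence of flips, and Chekhov-Shapiro algebras satisfy the Laurent phenomenon (hence all their generators are obtained by iterated mutation from the initial cluster), the identity
\[
\mathcal{C}_{\Lambda(\tau)}(M(j))\,\mathcal{C}_{\Lambda(\tau)}(M(j'))=P_j^{(\tau)}(x)
\]
inductively identifies every Chekhov-Shapiro cluster variable with a Caldero-Chapoton function, and conversely every $\mathcal{C}_{\Lambda(\tau)}(M(k))$ for $k$ an arc on $\Sigma_n$ with a Chekhov-Shapiro cluster variable; this gives both containments of algebras. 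Independence of the initial triangulation is built into the construction, because $M(j)$ is defined from the arc $j$ and not from $\tau$.

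The main obstacle will be the exchange computation at arcs adjacent to the orbifold triangle: one must correctly pin down $M(j)$ for arcs that wrap once or twice around the orbifold point, and then show that the associated quiver Grassmannian Euler characteristics assemble into the \emph{three-term} Chekhov-Shapiro polynomial rather than a binomial. Getting the combinatorics of submodules at the orbifold vertex to produce the correct middle coefficient (the one that distinguishes generalized cluster algebras from ordinary ones) is the delicate technical point; everything else is a careful but fairly routine extension of the arc-representation machinery already developed for triangulated surfaces without orbifold points.
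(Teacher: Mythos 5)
Your strategy --- define arc representations, verify the flip/exchange identity $\mathcal{C}_{\Lambda(\tau)}(M(j))\,\mathcal{C}_{\Lambda(\tau)}(M(j'))=P_j^{(\tau)}$ by a case analysis around $j$, and propagate by flip-induction --- is the route announced in the abstract, but it is not the one the paper actually carries out, and as written it has a genuine gap. Your argument, even granting all the Grassmannian computations, only shows that every Chekhov--Shapiro cluster variable is the Caldero--Chapoton function of an arc representation, i.e.\ the containment of the Chekhov--Shapiro algebra in $\mathcal{A}_{\Lambda(\tau)}$. The Caldero--Chapoton algebra is by definition generated by the CC functions of \emph{all} decorated representations of $\Lambda(\tau)$, and $\Lambda(\tau)$ has indecomposable modules that are not arc representations: precisely the non-$E$-rigid string modules (in the $\Sigma_3$ example of Section \ref{Section12}, the strings $1_3$, $b$, $c$, $b^{-1}\varepsilon$, $\varepsilon c^{-1}$, $b^{-1}\varepsilon c^{-1}$). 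To obtain the reverse containment you must show that the CC functions of these modules lie in the subalgebra generated by the arc CC functions; the paper does this in Propositions \ref{ProGen} and \ref{PropoGenerate}, resting on the classification of $E$-rigid indecomposables (Proposition \ref{ClasE}, Lemma \ref{Lem13.1}) and on explicit relations such as $\mathcal{C}_{\Lambda}(N(b))=\mathcal{C}_{\Lambda}(S_1)+1$. Your proposal never mentions these extra generators, so it does not prove that the two algebras coincide.

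On the route itself: the paper does not compute the exchange polynomials directly in $\Sigma_n$. It realizes $\Lambda(\tau)$ as the $\mathbb{Z}_3$-orbit algebra of the Jacobian algebra of a triangulated $(3n+3)$-gon $\widetilde{\Sigma}_n$, proves the push-down functor is a Galois $G$-covering (Lemma \ref{Sec13Lem1}), shows that injective presentations, $g$-vectors and Euler characteristics of quiver Grassmannians descend along it (Lemmas \ref{LeMIP} and \ref{ProChar}, Proposition \ref{MorphiP}), and then pushes down Ptolemy relations from the type-$A$ cover; the identification of the resulting exchange polynomials with the Chekhov--Shapiro trinomial $a^2+ab+b^2$ is imported from Paquette--Schiffler rather than recomputed. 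Your direct approach is viable in principle, but it concentrates all the difficulty in the case analysis for an arbitrary triangulation, where the strings wrapping the orbifold point are exactly what the covering machinery is designed to control; also, independence of $\tau$ is not ``built into the construction'' as you assert, since $M(j,\tau)$ is a module over an algebra that changes with $\tau$ --- that independence is precisely the content of Lemma \ref{AdmLemma} and Proposition \ref{MorphiP} in the paper.
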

 
The paper is organized as follows. In the first five sections we recall some facts 
about Caldero-Chapoton algebras, Galois $G$-covering functors, string algebras, cluster 
generalized algebras and surfaces with marked points and orbifold points of order 3 
that we need for stating and proving our results.  In Section \ref{section1} we recall 
the non-standard definition of \emph{basic} algebra introduced in \cite{CLS-15} and its 
representations. We also recall some facts about Galois $G$-coverings that will be 
crucial to deal with the $E$-invariant and $g$-vectors of arc representations with 
respect to an arbitrary triangulation of $\Sigma_n$. 
Section \ref{section2} is dedicated to recall the definition of Caldero-Chapoton 
algebra, see Example \ref{ExamMotiva}. In Section \ref{section3} we recall some results 
of \cite{BR-87} about string algebras, the concept of string is useful for us because 
the algebra $\Lambda(\tau)$ associated to a triangulation $\tau$ of $\Sigma_n$ is a 
string algebra. 

In Section \ref{Section4} we recall the definition of generalized cluster algebras 
introduced in \cite{CS-14}, the reader can find interesting relations of these algebras 
with cluster algebras in \cite{N-15,NR-15}.
In Section \ref{Section5} we recall some definitions and facts about surfaces with 
marked points and orbifold points. Also we define a \emph{natural Jacobian algebra} for 
any triangulation denoted by $\Lambda(\tau)$.

The goal of Section \ref{Sect14} is to present the $\Lambda(\tau)$ as an \emph{orbit 
Jacobian algebra}, see \cite{PS-17} and,   give the definition of the  arc 
representations of $\Lambda(\tau)$, which are (decorated) $\Lambda(\tau)$-modules that 
one can associate to arbitrary arcs on $\Sigma_n$.
In Section \ref{Section6} we study the arc representations of $\Lambda(\tau_0)$ for a 
specific choice of triangulation $\tau_0$; we show that on these representations the 
action of the Auslander-Reiten translation of $\Lambda(\tau_0)$ is given by 
rotation,\footnote{In sync with previous results of Br\"{u}stle-Qiu \cite{BQ} and 
Caldero-Chapoton-Schiffler \cite{CCS-04}.} compute their $g$-vectors, show that they 
are $E$-rigid, and prove that their orbits are dense in their respective irreducible 
components.

In Section \ref{Section13} we show that the results of Section \ref{Section6} hold for 
any triangulation $\tau$ of $\Sigma_n$ and not only for the specific triangulation $
\tau_0$. Here, Galois coverings, both of quivers and surfaces, come into play. 
 
In Section \ref{Sect15}  we state and prove our main result for any triangulation $\tau
$.  We close the paper with Section \ref{Section12}, which contains an example with 
explicit computations illustrating our main result; the example can be thought of as a 
complement to \cite[Example 9.4.2]{CLS-15}.
 
Sections \ref{Section6} and \ref{Section13} deserve a few words: all the results in 
Section \ref{Section6}, about the $E$-invariant, are particular cases of results from 
Section \ref{Section13}. We have decided to not omit  \ref{Section6}, and rather 
present a particular instance followed by the general treatment, because the Galois 
covering techniques used in Section \ref{Section13} are not needed when establishing 
the same results for the specific triangulation $\tau_0$ chosen in \ref{Section6}.

\section{Background}\label{section1}
In this section we fix notation and we recall some basic definitions  and facts about
algebras and quiver representations that  we will use throughout the work. The reader
can find more details in \cite{CLS-15}.

A \textit{quiver}  $Q=(Q_0, Q_1, t, h)$ consists of a finite set of vertices $Q_0$, a
finite set of arrows $Q_1$ and two maps $t,h: Q_1\rightarrow Q_0$ (tail and head). For
each $a\in Q_1$ we write $a:t(a)\rightarrow h(a)$. 
If $Q_0=\{1,\ldots, n\}$, we define the \emph{skew-symmetric matrix} $C_Q=(c_{i,j})\in
 \Matriz_{n\times n}(\mathbb{Z})$ from $Q$ as follows 
\[c_{i,j}=\left|\{a\in Q_1: h(a)=i, t(a)=j\}\right|-\left|\{a\in Q_1: h(a)=j, t(a)=i\}
\right|.\]

We say that a sequence of arrows  $\alpha=a_la_{l-1}\cdots a_2a_1$, is a \textit{path}
of $Q$ if  $t(a_{k+1})=h(a_{k})$, for $k=1, \ldots l-1$, in this case, we define  the 
\textit{length} of $\alpha$ as $l$. We say that $\alpha$ is a \textit{cycle} if 
$h(a_l)=t(a_1)$. In this work we deal with quivers with loops, that is, quivers where 
there is an arrow $a\in Q_1$ such that $h(a)=t(a)$.  

For $m\in \mathbb{N}$ let $C_m$ be the set of \textit{paths of length} $m$ and let $
\mathbb{C}C_m$ be the vector space with basis $C_m$.
The \textit{path algebra } of  a quiver $Q$ is denoted by $\mathbb{C}\langle Q\rangle$
and it is defined as $\mathbb{C}$-vector space as 
\[\mathbb{C}\langle Q\rangle=\bigoplus_{m\geq 0}{\mathbb{C}C_m},\]
where the product is given by the concatenation of paths. The \textit{completed path
algebra} of a quiver $Q$ is defined as vector space as 
\[\mathbb{C}\langle\langle Q\rangle\rangle=\prod_{m\geq 0}{\mathbb{C}C_m},\]
where the elements are written as infinite sums $\sum_{m\geq 0}{x_m}$ with $x_m\in
\mathbb{C}C_m$ and the product in $\mathbb{C}\langle\langle Q\rangle\rangle$ is
defined as
\[(\sum_{l\geq 0}{b_l})(\sum_{m\geq 0}{a_m})=\sum_{k\geq 0}{\sum_{l+m= k}{b_la_m}}.\]

Let $\mathfrak{M}=\prod_{m\geq 1}{\mathbb{C}C_m}$ be the two-sided ideal of $\mathbb{C}
\langle\langle Q \rangle \rangle$ generated by arrows of $Q$. Then $\mathbb{C} \langle
\langle Q \rangle \rangle$ can be viewed as a \textit{topological} 
$\mathbb{C}$-\textit{algebra} with the powers of $\mathfrak{M}$ as a basic system of 
openneighborhoods of $0$. This topology is known as $\mathfrak{M}$-\textit{adic 
topology}. Let  $I$ be a subset of  $\mathbb{C}\langle\langle Q \rangle \rangle$, we 
can calculate the closure of $I$ as 
$\overline{I}=\bigcap_{l\geq0}{(I+\mathfrak{M}^l)}$.

\begin{definition}\label{DefBasicAlg}
A two-sided ideal $I$ of $\mathbb{C}\langle\langle Q \rangle \rangle$ is 
\textit{semi-admissible} if $I\subseteq \mathfrak{M}^2$ and it is \emph{admissible} if 
some power of 
$\mathfrak{M}$ is a subset of $I$. Following \cite{CLS-15} we call an algebra $\Lambda$ 
\textit{basic} if $\Lambda=\mathbb{C}\langle\langle Q \rangle \rangle/I$ for some 
quiver $Q$ and some semi-admissible ideal $I$.
\end{definition}

A \textit{finite-dimensional representation} of $Q$ over $\mathbb{C}$ is a pair $
((M_i)_{i\in Q_0}, (M_a)_{a\in Q_1})$ where $M_i$ is a finite-dimensional 
$\mathbb{C}$-vector space for each $i\in Q_0$ and $M_a: M_{t(a)}\rightarrow M_{h(a)}$ 
is a $
\mathbb{C}$-linear map. Here the word \textit{representation} means finite-dimensional 
representation. 

The \textit{dimension vector} of a representation $M$ of $Q$ is given by $
\underline{\dimension}(M)=(\dimension(M_1),\ldots, \dimension(M_n)) $. We define $
\dimension(M)=\sum_{i=1}^{n}{\dimension(M_i)}$ as the \textit{dimension} of $M$. We say  
$M$ is a  \textit{nilpotent} representation if there is an $n>0$ such that for every  
path $a_na_{n-1}a\ldots a_1$ of length $n$ in $Q$  we have $M_{a_n}M_{a_{n-1}}\ldots 
M_{a_1}=0$. A \textit{subrepresentation} of $M$ is an $n$-tuple of $\mathbb{C}$-vector 
spaces $N=(N_i)_{i\in Q_0}$ such that $N_i\leq M_i$ for each $i\in Q_0$ and 
$M_a(N_{t(a)})\subseteq N_{h(a)}$ for every $a\in Q_0$.

We denote by $\nilp_\mathbb{C}(Q)$ the category of nilpotent representations of $Q$, 
and by $\mathbb{C}\langle\langle Q\rangle\rangle$-$\modulo$ the category of 
finite-dimensional left $\mathbb{C}\langle\langle Q\rangle\rangle$-modules. It is known 
that 
the category of representations of $Q$ and the category of $\mathbb{C}\langle Q \rangle
$-modules are equivalent. In \cite[Section 10]{DWZ-10} it was observed  that  $\nilp_
\mathbb{C}(Q)$ and $\mathbb{C}\langle\langle Q\rangle\rangle$-$\modulo$ are equivalent. 
 
Given a basic algebra $\Lambda=\mathbb{C}\langle\langle Q \rangle \rangle/I$ we define 
a \textit{representation} of $\Lambda$ as a nilpotent representation of $Q$ which is 
annihilated by $I$. We consider the category $\modulo (\Lambda)$ of finite-dimensional 
left modules as the category $\rep(\Lambda)$ of representations of $\Lambda$. 

Let $\Lambda=\mathbb{C}\langle\langle Q \rangle \rangle/I$ be a basic algebra.  We say 
$\mathcal{M}=(M,V)$ is a \textit{decorated representation} of $\Lambda$ if $M$ is a 
representation of $\Lambda$ and $V=(V_1,\ldots, V_n)$ is an n-tuple of 
finite-dimensional $\mathbb{C}$-vector spaces. We can think of $V$ as a representation 
of a 
quiver with $n$-vertices and no arrows. That is a representation of the semisimple 
ring $\mathbb{C}^{Q_0}$.
Let $\decrep(\Lambda)$ be the \emph{category of decorated representations} of 	$
\Lambda$. The objects of $\decrep(\Lambda)$ are  the decorated representations of $
\Lambda$ and its morphisms are given as follows. Let $(M,V)$ and $(N,W)$ be two 
decorated representations of $\Lambda$. We define the space of morhisms in $
\decrep(\Lambda)$ by
 $\Hom_{\decrep(\Lambda)}((M,V), (N,W))=\Hom_{\rep(\Lambda)}(M,N)\times
 \Hom_{\rep(\mathbb{C}^{Q_0})}(V,W)$.

Let $\mathcal{M}=(M,V)$ be a decorated representation of $\Lambda$. If $V=0$, we write 
$M$ instead $\mathcal{M}$. For $i\in\{1,\ldots, n\}$ we define the \textit{negative 
simple representation} of $\Lambda$ as $\mathcal{S}_i^-=(0,S_i)$ where $(S_i)_j$ is $
\mathbb{C}$ if $j=i$ and $(S_i)_j=0$ in other wise.

For a representation $M=((M_i)_{i\in Q_0}, (M_a)_{a\in Q_1})$ of $\Lambda$ and a vector 
$\textbf{e}\in \mathbb{N}^n$ let $\Grass_{\textbf{e}}(M)$ be the \textit{quiver 
Grassmannian} of subrepresentations $N$  of $M$ such that $\underline{\dimension}(N)=
\textbf{e}$. We denote the \textit{Euler characteristic} of $\Grass_{\textbf{e}}(M)$ by 
$\chi(\Grass_{\textbf{e}}(M))$. About Euler characteristic we are going to need the 
following result, see \cite{Bi-73},
 
\begin{lemma}[(Bia\'lynicki-Birula)]\label{LemmaBi}
Let $T$ be an algebraic torus acting on an algebraic variety $X$. If we denote by $X^{T}
$ the set of fixed points of the action,  then $\chi(X^{T})=\chi(X)$.
\end{lemma}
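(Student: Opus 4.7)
The plan is to deduce the equality from two standard facts about the topological Euler characteristic $\chi$ of complex algebraic varieties: \emph{additivity} on locally closed decompositions, $\chi(X) = \chi(U) + \chi(X\setminus U)$ for any locally closed $U\subseteq X$, and \emph{multiplicativity} on Zariski- or \'etale-locally trivial fibrations, $\chi(E) = \chi(B)\chi(F)$. Both follow from the coincidence of $\chi$ with the compactly supported Euler characteristic over $\mathbb{C}$. The crucial ancillary computation is that any positive-dimensional algebraic torus satisfies $\chi = 0$: indeed $\chi(\mathbb{C}^*) = 0$, and multiplicativity on products gives $\chi((\mathbb{C}^*)^r) = 0$ for all $r \geq 1$; consequently $\chi(T/H) = 0$ for every proper closed subgroup $H \subsetneq T$.

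With these in hand I would stratify $X \setminus X^T$ by stabilizer type. Because the stabilizer map $x \mapsto \mathrm{Stab}_T(x)$ is constructible and takes only finitely many values on a variety, the sets $X_H = \{x \in X : \mathrm{Stab}_T(x) = H\}$, with $H$ ranging over the occurring proper closed subgroups of $T$, yield a finite stratification of $X \setminus X^T$ by locally closed pieces. On each stratum the quotient torus $T/H$ acts freely; a geometric quotient $X_H \to X_H/T$ exists and is \'etale-locally trivial with fiber $T/H$, so multiplicativity gives $\chi(X_H) = \chi(X_H/T) \cdot \chi(T/H) = 0$. Additivity then yields
\begin{equation*}
\chi(X) = \chi(X^T) + \sum_H \chi(X_H) = \chi(X^T),
\end{equation*}
as claimed.

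The main obstacle is verifying that $X_H \to X_H/T$ really is an \'etale-locally trivial fibration when $X$ is allowed to be singular or non-projective; the existence of good geometric quotients in that generality requires some care. One workaround is to replace $T$ by a sufficiently generic one-parameter subgroup $\lambda: \mathbb{C}^* \to T$ chosen so that $X^\lambda = X^T$, reducing everything to a $\mathbb{C}^*$-action, and then to stratify further according to whether the stabilizer is trivial or a nontrivial finite subgroup of $\mathbb{C}^*$, at which point standard quotient constructions apply. Alternatively, when $X$ is smooth and projective, Bia\l ynicki-Birula's classical cellular decomposition exhibits $X$ as a finite disjoint union of affine bundles over the connected components of $X^T$, from which both the additivity and the equality $\chi(X) = \chi(X^T)$ drop out immediately.
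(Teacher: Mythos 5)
The paper offers no proof of this lemma at all: it is quoted as a known result with a bare citation to Bia\l ynicki-Birula's 1973 paper, so there is nothing internal to compare your argument against. On its own terms, your sketch is the standard and correct route: reduce to the additivity of $\chi=\chi_c$ on locally closed decompositions of complex varieties, together with the vanishing of $\chi$ for any positive-dimensional torus, and then kill the complement of $X^T$ stratum by stratum. The two places where the write-up is genuinely incomplete are exactly the ones you flag yourself: (i) the finiteness and local closedness of the stratification by stabilizer type, which is standard for diagonalizable group actions but does require an argument (e.g.\ covering by invariant affines, which needs normality, or passing to a generic one-parameter subgroup $\lambda$ with $X^\lambda=X^T$ so that only the subgroups $\mu_n\subset\mathbb{C}^*$ can occur as stabilizers); and (ii) the existence of an \'etale-locally trivial geometric quotient $X_H\to X_H/T$ on each stratum, which is where most textbook treatments either restrict to the quasi-projective case or argue instead that a variety carrying a $\mathbb{C}^*$-action with only finite stabilizers has $\chi_c=0$ by an equivariant d\'evissage. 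For the only use made of the lemma in this paper --- the $\mathbb{C}^*$-action on a quiver Grassmannian $\Grass_{\textbf{e}}(M(j))$ in Lemma \ref{ProChar} --- the variety is projective, so your closing remark already covers the case that is actually needed; as a proof of the lemma in the stated generality, the argument is a correct skeleton whose two technical joints still need to be welded.
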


The following definition plays a crucial role in some computations that will be 
involved later. It was introduced in  \cite[Section 2.4]{CLS-15}. 

\begin{definition}\label{Def_Lambda_p}
Given a basic algebra $\Lambda=\mathbb{C}\langle\langle Q \rangle \rangle/I$ and $p\geq 
2$ we define the $p$-truncation of $\Lambda$  by
$\Lambda_p=\mathbb{C}\langle\langle Q \rangle \rangle/(I+\mathfrak{M}^p)$.
\end{definition}

We are going to need some basic definitions about \emph{quivers with potential}, for 
all details the reader can see \cite{DWZ-08}.
Let $Q$ be  a quiver, we say that $S\in \mathbb{C}\langle\langle Q\rangle\rangle$ is a 
\textit{potential} for $Q$ if $S$ is a, possibly infinite, $\mathbb{C}$-linear 
combination  of cycles in $Q$ . Given two potentials $S$ and $W$  we say that they are 
\textit{cyclically equivalent} and write $S\sim_{\operatorname{cyc}} W$, if $S-W$ is in 
the closure of the sub-vector space of $\mathbb{C}\langle\langle Q\rangle\rangle$   
generated by all elements of the form $a_1a_2\cdots a_{n-1}a_n -a_2\cdots a_{n-1}
a_na_1$, with  $a_1a_2\cdots a_{n-1}a_n$ a cycle on $Q$.

\begin{definition}
We say $(Q,S)$ is a \textit{quiver with potential} (QP) if $S$ is a potential for $Q$ 
and if any two different cycles appearing with non-zero coefficient   in $S$ are not  
cyclically equivalent.
\end{definition}

Given an arrow $a\in Q_1$ and a cycle $a_na_{n-1}\cdots a_1$ in $Q$, define the 
\textit{cyclic derivative} of $a_na_{n-1}\cdots a_1$ with respect to $a$ as follows:
\[\partial_{a}(a_na_{n-1}\cdots a_1)=\sum\limits_{k=1}^{n}{\delta_{a, a_k}a_{k-1}
a_{k-2}\cdots a_1a_na_{n-1}\cdots a_{k+2}a_{k+1}},\]
we extend this definition by $\mathbb{C}$-linearity and continuity to all potentials 
for $Q$.
\begin{definition}
Let $(Q,S)$ be a quiver with potential. We define the \textit{Jacobian ideal}  $
\mathcal{J}(Q,S)$ as the closure of the ideal on $\mathbb{C}\langle\langle Q\rangle
\rangle$ generated by  all cyclic derivatives $\partial_a(S)$ with $a\in Q_1$. The 
quotient $\mathbb{C}\langle\langle Q\rangle\rangle/J(Q,S)$ is called the 
\textit{Jacobian algebra} of $(Q,S)$ and is denoted as $\mathcal{P}(Q,S)$.
\end{definition}
 
 \subsection{Varieties of representations}\label{SectionVR}
  
Let $\Lambda=\mathbb{C}\langle\langle Q \rangle \rangle/I$ and $\textbf{d}=(d_1,
\ldots, d_n)\in \mathbb{N}^n$ be a basic algebra and a vector of non-negative integers. 
The representations $M$ of $Q$ with $\underline{\dimension}(M)=\textbf{d}$ can be seen 
as points of the affine space 
\[\rep_{\textbf{d}}(Q)=\prod_{a\in Q_1}{\Hom_{\mathbb{C}}}(\mathbb{C}^{d_{t(a)}},
\mathbb{C}^{d_{h(a)}} ).\]
Now, let $\rep_{\textbf{d}}(\Lambda)$ be the  Zariski closed subset of $
\rep_{\textbf{d}}(Q)$ given by the representations $N$ of $\Lambda$ with $
\underline{\dimension}(N)=\textbf{d}$.

In $\rep_{\textbf{d}}(\Lambda)$ we have the action of $G_{\textbf{d}}=\prod_{i=1}^{n}
{GL(\mathbb{C}^{d_i})}$ by conjugation. If $g=(g_1,\ldots, g_n)\in G_{\textbf{d}}$ and 
$M=((M_i)_{i\in Q_0}, (M_a)_{a\in Q_1})\in \rep_{\textbf{d}}(\Lambda)$, then 
\[g\cdot M=((M_i)_{i\in Q_0}, (g_{h(a)}M_ag_{t(a)^{-1}})_{a\in Q_1}).\]

From definitions follows that the isomorphism classes of representations of $\Lambda$ 
with dimension vector $\textbf{d}$  are in bijection with the $G_{\textbf{d}}$-orbits 
in $\rep_{\textbf{d}}(\Lambda)$. If $M\in \rep_{\textbf{d}}(\Lambda)$, its 
$G_{\textbf{d}}$-orbit is denoted by $\mathcal{O}(M)$.
For $(\textbf{d}, \textbf{v})\in \mathbb{N}^n\times \mathbb{N}^n$ let $
\decrep_{\textbf{d},\textbf{v}}(\Lambda)$ be the \textit{decorated representations 
variety} of $\Lambda$. If $\textbf{v}=(v_1,\ldots, v_n)$, then 
\[\decrep_{\textbf{d},\textbf{v}}(\Lambda)=\rep_{\textbf{d}}(\Lambda)\times \{( 
\mathbb{C}^{v_1}, \ldots, \mathbb{C}^{v_n})\}.\]

We have an action of $G_{\textbf{d}}$ on  $\decrep_{\textbf{d},\textbf{v}}(\Lambda)$ 
given by $g\cdot\mathcal{M}= (g\cdot M,V)$ where $\mathcal{M}=(M,V)\in
\decrep_{\textbf{d},\textbf{v}}(\Lambda)$ and $g\in G_{\textbf{d}}$.

\subsection{Galois G-covering}

In this section we are going to make a reminder of Galois $G$-covering. For a nice review 
of this theory the reader can see the introductions of \cite{As-11,BL-14}. For our 
convenience we are going to present some results from \cite{BL-14}.
 
In this section $G$ will denote a finite group (in the general theory this assumption 
is not required). A category  $\mathcal{A}$ is  called $\mathbb{C}$-\emph{linear} or  
$\mathbb{C}$-\emph{category} if its sets of morphisms are $\mathbb{C}$-modules 
and the composition of morphisms is $\mathbb{C}$-linear. We assume that  we have a 
morphism 
$\rho:G\rightarrow \Auto(\mathcal{A})$ from $G$ to the group of automorphisms of $
\mathcal{A}$, not the group of auto-equivalences. That means we have an \emph{action} 
of $G$ on $\mathcal{A}$. We will abuse of notation and we will write $g$ instead $
\rho(g):\mathcal{A}\rightarrow \mathcal{A}$ for every $g\in G$. The action of $G$ on $
\mathcal{A}$ is called \emph{free} provided $g\cdot X$ is not isomorphic to $X$ for 
every non-trivial element $g\in G$ and for any indecomposable object $X$ of $
\mathcal{A}$. 
The next definitions are due to Asashiba, see \cite[Definition 1.1, Definition 1.7]
{As-11}.

\begin{definition}
Let $\mathcal{A}$ and $\mathcal{B}$ be $\mathbb{C}$-categories with $G$ acting on $
\mathcal{A}$. A $\mathbb{C}$-linear functor  $\mathcal{F}:\mathcal{A}\rightarrow 
\mathcal{B}$ is called $G$-\emph{stable} if there exist  functorial isomorphisms 
$\delta_g:\mathcal{F}g\rightarrow 
\mathcal{F}$ such that $\delta_{h,X}\delta_{g, h\cdot X}=\delta_{gh, X}$ for any $g,h
\in G$ and any object  $X$ in $\mathcal{A}$. In this case $\delta=(\delta_g)_{g\in G}$ 
is called a $G$-\emph{stabilizer}. If $\delta_g=\ident_{\mathcal{F}}$ for every $g\in G
$, we say that $\mathcal{F}$ is $G$-\emph{invariant}, see the next diagram.
\[\xymatrix{\mathcal{F}gh \ar@{->}[dr]_{\delta_{gh}}\ar@{->}[r]^{\delta_g} & 
\mathcal{F}h \ar@{->}[d]^{\delta_{h}} \\
              \ \  &  \mathcal{F}}\]
\end{definition}

\begin{definition}
Let $\mathcal{A},\mathcal{B}$ be $\mathbb{C}$-categories with a group $G$ acting on $
\mathcal{A}$. Let $\mathcal{F}:\mathcal{A}\rightarrow \mathcal{B}$ be a $G$-stable 
functor with 
stabilizer $\delta$.
\begin{enumerate}
\item[(a)] We say that $\mathcal{F}$ is a $G$-\emph{precovering} if the following maps are 
isomorphisms for any $X, Y$ objects in $\mathcal{A}$:
\[
\mathcal{F}_{X,Y}:\bigoplus_{g\in G}\mathcal{A}(X, g\cdot Y)\rightarrow \mathcal{B}
(\mathcal{F}(X),\mathcal{F}(Y)); \ (u_g)_{g\in G}\mapsto \sum_{g\in G}{\delta_{g, Y}
\mathcal{F}(u_g)},
\]
\[
\mathcal{F}^{X,Y}:\bigoplus_{g\in G}\mathcal{A}(g\cdot X,Y)\rightarrow \mathcal{B}
(\mathcal{F}(X),\mathcal{F}(Y)); \ (v_g)_{g\in G}\mapsto \sum_{g\in G}{\mathcal{F}(v_g)
\delta^{-1}_{g, X}}.
\]
\item[(b)] A $G$-precovering $\mathcal{F}$ is called a \emph{Galois} $G$-\emph{covering} if 
$\mathcal{F}$ has the following three conditions:
\begin{itemize}
\item[(i)] The functor $\mathcal{F}$ is \emph{almost dense}. It means that any 
indecomposable object $Y$ of $\mathcal{B}$ is isomorphic to $\mathcal{F}(X)$ for some 
object $X$ in 
$\mathcal{A}$.
\item[(ii)] If $X$ is indecomposable in $\mathcal{A}$, then $\mathcal{F}(X)$ is 
indecomposable in $\mathcal{B}$.
\item[(iii)] For any indecomposable objects $X, Y$ in $\mathcal{A}$ such that $
\mathcal{F}(X)\cong \mathcal{F}(Y)$, there exist $g\in G$ such that $g\cdot X \cong Y$.
\end{itemize}
\end{enumerate} 
\end{definition}

\begin{remark}
\begin{enumerate}
\item In \cite[Proposition 1.6]{As-11} is proved that $\mathcal{F}^{X,Y}$ is an 
isomorphism if and only if $\mathcal{F}_{X,Y}$ is an isomorphism. Note that a 
$G$-precovering is a faithful functor, see \cite[Lemma 2.6]{BL-14}.
\item In Krull-Schmidt categories a functor is almost dense if and only if it is dense.
\end{enumerate}
\end{remark}

The following lemma allows us to find examples of a Galois $G$-covering from a 
$G$-precovering between  module categories, see \cite[Lemma 2.9]{BL-14}

\begin{lemma}\label{LemBL1}
Let $\mathcal{A}, \mathcal{B}$ be Krull-Schmidt $\mathbb{C}$-categories with a group $G
$ acting freely on $\mathcal{A}$ and let $\mathcal{F}:\mathcal{A}\rightarrow\mathcal{B}
$ be a $G$-precovering. Assume $X$ is an object in $\mathcal{A}$ such that $
\Endo_{\mathcal{A}}(X)$ is local and  has nilpotent radical. Then $
\Endo_{\mathcal{B}}(\mathcal{F}(X))$ is local with nilpotent radical and if $Y$ is an 
object in $\mathcal{A}$ such that $\mathcal{F}(X)\cong F(Y)$, then there exist $g\in G$ such 
that $g\cdot X \cong Y$.
\end{lemma}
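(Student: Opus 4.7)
The plan is to exploit the $G$-precovering isomorphism
\[
\mathcal{F}_{X,X}\colon\bigoplus_{g\in G}\mathcal{A}(X,g\cdot X)\xrightarrow{\sim}\Endo_{\mathcal{B}}(\mathcal{F}(X)),
\]
transport the ring structure from the right-hand side to the left, and exploit the freeness of the $G$-action to see that all ``off-diagonal'' pieces end up in the radical. Write $R=\Endo_{\mathcal{A}}(X)$ with maximal ideal $\mathfrak{m}_0$ satisfying $\mathfrak{m}_0^N=0$, set $I_g=\mathcal{A}(X,g\cdot X)$ for $g\neq e$, and put $I_e=R$. A direct computation with the stabilizer $\delta$ shows that the transferred multiplication is $G$-graded in the sense that $I_g\cdot I_h\subseteq I_{gh}$. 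Let $\mathfrak{n}:=\mathfrak{m}_0\oplus\bigoplus_{g\neq e}I_g$, regarded via $\mathcal{F}_{X,X}$ as a subgroup of $E:=\Endo_{\mathcal{B}}(\mathcal{F}(X))$.

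The first goal is to show that $\mathfrak{n}$ is a two-sided ideal of $E$ with $E/\mathfrak{n}\cong R/\mathfrak{m}_0$ a division ring, so that $E$ is local with Jacobson radical $\mathfrak{n}$. The crucial input is that freeness of the $G$-action forces $g\cdot X\not\cong X$ for $g\neq e$; since $X$ is indecomposable in the Krull-Schmidt category $\mathcal{A}$, any composition of the form $X\to g\cdot X\to X$ arising in a product from $I_g\cdot I_{g^{-1}}$ fails to be invertible in $R$ and so lies in $\mathfrak{m}_0$. Combined with the $G$-grading and the ideal property of $\mathfrak{m}_0\subseteq R$, this yields $\mathfrak{n}\cdot E\subseteq\mathfrak{n}$ and $E\cdot\mathfrak{n}\subseteq\mathfrak{n}$, and the quotient $E/\mathfrak{n}$ is then isomorphic to $R/\mathfrak{m}_0$.

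Nilpotency of $\mathfrak{n}$ is the most delicate step, and the one I expect to be the main obstacle. For each word $x_1x_2\cdots x_k$ with $x_i\in\mathfrak{m}_0$ or $x_i\in I_{g_i}$ ($g_i\neq e$), I would track the sequence of partial products of the group-components, $g_1,\,g_1g_2,\,\ldots,\,g_1\cdots g_k$; whenever this sequence returns to $e$, the corresponding partial product lies in $\mathfrak{m}_0$ by the observation above. Combining this pigeonhole-type analysis on the finite group $G$ with $\mathfrak{m}_0^N=0$ yields an explicit exponent $M=M(N,|G|)$ with $\mathfrak{n}^M=0$.

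For the last assertion, let $\alpha\colon\mathcal{F}(X)\xrightarrow{\sim}\mathcal{F}(Y)$ be an isomorphism with inverse $\beta$. First, faithfulness of $\mathcal{F}$ together with indecomposability of $\mathcal{F}(X)\cong\mathcal{F}(Y)$ in the Krull-Schmidt category $\mathcal{B}$ forces $Y$ to be indecomposable in $\mathcal{A}$. Writing $\mathcal{F}_{X,Y}^{-1}(\alpha)=(u_g)_{g\in G}$ with $u_g\colon X\to g\cdot Y$ and $\mathcal{F}_{Y,X}^{-1}(\beta)=(v_h)_{h\in G}$ with $v_h\colon Y\to h\cdot X$, I would expand $\beta\alpha=\ident_{\mathcal{F}(X)}$ via the transferred product and project to the $I_e$-component of $E$: the identity $\ident_X$ then appears as a sum of compositions $g(v_{g^{-1}})\circ u_g\colon X\to g\cdot Y\to X$. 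Since $\ident_X\notin\mathfrak{m}_0$, at least one such summand is invertible in $R$; as $X$ is indecomposable, this forces the corresponding $u_g$ to be a split monomorphism, and the indecomposability of $g\cdot Y$ then upgrades $u_g$ to an isomorphism $X\cong g\cdot Y$, i.e.\ $Y\cong g^{-1}\cdot X$.
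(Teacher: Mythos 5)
The paper does not prove this lemma at all: it is imported verbatim from Bautista--Liu \cite[Lemma 2.9]{BL-14}, so there is no internal argument to compare against. Your proposal supplies an actual proof, and it is essentially correct; it follows what is in substance the standard covering-theory argument (and the spirit of Bautista--Liu's own proof): transport the ring structure of $\Endo_{\mathcal{B}}(\mathcal{F}(X))$ along $\mathcal{F}_{X,X}$, observe that the stabilizer identities make the transferred product $G$-graded, and use freeness --- together with indecomposability of $g\cdot X$, which has local endomorphism ring isomorphic to $\Endo_{\mathcal{A}}(X)$ --- to see that every composite $X\to g\cdot X\to X$ with $g\neq e$ is a non-isomorphism and hence lies in $\mathfrak{m}_0$. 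This correctly gives that $\mathfrak{n}$ is a two-sided ideal with $E/\mathfrak{n}\cong R/\mathfrak{m}_0$ a division ring, and the argument for the second assertion (expand $\beta\alpha=\ident_{\mathcal{F}(X)}$, read off the $I_e$-component $\sum_g (g\cdot v_{g^{-1}})\circ u_g=\ident_X$, and use locality of $R$ to extract an invertible summand, hence a split mono $X\to g\cdot Y$ into an indecomposable) is complete once you note, as you do, that faithfulness of $\mathcal{F}$ forces $Y$ to be indecomposable. Note that your pigeonhole argument for nilpotency genuinely uses that $G$ is finite, which is a standing assumption in this section of the paper but not in \cite{BL-14}, where the hypothesis is instead a local-boundedness condition guaranteeing only finitely many nonzero summands $\mathcal{A}(X,g\cdot X)$.

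One step needs tightening. In the nilpotency argument you propose to extract a factor in $\mathfrak{m}_0$ ``whenever the sequence of partial products returns to $e$.'' That is not enough: for $|G|\geq 3$ there are words of arbitrary length whose partial products never revisit $e$ (e.g.\ alternating so that the partial products cycle among the nontrivial elements), and for such words your criterion extracts no radical factors. The fix is immediate: pigeonhole on the value $h\in G$ attained most often by the sequence of partial products; between consecutive visits to $h$ the composite is a non-invertible endomorphism of $h\cdot X$ (it either is a single $\mathfrak{m}_0$-factor or passes through some $h'\cdot X\not\cong h\cdot X$), hence lies in $\operatorname{rad}\Endo_{\mathcal{A}}(h\cdot X)\cong\mathfrak{m}_0$, and any product of length at least $|G|(N+1)$ accumulates $N$ such factors and vanishes. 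With that repair the proof is complete.
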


The next theorem shows an interesting application of Galois $G$-covering in 
Auslander-Reiten theory, see \cite[Theorem 3.7]{BL-14}. 

\begin{theorem}[(Bautista-Liu, 2014)]\label{BLTheorem}
Let $\mathcal{A}, \mathcal{B}$ be Krull-Schmidt $\mathbb{C}$-categories with a group $G
$ acting freely on $\mathcal{A}$ and let $\mathcal{F}:\mathcal{A}\rightarrow\mathcal{B}
$ be a Galois $G$-covering. Then 
\begin{enumerate}
\item A short exact sequence $\eta$ in $\mathcal{A}$ is almost split if and only if $
\mathcal{F}(\eta)$ is almost split.
\item An object $X$ in $\mathcal{A}$ is the starting or ending term of a almost split 
sequence if and only if $\mathcal{F}(X)$ is the starting or ending term of a almost 
split sequence, respectively.
\end{enumerate}
\end{theorem}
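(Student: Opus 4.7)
The plan is to establish part (1) directly by lifting morphisms through the precovering isomorphism, and then to deduce part (2) from (1) together with the standard existence theorem for almost split sequences in Krull-Schmidt categories.

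For the forward direction of (1), assume $\eta\colon 0 \to X \to Y \to Z \to 0$ is almost split in $\mathcal{A}$; in particular $X$ and $Z$ are indecomposable, so $\mathcal{F}(X)$ and $\mathcal{F}(Z)$ are indecomposable by condition (ii). First I would show $\mathcal{F}(\eta)$ is not split: any candidate retraction $\mathcal{F}(Y) \to \mathcal{F}(X)$ lifts via the precovering isomorphism $\mathcal{F}^{Y,X}$ to a family $(r_g)_{g \in G}$ of morphisms $r_g\colon g\cdot Y \to X$, and freeness of the $G$-action combined with indecomposability of $X$ forces every $r_g$ with $g \neq 1$ into the radical, so that only $r_1$ can contribute a retraction and this would already split $\eta$, a contradiction. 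For the right almost split property of $\mathcal{F}(\eta)$, given $h\colon W \to \mathcal{F}(Z)$ that is not a split epimorphism, by additivity reduce to $W$ indecomposable and, by condition (i), write $W \cong \mathcal{F}(W')$ for some indecomposable $W' \in \mathcal{A}$. The precovering decomposition then expresses $h = \sum_{g} \delta_{g,Z}\mathcal{F}(h_g)$ with $h_g\colon W' \to g\cdot Z$; each $h_g$ must fail to be a split epimorphism, since otherwise the indecomposability of $W'$ and $g\cdot Z$ combined with the orthogonality built into the decomposition would force $h$ to be a split epimorphism. Each $h_g$ therefore factors through the almost split sequence $g\cdot \eta$ (which remains almost split because $g$ acts by an automorphism of $\mathcal{A}$), and reassembling these factorizations under $\mathcal{F}$ factors $h$ through $\mathcal{F}(Y) \to \mathcal{F}(Z)$.

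For the backward direction of (1), assume $\mathcal{F}(\eta)$ is almost split in $\mathcal{B}$. Lemma \ref{LemBL1} together with the faithfulness of $\mathcal{F}$ forces $X$ and $Z$ to be indecomposable with local endomorphism rings in $\mathcal{A}$, and $\eta$ is not split by additivity. Given $f\colon W \to Z$ in $\mathcal{A}$ that is not a split epimorphism with $W$ indecomposable, $\mathcal{F}(f)$ is not a split epimorphism either (a splitting would lift via the precovering isomorphism to a splitting of $f$ by the same freeness argument), so the almost split property of $\mathcal{F}(\eta)$ yields a factorization $\mathcal{F}(f) = \mathcal{F}(g)\circ s$ for some $s\colon \mathcal{F}(W) \to \mathcal{F}(Y)$, where $g\colon Y\to Z$ is the quotient map of $\eta$; pulling $s$ back via the precovering isomorphism and extracting the component indexed by $1 \in G$ supplies the required factorization of $f$ through $g$. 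Part (2) now follows formally: the forward direction is immediate from (1), and conversely, if $\mathcal{F}(X)$ starts (resp.\ ends) an almost split sequence then $X$ has local endomorphism ring by Lemma \ref{LemBL1}, the existence theorem provides an almost split sequence at $X$ in $\mathcal{A}$, and its image under $\mathcal{F}$ is almost split by (1) and hence isomorphic to the given one by uniqueness.

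The main obstacle will be controlling the contributions from $g \neq 1$ in the sum-over-$G$ decomposition whenever a morphism is lifted from $\mathcal{B}$ to $\mathcal{A}$: the essential factorizations must be extracted from the identity component, while all other components must either vanish or lie in the radical. This is precisely where freeness of the $G$-action (ensuring $g \cdot X \not\cong X$ for $g \neq 1$ with $X$ indecomposable) meets Krull-Schmidt indecomposability, and getting the radical versus non-radical accounting right requires care at every step, especially when many orbit components must be coherently reassembled into a single factorization.
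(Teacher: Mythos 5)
This theorem is imported verbatim from Bautista--Liu \cite[Theorem 3.7]{BL-14}; the paper offers no proof of its own, so your attempt can only be measured against the original argument. Your treatment of part (1) is essentially that argument and is sound: the splitting and non-splitting transfers follow from injectivity of the precovering isomorphisms $\mathcal{F}_{X,Y}$ and $\mathcal{F}^{X,Y}$ (extract the $g=1$ component), the right almost split property passes forward because freeness guarantees that at most one component $h_g\colon W'\to g\cdot Z$ can be an isomorphism while all others lie in the radical, and the factorizations through the translates $g\cdot\eta$ reassemble via the stabilizer identities $\delta_{g,Z}\mathcal{F}(g\cdot p)=\mathcal{F}(p)\,\delta_{g,Y}$. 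Two small points: you should state explicitly that $\mathcal{F}$ carries conflations to conflations, so that $\mathcal{F}(\eta)$ is a short exact sequence at all; and indecomposability of $X$ from that of $\mathcal{F}(X)$ needs only additivity and faithfulness, not Lemma \ref{LemBL1}, which runs in the opposite direction.

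The genuine gap is in the backward direction of part (2). You write that if $\mathcal{F}(X)$ ends an almost split sequence then ``the existence theorem provides an almost split sequence at $X$ in $\mathcal{A}$,'' after which (1) and uniqueness finish the job. But there is no existence theorem for almost split sequences in an arbitrary Krull--Schmidt $\mathbb{C}$-category: existence is precisely the nontrivial content being transferred across the covering, and if it could be invoked in $\mathcal{A}$ a priori, part (2) would be an empty corollary of part (1). The correct argument must \emph{construct} the sequence in $\mathcal{A}$ from the given one in $\mathcal{B}$ --- for instance by using the isomorphism the precovering induces on extension spaces to lift the conflation $0\to N\to E\to\mathcal{F}(X)\to 0$ to a conflation ending at $X$, and then verifying by the same component-extraction arguments as in (1) that the lift is almost split. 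In the concrete situation where this paper applies the theorem ($\mathcal{A}=\Lambda(T)\module$, $\mathcal{B}=\Lambda(\tau)\module$ for finite-dimensional algebras) Auslander--Reiten theory does supply existence on both sides, so your shortcut happens to be harmless there; but as a proof of the theorem as stated it does not go through.
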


Given a $\mathbb{C}$-algebra $\Lambda$ we consider it as a $\mathbb{C}$-category in the 
usual way, in other words, the set of objects of $\Lambda$ is a complete family of 
ortogonal and 
primitive  idempotents and the set of morphisms is given by $\Lambda(e_i,e_j)=e_j
\Lambda e_i$. In this context, the category of left $\Lambda$-modules $\Lambda\module$ 
is equivalent to the category of functors from $\Lambda$ to $\mathbb{C}\module$. An 
action of a group $G$ on $\Lambda$ induces an action of $G$ on $\Lambda\module$ in the 
following way. Given an $\Lambda$-module $M:\Lambda\rightarrow \mathbb{C}\module$ we 
define $g\cdot M:= Mg^{-1}$, remember that $g$ is thought as an automorphism of $
\Lambda$; for a morphism $u:M\rightarrow N$ of $\Lambda\module$, we define $g\cdot 
u(x)=u(g^{-1}x)$ for $x$ an object of $\Lambda$.  So, if we have a $G$-precovering $
\pi:\Lambda\rightarrow A$, Bongarzt and Gabriel defined the \emph{push-down functor} $
\pi_{*}:\Lambda\module \rightarrow A\module$, see \cite[Section 3.2]{BG-82}. In Remark 
\ref{DefPush} we define the push-down fuctor in our particular case.

We have a nice property for $\pi_{*}$, see \cite[Lemma 6.3]{BL-14}.

\begin{lemma}
Let $\Lambda$ and $A$ be finite dimensional $\mathbb{C}$-algebras with a group $G$ acting 
on $\Lambda$. Assume the action of $G$ is free. If $\pi: \Lambda\rightarrow A$ is a 
Galois $G$-precovering, then the push-down functor $\pi_{*}$ admits a $G$-stabilizer $
\delta$.
\end{lemma}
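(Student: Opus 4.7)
The plan is to build $\delta$ by hand from the canonical $G$-indexed decomposition of $\pi_{*}(M)$ guaranteed by the Galois covering structure. First I would recall the Bongartz--Gabriel construction of the push-down $\pi_{*}$: viewing an object of $\Lambda\module$ as a $\mathbb{C}$-linear functor $M\colon\Lambda\to\mathbb{C}\module$ and fixing, for each object $a$ of $A$, a preimage $e_{a}\in\pi^{-1}(a)$ (freeness of the action together with Galoisness make $\pi^{-1}(a)=\{g\cdot e_{a}\mid g\in G\}$ a free $G$-orbit), one sets
\[
\pi_{*}(M)(a)=\bigoplus_{g\in G}M(g\cdot e_{a}),
\]
and transports the action of a morphism $\alpha\in A(a,b)$ via the $G$-precovering isomorphism for $\pi$ combined with the stabilizer $\delta^{\pi}$ of $\pi$ itself.

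I would then define $\delta_{g,M}\colon\pi_{*}(g\cdot M)\to\pi_{*}(M)$ as follows. Since by definition $g\cdot M = M\circ g^{-1}$,
\[
\pi_{*}(g\cdot M)(a)=\bigoplus_{h\in G}(g\cdot M)(h\cdot e_{a})=\bigoplus_{h\in G}M(g^{-1}h\cdot e_{a}),
\]
so the bijective reindexing $h\mapsto g^{-1}h$ of $G$ yields a canonical identification with $\pi_{*}(M)(a)=\bigoplus_{h'\in G}M(h'\cdot e_{a})$ via the identity on each summand. I take this identification to be $\delta_{g,M}|_{a}$, and extend to all of $A$ by the same prescription on each object.

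What remains to verify is (i) naturality of $\delta_{g}$ in $M$, (ii) the cocycle relation $\delta_{h,M}\,\delta_{g,h\cdot M}=\delta_{gh,M}$, and (iii) that each $\delta_{g,M}$ is a morphism in $A\module$. Items (i) and (ii) are essentially bookkeeping: any $\pi_{*}(u)$ acts diagonally with respect to the $G$-decomposition, and the successive reindexings compose as $k\mapsto g^{-1}k\mapsto h^{-1}g^{-1}k=(gh)^{-1}k$. The crux, and the main obstacle, is (iii). It amounts to showing that when a morphism $\alpha\in A(a,b)$ is decomposed through the $G$-precovering isomorphism as $\sum_{k\in G}\pi(\beta_{k})(\delta^{\pi}_{k,e_{a}})^{-1}$ with $\beta_{k}\in\Lambda(k\cdot e_{a},e_{b})$, then the shift $k\mapsto gk$ of this decomposition computes the action of the same $\alpha$ on $\pi_{*}(g\cdot M)(a)$. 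This identity is a direct consequence of the cocycle $\delta^{\pi}_{h,X}\,\delta^{\pi}_{g,h\cdot X}=\delta^{\pi}_{gh,X}$ built into the stabilizer of $\pi$, together with the freeness of the $G$-action (which ensures that the indexing of summands is well defined up to the chosen orbit representatives). Once (iii) is established, $\delta=(\delta_{g})_{g\in G}$ is the required $G$-stabilizer of $\pi_{*}$ and the lemma follows.
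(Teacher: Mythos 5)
Your construction is correct, but note that the paper does not actually prove this lemma: it is quoted verbatim from Bautista--Liu \cite[Lemma 6.3]{BL-14} and used as a black box, so there is no in-paper argument to compare against. What you have written is essentially the standard Bongartz--Gabriel/Bautista--Liu construction, carried out explicitly. The key computation checks out: writing $\pi_{*}(M)(a)=\bigoplus_{h\in G}M(h\cdot e_{a})$ and $\pi_{*}(g\cdot M)(a)=\bigoplus_{h\in G}M(g^{-1}h\cdot e_{a})$, the reindexing $h\mapsto g^{-1}h$ is literally the identity on each summand, and if $\alpha\in A(a,b)$ decomposes as $\sum_{k}\pi(\beta_{k})$ with $\beta_{k}\in\Lambda(k\cdot e_{a},e_{b})$, then the $(p,q)$-component of $\pi_{*}(g\cdot M)(\alpha)$ is $M(g^{-1}p\cdot\beta_{p^{-1}q})$, which is exactly the $(g^{-1}p,g^{-1}q)$-component of $\pi_{*}(M)(\alpha)$ since $(g^{-1}p)^{-1}(g^{-1}q)=p^{-1}q$; this is precisely your point (iii), and the cocycle and naturality checks are the bookkeeping you describe. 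Two small cautions: first, in the generality stated (arbitrary Galois $G$-precovering $\pi$ with nontrivial stabilizer $\delta^{\pi}$, rather than the strictly $G$-invariant orbit projection used elsewhere in the paper) the twist by $\delta^{\pi}$ genuinely enters the component formula, so the verification of (iii) should carry those factors through rather than only invoking the cocycle for $\delta^{\pi}$ in passing; second, your identification depends on the choice of orbit representatives $e_{a}$, and one should remark that different choices yield canonically isomorphic functors, so nothing is lost. Neither point is a gap, only a place where the write-up should be made explicit.
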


With the following lemma we can construct a $G$-precovering from a Galois $G$-covering, 
see \cite[Theorem 6.5]{BL-14}.

\begin{lemma}\label{G-precovering}
Let $\Lambda$ and $A$ be finite dimensional $\mathbb{C}$-algebras with a group $G$ 
acting on $\Lambda$. Assume the action of $G$ is free. If $\pi: \Lambda\rightarrow A$ 
is a Galois $G$-covering, then the push-down functor 
$\pi_{*}:\Lambda\module \rightarrow A\module$ is a $G$-precovering.
\end{lemma}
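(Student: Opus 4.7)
The plan is to first invoke the preceding lemma to obtain a $G$-stabilizer for $\pi_{*}$, and then verify the defining isomorphism of a $G$-precovering by reducing to the case of indecomposable projective modules, where the condition coincides (after natural identifications) with the $G$-precovering condition already imposed on $\pi$ itself viewed as a functor between $\mathbb{C}$-categories of idempotents.

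The first step is immediate: the previous lemma provides a $G$-stabilizer $\delta=(\delta_g)_{g\in G}$ for $\pi_{*}$, so the canonical maps
\[
(\pi_{*})_{M,N}\colon \bigoplus_{g\in G}\Hom_{\Lambda}(M,g\cdot N)\longrightarrow \Hom_{A}(\pi_{*}M,\pi_{*}N)
\]
are well defined for every pair $M,N\in\Lambda\module$, and likewise for $(\pi_{*})^{M,N}$. By the remark following the definition of $G$-precovering, it suffices to prove that $(\pi_{*})_{M,N}$ is an isomorphism.

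The second step handles the case of indecomposable projectives. For primitive idempotents $e_i,e_j\in\Lambda$ one has $\pi_{*}(\Lambda e_i)=A\pi(e_i)$, and the canonical identifications
\[
\Hom_{\Lambda}(\Lambda e_i,g\cdot\Lambda e_j)\;\cong\;\Lambda(e_i,g\cdot e_j),\qquad \Hom_{A}(A\pi(e_i),A\pi(e_j))\;\cong\;A(\pi(e_i),\pi(e_j))
\]
transport $(\pi_{*})_{\Lambda e_i,\Lambda e_j}$ into the map $\pi_{e_i,e_j}$ coming from the $G$-precovering $\pi$ between $\mathbb{C}$-categories of idempotents. By hypothesis this latter map is an isomorphism. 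Taking finite direct sums gives the isomorphism for arbitrary finitely generated projective modules.

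The third and final step extends to all finite-dimensional modules via projective presentations. Choose presentations $P_1\to P_0\to M\to 0$ in $\Lambda\module$ and note that $\pi_{*}$ is right exact and preserves projectives (as it sends each indecomposable projective $\Lambda e_i$ to the indecomposable projective $A\pi(e_i)$), so $\pi_{*}P_1\to \pi_{*}P_0\to \pi_{*}M\to 0$ is a projective presentation in $A\module$. Applying the left exact functor $\bigoplus_{g\in G}\Hom_{\Lambda}(-,g\cdot N)$ on the $\Lambda$-side and $\Hom_{A}(-,\pi_{*}N)$ on the $A$-side, one obtains a commutative diagram of two four-term left exact sequences connected by the maps $(\pi_{*})_{-,N}$. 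The second step gives isomorphisms at $P_0$ and $P_1$, and the five-lemma then yields the isomorphism at $M$.

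The main technical point to check carefully is the commutativity of the diagram produced in the last step: one must verify that the $G$-stabilizer $\delta$ furnished by the previous lemma is compatible with the morphisms in the projective presentation and with the canonical identifications $\Hom_{\Lambda}(\Lambda e,X)\cong eX$. This compatibility is built into the definition of the push-down functor and its stabilizer, but it is the only place where one must be careful with signs of idempotents and with the distinction between the left action $g\cdot N=Ng^{-1}$ on modules and the action $g$ on idempotents; once these identifications are made explicit, the diagram commutes on the indecomposable projective generators and therefore, by additivity and naturality, on the whole presentation.
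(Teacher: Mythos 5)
Your argument is correct, but note that the paper does not prove this statement at all: it is quoted verbatim from Bautista--Liu, cited as \cite[Theorem 6.5]{BL-14}, with the preceding lemma (existence of the $G$-stabilizer for $\pi_*$) likewise imported from \cite[Lemma 6.3]{BL-14}. What you have written is essentially the standard proof of that theorem: take the stabilizer as given, observe that $\pi_*(\Lambda e_i)\cong A\pi(e_i)$ and that under the Yoneda identifications $\Hom_\Lambda(\Lambda e_i, g\cdot \Lambda e_j)\cong \Lambda(e_i,g\cdot e_j)$ and $\Hom_A(A\pi(e_i),A\pi(e_j))\cong A(\pi(e_i),\pi(e_j))$ the map $(\pi_*)_{\Lambda e_i,\Lambda e_j}$ becomes $\pi_{e_i,e_j}$, which is an isomorphism by hypothesis; then pass to arbitrary finite-dimensional modules by a projective presentation, left-exactness of $\bigoplus_{g\in G}\Hom_\Lambda(-,g\cdot N)$ (legitimate since $G$ is finite, as assumed throughout that section of the paper), and the four-term version of the five lemma. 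Two small points deserve to be made explicit if you write this up: first, the identification $\pi_*(\Lambda e_i)\cong A\pi(e_i)$ is not purely formal --- it already uses the precovering isomorphisms $\bigoplus_g\Lambda(e_i,g\cdot e_k)\cong A(\pi(e_i),\pi(e_k))$ applied to the definition $\pi_*(M)_{G\cdot k}=\bigoplus_g M_{g\cdot k}$; second, the naturality of $(\pi_*)_{-,N}$ in the first argument, which makes your ladder commute, follows directly from the formula $(u_g)_g\mapsto\sum_g\delta_{g,N}\pi_*(u_g)$ since precomposition with $\pi_*(f)$ commutes with the fixed morphisms $\delta_{g,N}$. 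With those two remarks your proof is complete and self-contained, which is arguably an improvement over the paper's bare citation.
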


%%%%%%%%%%%%%%%%%%%%%%%%%%%%%%%%%%%%%%%%%%%%%%%%%%%%%%%%
%%%%%%%%%%%%%%%%%%%%%%%%%%%%%%%%%%%%%%%%%%%%%%%%%%%%%%%
\section{E-invariant and Caldero-Chapoton functions}\label{section2}
In this section we recall some definitions that we work with. This definitions were 
introduced in \cite[Section 3.4]{CLS-15}. They were motivated by the theory of 
\textit{mutation of quivers with potential} developed in \cite{DWZ-10} and the 
Caldero-Chapoton \textit{functions} introduced in \cite{CCh-05}. In this section let 
$\Lambda=
\mathbb{C}\langle\langle Q \rangle \rangle/I$ a basic algebra.

\subsection{$g$-vectors}\label{section3.1}
For a decorated representation $\mathcal{M}=(M,V)$ of $\Lambda$ the $g$-\textit{vector} 
of $\mathcal{M}$ is given by $g_{\Lambda}(\mathcal{M})=(g_1,\ldots, g_n)$ where
\[g_i:=g_i(\mathcal{M})=-\dimension\Hom_{\Lambda}(S_i,M)+\dimension\Ext_{\Lambda}
^1(S_i,M)+\dimension(V_i).\]
It is clear that $g_{\Lambda}(\mathcal{M})\in \mathbb{Z}^{n}$. We denote by $I_i$ 
the \emph{injective envelope} of the simple representation $S_i$ in $\Lambda\module$. 
We recall  an interesting result to compute the $g$-vector of $\mathcal{M}$, see 
\cite[Lemma 3.4]{CLS-15} for a general version.   

\begin{lemma}\label{gVectorLemma}
Let $\mathcal{M}=(M,V)$ be a decorated representation of a finite dimensional algebra $
\Lambda$  and let $g_{\Lambda}(\mathcal{M})=(g_1,\ldots, g_n)$ be its $g$-vector. 
Assume we have a minimal injective presentation of $M$
\[
0\rightarrow M\rightarrow I_{0}(M)\rightarrow I_{1}(M),
\]
where $I_{0}(M)=\bigoplus_{i=1}^{n}{I_i^{a_i}}$ and $I_{1}(M)=\bigoplus_{i=1}^{n}
{I_i^{b_i}}$.
Then 
\[
g_i=-a_i+b_i+\dimension(V_i).
\]
\end{lemma}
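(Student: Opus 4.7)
The plan is to compute $\dim\Hom_{\Lambda}(S_i, M)$ and $\dim\Ext^1_{\Lambda}(S_i, M)$ directly from the minimal injective presentation and show that they equal $a_i$ and $b_i$ respectively; substituting into the definition of $g_i$ then yields the formula.

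The basic input is that since $I_j$ is the injective envelope of the simple $S_j$, its socle is $S_j$, and hence $\Hom_{\Lambda}(S_i, I_j)\cong \mathbb{C}$ when $i=j$ and vanishes otherwise. This immediately gives $\dim\Hom_{\Lambda}(S_i, I_0(M)) = a_i$ and $\dim\Hom_{\Lambda}(S_i, I_1(M)) = b_i$.

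For the first identity, I would use that the inclusion $M\hookrightarrow I_0(M)$ is essential (this is precisely what ``injective envelope'' means in the definition of a minimal injective presentation), so $\soc(M) = \soc(I_0(M))$. Since any homomorphism from a simple module $S_i$ into a module factors through the socle, this forces
\[
\Hom_{\Lambda}(S_i, M) = \Hom_{\Lambda}(S_i, \soc(I_0(M))) = \Hom_{\Lambda}(S_i, I_0(M)),
\]
so $\dim\Hom_{\Lambda}(S_i, M) = a_i$.

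For the second identity, apply $\Hom_{\Lambda}(S_i, -)$ to the presentation. Because $I_0(M)$ is injective, $\Ext^1_{\Lambda}(S_i, I_0(M)) = 0$, and the long exact sequence reads
\[
0 \to \Hom_{\Lambda}(S_i, M) \to \Hom_{\Lambda}(S_i, I_0(M)) \to \Hom_{\Lambda}(S_i, I_1(M)) \to \Ext^1_{\Lambda}(S_i, M) \to 0.
\]
The first map is an isomorphism because its source and target are already known to have the same finite dimension $a_i$. Consequently the middle map is zero, and $\Ext^1_{\Lambda}(S_i, M)\cong \Hom_{\Lambda}(S_i, I_1(M))$, yielding $\dim\Ext^1_{\Lambda}(S_i, M) = b_i$. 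The only place where the minimality hypothesis actually enters is the essential inclusion $\soc(M) = \soc(I_0(M))$; without it, $a_i$ would merely be an upper bound on $\dim\Hom_{\Lambda}(S_i, M)$, so the lemma would fail. No serious obstacle arises, this being essentially a bookkeeping argument in homological algebra.
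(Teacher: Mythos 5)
The paper does not prove this lemma at all; it is quoted from \cite[Lemma 3.4]{CLS-15}, so there is no in-paper argument to compare against. Your overall strategy is the standard one and the identities $\dimension\Hom_{\Lambda}(S_i,M)=a_i$ and $\dimension\Ext^1_{\Lambda}(S_i,M)=b_i$ are both true, but there is one concrete misstep in how you derive the second.

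The sequence $0\to M\to I_0(M)\to I_1(M)$ is only left exact: the map $I_0(M)\to I_1(M)$ need not be surjective, so you cannot literally ``apply $\Hom_{\Lambda}(S_i,-)$ to the presentation'' and read off a four-term exact sequence ending in $\Ext^1_{\Lambda}(S_i,M)\to 0$ with $\Hom_{\Lambda}(S_i,I_1(M))$ in the third slot. The correct route is to set $C=\coker(M\to I_0(M))=\ima(I_0(M)\to I_1(M))$, apply $\Hom_{\Lambda}(S_i,-)$ to the genuine short exact sequence $0\to M\to I_0(M)\to C\to 0$ to get $\Ext^1_{\Lambda}(S_i,M)\cong\Hom_{\Lambda}(S_i,C)$ (using, as you do, that $\Hom_{\Lambda}(S_i,M)\to\Hom_{\Lambda}(S_i,I_0(M))$ is an isomorphism and that $I_0(M)$ is injective), and then use that $C\hookrightarrow I_1(M)$ is an \emph{essential} extension to conclude $\Hom_{\Lambda}(S_i,C)=\Hom_{\Lambda}(S_i,I_1(M))$. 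This last step is the second place where minimality is indispensable, so your closing remark that minimality enters \emph{only} through $\soc(M)=\soc(I_0(M))$ is false: if $I_1(M)$ were not the injective envelope of $C$ (e.g.\ take $M$ injective, $I_0(M)=M$, and the zero map to a nonzero $I_1(M)$, which still gives a left-exact sequence), then $b_i$ would only bound $\dimension\Ext^1_{\Lambda}(S_i,M)$ from above and the formula would fail. The repair is routine, but as written the key exact sequence is unjustified.
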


\subsection{The $E$-invariant}
For decorated representations $\mathcal{M}=(M,V)$  and $\mathcal{N}=(N,W)$ of  $\Lambda
$ let 
\[E_{\Lambda}(\mathcal{M},\mathcal{N})=\dimension\Hom_{\Lambda}(M,N)+\sum_{i=1}^{n}
{\dimension(M_i)g_i(\mathcal{N})}.\]
The $E$-\textit{invariant} of $\mathcal{M}$   is defined as $E_{\Lambda}
(\mathcal{M})=E_{\Lambda}(\mathcal{M},\mathcal{M})$.

In \cite{CLS-15} it  was shown that the $E$-invariant has a homological interpretation 
in terms of the Auslander-Reiten translation of truncations of $\Lambda$, see 
Definition \ref{Def_Lambda_p}.

\bigskip
\begin{proposition}[{\cite[Proposition 3.5]{CLS-15}}]\label{PropoTrunc}
 Let $\mathcal{M}=(M,V)$  and $\mathcal{N}=(N,W)$ be decorated representations of $
\Lambda$. If $p>\dimension(M),\dimension(N)$, then
\[E_{\Lambda}(\mathcal{M},\mathcal{N})=E_{\Lambda_p}(\mathcal{M},\mathcal{N})=
\dimension\Hom_{\Lambda_p}(\tau_{\Lambda_p}^{-}(N),M)+\sum_{i=1}^{n}{\dimension(M_i)
\dimension(W_i)}.\]
\end{proposition}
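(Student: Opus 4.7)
The plan is to establish the two equalities separately. The first, $E_\Lambda(\mathcal{M},\mathcal{N}) = E_{\Lambda_p}(\mathcal{M},\mathcal{N})$, is essentially a truncation matter: since $\dimension(M),\dimension(N)<p$, Nakayama's lemma forces the descending chain $M\supseteq\mathfrak{M}M\supseteq\mathfrak{M}^2M\supseteq\cdots$ to strictly decrease until it becomes zero, and similarly for $N$; the length cannot exceed $\dimension(M)$ (resp.\ $\dimension(N)$), so $\mathfrak{M}^pM=0=\mathfrak{M}^pN$. Thus $M,N\in\Lambda_p\module$, giving $\Hom_\Lambda(M,N)=\Hom_{\Lambda_p}(M,N)$. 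To see that the $g$-vectors agree, I would take a projective presentation $\tilde P_1\to\tilde P_0\to S_i\to 0$ of $S_i$ in the full category of $\Lambda$-modules, truncate it to $P_j:=\tilde P_j/\mathfrak{M}^p\tilde P_j$ in $\Lambda_p\module$, and observe that $\Hom_\Lambda(\tilde P_j,N)=\Hom_{\Lambda_p}(P_j,N)$ because any $\Lambda$-morphism into $N$ factors through the quotient by $\mathfrak{M}^p$. The cokernels computing $\Ext^1(S_i,N)$ over $\Lambda$ and over $\Lambda_p$ therefore coincide, and the first equality follows.

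For the second equality I work inside the finite-dimensional algebra $\Lambda_p$. Take the minimal injective copresentation $0\to N\to J_0\xrightarrow{g} J_1$ with $J_0=\bigoplus_i I_i^{a_i}$ and $J_1=\bigoplus_i I_i^{b_i}$. By Lemma \ref{gVectorLemma}, $g_i^{\Lambda_p}(\mathcal{N})=-a_i+b_i+\dimension(W_i)$; combined with the Nakayama identity $\dimension\Hom_{\Lambda_p}(M,I_i)=\dimension(e_iM)=\dimension(M_i)$ (from $I_i=D(e_i\Lambda_p)$), this converts $\sum_i \dimension(M_i)(-a_i+b_i)$ into $-\dimension\Hom(M,J_0)+\dimension\Hom(M,J_1)$, so that
\[
E_{\Lambda_p}(\mathcal{M},\mathcal{N})-\sum_i \dimension(M_i)\dimension(W_i)=\dimension\Hom(M,N)-\dimension\Hom(M,J_0)+\dimension\Hom(M,J_1).
\]
Now apply the inverse Nakayama functor $\nu^{-1}$ to the injective copresentation to produce the projective presentation $\nu^{-1}(J_0)\xrightarrow{\nu^{-1}(g)}\nu^{-1}(J_1)\to \tau^-_{\Lambda_p}N\to 0$ of $\tau^-_{\Lambda_p}N$. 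Feed this into $\Hom_{\Lambda_p}(-,M)$: the canonical identification $\Hom_{\Lambda_p}(\nu^{-1}(I_i),M)=e_iM\cong D\Hom_{\Lambda_p}(M,I_i)$ exhibits $\Hom_{\Lambda_p}(\nu^{-1}(g),M)$ as linearly dual to $\Hom_{\Lambda_p}(M,g)$, hence of equal rank $r$. Rank-nullity then gives $\dimension\Hom(M,N)=\dimension\Hom(M,J_0)-r$ and $\dimension\Hom(\tau^-_{\Lambda_p}N,M)=\dimension\Hom(M,J_1)-r$, which combined with the displayed identity yields the second equality.

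The main obstacle is this final Auslander-Reiten step: one must verify that $\nu^{-1}$ applied to a minimal injective copresentation really produces the projective presentation witnessing $\tau^-_{\Lambda_p}N$, and that the two induced Hom maps are Nakayama-dual to each other (hence of equal rank). The truncation argument in the first equality is by comparison mechanical, relying only on the fact that morphisms into a module annihilated by $\mathfrak{M}^p$ automatically factor through the $\mathfrak{M}^p$-truncated quotient.
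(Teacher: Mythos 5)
The paper itself offers no proof of this statement: it is imported verbatim from \cite[Proposition 3.5]{CLS-15}, so there is no internal argument to compare yours against. Your proof is correct and is essentially the standard one behind the cited result: the truncation step handles the first equality (every module of dimension $<p$ is killed by $\mathfrak{M}^p$, so $\Hom$, $\Ext^1(S_i,-)$ and hence the $g$-vectors are unchanged when passing to $\Lambda_p$), and the second equality is exactly the Nakayama-functor computation, using Lemma \ref{gVectorLemma}, the identification $\dimension\Hom_{\Lambda_p}(M,I_i)=\dimension(M_i)$, the exact sequence $\nu^{-1}(J_0)\to\nu^{-1}(J_1)\to\tau^{-}_{\Lambda_p}N\to 0$, and the equality of ranks of $\Hom(M,g)$ and $\Hom(\nu^{-1}g,M)$ under the duality $\Hom_{\Lambda_p}(\nu^{-1}I,M)\cong D\Hom_{\Lambda_p}(M,I)$; all of these steps check out. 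One imprecision worth fixing: $\Ext^1_{\Lambda}(S_i,N)$ is not the cokernel of $\Hom_{\Lambda}(\tilde P_0,N)\to\Hom_{\Lambda}(\tilde P_1,N)$ but the subquotient $\Hom_{\Lambda}(K,N)/\ima\Hom_{\Lambda}(\tilde P_0,N)$ with $K=\ker(\tilde P_0\to S_i)$. Taking $\tilde P_0=\Lambda e_i$ minimal, so that $K=\mathfrak{M}e_i$ and $\mathfrak{M}^p\tilde P_0=\mathfrak{M}^{p-1}K$, every morphism $K\to N$ kills $\mathfrak{M}^{p-1}K$ because $\mathfrak{M}^{p-1}N=0$, and the identification of the two subquotients goes through. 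Even more directly, the first equality follows from Yoneda $\Ext$: any extension $0\to N\to E\to S_i\to 0$ has $\dimension(E)\leq p$, hence $\mathfrak{M}^pE=0$, so extensions over $\Lambda$ and over $\Lambda_p$ coincide.
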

This proposition is quite useful for us because the basic algebras we are considering 
satisfy $\Lambda_p=\Lambda$ for a sufficiently large  $p$.

\subsection{Caldero-Chapoton functions and algebras}

Let $\mathcal{M}=(M,V)$ be a decorated representation of $\Lambda$. For $\textbf{f}
=(f_1,\ldots, f_n)\in \mathbb{Z}^n$ by $\textbf{x}^\textbf{f}$ we mean $\prod_{i=1}^{n}
{x_i^{f_i}}$.
The \textit{Caldero-Chapoton function} (CC function for short) associated to $
\mathcal{M}$  of $\Lambda$ is the Laurent polynomial in $n$-variables $x_1,\ldots, x_n$ 
defined by
\[\mathcal{C}_{\Lambda}(\mathcal{M})=\textbf{x}^{g_{\Lambda}(\mathcal{M})}
\sum_{\textbf{e}\in\mathbb{N}}{\chi(\Grass_{\textbf{e}}(M))\textbf{x}^{C_Q\textbf{e}}},
\]
where $C_Q$ is defined  as in the second paragraph of Section \ref{section1}. From definitions we have $
\mathcal{C}_{\Lambda}(\mathcal{M})\in \mathbb{Z}[x_1^{\pm}, \ldots, x_n^{\pm}]$, Note $
\mathcal{C}_{\Lambda}(\mathcal{S}_i^{-})=x_i$.
The \textit{set of Caldero-Chapoton} functions associated to $\Lambda$ is 
\[
\mathcal{C}_{\Lambda}=\{\mathcal{C}_{\Lambda}(\mathcal{M})\colon \mathcal{M}\in 
\decrep(\Lambda)\}.
\]

The next lemma was shown in \cite{CLS-15}. It is convenient for computations of
 $g$-vectors and Caldero-Chapoton functions.

\begin{lemma}[{\cite[Lemma 4.1]{CLS-15}}]\label{Lemma4.1}
 If $\mathcal{M}=(M,V)$  and $\mathcal{N}=(N,W)$ are decorated representations of $
\Lambda$, then the following hold:
\begin{enumerate}
\item $g_{\Lambda}(\mathcal{M}\oplus\mathcal{N})=g_{\Lambda}(\mathcal{M})+ g_{\Lambda}
(\mathcal{N})$.
\item $\mathcal{C}_{\Lambda}(\mathcal{M})=\mathcal{C}_{\Lambda}(M,0)\mathcal{C}
_{\Lambda}(0,V)$.
\item $\mathcal{C}_{\Lambda}(\mathcal{M}\oplus\mathcal{N})=\mathcal{C}_{\Lambda}
(\mathcal{M})\mathcal{C}_{\Lambda}(\mathcal{N})$.
\end{enumerate}
\end{lemma}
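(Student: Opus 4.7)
The plan is to handle the three assertions in order, with (3) being the only one that requires a geometric input; (1) and (2) are purely book-keeping from the definitions.

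For (1) I would argue directly from the defining formula
\[
g_i(\mathcal{M}) = -\dim\Hom_{\Lambda}(S_i,M) + \dim\Ext^1_{\Lambda}(S_i,M) + \dim V_i.
\]
Each of the three summands is additive in $\mathcal{M}$: the first two because $\Hom_{\Lambda}(S_i,-)$ and $\Ext^1_{\Lambda}(S_i,-)$ are additive functors, the third by the definition of direct sum of decorated representations. Adding entry-wise gives the claim. For (2) I would use (1) applied to the splitting $\mathcal{M} = (M,0)\oplus(0,V)$ to see that $g_{\Lambda}(\mathcal{M}) = g_{\Lambda}(M,0) + g_{\Lambda}(0,V)$; note also that $\Grass_{\mathbf{e}}(0)$ is a point when $\mathbf{e} = 0$ and empty otherwise, so $\mathcal{C}_{\Lambda}(0,V) = \mathbf{x}^{g_{\Lambda}(0,V)}$ is a pure monomial. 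Multiplying this monomial against $\mathcal{C}_{\Lambda}(M,0)$ and collecting exponents recovers $\mathcal{C}_{\Lambda}(\mathcal{M})$ on the nose.

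For (3), thanks to (1) and (2) it suffices to prove multiplicativity of the Grassmannian-sum factor for undecorated modules, i.e.\ that
\[
\chi(\Grass_{\mathbf{e}}(M\oplus N)) = \sum_{\mathbf{e}_1 + \mathbf{e}_2 = \mathbf{e}} \chi(\Grass_{\mathbf{e}_1}(M))\,\chi(\Grass_{\mathbf{e}_2}(N)).
\]
The standard device is to introduce the torus $T = \mathbb{C}^{*}$ acting on $M\oplus N$ by $t\cdot(m,n) = (m, tn)$. This action is by $\Lambda$-module automorphisms (the structure maps $(M\oplus N)_a$ commute with scaling the $N$-component), so it induces an algebraic $T$-action on $\Grass_{\mathbf{e}}(M\oplus N)$. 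The key observation is that a subrepresentation $L \subseteq M\oplus N$ is $T$-stable if and only if $L = (L\cap M)\oplus(L\cap N)$: if $(m,n)\in L$ and $L$ is $T$-stable, then $(m,tn)\in L$ for all $t\in\mathbb{C}^*$, and subtracting $(m,n)$ and letting $t$ vary yields $(0,n)\in L$, hence $(m,0)\in L$. Consequently
\[
\Grass_{\mathbf{e}}(M\oplus N)^{T} = \coprod_{\mathbf{e}_1 + \mathbf{e}_2 = \mathbf{e}} \Grass_{\mathbf{e}_1}(M) \times \Grass_{\mathbf{e}_2}(N),
\]
and the identity for Euler characteristics follows from additivity on disjoint unions, multiplicativity on products, and Bia{\l}ynicki-Birula (Lemma \ref{LemmaBi}).

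The only delicate point in the whole argument is the verification that $T$-fixed submodules split; everything else is mechanical. The main obstacle one should be careful about is making sure the $T$-action on $M\oplus N$ is by $\Lambda$-module endomorphisms, so that the induced action really preserves the locus of subrepresentations inside the product of vector-space Grassmannians; once this is in place, Lemma \ref{LemmaBi} supplies the rest.
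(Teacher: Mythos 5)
Your proposal is correct. Note that the paper does not actually prove this lemma; it imports it verbatim from \cite[Lemma 4.1]{CLS-15}. Your reconstruction is the standard argument used there: parts (1) and (2) are the definitional book-keeping you describe, and for part (3) the identity $\chi(\Grass_{\mathbf{e}}(M\oplus N))=\sum_{\mathbf{e}_1+\mathbf{e}_2=\mathbf{e}}\chi(\Grass_{\mathbf{e}_1}(M))\chi(\Grass_{\mathbf{e}_2}(N))$ is obtained exactly via the $\mathbb{C}^*$-action scaling the $N$-summand, the splitting of $T$-fixed subrepresentations, and the Bia{\l}ynicki-Birula fixed-point theorem --- which is precisely why the paper records that theorem as Lemma \ref{LemmaBi}.
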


\begin{definition}
The \textit{Caldero-Chapoton algebra} $\mathcal{A}_{\Lambda}$ associated to $\Lambda$ 
is the $\mathbb{C}$-subalgebra of $\mathbb{C}[x_1^{\pm}, \ldots, x_n^{\pm}]$ generated 
by $\mathcal{C}_{\Lambda}$.
\end{definition}

From Lemma \ref{Lemma4.1}(iii) follows that $\mathcal{C}_{\Lambda}$ generates  $
\mathcal{A}_{\Lambda}$ as $\mathbb{C}$-vector space, see \cite[Lemma 4.2]{CLS-15}.

\begin{example}\label{ExamMotiva}
Let $Q$ be the quiver 
\begin{equation*}
\xymatrix{  
1\ar@{->}[r]^{a_1} & 2\ar@{->}[r]^<(0.3){a_2}&  \cdots\ar@{->}[r]^<(0.3){a_{n-2}}  & 
n-1\ar@{->}[r]^<(0.4){a_{n-1}}&n   
}
\end{equation*}
and let  $\Lambda=\mathbb{C}\langle Q\rangle$. For each sub-interval $\textbf{e}=[i,j]$  
of $[1,n]$ with $i\leq j$ we define an indecomposable representation $M_{\textbf{e}}$ 
of $\Lambda$ the following way. Let $(M_{\textbf{e}})_k=\mathbb{C}$ if $k\in \textbf{e}
$ and $(M_{\textbf{e}})_k=0$ if $k\notin \textbf{e}$, for $k\in [1,n]$. For an arrow 
$a_l$  with $l\in [1, n-1]$ define $(M_{\textbf{e}})_{a_l}$ as $\ident_{\mathbb{C}}$ if 
$t(a_l), h(a_l)\in \textbf{e}$ and zero in other wise.
Note that the dimension vector of $M_{\textbf{e}}$ can be identified with the 
sub-interval $\textbf{e}$. If $1<i$, then we have

\begin{equation*}
g_{\Lambda}(M_{\textbf{e}})_k=
\left\{
\begin{array}{lll}
-1 \mbox{ if } k=j,\\
\quad 1 \mbox{ if } k=i-1,\\
\quad 0 \mbox{ in other wise}.
\end{array}
\right.
\end{equation*}

If $i=1$, then $g_{\Lambda}(M_{\textbf{e}})_j=-1$  and $g_{\Lambda}
(M_{\textbf{e}})_k=0$ for $k\neq j$.
We have $E_{\Lambda}(M_{\textbf{e}})=0$  for each sub-interval $\textbf{e}$  of $[1,n]
$.
From \cite[Theorem 3.4]{CCh-05} we have $\mathcal{A}_{\Lambda}$ can be identified with 
the \textit{cluster algebra} associated to $Q$.
\end{example}

\section{Strongly reduced components}\label{SRsection}

In this section we recall some facts about \emph{strongly 
reduced irreducible components} which were introduced in \cite[Section 1.5]{GLS-12}. 
For our convenience we follow the exposition of \cite{CLS-15},
the reader can see \cite[Sections 5 and 6]{CLS-15} for a complete 
treatment about strongly reduced components in Caldero-Chapoton algebras.

Let $\Lambda$ be a basic algebra and consider dimension vectors 
$(\textbf{d},\textbf{v})\in \mathbb{N}^n\times \mathbb{N}^n$.
We denote by $\Irr_{\textbf{d}}(\Lambda)$ and 
$\DIrr_{\textbf{d},\textbf{v}}(\Lambda)$ the set of \emph{irreducible} 
components of $\rep_{\textbf{d}}(\Lambda)$ and 
$\decrep_{\textbf{d},\textbf{v}}(\Lambda)$ respectively. For 
$Z\in \DIrr_{\textbf{d},\textbf{v}}(\Lambda)$ we write 
$\underline{\dimension}(Z)=(\textbf{d},\textbf{v})$. We define 
\[
\Irr(\Lambda)=\bigcup_{\textbf{d}}{\Irr_{\textbf{d}}(\Lambda)} 
\mbox{ and }
\DIrr(\Lambda)=\bigcup_{(\textbf{d},\textbf{v})}
{\DIrr_{\textbf{d},\textbf{v}}(\Lambda)}
\]
the corresponding  sets of irreducible components.
From Section \ref{SectionVR} we have that  
\[
\decrep_{\textbf{d},\textbf{v}}(\Lambda)=\rep_{\textbf{d}}(\Lambda)
\times \{( \mathbb{C}^{v_1}, \ldots, \mathbb{C}^{v_n})\}.
\]
It is clear that $T:\decrep_{\textbf{d},\textbf{v}}(\Lambda)\rightarrow 
\rep_{\textbf{d}}(\Lambda)$ with $(M, \mathbb{C}^{\textbf{v}})\mapsto M$ 
is an isomorphism of affin varieties. In this way results on the variety 
of representations can be transported to the variety of decorated ones.
We introduce further notation in order to define strongly reduced 
components.  

Let  $Z,Z_1, Z_2\in \DIrr(\Lambda)$ be irreducible components, define

\begin{align*} 
c_{\Lambda}(Z)&=\min \{\dimension(Z)-\dimension \mathcal{O}(\mathcal{M})\colon \mathcal{M}\in Z \},\\
e_{\Lambda}(Z)&=\min\{\dimension \Ext^{1}_{\Lambda}(M,M) \colon \mathcal{M}=(M,V)\in Z\},\\
\ext^{1}_{\Lambda}(Z_1,Z_2)&=\min \{\dimension \Ext^{1}_{\Lambda}(M_1,M_2): \mathcal{M}_i=(M_1,V_i)\in Z_1, \mbox{ for } i=1,2 \}.\\
\end{align*}
From the semi-continuity of the functions 
$\dimension \Hom_{\Lambda}(-,?)$ and $\dimension 
\Ext^1_{\Lambda}(-,?)$, see \cite[Lemma 4.3]{CBS-02}, there 
exist an open set $U$ of $Z$ such that 
$E_{\Lambda}(\mathcal{M})=E_{\Lambda}(\mathcal{N})$ for all 
$\mathcal{M}, \mathcal{N}\in U$. Then we define 
$E_{\Lambda}(Z)=E_{\Lambda}(\mathcal{M})$ for 
$\mathcal{M}\in U$. In a similar way we define $E_{\Lambda}(Z_1,Z_2)$. 

The following lemma is proved in \cite[Lemma 5.2]{CLS-15}.

\begin{lemma}
Let  $Z,Z_1, Z_2\in \DIrr(\Lambda)$ be irreducible components. The following inequalities hold
\[
c_{\Lambda}(Z)\leq e_{\Lambda}(Z)\leq E_{\Lambda}(Z) \ \mbox{ and } \ 
\ext^1_{\Lambda}(Z_1,Z_2)\leq E_{\Lambda}(Z_1,Z_2).
\]
\end{lemma}

The following definition comes from \cite{GLS-12}.
\begin{definition}
Let  $Z\in \DIrr(\Lambda)$ be an irreducible component. We say that
$Z$ is \emph{strongly reduced} if $c_{\Lambda}(Z)= E_{\Lambda}(Z)$.
\end{definition}

We say that an irreducible component $Z\in \Irr(\Lambda)$ (resp. $Z\in 
\DIrr(\Lambda)$) is \emph{indecomposable} if there exist a dense  open  
$U \subseteq Z$ which contains  only indecomposable representations 
(resp. indecomposable decorated representations). 

Crawley-Boevey and Schr\"oer gave a \emph{canonical decomposition}
at the level  of irreducible components. This seems something as \emph{the
Krull-Schmidt property} at that level, see \cite[Theorems 1.1 and 1.2]{CBS-02}.

\begin{theorem}[(Crawley-Boevey-Schr\"oer)]
Let  $Z_1, \ldots, Z_t$ be irreducible components in $\Irr(\Lambda)$. The 
following two statements  are equivalent:
\begin{itemize}
    \item $\overline{Z_1\oplus\cdots \oplus Z_t}$ is an irreducible 
    component.
    \item $\ext^1_{\Lambda}(Z_i,Z_j)=0$, for $i\neq j$ with $i,j\in \{1, 
    \ldots, t\}$.
\end{itemize}
Moreover, the following hold
\begin{itemize}
    \item If $W\in \Irr(\Lambda)$ is an irreducible component, then there
    exist indecomposable irreducible components $W_1,\ldots, W_t$ in $ 
    \Irr(\Lambda)$ such that $W=\overline{W_1\oplus\cdots \oplus W_t}$ 
    and this decomposition is unique up to a permutation on $\{1,\ldots ,t\}$.
\end{itemize}
\end{theorem}

We wrote down the Crawley-Boevey-Schr\"oer theorem in its original 
version for $\rep(\Lambda)$, but it is true for $\decrep(\Lambda)$ as is stated in 
\cite[Theorem 5.3]{CLS-15}.

Now, we get something similar for strongly reduced components, see 
\cite[Theorem 5.11]{CLS-15}.

\begin{theorem}[(CI-LF-S)]\label{CDIsr}
Let $Z_1, \ldots Z_t$ be irreducible components in $\DIrr(\Lambda)$. The 
following two statements are equivalent:
\begin{itemize}
    \item $\overline{Z_1\oplus\cdots \oplus Z_t}$ is a strongly 
    reduced irreducible component.
    \item For any $Z_i$ we have that it is strongly reduced and $ 
    E_{\Lambda}(Z_i, Z_j)=0$ for all $i\neq j$ with $i,j\in \{1, 
    \ldots, t\}$.
\end{itemize}
\end{theorem}

We want generic Caldero-Chapoton functions. For each $(\textbf{d},\textbf{v})\in 
\mathbb{N}^n\times \mathbb{N}^n$ we consider the function
\[
C_{\textbf{d, v}}\colon \decrep_{\textbf{d, v}}(\Lambda)\rightarrow 
\mathbb{Z}[x_1^{\pm}, \ldots, x_n^{\pm}],
\]
defined by $\mathcal{M}\mapsto C_{\Lambda}(\mathcal{M})$. Since this function is 
constructible, then its image is finite. It turns out that for any irreducible component 
$Z\in\DIrr_{\textbf{d, v}}(\Lambda)$ there exist a dense open subset $U\subseteq Z$ 
where $C_{\textbf{d, v}}$ is constant in $U$. We define 
$C_{\Lambda}(Z)=C_{\Lambda}(\mathcal{M})$, for any $\mathcal{M}\in U$.

The set of irreducible components is denoted by $\firr(\Lambda)$. We define the 
\emph{graph} $\Gamma(\firr(\Lambda))$ of strongly reduced irreducible components as 
follows: it  has a vertex for any 
indecomposable  strongly reduced component and there is an edge between vertices $Y$ and 
$Z$ if $E_{\Lambda}(Y,Z)=0=E_{\Lambda}(Z,Y)$ . Note that $Y$ can be equal $Z$. The graph 
of irreducible components $\Gamma(\Irr(\Lambda))$ was defined in \cite[Section 
12.3]{CBS-02}

Let $\Gamma$ be a graph. We consider  graphs with single edges and loops. We denote by 
$\Gamma_0$ the set of vertices of $\Gamma$. By $\Gamma_{\mathcal{U}}$ we denote the full 
subgraph of $\Gamma$, whose set of vertices is $\mathcal{U}$. 

We call $\Gamma_{\mathcal{U}}$ \emph{complete} if for any vertices $i\neq j$ in 
$\mathcal{U}$ there is an edge between them.
A complete $\Gamma_{\mathcal{U}}$ subgraph is called \emph{maximal} if for any other 
complete subgraph $\Gamma_{\mathcal{U}'}$ with $\mathcal{U}\subseteq 
\mathcal{U}'$ we have $\mathcal{U}=\mathcal{U}'$.

We define a \emph{component cluster} of $\Lambda$ as the set of vertices of a maximal 
complete subgraph of $\Gamma(\firr(\Lambda))$. A component cluster $\mathcal{U}$ is 
$E$-rigid whenever $E_{\Lambda}(Z)=0$ for all $Z\in \mathcal{U}$.

The following notions were introduced in \cite[Section 6.5]{CLS-15}.

\begin{definition}
Let $\mathcal{U}=\{Z_1, Z_2,\ldots, Z_t\}$ be a  component cluster. We define  the 
$CC$-\emph{cluster} of $\Lambda$ associated to $\mathcal{U}$ by 
$\mathcal{C}_{\mathcal{U}}=\{\mathcal{C}_{\Lambda}(Z_1),\mathcal{C}_{\Lambda}(Z_2), 
\ldots, \mathcal{C}_{\Lambda}(Z_t)\}$. 
\end{definition}

The notation $CC$ comes from sets of Caldero-Chapoton functions. Cerulli Irelli, 
Labarini-Fragoso and Schr\"oer introduced the notion of laurent phenomenon for 
Caldero-Chapoton algebras, see \cite[Section 6.5]{CLS-15}.

\begin{definition}
The Caldero-Chapoton algebra $\mathcal{A}_{\Lambda}$ has the \emph{Laurent 
phenomenon property} provided for any  $E$-rigid component cluster 
$\{Z_1, \ldots, Z_t\}$ of $\Lambda$,  we have
\[
\mathcal{A}_{\Lambda}\subseteq \mathbb{C}[\mathcal{C}_{\Lambda}(Z_1)^{\pm}, \ldots 
\mathcal{C}_{\Lambda}(Z_t)^{\pm}].
\]
\end{definition}

\section{String algebras}\label{section3}
In this section we recall  some definitions and results about \textit{string algebras} 
exposed in \cite{BR-87}. Let $Q$ be a quiver.

Let $P$ be a subset of paths in $\mathbb{C}\langle Q\rangle$ and denote by $\langle P
\rangle $ the ideal generated by $P$. The algebra $\Lambda=\mathbb{C}\langle Q\rangle/ 
\langle P\rangle $ is called a \emph{string algebra} if the following conditions hold:

\begin{enumerate}
\item Any vertex $i\in Q_0$ is the tail or head point of at most two arrows of $Q$, 
that is, $|\{a\in Q\colon t(a)=i\}|\leq 2$ and $|\{a\in Q\colon h(a)=i\}|\leq 2$.
\item For any arrow $a\in Q_1$ we have $|\{b\in Q_1\colon t(a)=h(b) \mbox{ and } ab
\notin P\}|\leq 1$ and $|\{c\in Q_1\colon t(c)=h(a) \mbox{ and } ca\notin P\}|\leq 1$.
\item The ideal $\langle P\rangle$ is admissible on $\mathbb{C}\langle Q\rangle$. 
\end{enumerate}

To describe the finite-dimensional indecomposable $\Lambda$-modules we need the concept 
of \emph{string}.  We introduce an \textit{alphabet} consisting of  \textit{direct 
letters} given by each arrow $a\in Q_1$ and \emph{inverse letters}  given by $a^{-1}$ 
for each arrow $a\in Q$. The head and tail functions extend to this alphabet in the 
obvious way, that is, $h(a^{-1})=t(a)$ and $t(a^{-1})=h(a)$ for every arrow $a\in Q_1$. 
For a letter $l$ in this alphabet we denote its inverse letter with $l^{-1}$ and we 
write $l$ instead  $(l^{-1})^{-1}$. A \textit{word} in this alphabet of \emph{length} 
$r\geq 1$ is a sequence of letters $l_r\cdots l_1$ such that $t(l_{i+1})=h(l_{i})$ for 
$i=1,\ldots , r-1$.
For a word $W=l_r\cdots l_1$ we denote its \emph{inverse word} by $W^{-1}=l_1^{-1}
\cdots l_r^{-1}$. It is clear we can extend the head and tail functions to words.
 A \textit{string} of length $r\geq 1$ is a word $W=l_r\cdots l_1 $ such that  $W$ and  
$W^{-1}$ do not contain sub-words of the form $ll^{-1}$ for a letter $l$ and no 
sub-words of $W$ belongs to $P$. 

We introduce strings of length $0$ in the following way. For each vertex $i\in Q_0$  we 
have two strings of length $0$  denoted by $1_{(i,u)}$ with $u\in \{1,-1\}$. In this 
case $h(1_{(i,u)})=i=t(1_{(i,u)})$. By definition $1_{(i,u)}^{-1}=1_{(i,-u)}$.

We recall the definition of two functions to deal with strings. In \cite{BR-87} it is 
shown we can choose two functions $\sigma,\epsilon\colon Q_1\rightarrow \{1,-1\}$ such 
that the following conditions are satisfied

\begin{enumerate}
\item If $a_1\neq a_2$ are arrows with $t(a_1)=t(a_2)$, then $\sigma(a_1)=-\sigma(a_2)
$.\\
\item If $b_1\neq b_2$ are arrows with $h(b_1)=h(b_2)$, then  $\epsilon(b_1)=-
\epsilon(b_2)$.\\
\item If $a,b \in Q$ are arrows with $t(b)=h(a)$ and $ba\notin P$, then $\sigma(b)=-
\epsilon(a)$.
\end{enumerate}

For an arrow $a\in Q_1$ we have $\sigma(a^{-1})=\epsilon(a)$ and $\epsilon(a^{-1})=
\sigma(a)$. For a string $W=l_r\cdots l_1$ we define $\sigma(W)=\sigma(l_1)$ and $
\epsilon(W)=\epsilon(l_r)$. Besides we have  $\sigma (1_{(i,u)})=-u$ and $
\epsilon(1_{(i,u)})=u$. Note that if $W_1$ and $W_2$ are strings such that $W_2W_1$ is 
a string, then $\sigma(W_2)=-\epsilon(W_1)$. For $(i,u)\in Q_0\times \{1,-1\}$ let $
\mathcal{W}_{(i,u)}$ be the set of all strings $W$ with $h(W)=i$ and $\epsilon(W)=u$. 
Let $\mathcal{W}$ be the set of all strings and define on $\mathcal{W}$ an equivalence 
relation given by $W_1\sim W_2$ if and only if $W_2\in\{W_1, W_1^{-1}\}$. Let $
\underline{\mathcal{W}}$ be a complete set of representatives of the corresponding 
equivalence classes. 

\begin{remark}
In this article we are not going to use this functions $\epsilon$ and $\sigma$, but it 
is useful to remember that for string algebras the strings can be thought of as 
sequence of signs.
\end{remark}

In \cite{BR-87}, it was also defined the set $\mathcal{B}$ of \textit{bands}. A string $W\in 
\mathcal{W}$ belongs to $\mathcal{B}$ if length of $W$ is positive, $W^n\in \mathcal{W}
$ for all $n\in \mathbb{N}$ and $W$ is not the power of some string of smaller length. 

\subsection{Indecomposable string modules}\label{DefinitionNW}
For a string $W$, in \cite{BR-87}, it was defined a $\Lambda$-module $N(W)$, for 
convenience we  repeat this definition.
For the string $1_{(i,u)}$ we define $N(1_{(i,u)})$ as the simple representation $S_i$ 
at the  vertex $i\in Q_0$. If $W=l_r\cdots l_1$, then $N(W)$ is a representation of $
\mathbb{C}$-dimension $r+1$. For describe the structure of $\Lambda$-module let 
$p_0=t(l_1)$ and $p_k=h(l_k)$ for $k=1, \ldots, r$ vertices of $Q$. By definition  $
\dimension(N(W)_i)$ is $|\{k\in[1,r+1]\colon p_k=i\}|$. If $\{z_0, \cdots, z_r\}$ is a 
basis of $N(W)$ with $z_k\in N(W)_{p_k}$ for $k=0,\ldots, r$, then the action of the 
arrows is given by the following way
\[\xymatrix{  
z_0\ar@{|->}[r]^{l_1} & z_1\ar@{|->}[r]^<(0.3){l_2}&  \cdots\ar@{|->}[r]^<(0.3)
{l_{n-1}}  & z_{n-1}\ar@{|->}[r]^<(0.4){l_{n}}&z_n.  
}\]
If $l_k$ is a direct letter, then $N(W)_{l_k}(z_{k-1})  = z_{k}$; if $l_k$ is a inverse 
letter, then $N(W)_{l_k}(z_{k-1})  = z_{k}$ with $k=1,\ldots, n$; if $a\in Q_1$ and 
$N(W)_{a}(z_{k})$ is not defined yet, then $N(W)_{a}(z_{k})=0$.

In \cite{BR-87}, it was observed that $N(W)$ is isomorphic to $N(W^{-1})$. The modules 
$N(W)$ are called \emph{string modules}. The next result is a special case of the 
more general  result of Butler and Ringel proved in \cite[Section 3]{BR-87}.

\begin{theorem}[(Butler-Ringel)]\label{BRTheorem}
Let $\Lambda$ be a string algebra. If $\mathcal{B}=\emptyset$, then   the 
$\Lambda$-modules $N(W)$ with $W\in \underline{\mathcal{W}}$ form a complete list of 
indecomposable,  pairwise non-isomorphic $\Lambda$-modules.
\end{theorem}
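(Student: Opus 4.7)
The plan is to prove the theorem in three logically separate steps: (i) each string module $N(W)$ is indecomposable, (ii) $N(W_1)\cong N(W_2)$ forces $W_1\sim W_2$, and (iii) if $\mathcal{B}=\emptyset$ every finite dimensional indecomposable $\Lambda$-module is isomorphic to some $N(W)$. The backbone of the argument is an explicit combinatorial description of $\Hom_{\Lambda}(N(V),N(W))$ in terms of pairs of substrings of $V$ and $W$ that match, together with ``functorial filtration'' techniques on the module side.

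First I would build the Hom-basis. Given strings $V$ and $W$, call a pair of factorizations $V=V_3 U V_1$ and $W=W_3 U W_1$ \emph{admissible} when the letters at the cuts satisfy natural boundary conditions dictated by the sign functions $\sigma,\epsilon$ (namely $V_1,W_1$ end on direct letters or are trivial at the source of $U$, and $V_3,W_3$ start on inverse letters or are trivial at the target of $U$; plus the analogous conditions on the substrings themselves being maximal). For each such admissible pair one obtains a morphism $f_{U}\colon N(V)\to N(W)$ sending the basis segment of $N(V)$ indexed by $U$ identically onto the corresponding segment of $N(W)$ and annihilating the rest. A direct check using the string/band axioms of $\Lambda$ and the nature of nilpotent representations shows that the $f_U$'s form a $\mathbb{C}$-basis of $\Hom_{\Lambda}(N(V),N(W))$. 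Setting $V=W$ and analysing the admissible pairs shows that every endomorphism of $N(W)$ is a sum of the identity and endomorphisms whose image lies in a proper submodule (the span of the internal segments), so $\Endo_{\Lambda}(N(W))$ is local and $N(W)$ is indecomposable. The same Hom description, applied to $N(V)\leftrightarrows N(W)$, gives that $N(V)\cong N(W)$ forces $V$ and $W$ to appear as each other's top admissible substring, which happens only if $W\in\{V,V^{-1}\}$; this proves (i) and (ii).

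For (iii), assume $\mathcal{B}=\emptyset$ and let $M$ be an indecomposable finite dimensional $\Lambda$-module. The idea, following Butler--Ringel, is to exhibit a canonical ``string filtration'' of $M$. Using that at each vertex $i\in Q_0$ at most two arrows are incident on either side and that the relations are monomial, one attaches to every arrow $a\in Q_1$ an exact pair of subfunctors $(\ima M_a,\ker M_a)$ on $\modulo(\Lambda)$; iterating these together with the dual constructions produces a filtration of $M$ whose successive quotients are governed by walks in the \emph{string graph} of $\Lambda$. The key combinatorial lemma is that, on an indecomposable module, this filtration stabilises and the ``walk'' it produces is either a string, in which case $M\cong N(W)$, or a band, in which case $M$ is a band module; the hypothesis $\mathcal{B}=\emptyset$ excludes the latter alternative. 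Finally the generators coming from the Hom basis of step one show that the filtration splits compatibly with the string structure, yielding an isomorphism $M\cong N(W)$.

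The main obstacle is step (iii): the functorial filtration argument requires careful bookkeeping to verify that, on an arbitrary indecomposable $M$, the local two-arrows-in/two-arrows-out structure of $Q$ together with admissibility of $\langle P\rangle$ is enough to reconstruct a \emph{global} string. Once this is in place, the assumption $\mathcal{B}=\emptyset$ enters only to rule out the periodic (band) case. Steps (i) and (ii) are then essentially bookkeeping with the explicit Hom basis, whereas step (iii) is the genuine combinatorial--representation-theoretic content, and is where I would invest most of the effort.
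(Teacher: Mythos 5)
The paper does not prove this statement at all: it is quoted verbatim from Butler--Ringel \cite{BR-87} as an imported classification theorem, so there is no in-paper argument to compare yours against. Judged on its own, your proposal is a faithful roadmap of the known proofs in the literature, but it is a plan rather than a proof, and the gap sits exactly where you admit it does.

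Concretely: steps (i) and (ii) via the basis of $\Hom_{\Lambda}(N(V),N(W))$ indexed by admissible pairs of factorizations (``graph maps'') are workable, though you should be aware that this explicit Hom-basis is Crawley-Boevey's later refinement rather than Butler--Ringel's original route; they establish indecomposability and non-isomorphism directly from the functorial filtration, without first computing all of $\Hom$. More seriously, step (iii) --- that every finite-dimensional indecomposable over a string algebra is a string module or a band module --- is the entire content of the theorem, and your write-up reduces it to ``the key combinatorial lemma is that the filtration stabilises and the walk it produces is either a string or a band,'' which is precisely the assertion to be proved. Nothing in the proposal constructs the subfunctors precisely, shows they induce a direct-sum decomposition refining any given module, or verifies that the successive layers are governed by walks; the phrase ``careful bookkeeping'' is standing in for the several pages of functorial-filtration machinery (going back to Gelfand--Ponomarev and Ringel's dihedral-group paper) that constitute the actual argument. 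As it stands the proposal would be acceptable as a statement of intent, but it does not prove the theorem; if you intend to supply a proof rather than cite \cite{BR-87}, step (iii) must be carried out in full, including the verification that $\mathcal{B}=\emptyset$ eliminates the periodic case.
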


\subsection{Sub-strings}\label{SubSectionCam}
For a string of positive length $W=l_r\cdots l_1$ we define its \textit{support} as $
\Supp(W)=\{t(l_1)\}\cup\{h(l_k)\colon k=1,\cdots, r\}$. If $W=1_{(i,u)}$, then $
\Supp(W)=\{i\}$.
Given a string of positive length $W=l_r\cdots l_1$, we say that a string  $V$ is a 
\emph{sub-string} of $W$ if $V=l_{t+j}\cdots l_t$ is a subword of $W$ and there are no 
arrows $a,b\in Q_1$ such that $aV$ and $Vb^{-1}$ are subwords of $W$. 
For technical reasons we introduce the \emph{zero string} $0$ which is sub-string of 
any string.  Now, given a string $W$ we denote by $\Caminos(W)$ the set  of all 
sub-strings of $W$.
\begin{remark}
We use $\Caminos$ from the  word  in spanish \emph{caminata} for 
walk. On one hand  the elements of $\Caminos(W)$ can be thought as walks in 
the quiver. On the other  hand, there are a lot of W's in our notation.
\end{remark}
\section{Generalized cluster algebras}\label{Section4}
For convenience we recall the definition of \emph{generalized cluster algebras} 
introduced by Chekhov and Shapiro in \cite{CS-14}.

We say a matrix $B\in \Matriz_{n\times n}(\mathbb{Z})$ is \emph{skew-symmetrizable} if 
there exist positive integers $d_1,\ldots, d_n$ such that  $DB$ is skew-symmetric with 
$D=\diagonal(d_1,\ldots, d_n)$ a diagonal matrix. In this case we call $D$ a 
\emph{skew-symmetrizer} of $B$.

Given a skew-simmetrizable matrix $B$ and an integer $k\in\{1,\ldots, n\}$, the 
\emph{mutation of $B$ with respect to} $k$ is the matrix $\mu_{k}(B)$ with entries 
$b'_{ij}$ defined as follows

\begin{equation*}
b'_{ij}=
\left\{
\begin{array}{ll}
\quad -b_{ij} \mbox{ \ \ \ \ \ \ \ \ \ \ \ \ \ \ \ \ \  if } k=i \mbox{ or } k=j,\\
b_{ij} +\frac{|b_{ik}|b_{kj}+ b_{ik}|b_{k,j}|}{2} \mbox{ if } i\neq k\neq j.
\end{array}
\right.
\end{equation*}

For us it is enough  work without coefficients. For a study of generalized  cluster 
algebras with principal  coefficients, in parallel with the one for cluster algebras, 
the reader can see \cite{N-15}.

Let $\mathcal{F}$ be the field of the rational functions in $n$ algebraic independent 
variables with coefficients in $\mathbb{Q}$.

\begin{definition}
A \emph{seed} in $\mathcal{F}$ is a pair $(B,\textbf{x})$ where $B$ is a 
skew-symmetrizable matrix and $\textbf{x}=(x_1,\ldots, x_n)$ is  an $n$-tuple of algebraic 
independent elements of $\mathcal{F}$ which generate it.
\end{definition}

Now we assume that $B$ is skew-symmetric with skew-symmetrizer  $D$ and $b_{ik}/d_k$ is 
an integer for all $i\in\{1,\ldots, n\}$.

For $k\in \{1,\ldots, n\}$ we define the polynomials
\[ v_k^+=\prod_{b_{l,k}>0}{x_l^{b_{l,k}/d_k}} , \  \  \ v_k^-= \prod_{b_{l,k}<0}{x_l^{-
b_{l,k}/d_k}} \ \ \ \mbox{ and } \ \ \ \theta_k(u,v)=\sum_{l=0}^{d_k}{u^lv^{d_k - l}}.
\]

\begin{definition}[(Generalized cluster mutation)]
For a seed $(B,\textbf{x})$ and $k\in \{1,\ldots, n\}$, the \emph{mutation of $(B,
\textbf{x})$ with respect to} $k$ is the pair $\mu_k(B,\textbf{x})=(\mu_k(B), 
\mu_k(\textbf{x})$ where $\mu_k(\textbf{x})=(x_1',\ldots, x_n')$ is the $n$-tuple of 
elements of $\mathcal{F}$ given by

\begin{equation*}
x_i'=
\left\{
\begin{array}{ll}
x_i, \mbox{ \ \ \ \ \ \ \ \ \ \ \ if } k\neq i,\\
\frac{\theta_k(v_k^+,v_k^-)}{x_i}, \ \ \mbox{  if } k=i.
\end{array}
\right.
\end{equation*}
\end{definition}
In this article the polynomials $\theta_k(v_k^+,v_k^-)$ are often called 
\emph{polynomials of Chekhov-Shapiro}.
For a seed $(B,\textbf{x})$, let
\[\mathcal{X}=\{x\in \mu_{k_r}\cdots\mu_{k_1}(\textbf{x})\colon k_r\in\{1,\ldots, n\} \mbox{ 
and } r\geq 0 \}.\]

\begin{definition}
The \emph{generalized cluster algebra associated to} $(B, \textbf{x})$, denoted by $
\mathcal{A}(B)=\mathcal{A}(B,\textbf{x})$, is the subring of $\mathcal{F}$ generated by 
$\mathcal{X}$.
\end{definition} 

Note that exchange polynomials do not have to be binomials.  In  \cite[Theorem 2.5]
{CS-14}, it was shown a desired property.

\begin{theorem}[(The Laurent phenomenon)]
 Any generalized cluster variable can be expressed as a Laurent polynomial in the 
initial variables $x_i$.
\end{theorem}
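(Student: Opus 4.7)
The plan is to establish the Laurent phenomenon by adapting the \emph{caterpillar lemma} strategy of Fomin--Zelevinsky from the binomial setting to the polynomial exchange setting. Working with the standard convention $x_k x_k' = \theta_k(v_k^+,v_k^-)$, I would prove by induction on the length $r$ of a mutation sequence $\mu_{k_r}\cdots\mu_{k_1}$ that every cluster variable obtained in this way lies in $\mathbb{Z}[x_1^{\pm 1},\ldots, x_n^{\pm 1}]$. The base case $r=0$ is vacuous. For $r=1$ the only nontrivial variable is $x_k'=\theta_k(v_k^+,v_k^-)/x_k$, whose numerator is a sum of monomials in $x_1,\ldots,\widehat{x_k},\ldots,x_n$; hence $x_k'$ is immediately seen to be a Laurent polynomial in $\mathbf{x}$, after verifying that no pathological cancellation occurs when $v_k^+$ or $v_k^-$ is trivial.

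For the inductive step, the heart of the argument is to reduce the question to a two-step mutation $\mu_l\mu_k$ with $l\neq k$, and to show that writing the new variable $x_l''$ as a rational function of $\mathbf{x}$ produces a numerator divisible by a sufficiently high power of $x_k$. Once this cancellation is established, a connectedness argument on the $n$-regular tree of seeds, identical in form to \cite{FZ-02}, propagates the Laurent property along arbitrary mutation sequences. Concretely, I would track the exponents of $v_l^{\pm}$ and $v_k^{\pm}$ before and after $\mu_k$, using the mutation rule $b'_{ij} = b_{ij} + (|b_{ik}|b_{kj} + b_{ik}|b_{kj}|)/2$, and exploit the key algebraic factorization
\[
u^{d_k+1}-v^{d_k+1}=(u-v)\,\theta_k(u,v),
\]
which plays the role that the classical binomial identity plays in the ordinary case.

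The main obstacle is precisely this polynomial divisibility verification for $\theta_k$ when $d_k>1$. In the ordinary cluster algebra setting ($d_k=1$) the cancellation is the familiar two-term exchange identity. With $d_k+1$ monomial summands one must show that all intermediate signs, exponents, and summation indices conspire to produce divisibility by $x_k^N$ for the correct $N$; this is considerably more delicate than the binomial case and requires a careful case analysis on the signs of $b_{ik}$ and $b_{kj}$. An alternative route, which I would pursue in parallel to cross-check, is to embed the generalized cluster algebra into a \emph{Laurent phenomenon algebra} in the sense of Lam--Pylyavskyy by exhibiting an exchange pattern whose exchange Laurent polynomials recover $\theta_k(v_k^+,v_k^-)$; the LP-algebra Laurent phenomenon would then deliver the result while bypassing the explicit cancellation bookkeeping.
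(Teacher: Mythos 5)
The paper does not actually prove this statement: it is quoted directly from Chekhov--Shapiro \cite[Theorem 2.5]{CS-14}, so there is no internal proof to compare your attempt against. Judged on its own terms, your proposal correctly identifies the strategy that Chekhov and Shapiro themselves use, namely verifying the hypotheses of the Fomin--Zelevinsky caterpillar lemma for the polynomial exchange relations $x_kx_k'=\theta_k(v_k^+,v_k^-)$, and the factorization $u^{d_k+1}-v^{d_k+1}=(u-v)\,\theta_k(u,v)$ is indeed the relevant replacement for the binomial identity. So the route is the right one.

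There is nevertheless a genuine gap: the entire mathematical content of the theorem is the divisibility claim that you explicitly defer, namely that after a two-step mutation $\mu_l\mu_k$ the numerator of $x_l''$, written over the initial cluster, is divisible by the required power of $x_k$. You name this ``the main obstacle'' and do not carry it out; nothing in the proposal shows how the mutation rule for $b'_{ij}$ interacts with the exponents $b_{lk}/d_k$ appearing in $v_k^{\pm}$ to force the cancellation, nor do you verify the remaining caterpillar-lemma hypotheses (that each $\theta_k(v_k^+,v_k^-)$ is not divisible by any $x_i$, and the required compatibility between exchange polynomials at adjacent edges of the tree). The parallel route through Laurent phenomenon algebras is likewise only gestured at: realizing a generalized cluster algebra as an LP-algebra requires constructing the exchange Laurent polynomials and checking the Lam--Pylyavskyy axioms, which is itself a substantive task and cannot be invoked as a black box. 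As written, the proposal is a correct plan whose decisive lemma is left unproven.
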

 
\begin{remark}
The reader should be cautious by comparing the definitions of \cite{CS-14} with this 
ones  because there they take skew-symmetrizer by the right and here we did by the 
left.
\end{remark}

\begin{remark}\label{ExRel1}
In \cite[Lemma 3.1]{CS-14} the authors obtain exchange relations of the form 
$a^2+2\cos(\frac{\pi}{p})ab+b^2$ in the case of orbifold points of order $p$, with $p$ 
greater than one. We are going to obtain exchange relations of the form 
$a^2+ab+b^2$. Of course it is more natural to consider orbifold points of order three 
than order two as we made in the first version of this article.  
\end{remark}
\section{Polygons with one orbifold point}\label{Section5}
We will work with \emph{polygons with one orbifold point of order three} but for 
convenience  we recall some definitions of surfaces with orbifold points.
 For  more details about surfaces with orbifold points of order two or three and 
relations with generalized cluster algebras the reader can see \cite{CS-14} and 
references therein, for example \cite{Ch-09_1,Ch-09_2}. For an interesting and 
beautiful application of surfaces with orbifold points and group actions  in some 
cluster structures the reader is kindly asked to look at \cite{PS-17}.  

\subsection{Basic definitions}

Here we review some combinatorial aspects about orbifolds, however orbifolds, in 
general, are spaces generalizing manifolds. Orbifolds are locally modeled 
by an Euclidean space modulo a finite group. The \emph{order} of an orbifold point 
is the order of the non-trivial stabilizer  at that point. In our setting is not 
necessary to recall more geometric aspects about orbifolds. In the next section, we 
will concentrate in our specific surface. In particular, we shall see that the orbifold 
point of order three we are talking about is the fixed point under a suitable action of 
$\mathbb{Z}_3$ on a polygon.

Let $\Sigma$ be a compact connected oriented 2-dimensional real surface with possible 
empty boundary.  The pair $(\Sigma, \mathbb{M})$ where $\mathbb{M}$ is a finite subset 
of $\Sigma$ with at least one point from each connected component
of the boundary of $\Sigma$ is called a \emph{bordered surface} or just a surface. The 
points of $\mathbb{M}$ are called \emph{marked points} and the points of $\mathbb{M}$ 
that lie in the interior of $\mathbb{M}$ are called \emph{punctures}.
A triple $(\Sigma, \mathbb{M}, \mathbb{O})$ where $(\Sigma, \mathbb{M})$ is a bordered 
surface and $\mathbb{O}$ is a finite subset of $\Sigma \setminus (\mathbb{M}\cup 
\partial\Sigma)$ is called a \emph{marked surface with orbifold points}. The points of 
$\mathbb{O}$ are called \emph{orbifold points} and they will be denoted by a cross $
\times$ in the surface.

\subsection{Triangulations and flips}
 Let $(\Sigma, \mathbb{M}, \mathbb{O})$ be a marked surface with orbifold points of 
order three, without punctures, with boundary and we assume $\mathbb{O}$ is not empty.  
 An arc $i$ on $(\Sigma, \mathbb{M}, \mathbb{O})$ is a curve $i\colon [0,1]\rightarrow 
 \Sigma$ satisfying the following conditions
\begin{itemize}
\item the  endpoints of $i$ are both contained in $\mathbb{M}$. 
\item $i$ does not intersect itself, except that its endpoints may coincide.
\item $i$ does no intersect to $\mathbb{O}$ and $i$ does not intersect $\mathbb{M}$ 
except in its endpoints.
\item if $i$ cuts out an  monogon, then such monogon contains just one orbifold point. 
In this case $i$ is called a \emph{pending arc} or \emph{the loop of a orbifold point}.
\end{itemize}

Two arcs $i$ and $j$ are \emph{isotopic relative }to  $\mathbb{M}\cup\mathbb{O}$ if 
there exist a continuous function $H:[0,1]\times\Sigma \rightarrow \Sigma$ such that 
\begin{itemize}
\item $H(0, x)=x$, for all $x\in \Sigma$;
\item $H(1,i)=j$;
\item $H(t, p)=p$ for all $p\in \mathbb{M}\cup \mathbb{O}$;
\item  For every $t\in [0,1]$ the function $H_t\colon \Sigma \rightarrow \Sigma$ with 
$x\mapsto H(t,x)$ is a homeomorphism.
\end{itemize}
 We will consider arcs up to isotopy relative to $\mathbb{M}\cup \mathbb{O}$, 
 parametrization and orientation.

Given an arc $i$, we denote by $\tilde{i}$ its isotopy class. Let $i$ and $j$ be two 
arcs. We say   $i$ and $j$ are \emph{compatible} if either $\tilde{i}=\tilde{j}$ or $
\tilde{i}\neq\tilde{j}$ and there are arcs   $i_1\in\tilde{i}$ and $j_1\in  \tilde{j}$, 
such that  $i_1$ a $j_1$ do not intersect in $\Sigma \setminus \mathbb{M}$.

\begin{definition}
A \emph{triangulation} of $(\Sigma, \mathbb{M}, \mathbb{O})$  is a maximal collection 
of pairwise compatible arcs.
\end{definition}

Given a triangulation $\tau$ of the surface we define a \emph{triangle} of $\tau$ as 
the closure of a connected component of $(\Sigma\setminus \tau) \cup \{ \mbox{ the 
pending arcs of } \tau\}$. An \emph{orbifold triangle} is a triangle containing an 
orbifold point. A triangle without an orbifold point in its interior is called an 
\emph{ordinary triangle}. If a triangle intersects to the boundary 
of the surface at most in a  three points it is called an \emph{internal triangle}.

Let $\tau$ be a triangulation of $(\Sigma, \mathbb{M}, \mathbb{O})$. If $i$ is an arc 
of $\tau$, the  \emph{flip} of  $i$ with respect to $\tau$ is the unique arc $i'$ such 
that $\sigma=(\tau\setminus \{i\})\cup\{i'\}$ is a triangulation of $(\Sigma, 
\mathbb{M}, \mathbb{O})$. In this case we denote $i'=\flip_{\tau}(i)$ and we say that $
\sigma$ is obtained from $\tau$ by a flip of $i\in \tau$.
In our case, flips act transitively on triangulations of $(\Sigma, \mathbb{M}, 
\mathbb{O})$, see \cite[Theorem 4.2]{FeST-14}.

\subsection{Our surfaces}\label{SectPoly}
Let  $\Sigma_n$ be the surface, with $n\geq 2$,  given by a disk with boundary, $n+1$ 
marked points in its boundary,   without punctures and one orbifold point of order 
three. In this work we often refer to $\Sigma_n$ as  the $(n+1)$-\emph{polygon with one 
orbifold point}, the marked points are called \emph{vertices} and they are denoted by $
\{v_0,v_1,\ldots, v_n\}$. We orient the vertices in counterclockwise order. In  
pictures the orbifold point  is drawn with the symbol $\times$.

Let $\tau$ be  a triangulation of $\Sigma_n$. We have that $|\tau|=n$, see \cite[Lemma 
4.1]{PS-17}. In this case we have two types of admissible  triangles for $\tau$, see 
Figure \ref{Fig1}

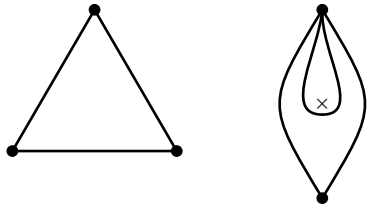
\begin{figure}[h]
\centering
\begin{tikzpicture}[scale=0.5,cap=round,>=latex]
\begin{scope}
%\node at (0,0);
% Radius of regular polygons
  \newdimen\R
  \R=2.5cm
  \coordinate (center) at (0,0);
	\filldraw [black] (90:\R)  circle (4pt)
			(210:\R)  circle (4pt)
			(330:\R) circle (4pt);
 \draw (0:\R);
			\draw[line width=1pt] (90:\R)--(210:\R);
			\draw[line width=1pt] (210:\R)--(330:\R);
			\draw[line width=1pt] (330:\R)--(90:\R);
\end{scope}
\begin{scope}[xshift=6cm]
\newdimen\R
  \R=2.5cm
  \coordinate (center) at (0,0);
	\filldraw [black] (90:\R)  circle (4pt)
			(270:\R)  circle (4pt);
 \draw (0:\R);
			\draw[line width=1pt] (90:\R) .. controls (-1.5, 0)  ..(270:\R);
			\draw[line width=1pt] (270:\R) .. controls (1.5, 0)  .. (90:\R);
	\node at (0,0) {$\times$};
	\draw[line width=1pt, rotate=45] (45:\R) to[out=220,in=135] (-0.2,-0.2);
			\draw[line width=1pt, rotate=45] (-0.2,-0.2) to[out=315,in=222] (45:2.5);	
\end{scope}
	\end{tikzpicture}
\caption{ An ordinary triangle (left) and an orbifold  triangle, i.e. a triangle 
containing the orbifold point (right) .}	
\label{Fig1}
\end{figure}

 We associate a quiver $Q(\tau)$ to a triangulation $\tau$ of the \emph{orbifold} $
\Sigma_n$ in the following way:  the set of vertices is given by the arcs of $\tau$ and 
the set of arrows is described as follows. For each  triangle $\Delta$ of $\tau$ and 
arcs $i$ and $j$ in $\Delta$  we draw an arrow from $j$ to $i$  if $i$ succeeds  $j$  
in the clockwise orientation,  with the understanding that no arrow incident to a 
boundary segment is drawn. Finally  we draw a loop at the pending arc of $\tau$. 
 
 \begin{remark}\label{ComenPoligo}
In the classical context of marked Riemann surfaces without orbifold points this loop  
is not drawn. For instance the quiver of a triangulation $T$ of a polygon $P$ without 
punctures and without orbifold points will be denoted by $Q(T)$ and it is constructed 
as above but, as we said, it does not have loops. 
\end{remark}

Denote by $\mathcal{H}(\tau)$ the collection of all internal triangles $\Delta$ of a 
given triangulation $\tau$. Any element $\Delta$ of $\mathcal{H}(\tau)$ defines a 
3-cycle $c_{\Delta}b_{\Delta}a_{\Delta}$ on $Q(\tau)$ up to \emph{cyclical 
equivalence}. If we denote 
by $\varepsilon$ the loop of $\tau$, then the \emph{potential} associated to $\tau$
 is $S(\tau)=\sum_{\Delta\in \mathcal{H}(\tau)}{c_{\Delta}b_{\Delta}a_{\Delta}}+ 
 \varepsilon^3$. 
\begin{definition}
For any triangulation $\tau$ of $\Sigma_n$ we define the \emph{basic algebra} $
\Lambda(\tau)$ associated to $\tau$ as the Jacobian algebra $\Lambda(\tau)=\mathcal{P}
(Q(\tau), S(\tau))$.  
\end{definition}

\begin{example}\label{FirExamLam}
Consider the triangulation $\tau$  of Figure \ref{ExFinSect}. We see that the algebra $
\Lambda(\tau)$  is $\mathbb{C}\langle Q(\tau)\rangle/I$, where $I$ is the ideal 
generated by $ba$, $cb$, $ac$ and $\varepsilon^2$ (compare to  \cite[Example 2.3]
{PS-17}).
\begin{figure}
\includegraphics[width=3cm, height=3cm]{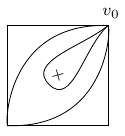}\hspace{3cm} 
\includegraphics[width=3cm, height=3cm]{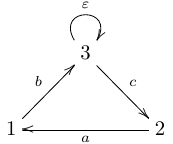}
\caption{ One triangulation $\tau$ of $\Sigma_3$ and its quiver $Q(\tau)$.}
\label{ExFinSect}
\end{figure}
\end{example}

\begin{definition}
A \emph{weighted quiver} is a pair $(Q, \textbf{d})$ where $Q$ is a quiver without 
loops and $\textbf{d}=(d_i)_{i\in Q_0}$ is an $n$-tuple of positive integers.
\end{definition}

Let $Q(\tau)^*$ be the quiver obtained from $Q(\tau)$ by deleting the loop.   Now we 
denote by $(Q(\tau)^*,\textbf{d}_{\tau})$  the \emph{weighted quiver associated to} $
\tau$ where $(\textbf{d}_{\tau})_j=2$ if $j$ is the pending arc and $(\textbf{d}
_{\tau})_j=1$ in other wise.

Fix an $n$-tuple $\textbf{d}=(d_1\ldots, d_n)$, in \cite[Lemma 2.3]{LZ-13} it was 
proved that there is a bijection between the set of 2-aclycic weighted quivers $(Q,
\textbf{d})$ and the collection of skew-symmetrizable matrices $B$ with skew-
symmetrizer given by $D=\diagonal(d_1,\ldots, d_n)$. Indeed, given a quiver $Q$, if 
$c_{ij}$ is  as in Section \ref{section2}, then $b_{ij}=d_jc_{ij}/\mbox{gcd}(d_i,d_j)$ 
define a matrix $B_{Q}$ skew-symmetrized by $D$.

Following \cite[Lemma 2.3]{LZ-13}, we denoted by $B(\tau)$ the skew-symmetrizable 
matrix associated to $(Q(\tau),\textbf{d}_{\tau})$ and we call it the \emph{adjacency 
matrix } associated to $\tau$.
\section{$\Lambda(\tau)$ as an orbit algebra}\label{Sect14}
In this section we shall note that  $\Lambda(\tau)$ can be seen as an \emph{orbit 
Jacobian algebra}. With this observation we are going to obtain some results about 
Galois coverings.  For details and missing definitions about orbit Jacobian algebras 
the reader can see \cite{PS-17}. At the end of the section we will define the arcs 
representations of $\Lambda(\tau)$.

Let $\widetilde{\Sigma}_n$ be the regular $(3n+3)$-gon with $u_1, u_2, \ldots, u_{3n+3}
$ vertices in counterclockwise orientation and let $\theta$ be the rotation by 
$120^{\circ}$ on $\widetilde{\Sigma}_n$ which sends a vertex $v_i$ to $v_{i+(n+1)}$ 
modulo $(3n+3)$.  In the terminology of \cite{PS-17}, $\Sigma_n$ is the 
$\mathbb{Z}_3$-\emph{orbit space} of $\widetilde{\Sigma}_n$. We consider $\theta$ as a generator of  
$G=\mathbb{Z}_3$. We can see that $G$ acts freely on $\{u_1, u_2, \ldots, u_{3n+3}\}$, 
that is, if $g\in G\setminus \{e\}$, then $g\cdot u_i\neq u_i$ for $i\in [1,3n+3]$.

We say that an arc $\widetilde{j}$ of $\widetilde{\Sigma}_n$ is $G$-\emph{admissible} 
or just \emph{admissible} if $\widetilde{j}$ belongs to some $G$-invariant 
triangulation $T$ of $\widetilde{\Sigma}_n$.

Let $T$ be a triangulation of $\widetilde{\Sigma}_n$ and suppose that $T$ is $G$-
invariant. Consider $Q(T)$ the quiver associated to $T$, see Remark \ref{ComenPoligo}. 
We can define a potential  for $Q(T)$ as  $S(T)=\sum_{\Delta\in \mathcal{H}(T)}
{\gamma_{\Delta}\beta_{\Delta}\alpha_{\Delta}}$. Note that  $G$ acts freely on $Q(T)_0$ 
and for any $\alpha_{\Delta}\beta_{\Delta}\gamma_{\Delta}$ we have that $g\cdot( 
\alpha_{\Delta}\beta_{\Delta}\gamma_{\Delta})$ is again a summand of $S(T)$ for all $g
\in G$. We can define, \cite[Section 2.1]{PS-17}, the \emph{orbit quiver} $Q(T)_G$ of 
$Q(T)$   in the obvious way. We define the potential $S(T)_G$ for $Q(T)_G$ as the image 
of $S(T)$ under the canonical  morphism $\pi:\mathbb{C}\langle Q(T)\rangle\rightarrow 
\mathbb{C}\langle Q(T)_G\rangle$ induces by $\pi(i)=G\cdot i$ for $i\in Q(T)_0$ and $
\pi(a)=G\cdot a$ for $a\in Q(T)_1$, note that $\pi$ is a Galois $G$-covering. We define 
the \emph{orbit Jacobian algebra} of the orbit quiver with potential as $\mathcal{P}
(Q(T),S(T))_G= \mathcal{P}(Q(T)_G,S(T)_G)$. We make the following convention $
\Lambda(T)=\mathcal{P}(Q(T),S(T))$ and $\Lambda(T)_G=\mathcal{P}(Q(T),S(T))_G$, see 
Example \ref{ExplaPS}.
The following result shows that we get a Galois covering, see \cite[Proposition 3.1]
{PS-17}. 
\begin{lemma}[(Paquette-Schiffler)]\label{LambdaGcovering}
The Galois $G$-covering $\pi:\mathbb{C}\langle Q(T)\rangle\rightarrow \mathbb{C}\langle 
Q(T)_G\rangle$ induces a Galois $G$-covering $\pi:\Lambda(T)\rightarrow  \Lambda(T)_G$.
\end{lemma}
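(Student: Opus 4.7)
The plan is to descend the Galois $G$-covering property from the path algebra to the Jacobian algebra.

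First I would verify that $\pi$ factors through Jacobian ideals. Since $G$ acts freely on $Q(T)$ and the potential $S(T)=\sum_{\Delta\in\mathcal{H}(T)}\gamma_\Delta\beta_\Delta\alpha_\Delta$ is $G$-invariant, for each arrow $a$ of $Q(T)$ exactly one summand of $S(T)$ contains $a$, and a direct computation yields $\pi(\partial_a S(T))=\partial_{\pi(a)}S(T)_G$. Taking closures in the $\mathfrak{M}$-adic topology gives $\pi(\mathcal{J}(Q(T),S(T)))\subseteq \mathcal{J}(Q(T)_G,S(T)_G)$, so $\pi$ induces an algebra morphism $\bar\pi:\Lambda(T)\to\Lambda(T)_G$, and the $G$-stabilizer of $\pi$ descends to one for $\bar\pi$ because the upstairs Jacobian ideal is $G$-invariant.

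The central step is the $G$-precovering condition for $\bar\pi$: for every pair of vertices $(i,j)$ of $Q(T)$, the natural map
\[
\bigoplus_{g\in G}e_{g\cdot j}\,\Lambda(T)\,e_i\ \longrightarrow\ e_{\bar j}\,\Lambda(T)_G\,e_{\bar i}
\]
must be a $\mathbb{C}$-linear isomorphism. Setting $K=\mathcal{J}(Q(T),S(T))$ and $K_G=\mathcal{J}(Q(T)_G,S(T)_G)$, the fact that $\pi$ is already an isomorphism on the corresponding direct sums of path-algebra morphism spaces reduces this to showing that the restriction $\bigoplus_{g\in G}e_{g\cdot j}Ke_i\to e_{\bar j}K_Ge_{\bar i}$ is bijective. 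Injectivity is free. For surjectivity, a generator $p\,\partial_{\bar a}S(T)_G\,q$ of $e_{\bar j}K_Ge_{\bar i}$ is lifted as follows: fix a representative $i$ of $\bar i$, use the path-algebra Galois covering to lift $q$ uniquely to a path $\tilde q$ starting at $i$; this pins down a unique arrow $a$ above $\bar a$ with $h(a)=h(\tilde q)$; then lift $p$ to $\tilde p$ starting at the tail of $\partial_a S(T)$. The product $\tilde p\,\partial_a S(T)\,\tilde q$ lies in $e_{g\cdot j}Ke_i$ for the $g$ determined by the endpoint of $\tilde p$, and projects to the chosen generator. $\mathbb{C}$-linearity and continuity then take care of arbitrary elements.

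Finally, the three Galois axioms follow cleanly. Density on objects holds because every primitive idempotent of $\Lambda(T)_G$ is a $G$-orbit of primitive idempotents of $\Lambda(T)$. For preservation of indecomposability and the orbit condition, I would invoke Lemma~\ref{LemBL1} applied to the $G$-precovering $\bar\pi$: the $G$-action on $\Lambda(T)$ as a category is free (since it is free on vertices), and each $\Endo_{\Lambda(T)}(e_i)=e_i\Lambda(T)e_i$ is local with nilpotent radical because $\Lambda(T)$ is a basic algebra. The main obstacle, more a matter of bookkeeping than a substantive difficulty, is the surjectivity step above: freeness of $G$ on arrows and triangles of $Q(T)$ is precisely what ensures that a single lifted cyclic derivative $\partial_a S(T)$ suffices rather than a sum over $G$-translates, so that the Jacobian ideals match graded-component by graded-component under $\pi$.
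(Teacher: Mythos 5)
The paper does not actually prove this statement: it is imported wholesale from Paquette--Schiffler (the lemma is attributed to them and the text points to \cite[Proposition 3.1]{PS-17}), so there is no internal proof to compare yours against. Judged on its own merits, your argument is essentially correct and is the natural one: descend the ideal, reduce the precovering condition on $\Lambda(T)$ to the statement that $\bigoplus_{g\in G}e_{g\cdot j}\mathcal{J}(Q(T),S(T))e_i\to e_{\bar j}\mathcal{J}(Q(T)_G,S(T)_G)e_{\bar i}$ is onto (injectivity and surjectivity of the quotient map then follow from the path-algebra isomorphism), prove surjectivity by unique path lifting, and get the three Galois axioms from Lemma~\ref{LemBL1} together with freeness of the action on vertices and locality of $e_i\Lambda(T)e_i$. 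The reduction step and the lifting step are exactly where the content lies, and you have them right.

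Two small imprecisions are worth fixing, neither of which is a gap. First, the identity $\pi(\partial_a S(T))=\partial_{\pi(a)}S(T)_G$ is false on the nose for the arrows of the $G$-invariant triangle: with $S(T)=\dots+\varepsilon_1\varepsilon_2\varepsilon_3$ and $S(T)_G=\dots+\varepsilon^3$ one has $\pi(\partial_{\varepsilon_1}(\varepsilon_1\varepsilon_2\varepsilon_3))=\varepsilon^2$ while $\partial_{\varepsilon}(\varepsilon^3)=3\varepsilon^2$ (and a free orbit of internal triangles contributes its cycle to $\pi(S(T))$ with coefficient $3$). Over $\mathbb{C}$ these nonzero scalars do not change the generated ideals, so both the inclusion $\pi(\mathcal{J})\subseteq\mathcal{J}_G$ and the surjectivity of the lifted generators survive, but the equation should be stated only up to scalar. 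Second, ``exactly one summand of $S(T)$ contains $a$'' should read ``at most one'': arrows lying in non-internal triangles occur in no summand, and for them both cyclic derivatives vanish, which is consistent. Finally, when you invoke Lemma~\ref{LemBL1}, note that at the level of the algebras viewed as categories the three Galois axioms are nearly immediate anyway, since the objects are the primitive idempotents, $\bar\pi(e_i)=e_{G\cdot i}$ realizes every object downstairs, and distinct primitive idempotents of the basic algebra $\Lambda(T)_G$ are non-isomorphic, so the orbit condition reduces to $G\cdot i=G\cdot j$.
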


\begin{remark}
Let $\tau$ be a triangulation of $\Sigma_n$ and let $T$ be the triangulation of $
\widetilde{\Sigma}_n$ such that $G\cdot T=\tau$. In $T$ there exist an unique triangle 
$\Delta_T$ such that it is $G$-invariant and  the other triangles in $\mathcal{H}(T)$ 
have a trivial stabilizer. The triangle $\Delta_T$ corresponds to the pending arc of $
\tau$ and the $G$-orbit of any triangle $\Delta$ different to $\Delta_T$  corresponds 
with a triangle of $\mathcal{H}(\tau)$.   We conclude that  $Q(\tau)=Q(T)_{G}$ and with 
the above observation we get that $\Lambda(\tau)=\Lambda(T)_G$.
\end{remark}

\begin{example}\label{ExplaPS}
Let $\tau$ be the triangulation of $\Sigma_3$ depicted on the right of Figure 
\ref{FigExamPS}. Let $T$ be the corresponding triangulation on $\widetilde{\Sigma}_3$ 
depicted on the left of Figure \ref{FigExamPS}. The quiver $Q(T)$ is drawn below

\begin{equation*} 
\xymatrix{& & & i_3\ar@{->}[d]_{\beta_3}& & &\\
          & & & j_3\ar@{->}[d]_{\alpha_3}& & &\\  
          & & & k_3 \ar@{->}[dr]^{\varepsilon_3} & & &  \\
          i_1\ar@{->}[r]_{\beta_1}&j_1\ar@{->}[r]_{\alpha_1} &  k_1 \ar@{->}
          [ru]^{\varepsilon_1}  & \ \  &  k_2 \ar@{->}[ll]^{\varepsilon_2}& j_2\ar@{->}
          [l]^{\alpha_2} &  i_2\ar@{->}[l]^{\beta_2}}
\end{equation*}

Consider $S(T)=\varepsilon_1\varepsilon_2\varepsilon_3$ the \emph{potential} associated 
to $T$. Let $G=<\theta>$ be the cyclic group of order 3 with generator $\theta$.  Then 
$G$ acts freely on $(Q,S)$ by increasing by one, module 3, the indices of the symbols. 
Passing to the orbit space of this action we get
\[
\xymatrix{  
Q(T)_G: i\ar@{->}[r]^{\beta} & j\ar@{->}[r]^<(0.3){\alpha}&  k\ar@(ur,dr)^{\varepsilon}   
} \  \ \mbox{ and the potential } S(T)_{G}=\varepsilon^3,
\]
where $i=G\cdot i_1$, $j=G\cdot j_1$, $k=G\cdot k_1$, $\alpha=G\cdot \alpha_1$, $
\beta=G\cdot \beta_1$ and $\varepsilon=G\cdot \varepsilon_1$. The orbit Jacobian 
algebra $\mathcal{P}(Q(T)_G, S(T)_G)$  is not but $\Lambda(\tau)=\mathbb{C}\langle 
Q(\tau)\rangle /\langle \varepsilon^2 \rangle$.  
\end{example}

\begin{proposition}\label{FinitedimPro}
Let $\tau$ be a  triangulation of $\Sigma_n$. Then $\Lambda(\tau)$ is finite 
dimensional.
\end{proposition}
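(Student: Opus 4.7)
The plan is to exploit the Galois $G$-covering $\pi:\Lambda(T)\rightarrow \Lambda(\tau)=\Lambda(T)_G$ from Lemma \ref{LambdaGcovering}, where $T$ denotes the unique $G$-invariant triangulation of the regular $(3n+3)$-gon $\widetilde{\Sigma}_n$ with $G\cdot T=\tau$. Since $|G|=3$ is finite, the problem reduces to showing that the covering algebra $\Lambda(T)$ is finite-dimensional; the orbit algebra will then inherit this property from the covering relation on Hom-spaces.

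The first (main) step is to show that $\Lambda(T)=\mathcal{P}(Q(T),S(T))$ is finite-dimensional. The quiver $Q(T)$ carries no loops because $T$ triangulates an ordinary polygon without orbifold points, and $S(T)=\sum_{\Delta\in\mathcal{H}(T)}\gamma_{\Delta}\beta_{\Delta}\alpha_{\Delta}$ is a sum of 3-cycles, one for each internal triangle. Taking cyclic derivatives, for each such $\Delta$ one obtains the length-two monomial relations
\[
\beta_{\Delta}\alpha_{\Delta}=\gamma_{\Delta}\beta_{\Delta}=\alpha_{\Delta}\gamma_{\Delta}=0.
\]
Each arrow of $Q(T)$ appears in at most one of these 3-cycles, and each vertex of $Q(T)$ (i.e. each arc of $T$) lies in at most two triangles. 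Consequently, a nonzero path in $\Lambda(T)$ is forced to alternate between the two triangles containing the current intermediate vertex, which combined with the finiteness of $T$ gives a universal bound on the length of nonzero paths. Alternatively, one can appeal to the well-known fact that Jacobian algebras of triangulated unpunctured polygons are cluster-tilted algebras of type $A$, hence gentle and finite-dimensional.

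Once $\dim_{\mathbb{C}}\Lambda(T)<\infty$ is in hand, the second step transfers finite-dimensionality across the Galois covering. By the $G$-precovering property, for any pair of $G$-orbits of vertices $G\cdot\tilde{i},\,G\cdot\tilde{j}\in Q(\tau)_0$ with fixed representatives $\tilde{i},\tilde{j}\in Q(T)_0$, one has an isomorphism
\[
\bar{e}_{j}\,\Lambda(\tau)\,\bar{e}_{i}\;\cong\;\bigoplus_{g\in G} e_{g\cdot\tilde{j}}\,\Lambda(T)\,e_{\tilde{i}},
\]
which is a finite direct sum of finite-dimensional vector spaces. Summing over the finitely many pairs of vertices of $Q(\tau)$ yields $\dim_{\mathbb{C}}\Lambda(\tau)\le|G|\cdot\dim_{\mathbb{C}}\Lambda(T)<\infty$.

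The main obstacle is the first step: showing rigorously that long paths in $\mathcal{P}(Q(T),S(T))$ must be zero, since the relations are short and combinatorial arguments about how paths propagate across adjacent triangles of the polygon are needed. The covering-to-orbit transfer in the last step is then essentially formal.
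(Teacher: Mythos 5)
Your proposal is correct and follows essentially the same route as the paper: both reduce the statement to the finite-dimensionality of the covering algebra $\Lambda(T)$ via the Galois $G$-covering of Lemma \ref{LambdaGcovering}, and then use the isomorphisms $\bigoplus_{g\in G}\Lambda(T)(e_{\tilde i}, g\cdot e_{\tilde j})\cong\Lambda(\tau)(\pi(e_{\tilde i}),\pi(e_{\tilde j}))$ over the finitely many pairs of vertices to conclude $\dim_{\mathbb{C}}\Lambda(\tau)\le |G|\cdot\dim_{\mathbb{C}}\Lambda(T)<\infty$. The only difference is that where you argue combinatorially (via non-backtracking walks in the tree of triangles of the polygon) that $\Lambda(T)$ is finite-dimensional, the paper simply cites \cite[Theorem 36]{L-09}.
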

\begin{proof}
From Lemma \ref{LambdaGcovering} we have that $\pi:\Lambda(T)\rightarrow  \Lambda(T)_G$ 
is a Galois $G$-covering. In particular we have isomorphism $\pi_{i,j}: \bigoplus_{g\in 
G}\Lambda(T)(e_i,g\cdot e_j)\rightarrow\Lambda(\tau)(\pi(e_i),\pi(e_j))$ for any 
idempotent $e_i$ and $e_j$ of $\Lambda(T)$. We know $\Lambda(T)$ is finite-dimensional 
(the reader can see  the finite dimension of the Jacobian algebra associated to a 
triangulation in a more general context of surfaces with non-empty boundary in 
\cite[Theorem 36]{L-09}), so $\Lambda(\tau)$ is finite dimensional. The proof of the 
lemma is completed.
\end{proof}

A string algebra $B=\mathbb{C}\langle Q\rangle/ \langle P\rangle$ is a \emph{gentle 
algebra} if the following conditions are satisfied
\begin{itemize}
\item[(Gt1)] $P$ is generated by paths of length 2.
\item[(Gt2)] For any arrow $a\in Q_1$ we have $|\{b\in Q_1\colon t(a)=h(b) \mbox{ and } 
ab \in P\}|\leq 1$ and $|\{c\in Q_1\colon t(c)=h(a) \mbox{ and } ca\in P\}|\leq 1$.
\end{itemize}

\begin{proposition}\label{PropGentAlg}
For any triangulation $\tau$ of $\Sigma_n$ we have that $\Lambda(\tau)$ is a gentle 
algebra.
\end{proposition}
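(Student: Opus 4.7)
The plan is to extract the defining relations of $\Lambda(\tau)=\mathcal{P}(Q(\tau),S(\tau))$ explicitly and then verify conditions (1)--(3) of a string algebra together with the two extra gentle conditions (Gt1) and (Gt2) by a local analysis at each vertex of $Q(\tau)$.

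First I would compute the Jacobian relations. Since $S(\tau)=\sum_{\Delta\in\mathcal{H}(\tau)}c_{\Delta}b_{\Delta}a_{\Delta}+\varepsilon^{3}$ is a sum of cycles of length $3$, every cyclic derivative is a path of length $2$: for each internal triangle $\Delta$ one obtains the three length-$2$ paths $b_{\Delta}a_{\Delta}$, $c_{\Delta}b_{\Delta}$, $a_{\Delta}c_{\Delta}$, and $\partial_{\varepsilon}(\varepsilon^{3})=3\varepsilon^{2}$. Letting $P$ denote the resulting finite set of length-$2$ paths, one has $\mathcal{J}(Q(\tau),S(\tau))=\langle P\rangle$. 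Combined with Proposition \ref{FinitedimPro}, which ensures finite-dimensionality and hence that some power of $\mathfrak{M}$ lies in $\langle P\rangle$, the ideal $\langle P\rangle$ is admissible, and one can identify $\Lambda(\tau)\cong\mathbb{C}\langle Q(\tau)\rangle/\langle P\rangle$. This already settles (Gt1) as soon as $\Lambda(\tau)$ is known to be a string algebra.

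Next I would verify the in- and out-degree bound required by string algebras, directly from the combinatorics of the triangulation: each arc $i\in\tau$ is a side of exactly two triangles of $\tau$, and from the rule defining $Q(\tau)$ each such triangle contributes at most one arrow ending at $i$ and at most one starting at $i$; the orbifold monogon at the pending arc $k$ accounts for the loop $\varepsilon$. Thus every vertex of $Q(\tau)$ has at most two in-arrows and two out-arrows. For conditions (2) and (Gt2) I would argue locally. The key structural observation is that every arrow of $Q(\tau)$ appears in at most one summand of $S(\tau)$: a non-loop arrow $\alpha:j\to i$ lies only in the $3$-cycle coming from the unique triangle of $\tau$ in which $i$ clockwise-succeeds $j$, while $\varepsilon$ appears only in $\varepsilon^{3}$. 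Hence the length-$2$ paths through any vertex $i$ belonging to $P$ are in bijection with the (at most two) triangles at $i$ contributing a $3$-cycle to $S(\tau)$, together with $\varepsilon^{2}$ at the pending vertex. A direct check then shows that at every vertex at most one relation and at most one non-relation sit on each side of every arrow, which is exactly what (2) and (Gt2) demand.

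I expect the principal subtlety to be the analysis at the pending-arc vertex $k$. There the loop $\varepsilon$ produces an additional in-arrow and out-arrow, so up to four length-$2$ paths $\varepsilon\varepsilon$, $b\varepsilon$, $\varepsilon a$, $ba$ may be present, with $a,b$ the non-loop arrows at $k$ coming from the ordinary triangle adjacent to the orbifold monogon. One must verify that only $\varepsilon^{2}$ and, if that ordinary triangle is internal, $ba$ lie in $P$, while $\varepsilon a$ and $b\varepsilon$ do not. This is precisely where the fact that $\varepsilon$ occurs exclusively in $\varepsilon^{3}$, and never in any $3$-cycle from $\mathcal{H}(\tau)$, rules out spurious compositions and preserves the gentle matching between in- and out-arrows at $k$.
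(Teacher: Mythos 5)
Your proof is correct, but it takes a genuinely different route from the one in the paper. The paper passes to the triangulation $T$ of the $3$-fold cover $\widetilde{\Sigma}_n$ with $G\cdot T=\tau$, quotes \cite[Lemma 2.5]{ABCP-09} to get that $\Lambda(T)$ is gentle, and then descends the string/gentle conditions to $\Lambda(\tau)=\Lambda(T)_G$ by choosing preimages of each non-pending arc inside a fundamental region of the $G$-action, treating the orbifold triangle separately; you instead never leave $\Sigma_n$ and verify all the axioms by a direct local analysis of the cyclic derivatives of $S(\tau)$ at each vertex of $Q(\tau)$. Your key observation --- that every arrow of $Q(\tau)$ occurs in at most one summand of $S(\tau)$, so the length-two relations through a vertex are matched bijectively with the internal triangles incident to the corresponding arc (plus $\varepsilon^2$ at the pending vertex) --- is exactly the mechanism behind the ABCP lemma, so in effect you reprove that lemma in the orbifold setting rather than import it through the covering; this makes your argument more self-contained and elementary, at the cost of redoing the case analysis that the covering argument outsources. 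Both arguments rely on Proposition \ref{FinitedimPro} for admissibility of the ideal, and both tacitly identify the (closed) Jacobian ideal with the ordinary ideal generated by the length-two cyclic derivatives, which is harmless here since the quotient is finite-dimensional. Two small points to tidy up: the triangle adjacent to the orbifold monogon that carries the non-loop arrows $a,b$ at the pending vertex is what the paper calls the \emph{orbifold} triangle, not an ordinary one, and in the paper's convention the pending arc is a side of only that one triangle (the monogon being absorbed into it), so your count of ``exactly two triangles per arc'' needs the exception you already implicitly make when you treat the pending vertex separately. Neither point affects the validity of the argument.
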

\begin{proof}
Let $T$ be the triangulation of $\widetilde{\Sigma}_n$ such that $G\cdot T=\tau$. The 
proof is an \emph{adaptation} of proof \cite[Lemma 2.5]{ABCP-09}, from that lemma we 
have that $\Lambda(T)$ is gentle.  By definition  $\Lambda(\tau)=\mathbb{C}\langle Q_G
\rangle/J(Q_G, S_G)$ and it is clear that $J(Q_G, S_G)$ is generated by paths of length 
two. Since we have Proposition \ref{FinitedimPro},  only remains to prove (Gt2), (S1) 
and(S2). 

(S1). First, let $j$ be the pending arc of $\tau$. We consider $\tilde{j}$ an element 
in $\pi^{-1}(j)$. We have that  $\tilde{j}$ is contained in two triangles of $T$. One 
of those triangles has  the other preimages of $j$ as sides,  say $\Delta(j)$, in other 
words, $\Delta(j)$ is invariant under $G$. By the definition of $S(T)_G$ we can 
conclude that there is a loop based at $j$ and there is at most one arrow starting at 
$j$ and one arrow ending at $j$. Now, one connected component of $\tilde{\Sigma}_n
\setminus\{\tilde{j}\}$, precisely which no contain the other preimages of $j$, it is a 
fundamental region for the action of $G$ on $T$. If $k$ is not a pending arc of $\tau$, 
we can consider a preimage of $k$  in the fundamental region above, recall $\Lambda(T)$ 
is gentle, this implies (S1) for $k$. For this reason we just have to prove (S2) and 
(Gt2) in the orbifold triangle. 

(S2) and (Gt2). This two properties follow from the fact that there is  a loop based at 
the pending arc $j$ and at most one arrow with starting at $j$ and at most one arrow 
with ending at $j$.
This conclude the proof. 
\end{proof}

By Lemma \ref{G-precovering} and  Lemma \ref{LambdaGcovering}  we know the following.

\begin{lemma}\label{PushG}
Let $\tau$ be a triangulation of $\Sigma_n$. Then the push-down functor $\pi_{*}:
\Lambda(T)\module \rightarrow \Lambda(\tau)\module$ is a $G$-precovering.
\end{lemma}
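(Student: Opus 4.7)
The plan is to deduce this immediately from the two lemmas already recalled in the text: Lemma \ref{G-precovering} (which turns a Galois $G$-covering of finite-dimensional algebras into a $G$-precovering at the level of module categories, provided the action is free) and Lemma \ref{LambdaGcovering} (which says that $\pi:\Lambda(T)\rightarrow\Lambda(T)_G=\Lambda(\tau)$ is a Galois $G$-covering).

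First I would explicitly verify the three hypotheses of Lemma \ref{G-precovering}. Finite-dimensionality of $\Lambda(\tau)$ has just been established in Proposition \ref{FinitedimPro}, and the finite-dimensionality of $\Lambda(T)$ is the ingredient used in that same proof (via \cite{L-09}, since $T$ is a triangulation of the unpunctured disk $\widetilde{\Sigma}_n$). Next I would check that the $G$-action on $\Lambda(T)$ is free: by construction $G=\mathbb{Z}_3=\langle\theta\rangle$ permutes the $3n+3$ vertices of $\widetilde{\Sigma}_n$ without fixed points, and because no non-pending arc of $T$ can coincide with its image under the rotation $\theta$ (its two endpoints would have to be $\theta$-fixed), $G$ acts freely on $Q(T)_0$; it also acts freely on $Q(T)_1$, since each arrow of $Q(T)$ is determined by the triangle and the pair of its sides, and the only $G$-invariant triangle $\Delta_T$ contributes no arrows among its three sides that are identified under $\theta$. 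Hence the induced automorphism action of $G$ on the $\mathbb{C}$-category $\Lambda(T)$ is free.

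With these three points in place, Lemma \ref{LambdaGcovering} provides a Galois $G$-covering $\pi:\Lambda(T)\rightarrow\Lambda(\tau)$ of finite-dimensional algebras with free $G$-action, and Lemma \ref{G-precovering} then yields directly that the push-down functor
\[
\pi_{*}:\Lambda(T)\module\rightarrow \Lambda(\tau)\module
\]
is a $G$-precovering, which is what we wanted.

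I do not expect any serious obstacle. The only mildly subtle point is the verification of freeness of the $G$-action on the category $\Lambda(T)$ (as opposed to on the underlying surface), and this is handled by the observation above about the unique $G$-invariant triangle $\Delta_T\subset T$ together with the fact that the remaining triangles (and hence the generators of $Q(T)_1$) are permuted freely by $G$.
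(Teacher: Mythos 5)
Your proposal is correct and follows exactly the route the paper takes: the statement is deduced by combining Lemma \ref{LambdaGcovering} with Lemma \ref{G-precovering}, which is precisely what the paper does (its justification is the single sentence preceding the lemma). Your additional verification of finite-dimensionality and of the freeness of the $G$-action on $Q(T)$ only makes explicit hypotheses the paper leaves implicit.
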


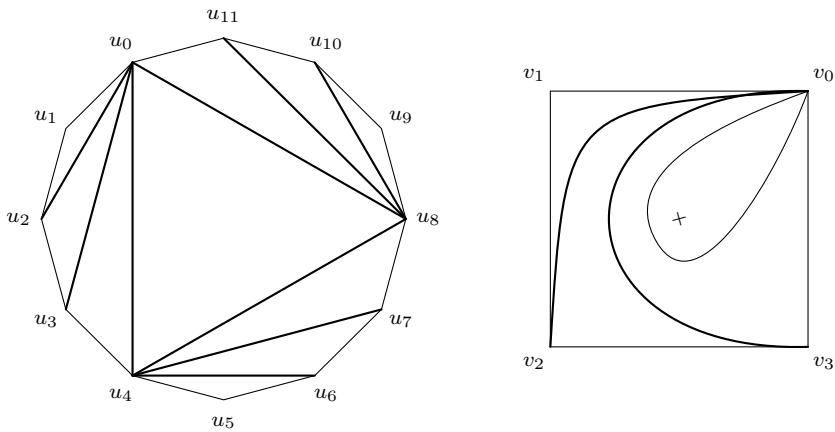
\begin{figure}[ht]
\centering
\begin{tikzpicture}[scale=3,cap=round,>=latex]
\begin{scope}
%\node at (0,0) {$\times$};
% Radius of regular polygons
  \newdimen\R
  \R=0.8cm
  \coordinate (center) at (0,0);
 \draw (0:\R)
     \foreach \x in {30,60,...,360} {  -- (\x:\R) }
              -- cycle (360:\R)%node[below] {$v_4$} 
              -- cycle (330:\R) 
              -- cycle (300:\R) 
              -- cycle (270:\R)
              -- cycle (240:\R) 
              -- cycle (210:\R)              
              -- cycle (180:\R)
					    -- cycle (150:\R)
							-- cycle (120:\R)
              -- cycle (90:\R) 
              -- cycle (60:\R) 
              -- cycle (30:\R)
              -- cycle (0:\R);
							\draw[line width=0.8pt] (90:\R)-- (0:\R);
              \draw[line width=0.8pt] (60:\R)-- (0:\R);
							\draw[line width=0.8pt] (120:\R)-- (0:\R);
							\draw[line width=0.8pt] (240:\R)-- (0:\R);
							\draw[line width=0.8pt] (240:\R)-- (330:\R);
							\draw[line width=0.8pt] (240:\R)-- (300:\R);
							\draw[line width=0.8pt] (120:\R)-- (240:\R);
							\draw[line width=0.8pt] (120:\R)-- (210:\R);
							\draw[line width=0.8pt] (120:\R)-- (180:\R);
				\node at (120:0.9) {$u_0$};		
				\node at (150:0.9) {$u_1$};
				\node at (180:0.9) {$u_2$};
				\node at (210:0.9) {$u_3$};
				\node at (240:0.9) {$u_4$};
				\node at (270:0.9) {$u_5$};
				\node at (300:0.9) {$u_6$};
				\node at (330:0.9) {$u_7$};
				\node at (360:0.9) {$u_8$};
				\node at (30:0.9) {$u_9$};
				\node at (60:0.9) {$u_{10}$};
				\node at (90:0.9) {$u_{11}$};
\end{scope}
\begin{scope}[xshift=2cm]	
\node at (0,0) {\rotatebox{60}{$\times$}};
  \newdimen\R
  \R=0.8cm
  \coordinate (center) at (0,0);
 \draw (45:\R)
     \foreach \x in {45,135,225,315} {  -- (\x:\R) }
              -- cycle (315:\R)
              -- cycle (225:\R) 
              -- cycle (135:\R) 
              -- cycle (45:\R);
	    \draw (45:\R) to[out=200,in=120] (-0.1,-0.1);
			\draw (-0.1,-0.1) to[out=300,in=250] (45:0.8);
	    \draw[line width=0.8pt](45:\R)   .. controls (-0.5, 0.5)  .. (225:\R);	
	    \draw[line width=0.8pt](45:\R) ..controls (-0.6,0.6) and (-0.6,-0.6) .. (315:
	    \R);
			\node at (45:0.9) {$v_0$};	
			\node at (135:0.9) {$v_1$};
			\node at (225:0.9) {$v_2$};
			\node at (315:0.9) {$v_3$};
\end{scope}
\end{tikzpicture}
\caption{We obtain a  triangulation of a square with one orbifold point of order 3  as 
the $G$-orbit space of a triangulation of a 12-gon.}
\label{FigExamPS}
\end{figure}			

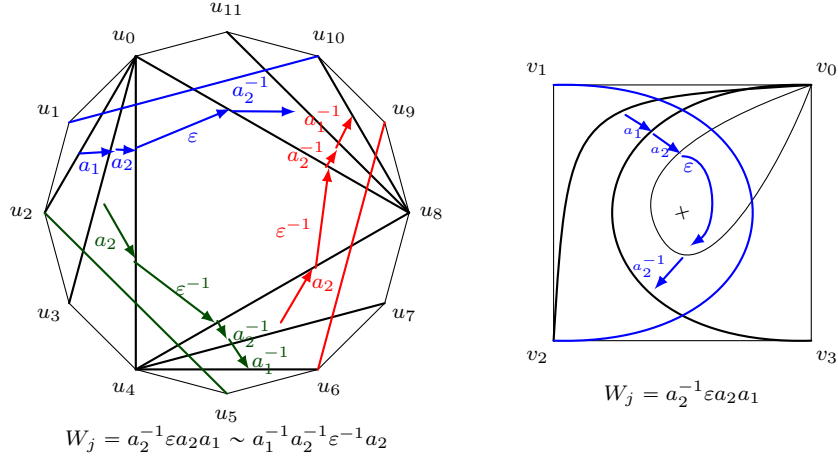
\begin{figure}[ht]
\centering
\begin{tikzpicture}[scale=3,cap=round,>=latex]
\begin{scope}
%\node at (0,0) {$\times$};
% Radius of regular polygons
  \newdimen\R
  \R=0.8cm
  \coordinate (center) at (0,0);
 \draw (0:\R)
     \foreach \x in {30,60,...,360} {  -- (\x:\R) }
              -- cycle (360:\R)%node[below] {$v_4$} 
              -- cycle (330:\R) 
              -- cycle (300:\R) 
              -- cycle (270:\R)
              -- cycle (240:\R) 
              -- cycle (210:\R)              
              -- cycle (180:\R)
					    -- cycle (150:\R)
							-- cycle (120:\R)
              -- cycle (90:\R) 
              -- cycle (60:\R) 
              -- cycle (30:\R)
              -- cycle (0:\R);
							\draw[line width=0.8pt] (90:\R)-- (0:\R);
              \draw[line width=0.8pt] (60:\R)-- (0:\R);
							\draw[line width=0.8pt] (120:\R)-- (0:\R);
							\draw[line width=0.8pt] (240:\R)-- (0:\R);
							\draw[line width=0.8pt] (240:\R)-- (330:\R);
							\draw[line width=0.8pt] (240:\R)-- (300:\R);
							\draw[line width=0.8pt] (120:\R)-- (240:\R);
							\draw[line width=0.8pt] (120:\R)-- (210:\R);
							\draw[line width=0.8pt] (120:\R)-- (180:\R);
				\node at (120:0.9) {$u_0$};		
				\node at (150:0.9) {$u_1$};
				\node at (180:0.9) {$u_2$};
				\node at (210:0.9) {$u_3$};
				\node at (240:0.9) {$u_4$};
				\node at (270:0.9) {$u_5$};
				\node at (300:0.9) {$u_6$};
				\node at (330:0.9) {$u_7$};
				\node at (360:0.9) {$u_8$};
				\node at (30:0.9) {$u_9$};
				\node at (60:0.9) {$u_{10}$};
				\node at (90:0.9) {$u_{11}$};
		\draw[blue, line width=0.8pt] (150:\R)-- (60:\R);	
		\draw[black!70!green, line width=0.8pt] (180:\R)-- (270:\R);	
		\draw[red, line width=0.8pt] (300:\R)-- (30:\R);
		\draw[blue, line width=0.8pt, ->] (158:0.7)-- node[pos=0.3, below]{ $a_1$}
		(151:0.56);
		\draw[blue, line width=0.8pt, ->] (150:0.56)-- node[pos=0.3, below]{ $a_2$}
		(145:0.48);
		\draw[blue, line width=0.8pt, ->] (144:0.49)-- node[pos=0.6, below]{$
		\varepsilon$}(88:0.46);
		\draw[blue, line width=0.8pt, ->] (87:0.45)-- node[pos=0.3, above]{$a_2^{-1}$}
		(56:0.54);
		\draw[red, line width=0.8pt, rotate=-120, <-](158:0.7)-- node[pos=0.1, left]{ 
		$a_1^{-1}$}(151:0.56);
		\draw[red, line width=0.8pt, <-, rotate=-120] (150:0.56)-- node[pos=0.3, left]{ 
		$a_2^{-1}$}(145:0.48);
		\draw[red, line width=0.8pt, <-, rotate=-120] (144:0.49)-- node[pos=0.6, left]
		{$\varepsilon^{-1}$}(88:0.46);
		\draw[red, line width=0.8pt, <-, rotate=-120] (87:0.45)-- node[pos=0.3, right]
		{$a_2$}(56:0.54);
		\draw[black!70!green, line width=0.8pt, <-, rotate=120] (158:0.7)-- 
		node[pos=0.3, right]{ $a_1^{-1}$}(151:0.56);
		\draw[black!70!green,, line width=0.8pt, <-, rotate=120] (150:0.56)-- 
		node[pos=0.3, right]{ $a_2^{-1}$}(145:0.48);
		\draw[black!70!green,, line width=0.8pt, <-, rotate=120] (144:0.49)-- 
		node[pos=0.6, right]{$\varepsilon^{-1}$}(88:0.46);
		\draw[black!70!green,, line width=0.8pt, <-, rotate=120] (87:0.45)-- 
		node[pos=0.3, left]{$a_2$}(56:0.54);
		 \node at (0,-1) {$W_j=a_2^{-1}\varepsilon a_2a_1\sim a_1^{-1}a_2^{-1}
		 \varepsilon^{-1}a_2$};
\end{scope}		
\begin{scope}[xshift=2cm]
\node at (0,0) {\rotatebox{60}{$\times$}};
  \newdimen\R
  \R=0.8cm
  \coordinate (center) at (0,0);
 \draw (45:\R)
     \foreach \x in {45,135,225,315} {  -- (\x:\R) }
              -- cycle (315:\R)
              -- cycle (225:\R) 
              -- cycle (135:\R) 
              -- cycle (45:\R);
	    \draw (45:\R) to[out=200,in=120] (-0.1,-0.1);
			\draw (-0.1,-0.1) to[out=300,in=250] (45:0.8);
	    \draw[line width=0.8pt](45:\R)   .. controls (-0.5, 0.5)  .. (225:\R);	
	    \draw[line width=0.8pt](45:\R) ..controls (-0.6,0.6) and (-0.6,-0.6) .. (315:
	    \R);
			\draw[blue, line width=0.8pt, rotate=180](45:\R) ..controls (-0.6,0.6) and 
			(-0.6,-0.6) .. (315:\R);
			\node at (45:0.9) {$v_0$};	
			\node at (135:0.9) {$v_1$};
			\node at (225:0.9) {$v_2$};
			\node at (315:0.9) {$v_3$};
			\draw[blue, line width=0.8pt, ->] (120:0.5)-- node[pos=0.3, below]{\tiny 
			$a_1$}(110:0.38);
			\draw[blue, line width=0.8pt, ->] (110:0.37)-- node[pos=0.25, below]{\tiny 
			$a_2$}(91:0.26);
			\draw[blue, line width=0.8pt, ->] (90:0.25) to[out=0,in=35] node[pos=0.25, 
			left]{\footnotesize $\varepsilon$}(283:0.15);
			\draw[blue, line width=0.8pt, ->] (270:0.2)-- node[pos=0.25, left]{\tiny 
			$a_2^{-1}$}(250:0.36);
      \node at (0,-0.8) {$W_j=a_2^{-1}\varepsilon a_2a_1$};
\end{scope}
\end{tikzpicture}
\caption{Let $j$ be the blue arc (right). We define $W_j$ from the left. In this case $
\alpha$ can be the blue, red or green arc.  Note that $W_j$ can be  read  
\emph{directly} from the right.}
\label{FigExampleT}
\end{figure}	

 We are going to define a string $W_j(\tau)$ of $\Lambda(\tau)$ for every arc $j\notin 
 \tau$ of $\Sigma_n$. We denote by $\pi\colon \widetilde{\Sigma}_n\rightarrow \Sigma_n$ the 
 canonical projection. We know that $\pi^{-1}(j)=\{ \widetilde{j}, \widetilde{j}
 _{\theta}, \widetilde{j}_{\theta^2}\}$. Recall that $G=\langle \theta\rangle =
 \mathbb{Z}_3$.  Let $T$ be the triangulation in $\widetilde{\Sigma}_n$ corresponding 
 to $\tau$,  see Figure \ref{FigExampleT}. 

Let $j$ be an arc of $\Sigma_n$ such that $j\notin \tau$. Choice $\alpha\in \pi^{-1}(j)
$, by definition $\alpha$ is an arc of $\widetilde{\Sigma}_n$ and $\alpha$ joints two 
vertices  $u_{l}$ and $u_{l+r}$  of $\widetilde{\Sigma}_n$. Every time  $\alpha$ 
crosses two adjacent initial arcs $\gamma:\widetilde{i}_{s_{1}}\rightarrow\widetilde{i}
_{s_{2}}$  of $\widetilde{\tau}$, we write the letter $G\cdot \gamma$ (a letter on 
$Q(\tau)$) if $\alpha$ crosses $\widetilde{i}_{s_{1}}$ first from $u_l$ to $u_{l+r}$ or 
we write the letter $ (G\cdot \gamma)^{-1}$ in other wise, see Example \ref{ExplaPS} . 
This construction does not depend of the choice of $\alpha$ up to string equivalency, 
see Section \ref{section3}. Denote by $W_j(\tau)$ the string of $\Lambda(\tau)$ 
obtained in this way. In Figure \ref{FigExampleT} we show an example of this 
construction.

\begin{definition}\label{DefMCuerda}
Let $\tau$ be a triangulation of $\Sigma_n$. For any arc $j\notin \tau$ we define 
\emph{the arc representation}  $M(j, \tau)$ of $j$ with respect to $\tau$ as the string 
module associated to $W_j(\tau)$, i.e $M(j,\tau)=N(W_j(\tau))$, see Section 
\ref{DefinitionNW}. Since a string and its inverse generate isomorphic string modules 
we have that $M(j,\tau)$ is well defined up to isomorphism. Now, for any arc $j\in \tau
$ we define $M(j,\tau):=\mathcal{S}_{j}^{-}$ as the corresponding negative simple 
representation of $\Lambda(\tau)$.
\end{definition}

As long as there is no confusion   we ease the notation and  write $W_j:=W_j(\tau)$ and  
$M(j):=M(j,\tau)$.
We finish this section with some remark about the push-down fuctor.

\begin{remark}\label{DefPush}
For  convenience we  are going to define explicitly  the push-down functor in our 
situation. Let $T$ be atriangulation of $\widetilde{\Sigma}_n$. Set      
$\Lambda=\Lambda(T)$  and consider   $\pi:\Lambda \rightarrow 
\Lambda_G $ the canonical projection of the action, where $\Lambda_G=\Lambda(T)_G$. 
We define the push-down functor  $\pi_{*}:\Lambda\module \rightarrow \Lambda_G\module$ 
as follows. 

\emph{For objects}: let $M\in \Lambda\module$ be a $\Lambda$-representation. For $i\in 
Q_0$ we define 
$\pi_*(M)_{G\cdot i}=\bigoplus_{g\in G}{M_{g\cdot i}}$. Let $\alpha:i\rightarrow j$ be 
an arrow of $Q$. We are going to define $\pi_*(M)_{G\cdot \alpha}: \bigoplus_{g\in G}
{M_{g\cdot i}}\rightarrow \bigoplus_{h\in G}{M_{h\cdot j}}$. Now, by definition, for 
any $h\in G$ we have an isomorphism $\pi_{j, h\cdot i}: \bigoplus_{g\in G}\Lambda(g
\cdot i, h\cdot j)\rightarrow \Lambda_G(G\cdot i,G\cdot j)$. So, $G\cdot \alpha=\sum_{g
\in G}{\pi(\alpha_{h,g})}$ for any $h\in G$ and we define $\pi_{*}(M)_{G\cdot \alpha}
=(\alpha_{h,g})_{g,h\in G}$.

\emph{For morphims:} let $f: M\rightarrow N$ be a morphism in $\Lambda\module$. For any 
$i\in Q_0$ we need to define $\pi_*(f)_{G\cdot i}: \bigoplus_{g\in G}{M_{g\cdot i}}
\rightarrow \bigoplus_{h\in G}{N_{h\cdot i}}$. We set $\pi_*(f)_{G\cdot i}=
\diagonal(f_{g\cdot i}: g\in G)$ as a \emph{diagonal map}.
\end{remark}

\section{The Caldero-Chapoton algebra for a specific triangulation}\label{Section6}
In this section we will study the Caldero-Chapoton algebra of a specific triangulation 
$\tau_0$ of $\Sigma_n$. We will see that the arc representations of $\Lambda(\tau_0)$ 
play a central role.
Fix  the vertices of $\Sigma_n$ in  counter clockwise order $\{v_0\ldots, v_n\}$. Let 
$i_n$ be the pending arc at $v_0$. We denote the pending at $v_k$ as $i'_k$ for $k=0,
\ldots, n$. With this notation we see that $i'_0=i_n$. Let $i_k$ be the arc from $v_0$ 
to $v_{k+1}$ going in counterclockwise for $k=1,\ldots, n-1$. We define the 
\emph{special  triangulation} $\tau_0$ of $\Sigma_n$ as the collection of arcs $\{i_1,
\ldots,i_n\}$, see on the right hand of Figure \ref{FigExamPS}.

For $\tau_0$ we have a nice description of the concepts introduced in Section 
\ref{Section5}, for instance, the weighted quiver associated to $\tau_0$ looks like
\begin{equation*}\label{QA_n}
\xymatrix{  
Q(\tau_0)^*\colon1\ar@{->}[r]^{a_1} & 2\ar@{->}[r]^<(0.3){a_2}&  \cdots\ar@{->}
[r]^<(0.3){a_{n-2}}  & n-1\ar@{->}[r]^<(0.4){a_{n-1}}&n,  
}
\mbox{ and } \textbf{d}_{\tau_0}=(1,1,\ldots, 1,2).
\end{equation*}

The matrix $B(\tau_0)$ is going to be our input to obtain the polynomials of 
Chekhov-Shapiro and we are going to describe a basic algebra associated to $\tau_0$.

Let $\Lambda:=\Lambda(\tau_0)$ be the basic algebra associated to $\tau_0$, it is clear 
that $\Lambda$ is given by $\mathbb{C}\langle Q(\tau_0)\rangle/I$ where $Q(\tau_0)$ is 
the quiver 
\begin{equation}\label{SQA_n}
\xymatrix{  
1\ar@{->}[r]^{a_1} & 2\ar@{->}[r]^<(0.3){a_2}&  \cdots\ar@{->}[r]^<(0.3){a_{n-2}}  & 
n-1\ar@{->}[r]^<(0.4){a_{n-1}}&n\ar@(ur,dr)^{\varepsilon}   
}
\end{equation}
and $I$ is the ideal generated by $\varepsilon^2$.
 
For every arc $j$ of $\Sigma_n$ we defined a \emph{decorated indecomposable 
representation} $M(j)$ of $\Lambda$ with respect to $\tau_0$, see Definition 
\ref{DefMCuerda}. 

\begin{remark}\label{Obs1}
For any arc $j\notin \tau_0$ Definition \ref{DefMCuerda} can be rewritten up to 
isomorphism by counting intersection numbers  directly in $\Sigma_n$. We give this 
approach for convenience,

\begin{equation*}
\dimension(M(j))_l= |i_l \cap j| \mbox{ in the interior of } \Sigma_n.
\end{equation*}
Given an arrow $a_l$ with $l=1,\ldots, n-1$, we define $M(j)_{a_l}$ as follows: 
\begin{itemize}
\item if $0< \dimension(M(j)_{t(a_l)}) < \dimension(M(j)_{h(a_l)})$, then  $M(j)_{a_l}=
\left( \begin{smallmatrix}
0 \\
1 \end{smallmatrix} \right)$;
\item if $0<\dimension(M(j)_{t(a_l)}) = \dimension(M(j)_{h(a_l)})$, then $M(j)_{a_l}$ 
acts as the corresponding identity;
\item $M(j)_{a_l}=0$ in otherwise.
\end{itemize}
If $\dimension(M(j)_{n})\neq 0$, then $M(j)_{\varepsilon}=\left( \begin{smallmatrix}
0 & 1 \\
0 & 0 \end{smallmatrix} \right)$.
\end{remark}

\begin{remark}
From Definition \ref{DefMCuerda} and Theorem \ref{BRTheorem} we know $M(j)$ is 
indecomposable in $\decrep(\Lambda)$ for every arc $j$. Remark \ref{Obs1} allow us to 
compute $M(j)$ without  $\widetilde{\Sigma}_n$.
\end{remark}

For any arc $j\notin \tau_0$ we define the \emph{support} of $M(j)$ as $\Supp M(j)=\{l
\colon M(j)_l\neq 0\}$. The same argument of \cite[Lemma 2.2]{CCS-04} can be applied 
here to conclude that $\Supp M(j)$ is connected as a subset of $[1,n]$. So, we are 
going to think that $\Supp M(j)$ is an interval.

\begin{example}
For $n=5$, we compute $M(j_l)$ with $l=1,2$ and $3$, see Figure \ref{FigExample1}. 
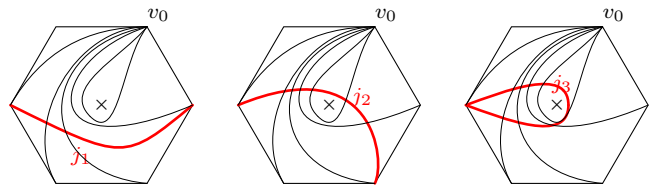
\begin{figure}[ht]
\centering
\begin{tikzpicture}[scale=1.5,cap=round,>=latex]
\begin{scope}
\node at (0,0) {$\times$};
% Radius of regular polygons
  \newdimen\R
  \R=0.8cm
  \coordinate (center) at (0,0);
 \draw (0:\R)
     \foreach \x in {60,120,...,360} {  -- (\x:\R) }
              -- cycle (300:\R) %node[below] {$v_4$}
              -- cycle (240:\R) 
              -- cycle (180:\R) 
              -- cycle (120:\R) 
              -- cycle (60:\R)
              -- cycle (0:\R);
	    \draw[red][line width=1pt](180:\R) .. controls (0.2, -0.5) .. node[pos=0.27, 
	    below]{$j_1$} (0:\R);
			\draw[line width=0.3pt](60:\R) to[out=230,in=135] (-0.1,-0.1);
			\draw[line width=0.3pt](-0.1,-0.1) to[out=315,in=232] (60:0.8);
			\draw[line width=0.3pt](60:\R) to[out=180,in=60] (180:0.8);
			\draw[line width=0.3pt](60:\R) to[out=180,in=120] (240:0.8);
			\draw[line width=0.3pt](60:\R)   .. controls (-0.6, 0.5) and (-0.6, -0.6) 
			.. (300:\R);	
			\draw[line width=0.3pt](60:\R)   .. controls (-0.5, 0.4) and (-0.6, -0.6) 
			.. (360:\R);	
		\node at (0.5,0.8) {$v_0$};
	\end{scope}
	\begin{scope}[xshift=2cm]
	\node at (0,0) {$\times$};
% Radius of regular polygons
  \newdimen\R
  \R=0.8cm
  \coordinate (center) at (0,0);
 \draw (0:\R)
     \foreach \x in {60,120,...,360} {  -- (\x:\R) }
              -- cycle (300:\R) 
              -- cycle (240:\R) 
              -- cycle (180:\R) 
              -- cycle (120:\R)
              -- cycle (60:\R) 
              -- cycle (0:\R);				
	    \draw[red][line width=1pt](180:\R)   .. controls (0.4, 0.5) and (0.5, -0.5) ..  
	    node[pos=0.5, above]{$j_2$}(300:\R);
	    \draw[line width=0.3pt](60:\R) to[out=230,in=135] (-0.1,-0.1);
			\draw[line width=0.3pt](-0.1,-0.1) to[out=315,in=232] (60:0.8);
			\draw[line width=0.3pt](60:\R) to[out=180,in=60] (180:0.8);
			\draw[line width=0.3pt](60:\R) to[out=180,in=120] (240:0.8);
			\draw[line width=0.3pt](60:\R)   .. controls (-0.6, 0.5) and (-0.6, -0.6) 
			.. (300:\R);	
			\draw[line width=0.3pt](60:\R)   .. controls (-0.5, 0.4) and (-0.6, -0.6) 
			.. (360:\R);		
 \node at (0.5,0.8) {$v_0$};
	\end{scope}
	 
\begin{scope}[xshift=4cm]
	\node at (0,0) {$\times$};
% Radius of regular polygons
  \newdimen\R
  \R=0.8cm
  \coordinate (center) at (0,0);
 \draw (0:\R)
     \foreach \x in {60,120,...,360} {  -- (\x:\R) }
              -- cycle (300:\R) 
              -- cycle (240:\R)
              -- cycle (180:\R)
              -- cycle (120:\R)
              -- cycle (60:\R)
              -- cycle (0:\R);						
	    \draw[red][line width=1pt] (180:\R) to[out=20,in=90] node[pos=0.6, right]
	    {$j_3$}(0.1,0);
			\draw[red][line width=1pt] (0.1,0) to[out=270,in=340](180:0.8);
	    \draw[line width=0.3pt](60:\R) to[out=230,in=135] (-0.1,-0.1);
			\draw[line width=0.3pt](-0.1,-0.1) to[out=315,in=232] (60:0.8);
			\draw[line width=0.3pt](60:\R) to[out=180,in=60] (180:0.8);
			\draw[line width=0.3pt](60:\R) to[out=180,in=120] (240:0.8);
			\draw[line width=0.3pt](60:\R)   .. controls (-0.6, 0.5) and (-0.6, -0.6) 
			.. (300:\R);	
			\draw[line width=0.3pt](60:\R)   .. controls (-0.5, 0.4) and (-0.6, -0.6) 
			.. (360:\R);		
			 \node at (0.5,0.8) {$v_0$};
	\end{scope}
\end{tikzpicture}
\caption{ Some arcs for $n=5$.}
\label{FigExample1}
\end{figure} 

We  have 
\begin{equation*}
\begin{array}{l}
\xymatrix{  
M(j_1):0\ar@{->}[r] &\mathbb{C}\ar@{->}[r]^<(0.4){\ident}& \mathbb{C} \ar@{->}[r]&
 0\ar@{->}[r]&0\ar@(ur,dr)^{ 0}
},\\
\xymatrix{  
M(j_2):0\ar@{->}[r] &\mathbb{C}\ar@{->}[r]^<(0.4){\ident}& \mathbb{C} \ar@{->}
[r]^<(0.3){ {\scriptstyle \left( \begin{smallmatrix}
0 \\
1 \end{smallmatrix} \right)}}&\mathbb{C}^2\ar@{->}[r]^<(0.4){\ident}&\mathbb{C}
^2\ar@(ur,dr)^<(0.3){ \left( \begin{smallmatrix}
0 & 1 \\
0 & 0 \end{smallmatrix} \right)}},\\
\xymatrix{  
M(j_3):0\ar@{->}[r] & \mathbb{C}^2\ar@{->}[r]^<(0.4){\ident}&\mathbb{C}^2\ar@{->}
[r]^<(0.4){\ident}& \mathbb{C}^{2} \ar@{->}[r]^<(0.4){\ident}&\mathbb{C}
^2\ar@(ur,dr)^<(0.3){ \left( \begin{smallmatrix}
0 & 1 \\
0 & 0 \end{smallmatrix} \right)}
}.
\end{array}
\end{equation*}

As illustration we have the Caldero-Chapoton function  $\mathcal{C}_{\Lambda}(M(j_2))$
{\footnotesize
\[
\frac{x_1x_2x_3^2x_4^2+ x_1x_2x_3^2x_4+ x_1x_2x_3x_4x_5 +x_1x_2x_3^2+2x_1x_2x_3x_5+ 
x_1x_2x_5^2+x_1x_4x_5^2+ x_3x_4x_5^2 +x_1x_3x_4x_5+x_3^2x_4x_5}{x_2x_3x_4^2x_5}.
\]
}
\end{example}

\subsection{AR translations, E-invariant and g-vectors of arc representations}
Let $j$ be an arc of $\Sigma_n$. We introduce some notation. Given two vertices $v_r$ 
and $v_l$ of $\Sigma_n$ with $r+1<l$ and $r\in\{1,\ldots,n-2\}$ we have two arcs from 
$v_r$ to $v_l$ denoted by $[v_r, v_l]^+$ and $[v_r,v_l]^-$. Indeed,if $0<r$, then $
[v_r, v_l]^+$ does not intersect to $i_n$ in the interior of $\Sigma_n$ while $[v_r, 
v_l]^-$ does. For example, in the Figure \ref{FigExample1} we have $j_1=[v_2,v_5]^+$ 
and $j_2=[v_2,v_4]^-$. 
For $r=0$ we say that $i_k=[v_0,v_{k+1}]^-$ and $[v_0,v_{k+1}]^+$ is the another arc 
from $v_0$ to $v_{k+1}$ with $k=1, \ldots, n-1$. In the case $l=r+1$, we have $[v_r, 
v_{r+1}]^-$ is not a boundary segment.

\begin{remark}\label{Obs2}
If $n\geq 4$, with the above notation we can describe $W_j$ explicitly for any $j\notin
\tau_0$. 
\begin{itemize}
\item $W_{[v_1, v_n]^+}=a_{n-3}\cdots a_1$ and $W_{[v_1, v_n]^-}=\varepsilon\cdots a_1$;
\item  $W_{[v_i, v_l]^+}=a_{l-3}\cdots a_i$ and $W_{[v_i, v_l]^-}=a_{l}^{-1}\cdots 
a_{n-1}^{-1}\varepsilon\cdots a_i$ for $0<i$ and $i+2<l<n$;
\item  $W_{[v_i, v_{l}]^+}=1_{(i,+)}$ and $W_{[v_i, v_l]^-}=a_{l}^{-1}\cdots a_{n-1}
^{-1}\varepsilon\cdots a_i$ for $l=i+2$ and $i\leq n-2$;
\item   $W_{[v_i, v_l]^-}=a_{l}^{-1}\cdots a_{n-1}^{-1}\varepsilon\cdots a_i$ for $l=i
+1$ and $0<i<n-2$;
\item $W_{[v_{n-1}, v_n]^-}= \varepsilon a_{n-1}$;
\item  $W_{[v_0, v_l]^+}=a_{n-2}\cdots a_l$ for $1\leq l<n-1$;
\item  $W_{i'_k}=a_{k}^{-1}\cdots a_{n-1}^{-1}\varepsilon\cdots a_k$ for $1\leq k\leq 
n-1$; 
\item $W_{i'_n}=\varepsilon$.
\end{itemize}
\end{remark}

The reader can compare the following lemma with the $A_n$ case,  \cite[Theorem 2.13]
{CCS-04}. Given an arc $j$ we denote by $r^+(j)$ ( in  \cite{CCS-04} it would be $r^-$)  
the arc of $\Sigma_n$ that we obtain by rotating $j$ in counterclockwise for an angle 
of $2\pi/(n+1)$. By $r^-(j)$ ( in  \cite{CCS-04} it would be $r^+$) we denote the arc 
obtained from $j$ by rotating $j$ in clockwise for an angle of $2\pi/(n+1)$. 

\begin{lemma}\label{LemmaTau}
Assume $j$ is not an initial arc of $\Sigma_n$.
\begin{itemize}
\item[(a)] If $M(j)$ is not projective, then $\tau(M(j))=M(r^+(j))$, where $\tau$ 
denotes the Auslander-Reiten translation.
\item[(b)] If $M(j)$ is not injective, then $\tau^-(M(j))=M(r^-(j))$.
\end{itemize}
\end{lemma}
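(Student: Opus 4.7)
The plan is to exploit the gentle structure of $\Lambda:=\Lambda(\tau_0)$ established in Proposition \ref{PropGentAlg}, combined with the Butler--Ringel combinatorial description of Auslander--Reiten sequences for string algebras from \cite{BR-87}. Since each arc representation $M(j)$ is by construction (Definition \ref{DefMCuerda}) a string module $N(W_j)$, and since the rotation $r^+$ acts on arcs by a transparent combinatorial rule on endpoints, the strategy is to match the Butler--Ringel recipe for $\tau N(W_j)$ against the string $W_{r^+(j)}$ attached to the rotated arc.

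First I would use Remark \ref{Obs2} to record, case by case, the string $W_j$ explicitly for every non-initial arc $j$, and to identify the projective and injective indecomposables $P_k$, $I_k$ with specific arc representations. A short direct computation gives, for instance, $P_k\cong M([v_k,v_n]^-)$ for $1\leq k\leq n-1$ and $P_n\cong M(i'_n)$, with an analogous list for the injectives. This pins down exactly which arcs must be excluded in parts (a) and (b).

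Next I would invoke the Butler--Ringel recipe: for any non-projective string module $N(W)$ over a gentle algebra, $\tau N(W)\cong N(W')$, where $W'$ is obtained from $W$ by, at each of its two endpoints, either deleting a maximal cohook or adjoining a maximal hook, the choice being dictated by whether the endpoint of $W$ admits extension by a direct or an inverse letter. Using the shapes of $W_j$ from Remark \ref{Obs2}, I would then compute $W_{r^+(j)}$ directly from the rotated endpoints $v_{a+1},v_{b+1}$, and verify, case by case, that this string matches the Butler--Ringel surgery applied to $W_j$. Geometrically the match should be clean: rotating an endpoint of $j$ by one step either creates a new crossing with an arc of $\tau_0$, prepending or appending a letter at the corresponding end of $W_j$, or removes one, deleting a letter, which is exactly the hook/cohook operation. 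Part (b) then follows from part (a) by duality: one first checks from the classification of projectives and injectives that $M(j)$ is non-injective if and only if $M(r^-(j))$ is non-projective, and then applies (a) to $r^-(j)$ to obtain $\tau M(r^-(j))=M(r^+r^-(j))=M(j)$, i.e.\ $\tau^{-}M(j)=M(r^-(j))$.

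The main obstacle is the case analysis itself: Remark \ref{Obs2} divides the non-initial arcs into roughly seven families, and for each one has to check the surgery at both ends of $W_j$. The delicate point is the endpoint where the string meets the loop $\varepsilon$ at vertex $n$, since it is here that the algebra genuinely departs from the type $A$ setting of \cite{CCS-04}: the relation $\varepsilon^2=0$ forces any hook or cohook involving $\varepsilon$ to have length exactly one, so one must track carefully when the surgery at that end actually uses the loop and when it stops short of it. Once the surgery is matched endpoint by endpoint, the two identifications $\tau M(j)=M(r^+(j))$ and $\tau^{-}M(j)=M(r^-(j))$ both drop out.
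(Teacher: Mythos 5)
Your argument is correct, but it is not the route the paper actually takes. The paper's proof of Lemma \ref{LemmaTau} is a deferral: it remarks that the statement \emph{can} be proved exactly as you propose---by cases, using Remark \ref{Obs2} and the Butler--Ringel classification of almost split sequences for string algebras in \cite{BR-87}---but then obtains it as the special case $\sigma=\tau_0$ of Corollary \ref{CoroTau+}. That corollary is proved by lifting the arc and the triangulation to the regular $(3n+3)$-gon $\widetilde{\Sigma}_n$, invoking the known rotation formula for type $A$ from \cite[Theorem 2.13]{CCS-04}, and transporting almost split sequences down through the push-down functor $\pi_*$ via the Bautista--Liu result (Theorem \ref{BLTheorem}). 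Your direct hook/cohook surgery on the strings of Remark \ref{Obs2} buys self-containedness: it never leaves $\Lambda(\tau_0)$ and needs no covering theory, at the price of a multi-family case check whose only delicate point is, as you correctly flag, the end of the string at the loop $\varepsilon$, where $\varepsilon^2=0$ truncates the hooks and cohooks. The covering argument buys uniformity: the identical proof works for an arbitrary triangulation $\tau$, which is what Section \ref{Section13} needs anyway. One point to tighten in your reduction of (b) to (a): before applying (a) to the arc $r^-(j)$ you must check not only that $M(r^-(j))$ is non-projective but also that $r^-(j)$ is not an initial arc (otherwise $M(r^-(j))$ is a negative simple representation and (a) does not apply to it); this is equivalent to the non-injectivity of $M(j)$ and falls out of the same classification of injectives, but it should be stated explicitly.
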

\begin{proof}
The lemma can be proved by cases using Remark \ref{Obs2} and 
the classification of the Auslander-Reiten sequences containing string modules from 
\cite[p.p 170-172]{BR-87}. However, we proved this result in Corollary \ref{CoroTau+}.
\end{proof}

\begin{lemma}\label{LemmaE0}
Assume that $j$ is an arc of $\Sigma_n$. Then $E_{\Lambda}(M(j))=0$.
\end{lemma}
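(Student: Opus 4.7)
The plan is to reduce $E_{\Lambda}(M(j))=0$ to the vanishing of $\Hom_{\Lambda}\bigl(M(r^-(j)),M(j)\bigr)$ between two string modules, and then read off this vanishing from the Butler--Ringel calculus for string algebras.

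First I would dispose of the case $j\in\tau_{0}$ by hand. Here $M(j)=\mathcal{S}^{-}_{j}=(0,S_{j})$ has zero module part, so
\[
E_{\Lambda}(\mathcal{S}^{-}_{j})=\dim\Hom_{\Lambda}(0,0)+\sum_{i=1}^{n}0\cdot g_{i}(\mathcal{S}^{-}_{j})=0
\]
follows directly from the definition of the $E$-invariant. I may therefore assume $j\notin\tau_{0}$, so that $M(j)=N(W_{j})$ is an honest string module with trivial decoration.

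By Proposition \ref{FinitedimPro} the algebra $\Lambda$ is finite-dimensional, hence $\Lambda_{p}=\Lambda$ for all sufficiently large $p$; Proposition \ref{PropoTrunc} then specializes to
\[
E_{\Lambda}(M(j))=\dim\Hom_{\Lambda}\bigl(\tau^{-}_{\Lambda}M(j),\,M(j)\bigr),
\]
the decoration-dependent term vanishing because $V=0$. If $M(j)$ is injective, then $\tau^{-}_{\Lambda}M(j)=0$ and the result is immediate. Otherwise, Lemma \ref{LemmaTau}(b) identifies $\tau^{-}_{\Lambda}M(j)$ with $M(r^{-}(j))$, and the proof is reduced to showing that $\Hom_{\Lambda}(M(r^{-}(j)),M(j))=0$.

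To establish this vanishing I would exploit the fact that $\Lambda$ is a string (indeed gentle) algebra by Proposition \ref{PropGentAlg}, and invoke the Butler--Ringel parametrization of $\Hom$-spaces between string modules: a basis of $\Hom_{\Lambda}(N(W'),N(W))$ is indexed by common substrings of $W'$ and $W$ appearing as a factor of $W'$ and as a submodule of $W$, subject to hook/cohook admissibility at their endpoints. Using the explicit list in Remark \ref{Obs2}, I would compare $W_{j}$ with $W_{r^{-}(j)}$ case by case through the arc typology (generic $[v_{r},v_{l}]^{+}$, generic $[v_{r},v_{l}]^{-}$, pending arcs $i'_{k}$, and the boundary-adjacent special cases), observing that the ``shift of indices by one'' relating $W_{j}$ to $W_{r^{-}(j)}$ always obstructs at least one admissibility condition at the endpoints of any candidate common substring.

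The main obstacle lies in the behaviour near the orbifold loop $\varepsilon$: when $W_{j}$ traverses $\varepsilon$, the rotation $r^{-}$ may move, lengthen, or shorten the $\varepsilon$-segment, so one must carefully track what happens to the single occurrence of $\varepsilon$ in each string and use the relation $\varepsilon^{2}=0$ together with the hook/cohook condition to rule out admissible common substrings that meet the orbifold vertex $n$. Once this orbifold-adjacent case analysis is completed, the remaining cases reduce to the type-$A$ intersection-number analysis of \cite{CCS-04} and present no further difficulty.
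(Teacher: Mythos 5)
Your proposal is correct in outline, but it takes a genuinely different route from the paper's. The paper does not prove Lemma \ref{LemmaE0} in place: it postpones the statement to Lemma \ref{Lem13.1} (stated there for an arbitrary triangulation), whose proof lifts $M(j)$ along the push-down functor to an arc module $M(\widetilde{j},T)$ over the $(3n+3)$-gon, invokes Proposition \ref{PiErigid} to translate $E$-rigidity of $\pi_*(M)$ into the vanishing of $E_{\Lambda(T)}(M,g\cdot M)$ for all $g\in G$, and then quotes the known type-$A$ results of \cite{CCS-04}. Your argument instead stays on $\Lambda(\tau_0)$: the reduction via Proposition \ref{PropoTrunc} and Lemma \ref{LemmaTau}(b) to $\Hom_{\Lambda}\bigl(M(r^-(j)),M(j)\bigr)=0$ is exactly right, and the remaining vanishing is a finite Butler--Ringel computation over the arc typology of Remark \ref{Obs2} --- precisely the alternative the authors flag when they remark that the lemma ``can be proved with the alluded results of \cite{BR-87}.'' The covering approach buys uniformity: it works verbatim for every triangulation (which Section \ref{Section13} needs) and outsources all combinatorics to the hereditary $A_{3n}$ case, at the cost of the machinery of Sections \ref{Sect14} and \ref{Section13}; your approach is more elementary and self-contained for $\tau_0$, but the case analysis near the loop $\varepsilon$ --- which you correctly identify as the delicate point --- is left as a plan rather than carried out, and would have to be redone for other triangulations. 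One caveat: Lemma \ref{LemmaTau}, on which you rely, is itself only proved in the paper via the covering argument (Corollary \ref{CoroTau+}), so for your proof to be truly independent of that machinery you would also need to supply the direct Butler--Ringel proof of that lemma, which the authors likewise assert is possible.
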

\begin{proof}
We postpone this proof up to Corollary \ref{Lem13.1}. It is worth mention that this 
lemma can be proved with the alluded results of \cite{BR-87}.
\end{proof}

\begin{lemma}\label{LemmaA}
Assume $j$ is an arc of $\Sigma_n$ such that $j\notin \tau_0$. Then $\pd M(j)\leq 1$ 
and $\injd M(j)\leq 1$. Here $\pd M(j)$ (resp. $\injd M(j)$) denotes the 
\emph{projective dimension} of $M(j)$ (resp. the \emph{injective dimension} of M(j)).
\end{lemma}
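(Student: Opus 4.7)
The plan is to exploit the gentle structure of $\Lambda=\Lambda(\tau_0)$ (Proposition~\ref{PropGentAlg}) and to construct, for each arc $j\notin\tau_0$, an explicit projective resolution and an explicit injective resolution of length at most~$1$. First, I would record the indecomposable projectives and injectives of $\Lambda$ as string modules of the quiver (\ref{SQA_n}): one has $P_i=N(\varepsilon a_{n-1}\cdots a_i)$ for $i<n$ and $P_n=N(\varepsilon)$; dually, $I_i=N(a_{i-1}\cdots a_1)$ for $i<n$ and $I_n=N(a_1^{-1}\cdots a_{n-1}^{-1}\varepsilon a_{n-1}\cdots a_1)$. Several of these already appear among the $M(j)$'s of Remark~\ref{Obs2} (for instance $M([v_1,v_n]^-)=P_1$, $M([v_{n-1},v_n]^-)=P_{n-1}$, and $M(i'_n)=P_n$), for which $\pd M(j)=0$ is automatic and only the injective resolution requires attention.

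Second, I would classify the remaining strings $W_j$ of Remark~\ref{Obs2} into two families according to whether the letter $\varepsilon$ appears in $W_j$: family~(a) consists of the purely direct strings $a_{l-3}\cdots a_i$, $a_{n-2}\cdots a_l$, and the simple string $1_{(i,+)}$; family~(b) consists of the ``mountain-shaped'' strings $a_l^{-1}\cdots a_{n-1}^{-1}\varepsilon a_{n-1}\cdots a_i$, including the pending-arc strings $W_{i'_k}=a_k^{-1}\cdots a_{n-1}^{-1}\varepsilon a_{n-1}\cdots a_k$ (in which the two endpoints coincide). Inspecting the direct/inverse pattern of the letters identifies the peaks and valleys of $W_j$, and hence the top and socle of $M(j)$: in family~(a) there is a single peak at the first vertex and a single valley at the last; in family~(b) there are two peaks, at the endpoints, and a unique internal valley at vertex~$n$ immediately after $\varepsilon$. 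Consequently the projective cover and injective envelope of $M(j)$ are $P_i$ and $I_{l-2}$ in family~(a), and $P_i\oplus P_l$ and $I_n$ in family~(b).

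The heart of the argument is the kernel/cokernel computation. In family~(a), the kernel of $P_i\twoheadrightarrow M(j)$ is the ``unused tail'' of the string of $P_i$, which is itself a string projective of the form $P_{l-1}$ (or $P_{n-1}$, depending on the sub-case); and the cokernel of $M(j)\hookrightarrow I_{l-2}$ is the direct injective $I_{i-1}$ (with the convention $I_0=0$), giving the desired resolutions. In family~(b), the map $P_i\oplus P_l\twoheadrightarrow M(j)$ has a kernel that is $2$-dimensional and concentrated at vertex~$n$, spanned by the difference of the two lifts of the socle vector $z_{n-i+1}$ and by the element $\varepsilon\cdot y'_{n-l}$ of the second summand; a direct computation using $\varepsilon^2=0$ shows that $\varepsilon$ maps the first basis vector to $\pm$ the second, so the kernel is isomorphic to $P_n$. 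Dually, the cokernel of $M(j)\hookrightarrow I_n$ has dimension $2$ at vertices $1,\ldots,i-1$ and dimension $1$ at vertices $i,\ldots,l-1$, and splits as $I_{i-1}\oplus I_{l-1}$.

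The main obstacle is ensuring that the $2$-dimensional kernel in family~(b) really is the indecomposable $P_n$ rather than $S_n\oplus S_n$; this hinges on the precise action of $\varepsilon$ on the two natural kernel generators and on the relation $\varepsilon^2=0$. Once this is settled, all cases of Remark~\ref{Obs2} follow uniformly; in particular the pending-arc strings $W_{i'_k}$ are handled as family~(b) with $i=l=k$, yielding a projective cover $P_k^{\oplus 2}$ with kernel $P_n$ and an injective envelope $I_n$ with cokernel $I_{k-1}^{\oplus 2}$.
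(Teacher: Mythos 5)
Your argument is correct, but it follows a genuinely different route from the paper. The paper's proof is a one-line citation: it identifies $\Lambda$ with an algebra $H(C,D,\Omega)$ in the sense of Gei\ss--Leclerc--Schr\"oer \cite{GLS-15} (for an explicit symmetrizable Cartan matrix $C$ with symmetrizer $D=\diagonal(1,\dots,1,2)$), observes that each $M(j)$ with $j\notin\tau_0$ is \emph{locally free} --- which amounts to the fact, visible in Remark \ref{Obs1}, that $\dimension M(j)_n\in\{0,2\}$, i.e.\ $M(j)_n$ is free over $\mathbb{C}[\varepsilon]/(\varepsilon^2)$ --- and then invokes \cite[Proposition 3.5]{GLS-15}, which asserts that locally free modules over such algebras have projective and injective dimension at most $1$. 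You instead build explicit length-one projective and injective resolutions by hand from the string combinatorics of Remark \ref{Obs2}; I checked the key points and they are right, in particular the crucial computation that the two-dimensional kernel of $P_i\oplus P_l\twoheadrightarrow M(j)$ in your family (b) carries a nontrivial $\varepsilon$-action (via $\varepsilon(y_{n-i+1}-y'_{n-l})=-y'_{n-l+1}$) and is therefore $P_n$ rather than $S_n\oplus S_n$. One cosmetic slip: for the strings $W_{[v_0,v_l]^+}=a_{n-2}\cdots a_l$ the kernel of the projective cover is $P_n$, not $P_{n-1}$; this does not affect the argument, since all that matters is that the ``unused tail'' of $P_i$ is projective. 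The trade-off is clear: the paper's proof is short but leans on the full machinery of \cite{GLS-15}, while yours is elementary and self-contained, at the cost of a case analysis; as a bonus, your injective resolutions recover exactly the minimal injective presentations recorded in Remark \ref{Obs3}, which the paper otherwise attributes to \cite{BR-87} and uses later to compute $g$-vectors.
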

\begin{proof}
The lemma follows from  \cite[Proposition 3.5]{GLS-15} and the definition of $M(j)$. 
Indeed, in the language of \cite{GLS-15}, if we take 
\begin{equation*}
C=\begin{pmatrix}
    2 & -1 & 0 & \dots  & 0 & 0 \\
    -1 & 2 & -1 & \dots  & 0 & 0 \\
    0 & -1 & 2  & \ldots & 0 & 0 \\
    \vdots & \vdots & \vdots & \ddots & \vdots & \vdots \\
    0 & 0 & 0 & \ldots & 2 & -2 \\
    0 & 0 & 0 & \dots  & -1 & 2
\end{pmatrix}, \ 
D=\begin{pmatrix}
    1 & 0 & \dots  & 0 & 0 \\
    0 & 1 & \dots  & 0 & 0 \\
    \vdots  & \vdots & \ddots & \vdots & \vdots \\
    0  & 0 & \ldots & 1 & 0 \\
    0 & 0 & \dots  & 0 & 2
\end{pmatrix},
\end{equation*} 
and $\Omega=\{(i+1,i)\colon 1\leq i\leq n-1\}$, then we get $\Lambda=H(C,D,\Omega)$. By 
definition $M(j)$ is \emph{locally free}  $\Lambda$-module for every arc $j\notin 
\tau_0$ (see  \cite[Section 1.5]{GLS-15}).
\end{proof}

\begin{remark}
In \cite{GLS-18}, Gei\ss-Leclerc-Schr\"oer have proved that, in particular, for this 
algebra $\Lambda=H(C,D,\Omega)$ of the previous lemma, we can recover a classic cluster 
algebra  by means of  Caldero-Chapoton functions. However, they consider the 
quasiprojective variety $\Grass_{\mbox{l.f.}}(\textbf{r},M)$ of \emph{locally free 
submodules} $N$ of $M$ with rank vector $\textbf{r}$  instead  $\Grass_{\textbf{e}}(M)$ 
as we made her, see Example \cite[Section 13.1]{GLS-18}.
\end{remark}

\begin{remark}\label{Obs3}
Lemma \ref{LemmaA} ensures that $\injd M(j)\leq 1$, now it can be seen that we have the 
following minimal injective presentation of $M(j)$ for each arc $j\notin\tau_0$, this 
is a consequence of \cite{BR-87}.
\begin{enumerate}
\item  $j$ crosses to $i_n\colon$ in this case $W_j=a_{n_j}^{-1}\cdots\varepsilon 
\cdots a_{m_j}$ with $m_j\leq n_j$. Then the following exact sequence is a minimal 
injective presentation of $M(j)$,
\[0\rightarrow M(j)\rightarrow   N(a_1^{-1}\cdots \varepsilon\cdots a_1)\rightarrow 
N(a_{n_j-2}\cdots a_1)\oplus N(a_{m_j-2}\cdots a_1).\]
Here we define  $N(a_{r-2}\cdots a_1)$ as zero if $r=1$   and it is the simple 
representation at $1$ if $r=2$.
\item $j$ does not cross to $i_n\colon$ in this case $W_j=a_{n_j}\cdots a_{m_j}$ with 
$n-2\geq n_{j}\geq m_j$. Then the following exact sequence is a minimal injective 
presentation of $M(j)$,
\[0\rightarrow M(j)\rightarrow   N( a_{n_j}\cdots a_1)\rightarrow N(a_{m_j-2}\cdots 
a_1).\]
\end{enumerate}
\end{remark}

\begin{proposition}\label{Prop2}
If $j_1$ and $j_2$ are not arcs of $\tau_0$, then the following hold:
\begin{itemize}
\item There exists a $\mathbb{C}$-linear isomorphism 
\[\Ext^1_{\Lambda}(M(j_1), M(j_2))\cong \Hom_{\Lambda}(\tau^-(M(j_2)), M(j_1)).\]
\item  There exists a $\mathbb{C}$-linear isomorphism 
\[\Hom_{\Lambda}(M(j_1),\tau(M(j_2)))\cong \Hom_{\Lambda}(\tau^-(M(j_1)),M(j_2)).\]
\end{itemize}
\end{proposition}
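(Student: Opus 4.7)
The plan is to derive both isomorphisms from Auslander-Reiten duality, combined with the bounds $\pd M(j)\leq 1$ and $\injd M(j)\leq 1$ supplied by Lemma \ref{LemmaA} for every arc $j\notin\tau_0$. Recall that, for a finite-dimensional algebra $\Lambda$ and modules $X,Y$, the full AR formula $D\Ext^1_\Lambda(X,Y)\cong\overline{\Hom}_\Lambda(Y,\tau X)\cong\underline{\Hom}_\Lambda(\tau^{-}Y,X)$ collapses, under either $\pd X\leq 1$ or $\injd Y\leq 1$, to the cleaner canonical isomorphisms
$$D\Ext^1_\Lambda(X,Y)\cong \Hom_\Lambda(Y,\tau X)\cong \Hom_\Lambda(\tau^{-}Y,X),$$
as can be checked directly from a two-term minimal projective resolution of $X$ via Nakayama duality $\Hom_\Lambda(-,\nu P)\cong D\Hom_\Lambda(P,-)$ for $P$ projective, and dually from a two-term minimal injective coresolution of $Y$.

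For part~(1), I would apply this with $X=M(j_1)$ and $Y=M(j_2)$; the hypothesis $\injd M(j_2)\leq 1$ from Lemma \ref{LemmaA} makes the formula valid and yields a canonical isomorphism
$$D\Ext^1_\Lambda(M(j_1),M(j_2))\cong\Hom_\Lambda(\tau^{-}M(j_2),M(j_1)).$$
Passing to $\mathbb{C}$-linear duals (a dimension-preserving operation on finite-dimensional spaces) then produces the stated $\mathbb{C}$-linear isomorphism.

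For part~(2), I would compute $\Ext^1_\Lambda(M(j_2),M(j_1))$ in two different ways. The bound $\pd M(j_2)\leq 1$ gives
$$D\Ext^1_\Lambda(M(j_2),M(j_1))\cong\Hom_\Lambda(M(j_1),\tau M(j_2)),$$
while $\injd M(j_1)\leq 1$ gives
$$D\Ext^1_\Lambda(M(j_2),M(j_1))\cong\Hom_\Lambda(\tau^{-}M(j_1),M(j_2)).$$
Chaining these two isomorphisms and dualizing produces the desired canonical isomorphism $\Hom_\Lambda(M(j_1),\tau M(j_2))\cong\Hom_\Lambda(\tau^{-}M(j_1),M(j_2))$.

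The argument is essentially mechanical once Lemma \ref{LemmaA} is in hand; the only real input beyond it is the standard AR duality formula. No substantial obstacle is expected, the only subtlety being the justification of the collapse of $\overline{\Hom}$ and $\underline{\Hom}$ under the projective/injective dimension hypothesis. This is a standard fact in Auslander-Reiten theory and poses no genuine difficulty in the present non-hereditary setting, since the explicit two-term resolutions that the $\pd\leq 1$ and $\injd\leq 1$ conditions provide bypass the usual correction terms entirely.
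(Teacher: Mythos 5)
Your proposal is correct and follows essentially the same route as the paper, which likewise deduces both isomorphisms from Lemma \ref{LemmaA} together with the Auslander--Reiten formulas under the hypotheses $\pd\leq 1$ and $\injd\leq 1$ (cited there as \cite[Corollaries (IV) 2.14 and 2.15]{ASS}); your two-way computation of $\Ext^1_{\Lambda}(M(j_2),M(j_1))$ for the second item is exactly the intended argument.
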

\begin{proof}
The proposition follows from Lemma \ref{LemmaA},  \cite[Corollary (IV) 2.14(b)]{ASS} 
and \cite[Corollary (IV) 2.15(a)]{ASS} respectively.
\end{proof}

Albeit the previous proposition is true for any modules with projective and injective 
dimension at most 1 we stated it in that fashion for convenience.

\begin{lemma}\label{LemmaCompati}
Let $\tau$ be a triangulation of $\Sigma_n$. If $j_1$ and $j_2$ are arcs of $\tau$, 
then $E_{\Lambda}(M(j_1),M(j_2))=0$.
\end{lemma}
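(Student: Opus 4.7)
My strategy is to compute the $E$-invariant through its homological incarnation in Proposition \ref{PropoTrunc}. Since $\Lambda=\Lambda(\tau_0)$ is finite-dimensional by Proposition \ref{FinitedimPro}, that proposition yields, for $M(j_k)=(M_k,V_k)$,
$$E_\Lambda(M(j_1),M(j_2))=\dim\Hom_\Lambda(\tau^-(M_2),M_1)+\sum_{i=1}^n\dim(M_{1,i})\dim(V_{2,i}).$$
In the sub-case where both arcs lie outside $\tau_0$ the first summand is $\dim\Ext^1_\Lambda(M(j_1),M(j_2))$ by Proposition \ref{Prop2}. The plan is to show both summands vanish by splitting on whether $j_1,j_2$ lie in $\tau_0$.

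First I would dispatch the cases where at least one of $j_1,j_2$ is in $\tau_0$. If $j_k\in\tau_0$ then $M_k=0$ and $V_k=S_{j_k}$ by Definition \ref{DefMCuerda}. When $j_1\in\tau_0$, $M_1=0$ simultaneously annihilates the $\Hom$ term and the bilinear sum. When $j_2\in\tau_0$ but $j_1\notin\tau_0$, $\tau^-(M_2)=\tau^-(0)=0$ kills the first summand, and the second collapses to $\dim(M_{1,j_2})$, which by Remark \ref{Obs1} equals the interior intersection number $|j_1\cap j_2|$; this vanishes because $j_1$ and $j_2$ are compatible arcs of $\tau$.

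The substantive case is $j_1,j_2\notin\tau_0$, where decorations are zero and the claim reduces to
$$\dim\Hom_\Lambda(\tau^-(M(j_2)),M(j_1))=0.$$
If $M(j_2)$ is injective the left-hand side is automatically zero; otherwise Lemma \ref{LemmaTau}(b) identifies $\tau^-(M(j_2))=M(r^-(j_2))$, so the task becomes to prove $\Hom_\Lambda(M(r^-(j_2)),M(j_1))=0$. Since $\Lambda$ is a string algebra by Proposition \ref{PropGentAlg}, I intend to use the Butler--Ringel description of $\Hom$-spaces between string modules: a non-zero morphism $M(r^-(j_2))\to M(j_1)$ would correspond to a common factoring sub-string shared by $W_{j_1}$ and $W_{r^-(j_2)}$. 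Using the explicit shapes of these strings listed in Remark \ref{Obs2}, I plan to verify, case by case according to where the endpoints of $j_1,j_2$ sit relative to the anchor $v_0$ of $\tau_0$, that the compatibility of $j_1$ with $j_2$ (together with the fact that $r^-$ is a small rotation) forbids any such shared sub-string.

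The main obstacle I anticipate is the sub-string analysis in the last step: the loop $\varepsilon$ at the pending-arc vertex of $Q(\tau_0)$ splits the arcs of $\Sigma_n$ into two families according to whether $W_j$ traverses $\varepsilon$, and the possible overlaps between $W_{j_1}$ and $W_{r^-(j_2)}$ proliferate accordingly. A slicker alternative I may exploit is to lift $j_1,j_2$ to compatible arcs of the $G$-invariant triangulation $T$ of $\widetilde{\Sigma}_n$, and to reduce the $\Hom$-vanishing on $\Sigma_n$ to the well-understood $A_{3n+2}$ case on the $(3n{+}3)$-gon using the push-down functor of Lemma \ref{PushG}; since that covering viewpoint belongs morally to Section \ref{Section13}, however, the plan for Section \ref{Section6} is to carry out the direct string-combinatorial case analysis sketched above.
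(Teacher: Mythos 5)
Your handling of the degenerate cases coincides with the paper's: the immediate dismissal when $j_2\in\tau_0$ or $M(j_2)$ is injective, and the reduction of the case $j_2\in\tau_0$, $j_1\notin\tau_0$ to $\dim M(j_1)_{j_2}=|j_1\cap j_2|=0$ via Remark \ref{Obs1} is exactly the paper's Case 1. Where you genuinely diverge is the substantive case $j_1,j_2\notin\tau_0$ with $M(j_2)$ non-injective. The paper never computes a $\Hom$-space combinatorially: it introduces the minimal exponents $m_l$ for which $(\tau^-)^{m_l}M(j_l)=M((r^-)^{m_l}(j_l))$ becomes injective, and uses the Auslander--Reiten formulas \cite[Corollary (IV) 2.14, 2.15]{ASS} to slide the pair $(M(j_1),M(j_2))$ along $\tau^-$ until one argument is injective, at which point vanishing is automatic (its Case 2 disposes of the injective situation by a support-disjointness observation). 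You instead stop after one application of Lemma \ref{LemmaTau}(b) and propose to prove $\Hom_\Lambda(M(r^-(j_2)),M(j_1))=0$ directly from the Butler--Ringel basis of morphisms between string modules, using the explicit strings of Remark \ref{Obs2}. That route is viable for this gentle algebra and is arguably more self-contained (it does not lean on \cite{ASS} or on iterating the rotation formula), but it concentrates all the difficulty in the case analysis you explicitly defer: you must show that compatibility of $j_1$ and $j_2$ excludes every string that is simultaneously a quotient string of $W_{r^-(j_2)}$ and a sub-string (in the submodule sense of Section \ref{SubSectionCam}) of $W_{j_1}$ --- note that ``common factoring sub-string'' needs this asymmetric quotient/submodule formulation to be the correct criterion, and that $r^-(j_2)$ itself generally crosses $j_1$, so the argument cannot shortcut through compatibility of the rotated arc. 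The paper's AR-theoretic detour buys exactly the avoidance of that enumeration, at the cost of invoking Lemma \ref{LemmaTau} repeatedly; your approach, once the finite check is actually carried out, would be a complete and somewhat more elementary proof.
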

\begin{proof}
If  $j_2\in \tau_0$ or $M(j_2)$ is injective, then $E_{\Lambda}(M(j_1),M(j_2))=0$ for 
all arc $j_1\in \tau$ by definitions and Proposition \ref{PropoTrunc}. So we can 
suppose $j_2$ is not in $\tau_0$ and $M(j_2)$ is not injective.
\begin{itemize}
\item[Case 1.] $j_1 =i_k$ for some $1\leq k\leq n\colon$ in this case $M(j_1)$ is the 
negative simple representation of $\Lambda$ at $k$. It is clear that 
$E_{\Lambda}(M(j_2),M(i_k))=\dimension M(j_2)_k$ by Proposition \ref{PropoTrunc}, but 
$i_k, j_2\in \tau$, then $\dimension M(j_2)_k=0$. 
\item[Case 2.] $j_1\notin \tau_0$ and $M(j_1)$ is injective: then $E_{\Lambda}
(M(j_2),M(j_1))=0$ for all $j_2\in \tau$. We have to prove $E_{\Lambda}
(M(j_1),M(j_2))=0$.
By Proposition \ref{PropoTrunc} we get $E_{\Lambda}(M(j_1),M(j_2))=\dimension 
\Hom_{\Lambda}(\tau^-(M(j_2), M(j_1))$. If  $M(j_1)$ is injective, then $j_1=[v_1,v_l]^
+$ with $2<l\leq n$, $j_1=i'_1$ or $j=[v_0,v_1]^+$. Since $j_1, j_2\in\tau$ and $M(j_2)
$ is not injective, $\Supp(M(j_1))\cap \Supp(\tau^-M(j_2))=\emptyset$ and  $
\Hom_{\Lambda}(\tau^-(M(j_2)), M(j_1))=0$. 
 
\item[Case 3.] $j_1\notin \tau_0$ and $M(j_1)$ is not injective$\colon$ we have to 
prove $E_{\Lambda}(M(j_1),M(j_2))=0$ and $E_{\Lambda}(M(j_2),M(j_1))=0$. For $l=1,2$, 
let $m_l$ be the  minimum positive  integer such that $M((r^{-})^{m_l}(j_l))$ is 
injective.

If $m_1\leq m_2$ , then by \cite[Corollary (IV) 2.15 (c)]{ASS} and Proposition 
\ref{PropoTrunc} we have $$E_{\Lambda}(M(j_1),M(j_2))=\dimension \Hom_{\Lambda}
((\tau^-)^{m_1+1}(M(j_2),(\tau^-)^{m_1} M(j_1)).$$
 \cite[Corollary (IV) 2.14 (b)]{ASS} implies $\dimension \Ext^1((\tau^-)^{m_1} M(j_1), 
 (\tau^-)^{m_1}(M(j_2))) = E_{\Lambda}(M(j_1),M(j_2))$. Since $(\tau^-)^{m_1}( M(j_1))$ 
 is injective, we get $E_{\Lambda}(M(j_1),M(j_2))=0$.
Now, 
\[E_{\Lambda}(M(j_2),M(j_1))=\dimension \Hom_{\Lambda}(\tau^-(M(j_1), M(j_2)).\]
Since $m_1\leq m_2$, we apply  \cite[Corollary (IV) 2.15 (c)]{ASS} to obtain 
$E_{\Lambda}(M(j_2),M(j_1))=0$. The case $m_2< m_1$ is similar.
\end{itemize}
 This proves the lemma.
\end{proof}

\begin{lemma}\label{LemmaE1}
Given an arc $j\notin \tau_0$ we have $\Ext^1(M(j),M(j))=0$.
\end{lemma}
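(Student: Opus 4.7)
The plan is to chain together Lemma \ref{LemmaE0}, Proposition \ref{PropoTrunc}, and the first part of Proposition \ref{Prop2} so as to express $\Ext^{1}_{\Lambda}(M(j),M(j))$ as a Hom space whose dimension is precisely the $E$-invariant $E_{\Lambda}(M(j))$.

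First I would dispense with the case in which $M(j)$ is injective, where $\Ext^{1}_{\Lambda}(-,M(j))=0$ trivially, so the lemma is immediate. From now on assume $M(j)$ is not injective. Since $j\notin\tau_{0}$, the decorated representation $M(j)$ coincides with the underlying module (the negative part is zero), so Proposition \ref{PropoTrunc} applied with $\mathcal{M}=\mathcal{N}=M(j)$ and any $p>\dimension M(j)$ gives
\[
E_{\Lambda}(M(j))=E_{\Lambda}(M(j),M(j))=\dimension\Hom_{\Lambda_{p}}\bigl(\tau^{-}_{\Lambda_{p}}(M(j)),M(j)\bigr).
\]
By Proposition \ref{FinitedimPro} the algebra $\Lambda=\Lambda(\tau_{0})$ is finite dimensional, so for $p$ large enough $\Lambda_{p}=\Lambda$, and the right-hand side equals $\dimension\Hom_{\Lambda}(\tau^{-}(M(j)),M(j))$.

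Next, I would invoke Lemma \ref{LemmaE0}, which asserts $E_{\Lambda}(M(j))=0$ for every arc $j$ of $\Sigma_{n}$; combined with the previous display this yields $\Hom_{\Lambda}(\tau^{-}(M(j)),M(j))=0$. Finally, since both arguments are the arc $j\notin\tau_{0}$, the first isomorphism of Proposition \ref{Prop2} (applied with $j_{1}=j_{2}=j$) provides
\[
\Ext^{1}_{\Lambda}(M(j),M(j))\cong\Hom_{\Lambda}(\tau^{-}(M(j)),M(j))=0,
\]
which completes the argument.

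There is no real obstacle here provided the supporting lemmas are already in hand: the whole point is to package the $E$-invariant vanishing of Lemma \ref{LemmaE0} into an $\Ext^{1}$ vanishing statement via the Auslander--Reiten formula of Proposition \ref{Prop2}, which is legitimate because $\pd M(j)\le 1$ and $\injd M(j)\le 1$ by Lemma \ref{LemmaA}. The only delicate point is making sure the truncation $\Lambda_{p}$ equals $\Lambda$ so that $\tau^{-}_{\Lambda_{p}}=\tau^{-}_{\Lambda}$, which is guaranteed by Proposition \ref{FinitedimPro}.
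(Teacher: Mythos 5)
Your proposal is correct and follows essentially the same route as the paper's own proof: Proposition \ref{PropoTrunc} identifies $E_{\Lambda}(M(j))$ with $\dimension\Hom_{\Lambda}(\tau^-(M(j)),M(j))$, Proposition \ref{Prop2} converts this Hom space into $\Ext^1_{\Lambda}(M(j),M(j))$, and Lemma \ref{LemmaE0} supplies the vanishing. The extra remarks about the injective case and about $\Lambda_p=\Lambda$ are harmless elaborations of what the paper leaves implicit.
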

\begin{proof}
By Proposition \ref{PropoTrunc} we have $E_{\Lambda}(M(j))=\dimension \Hom_{\Lambda}( 
\tau^-(M(j)), M(j))$. Proposition \ref{Prop2} implies $E_{\Lambda}(M(j))=\dimension
\Ext^1(M(j),M(j))$. The lemma follows from Lemma \ref{LemmaE0}.
\end{proof}
The following result is a consequence of Voigt's Lemma, see 
\cite[Sections 1.6 and 1.8]{DP-95}.
\begin{lemma}\label{OrbitOpen}
Assume $j\notin \tau_0$. Then the $G_{\textbf{d}}$-orbit $\mathcal{O}(M(j))$ is open in 
$\rep_{\textbf{d}}(\Lambda)$. 
\end{lemma}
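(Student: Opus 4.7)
The plan is to apply the classical orbit-opening criterion: for a representation $M$ of a finite-dimensional algebra, the $G_{\mathbf{d}}$-orbit $\mathcal{O}(M)$ is open in $\rep_{\mathbf{d}}(\Lambda)$ as soon as $\Ext_{\Lambda}^{1}(M,M)=0$ \emph{and} the variety $\rep_{\mathbf{d}}(\Lambda)$ is smooth at $M$. Both hypotheses will be extracted from facts already proved for $M(j)$ with $j\notin\tau_{0}$.

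More precisely, I would proceed in three steps. First, I would invoke Voigt's lemma, which gives the estimate
\[
\dim\rep_{\mathbf{d}}(\Lambda)-\dim\mathcal{O}(M)\leq \dim\Ext_{\Lambda}^{1}(M,M),
\]
together with the Gabriel/Gei\ss{}-type criterion that a vanishing $\Ext_{\Lambda}^{2}(M,M)=0$ kills all obstructions to lifting first-order deformations and hence forces $\rep_{\mathbf{d}}(\Lambda)$ to be smooth at $M$ (so that the above estimate becomes an equality). Second, I would use Lemma \ref{LemmaA}, which guarantees $\pd M(j)\leq 1$; this implies $\Ext_{\Lambda}^{2}(M(j),-)=0$ and, in particular, $\Ext_{\Lambda}^{2}(M(j),M(j))=0$, so $\rep_{\mathbf{d}}(\Lambda)$ is smooth at $M(j)$. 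Third, I would appeal to Lemma \ref{LemmaE1}, which provides $\Ext_{\Lambda}^{1}(M(j),M(j))=0$.

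Combining these three inputs, the codimension of $\mathcal{O}(M(j))$ at the point $M(j)$ vanishes, so $\dim_{M(j)}\mathcal{O}(M(j))=\dim_{M(j)}\rep_{\mathbf{d}}(\Lambda)$. Because $M(j)$ is a smooth point it lies on a unique irreducible component, and the orbit therefore contains a Zariski-open neighbourhood of $M(j)$. Since $\mathcal{O}(M(j))$ is $G_{\mathbf{d}}$-invariant and the action is by regular automorphisms of $\rep_{\mathbf{d}}(\Lambda)$, translating this neighbourhood around the orbit shows that every point of $\mathcal{O}(M(j))$ has an open neighbourhood contained in $\mathcal{O}(M(j))$, whence the orbit is open in $\rep_{\mathbf{d}}(\Lambda)$.

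There is essentially no hard step: the argument is a bookkeeping reduction to two lemmas already in hand. The only care needed is in citing the $\Ext^{2}$--smoothness dictionary correctly, as $\Lambda(\tau_{0})$ has a loop (so the quiver is not acyclic) but is finite-dimensional by Proposition \ref{FinitedimPro}; this does not affect the tangent-space computation underlying Voigt's lemma, since that computation is purely infinitesimal and works for arbitrary basic finite-dimensional algebras.
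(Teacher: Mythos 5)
Your proof is correct and follows essentially the same route as the paper: the paper's entire argument is that $\Ext^1_{\Lambda}(M(j),M(j))=0$ by Lemma \ref{LemmaE1}, and that this alone implies openness of the orbit by the standard criterion, cited there as \cite[1.7 Corollary 3]{DP-95}. The only difference is that you additionally establish smoothness of $\rep_{\textbf{d}}(\Lambda)$ at $M(j)$ via $\pd M(j)\leq 1$ and $\Ext^2$-vanishing; this is harmless but superfluous, since the cited criterion needs only $\Ext^1$-vanishing (Voigt's lemma bounds the scheme-theoretic tangent space of $\rep_{\textbf{d}}(\Lambda)$ at $M(j)$ by $\dimension\mathcal{O}(M(j))$, which already forces every component through $M(j)$ to coincide with $\overline{\mathcal{O}(M(j))}$ and hence the orbit to be open).
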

\begin{proof}
By Lemma \ref{LemmaE1} we have $\Ext^1(M(j),M(j))=0$, this implies that $\mathcal{O}(M)
$ is open, for example  see \cite[1.7 Corollary 3]{DP-95}.
\end{proof}

By Lemma \ref{LemmaE0} we have examples of $E$-\emph{rigid} indecomposable $\Lambda$-
modules  i.e.$E_{\Lambda}(M(j))=0$. The next result shows that we already know all  
$E$-rigid $\Lambda$-modules, this result can be seen as a consequence of Lemma 
\ref{Lem13.1}, 
but we have been mentioned that for $\tau_0$ the proof of some result may be made with 
explicit computations, here we give an example.

\begin{proposition}\label{ClasE}
If $N$ is a indecomposable  $\Lambda$-module and $N$ is not of the form $M(j)$ for some 
arc $j$ of $\Sigma_n$, then $E_{\Lambda}(N)>0$. 
\end{proposition}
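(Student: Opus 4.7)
\emph{Step 1: Reduction to string modules.} The algebra $\Lambda=\Lambda(\tau_0)$ is finite-dimensional and gentle by Propositions \ref{FinitedimPro} and \ref{PropGentAlg}. I first verify that $\Lambda$ admits no band modules: the unique cycle of $Q(\tau_0)$ is the loop $\varepsilon$ at vertex $n$, and the relation $\varepsilon^{2}=0$ rules out any band $W$, since a high enough power $W^{n}$ would contain a forbidden subword $\varepsilon^{\pm 2}$. Theorem \ref{BRTheorem} then implies that every indecomposable $\Lambda$-module is isomorphic to a string module $N(W)$.

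\emph{Step 2: Classification of strings.} Next I enumerate the strings of $\Lambda$ up to the equivalence $W\sim W^{-1}$. Any walk in the linear subquiver $1\to 2\to\cdots\to n$ that changes direction creates a forbidden subword $a_{k}a_{k}^{-1}$ or $a_{k}^{-1}a_{k}$, so such walks are monotone. Moreover, any string contains at most one letter from $\{\varepsilon,\varepsilon^{-1}\}$, since the only walk from $n$ to $n$ in the linear subquiver has length zero. Consequently every string of $\Lambda$ has one of the forms $1_{(i,+)}$, $a_{l}\cdots a_{k}$, or $a_{m}^{-1}\cdots a_{n-1}^{-1}\,\varepsilon^{\pm 1}\,a_{n-1}\cdots a_{k}$ (with either flanking monotone path possibly empty). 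Comparing with Remark \ref{Obs2}, the strings $W_{j}(\tau_0)$ arising from arcs $j$ of $\Sigma_n$ are exactly those whose canonical representative uses $\varepsilon$ (not $\varepsilon^{-1}$) and whose endpoint data comes from a geometrically valid arc. The strings that are \emph{not} of this form fall into a small number of explicit families: the simples $S_{n-1}$ and $S_{n}$, certain short monotone strings reaching the vertex $n$, and the strings with $\varepsilon^{-1}$ in the middle.

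\emph{Step 3: Positive $E$-invariant for non-arc strings.} For each non-arc indecomposable $N=N(W)$ I would invoke Proposition \ref{PropoTrunc} (using that $\Lambda=\Lambda_{p}$ for all sufficiently large $p$) to reduce to
\[
E_{\Lambda}(N)=\dim_{\mathbb{C}}\Hom_{\Lambda}\bigl(\tau_{\Lambda}^{-}(N),N\bigr).
\]
For the gentle algebra $\Lambda$, the Auslander--Reiten translate $\tau^{-}(N(W))$ is again a string module $N(W')$ whose underlying string is produced from $W$ by the cohook construction of \cite[pp.\ 170--172]{BR-87}. For each of the non-arc families identified in Step 2, I would apply this combinatorial description to determine $W'$ explicitly and then exhibit a nonzero graph morphism $N(W')\to N(W)$, typically arising from a common substring of $W'$ and $W$. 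The main difficulty is precisely this case analysis, especially at the boundary cases $S_{n-1}$, $S_{n}$, and short strings adjacent to the loop $\varepsilon$, where $\tau^{-}$ can involve injective envelopes nontrivially. As noted in the excerpt, the proposition can alternatively be recovered uniformly from the forthcoming Corollary \ref{Lem13.1}, deduced via the Galois-covering techniques of Section \ref{Section13}, which bypasses this explicit case-by-case argument.
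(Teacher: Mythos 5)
Your strategy is the same as the paper's: reduce to string modules via Theorem \ref{BRTheorem}, list the strings of $\Lambda$, identify those that are not of the form $W_j$, and for each of them combine Proposition \ref{PropoTrunc} with the Butler--Ringel description of $\tau^{-}$ to produce a nonzero morphism $\tau^{-}(N(W))\rightarrow N(W)$. The problem is that the two steps where the content lies are, respectively, incorrect and absent. In Step 2 your list of non-arc strings is wrong: $S_{n-1}$ \emph{is} an arc module (every $1_{(i,+)}$ with $i\in[1,n-1]$ occurs as some $W_j$; only the simple $S_n$ at the loop vertex fails to be of the form $M(j)$ --- compare the $n=3$ computation in Section \ref{Section12}, where exactly one simple is non-rigid). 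Moreover, ``whose canonical representative uses $\varepsilon$ rather than $\varepsilon^{-1}$'' does not single out a class of strings, since any string through the loop can be rewritten with the opposite power of $\varepsilon$ by passing to its inverse; the true dividing line for $a_{k}^{-1}\cdots a_{n-1}^{-1}\varepsilon a_{n-1}\cdots a_{m}$ is $m\leq k$ (arc) versus $m>k$ (not an arc), and the non-arc strings avoiding the loop are \emph{all} monotone paths $a_{n-1}\cdots a_{l}$ with $l\in[1,n-1]$ reaching the loop vertex, together with $1_{(n,\pm)}$ --- not ``certain short'' ones. (A count confirms this: there are $n^{2}+n$ arcs, hence $n^{2}$ arc modules, against $n+\binom{n}{2}+n^{2}$ strings in total.)

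In Step 3 you only announce what you would do, and the case analysis you defer is precisely the proof. The paper carries it out: for $W=a_{n-1}\cdots a_{l}$ one computes $\tau^{-}(N(W))=N(\varepsilon^{-1}a_{n-1}\cdots a_{l-1})$, and for $W=a_{n_W}^{-1}\cdots\varepsilon^{-1}\cdots a_{m_W}$ with $m_W<n_W$ one computes $\tau^{-}(N(W))=N(a_{n_W-1}Wa_{m_W-1})$; in each case $W$ (or an obvious substring of it) is a common factor of source and target, giving a visible nonzero graph morphism and hence $E_{\Lambda}(N(W))>0$ by Proposition \ref{PropoTrunc}. Until you compute these translates and exhibit the morphisms (including the degenerate cases $S_n$ and strings with an empty tail), the argument is an outline rather than a proof. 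Falling back on Lemma \ref{Lem13.1} is logically admissible --- the paper itself notes this --- but that is a different proof via Galois coverings, not a completion of the elementary one you set up.
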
 
\begin{proof}
Let $W$ be the string associated to $N$ and assume that $N$ is not $E$-rigid. Given a 
non-initial arc $j$ of $\Sigma_n$ we have the string $W_j$ is one of the following

\begin{equation*}
\begin{split} 
 &1_{(i,+)}, \mbox{ with  }i\in [1,n-1], \\
  & \varepsilon \cdots a_{m_j} \mbox{ with } m_j\in [1,n-1],\\
  & a_{n_j}^{-1}\cdots\varepsilon \cdots a_{m_j} \mbox{ with } m_j\leq n_j \mbox{ and } 
  m_j\in [1,n-1],\\
  & a_{n_j}\cdots a_{m_j} \mbox{ with } n-2\geq n_{j}\geq m_j.
\end{split}
\end{equation*}
Therefore  $W$ is  different to $W_j$ for any arc $j$ of $\Sigma_n$, here we use Remark 
\ref{Obs2}.
If $W=a_{n-1}\cdots a_l$ with $l>1$ we have $N(W)$ looks like
\[0\rightarrow \cdots 0\rightarrow\mathbb{C}\rightarrow \cdots \rightarrow \mathbb{C}.
\]
By \cite{BR-87} we get $\tau^{-1}(N(W))=N(\varepsilon^{-1}a_{n-1}\cdots a_{l-1})$. 
Since 
\[\xymatrix @-0.8pc {  
N(\varepsilon^{-1}a_{n-1}\cdots a_{l-1}):0\ar@{->}[r] & \cdots\ar@{->}[r] & 0\ar@{->}
[r] & \mathbb{C}\ar@{->}[r]^<(0.4){\ident}& \cdots\ar@{->}[r]^<(0.4){\ident} & 
\mathbb{C} \ar@{->}[r]^<(0.3){ {\scriptstyle \left( \begin{smallmatrix}
1 \\
0 \end{smallmatrix} \right)}}&\mathbb{C}^2\ar@(ur,dr)^<(0.3){ \left( 
\begin{smallmatrix}
0 & 1 \\
0 & 0 \end{smallmatrix} \right)}}\]
we have $\Hom_{\Lambda}(\tau^{-1}(N(W)), N(W))\neq 0$. Proposition \ref{PropoTrunc} 
implies $E_{\Lambda}(N(W))>0$.
The case when $l=1$ is similar and follows from \cite{BR-87}.

If $W=a_{n_W}^{-1}\cdots\varepsilon^{-1} \cdots a_{m_W}$ with $1<m_W<n_W$, then 
\[\xymatrix @-0.8pc {  
N(W):0\ar@{->}[r] & \cdots\ar@{->}[r] & 0\ar@{->}[r] & \mathbb{C}\ar@{->}[r]^<(0.4)
{\ident}& \cdots\ar@{->}[r]^<(0.4){\ident} & \mathbb{C} \ar@{->}[r]^<(0.3){ 
{\scriptstyle \left( \begin{smallmatrix}
1 \\
0 \end{smallmatrix} \right)}}&\mathbb{C}^2\ar@{->}[r]^<(0.4){\ident}&\cdots\ar@{->}
[r]^<(0.4){\ident}&  \mathbb{C}^2\ar@(ur,dr)^<(0.3){ \left( \begin{smallmatrix}
0 & 1 \\
0 & 0 \end{smallmatrix} \right)}}\]
and by \cite{BR-87} we get $\tau^{-1}(N(W))=N(a_{n_W-1}Wa_{m_W-1})$. From definitions 
we get $\Hom_{\Lambda}(\tau^{-1}(N(W)), N(W))\neq 0$, so $E_{\Lambda}(N(W))>0$. The 
case when $m_W=1$ is  similar and follows from \cite{BR-87}. Since the indecomposable $
\Lambda$-modules are parametrized by strings,  the proposition follows from Theorem 
\ref{BRTheorem}.
\end{proof}

Now we interpret the $g$-vector of a representation $M(j)$ in terms of intersection 
numbers. The three lemmas below follow from Remark \ref{Obs3} and Lemma 
\ref{gVectorLemma}. For instance
\begin{lemma}
For a pending arc $i_k'$ with $k\in\{1, 2,\ldots, n\}$ we have
\begin{equation*}
g_{\Lambda}(M(i'_k))_l=
\left\{
\begin{array}{lll}
 \ \ 2 \mbox{ if } l=k-1,\\
-1 \mbox{  if } l=n,\\
 \ \ 0 \mbox{ in otherwise}.
\end{array}
\right.
\end{equation*}
\end{lemma}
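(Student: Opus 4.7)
The plan is to apply Lemma~\ref{gVectorLemma} after exhibiting the minimal injective presentation of $M(i'_k)$ explicitly. By Remark~\ref{Obs2}, for $1 \leq k \leq n - 1$ the string $W_{i'_k}$ equals $a_k^{-1} a_{k+1}^{-1} \cdots a_{n-1}^{-1} \varepsilon a_{n-1} \cdots a_{k+1} a_k$, while $W_{i'_n} = \varepsilon$. In every case this string contains the letter $\varepsilon$ and has the shape of case~(1) of Remark~\ref{Obs3} with $m_j = n_j = k$ (where the $\cdots$ portions are to be read as empty when $k = n$). Thus the minimal injective presentation of $M(i'_k)$ has the form
$$0 \longrightarrow M(i'_k) \longrightarrow N(a_1^{-1}\cdots\varepsilon\cdots a_1) \longrightarrow N(a_{k-2}\cdots a_1) \oplus N(a_{k-2}\cdots a_1),$$
with the convention of Remark~\ref{Obs3} that the rightmost term is zero when $k = 1$ and equals $S_1 \oplus S_1$ when $k = 2$.

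Next I would identify each of these string modules with an indecomposable injective of $\Lambda$. By Proposition~\ref{PropGentAlg} the algebra $\Lambda$ is gentle, so its indecomposable injectives are string modules, and using the formula $\dim (I_l)_j = \dim e_j\Lambda e_l$ (the number of paths from $l$ to $j$ in $Q(\tau_0)$ modulo $\varepsilon^2 = 0$) one checks that $N(a_{r-2}\cdots a_1) \cong I_{r-1}$ for every $r \geq 2$ (both modules have dimension vector $(1,\ldots,1,0,\ldots,0)$ with ones in positions $1,\ldots, r-1$ and socle $S_{r-1}$) and that $N(a_1^{-1}\cdots\varepsilon\cdots a_1) \cong I_n$ (both have dimension vector $(2,2,\ldots,2)$ and socle $S_n$). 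Hence the presentation takes the form
$$0 \longrightarrow M(i'_k) \longrightarrow I_n \longrightarrow I_{k-1}^{\oplus 2},$$
read with the understanding that $I_{k-1}^{\oplus 2} = 0$ when $k = 1$.

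Finally, since $i'_k \notin \tau_0$ the decoration $V$ of $M(i'_k)$ vanishes, so writing $I_0(M(i'_k)) = \bigoplus_l I_l^{a_l}$ and $I_1(M(i'_k)) = \bigoplus_l I_l^{b_l}$ we read off $a_n = 1$, $b_{k-1} = 2$, and all other multiplicities equal to $0$. Lemma~\ref{gVectorLemma} then gives
$$g_l(M(i'_k)) = -a_l + b_l + \dim V_l = -\delta_{l,n} + 2\,\delta_{l,k-1},$$
which is exactly the formula in the statement (and correctly collapses to $g_l = -\delta_{l,n}$ when $k = 1$, since no vertex $l = 0$ exists and the right-hand term of the presentation disappears). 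The main technical step in the argument is the identification of $N(a_1^{-1}\cdots\varepsilon\cdots a_1)$ and $N(a_{k-2}\cdots a_1)$ with the indecomposable injectives $I_n$ and $I_{k-1}$ of the gentle algebra $\Lambda$; once this is done, the $g$-vector is read off directly from Lemma~\ref{gVectorLemma}.
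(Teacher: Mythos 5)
Your proof is correct and follows essentially the same route as the paper: it reads off the minimal injective presentation $0\to M(i'_k)\to I_n\to I_{k-1}^{\oplus 2}$ from Remark~\ref{Obs3} with $m_j=n_j=k$, identifies $N(a_1^{-1}\cdots\varepsilon\cdots a_1)$ with $I_n$ and $N(a_{k-2}\cdots a_1)$ with $I_{k-1}$, and applies Lemma~\ref{gVectorLemma}. You merely supply a little more justification for the identification of the injectives (which the paper asserts directly), so no changes are needed.
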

\begin{proof}
We start with  the pending arc $i'_k$, from Remark \ref{Obs3} we obtain $I_n=N(a_1^{-1}
\cdots \varepsilon\cdots a_1)$ and  $N(a_{n_j-2}\cdots a_1)= N(a_{m_j-2}\cdots a_1)$ 
because $n_j=k=m_j$, remember that the string associated to $i'_k$ is $a_k^{-1}\cdots 
\varepsilon\cdots a_k$. By the other hand  $I_{k-1}=N(a_{k-2}\cdots a_1)$. The result 
follow from Lemma \ref{gVectorLemma}.
\end{proof}

\begin{lemma}
Let $j$ be an arc. If $j$ is not a initial arc, it is not a pending arc and it 
intersects to $i_n$ in the interior of $\Sigma_n$, then we have
\begin{equation*}
g_{\Lambda}(M(j))_l=
\left\{
\begin{array}{lll}
 \ \ 1 \mbox{ if } l+1  \mbox{ is the minimum of } k \mbox{ such that }
 \dimension(M(j))_k=1,\\
  \ \ 1 \mbox{ if } l+1 \mbox{ is the minimum of } k \mbox{ such that }
  \dimension(M(j))_k=2,\\
-1 \mbox{  if } l=n,\\
 \ \ 0 \mbox{ in otherwise}.
\end{array}
\right.
\end{equation*}
\end{lemma}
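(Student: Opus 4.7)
The plan is to apply Lemma \ref{gVectorLemma} to the minimal injective presentation of $M(j)$ furnished by Remark \ref{Obs3}(1). First I would inspect Remark \ref{Obs2} to record that, under the hypotheses of the lemma, the string $W_j$ has the form
$$W_j = a_{n_j}^{-1} a_{n_j+1}^{-1}\cdots a_{n-1}^{-1}\,\varepsilon\, a_{n-1}\cdots a_{m_j}$$
with $1 \le m_j < n_j \le n$; the strict inequality $m_j < n_j$ is precisely what distinguishes the non-pending arcs crossing $i_n$ from the pending ones $i'_k$. Then I would read the dimension vector of $M(j)$ directly from the vertex sequence of $N(W_j)$ defined in Section \ref{DefinitionNW}: the ascending part of the walk visits vertices $m_j, m_j+1,\ldots, n$ once each, the loop $\varepsilon$ adds a second occurrence of $n$, and the descending part visits $n-1, n-2,\ldots, n_j$ once each. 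This yields
$$\dim(M(j))_k = \begin{cases} 1 & \text{if } m_j \le k \le n_j - 1, \\ 2 & \text{if } n_j \le k \le n, \\ 0 & \text{otherwise}, \end{cases}$$
so that $m_j = \min\{k : \dim(M(j))_k = 1\}$ and $n_j = \min\{k : \dim(M(j))_k = 2\}$.

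Next I would identify the terms of the injective presentation with indecomposable injective $\Lambda$-modules. Using that $\dim(I_l)_k$ equals the number of paths from $k$ to $l$ in the quiver \eqref{SQA_n}, one sees that $I_l = N(a_{l-1}\cdots a_1)$ for $2 \le l < n$, that $I_1 = S_1$, and that $I_n = N(a_1^{-1}\cdots a_{n-1}^{-1}\varepsilon a_{n-1}\cdots a_1)$. Under the convention $I_0 := 0$ used in Remark \ref{Obs3}, the presentation becomes
$$0 \to M(j) \to I_n \to I_{n_j-1} \oplus I_{m_j-1},$$
so in the notation of Lemma \ref{gVectorLemma} one has $a_n = 1$ with all other $a_i = 0$, and $b_{n_j-1} = 1$, together with $b_{m_j-1} = 1$ when $m_j \ge 2$ (the second summand vanishes when $m_j = 1$).

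The formula $g_l = -a_l + b_l$ (with $V_l = 0$) then yields $g_n = -1$, $g_{n_j-1} = 1$, and $g_{m_j-1} = 1$ when $m_j \ge 2$, while all other components are zero. Translating back via the identifications from the first step, $l+1 = n_j$ is exactly the condition that $l+1$ be the minimum $k$ with $\dim(M(j))_k = 2$, and $l+1 = m_j$ is the condition that $l+1$ be the minimum $k$ with $\dim(M(j))_k = 1$; the boundary case $m_j = 1$ leaves no valid $l \in \{1,\ldots,n\}$ satisfying the latter equation, consistently with the vanishing of the corresponding summand. The only mildly delicate point I anticipate is this boundary bookkeeping, together with the verification that non-pending crossings of $i_n$ force $m_j < n_j$ so that the two summands in $I_1(M(j))$ contribute independently to $b_{n_j-1}$ and $b_{m_j-1}$; the remainder is a direct unpacking of the definitions.
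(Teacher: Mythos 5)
Your argument is correct and is exactly the route the paper takes: it reads off the dimension vector from the string $W_j$ of Remark \ref{Obs2}, identifies the terms of the minimal injective presentation in Remark \ref{Obs3}(1) as $I_n$, $I_{n_j-1}$ and $I_{m_j-1}$, and applies Lemma \ref{gVectorLemma} (the paper only writes this out for the pending-arc case and leaves this lemma as an identical computation). Your handling of the boundary cases ($m_j=1$, and the degenerate descending part when $n_j=n$) fills in details the paper omits, and is consistent with the stated conventions.
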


\begin{lemma}
Let $j$ be an arc. If $j$ is not an initial arc, it is not a pending arc and  it does 
not intersect to $i_n$, then we have
\begin{equation*}
g_{\Lambda}(M(j))_l=
\left\{
\begin{array}{lll}
  \ \ 1 \mbox{ if } l+1 \mbox{ is the minimum of } k \mbox{ such that }
  \dimension(M(j))_k=1,\\
-1 \mbox{  if } l \mbox{ is the maximum of } k \mbox{ such that }\dimension(M(j))_k\neq 
0,\\
 \ \ 0 \mbox{ in otherwise}.
\end{array}
\right.
\end{equation*}
\end{lemma}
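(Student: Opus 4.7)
The plan is to mirror the proof given for the first lemma in this sequence, using Remark \ref{Obs3}(2) to obtain an explicit minimal injective presentation of $M(j)$ and then Lemma \ref{gVectorLemma} to read off the $g$-vector.

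Under the hypotheses of the statement, $W_j$ has the form $a_{n_j}\cdots a_{m_j}$ with $n-2 \ge n_j \ge m_j$; the length-zero case $W_j = 1_{(i,+)}$ (which occurs for $j = [v_i, v_{i+2}]^+$) will be handled as the degenerate instance $n_j+1=m_j$ at the end. Because $j$ does not cross $i_n$, the loop $\varepsilon$ plays no role in $M(j)$, so the representation is supported precisely on the interval $[m_j, n_j+1]$ with dimension $1$ at each such vertex. Consequently, $m_j$ is the minimum $k$ with $\dim M(j)_k = 1$ and $n_j + 1$ is the maximum $k$ with $\dim M(j)_k \neq 0$, which lets me match the formula's descriptions of indices to explicit numbers.

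Next I would identify the two string modules in the presentation
\[
0\to M(j)\to N(a_{n_j}\cdots a_1)\to N(a_{m_j-2}\cdots a_1)
\]
of Remark \ref{Obs3}(2) as injective indecomposables. For $0 \le r \le n-2$, the string module $N(a_r\cdots a_1)$ is uniserial with dimension vector equal to the indicator of $\{1,\ldots,r+1\}$ and simple socle $S_{r+1}$; since $r+1$ lies strictly before the loop, one verifies that it is the injective envelope $I_{r+1}$ in $\Lambda\module$. With the convention $I_{m_j-1} = 0$ when $m_j = 1$, the presentation becomes $0\to M(j)\to I_{n_j+1}\to I_{m_j-1}$.

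Applying Lemma \ref{gVectorLemma} with $a_{n_j+1} = 1$, $b_{m_j-1} = 1$ and all other coefficients zero then yields $g_l = +1$ at $l = m_j - 1$ (a valid vertex only when $m_j > 1$, which is consistent with the boundary case), $g_l = -1$ at $l = n_j+1$, and $g_l = 0$ otherwise. The main, but modest, point requiring justification is that $N(a_r\cdots a_1)$ is genuinely injective in $\Lambda\module$ (and not merely in the loop-free quotient algebra); I expect this to be the only real obstacle, and it can be settled either by computing $I_{r+1}$ directly from the presentation of $\Lambda$ or by verifying the lifting property against the simples $S_i$.
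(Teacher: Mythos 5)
Your proof is correct and follows essentially the same route as the paper: the paper derives all three $g$-vector lemmas from Remark \ref{Obs3} together with Lemma \ref{gVectorLemma}, writing out only the pending-arc case explicitly, and your argument is precisely the omitted instance coming from case (2) of that remark. The identification $N(a_r\cdots a_1)\cong I_{r+1}$ for $r+1<n$ and the handling of the degenerate cases $m_j=1$ and $W_j=1_{(i,+)}$ are exactly the details that needed checking.
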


\begin{proposition}\label{PropIndLin}
The set 
\[
\{\mathcal{C}_{\Lambda}(M(j))\colon j \mbox{ is an arc of   } \Sigma_n\} 
\]
is linearly independent over $\mathbb{C}$.
\end{proposition}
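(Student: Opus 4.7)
The strategy is to reduce linear independence of the family $\{\mathcal{C}_{\Lambda}(M(j))\}$ to the injectivity of the map $j\mapsto g_{\Lambda}(M(j))$ on arcs of $\Sigma_n$, and then exploit an extremality argument on the Caldero-Chapoton formula.

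First I would use the three preceding lemmas to list the $g$-vectors in four cases: (i) initial arcs $j=i_k\in\tau_0$ give $g_{\Lambda}(M(j))=e_k$; (ii) pending arcs $j=i'_k$ with $k\in\{1,\ldots,n\}$ give $2e_{k-1}-e_n$ (with the convention $e_0:=0$); (iii) non-pending arcs $j$ crossing $i_n$ give $e_{a-1}+e_{b-1}-e_n$, where $a$ (resp.\ $b$) is the smallest index $l$ with $\dim M(j)_l=1$ (resp. $=2$); (iv) arcs that do not cross $i_n$ and are not in $\tau_0$ give $e_{a-1}-e_b$, where $[a,b]$ is the support of $M(j)$. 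A direct inspection of the number, positions, and values of nonzero entries shows these vectors are pairwise distinct across the four cases, and within each case the parameters are recovered from the $g$-vector (so they pin down the arc).

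Second, write the Caldero-Chapoton function as $\mathcal{C}_{\Lambda}(M(j))=\textbf{x}^{g_{\Lambda}(M(j))}F_{M(j)}(\hat{y}_1,\ldots,\hat{y}_n)$ with $\hat{y}_i=\textbf{x}^{C_Q e_i}$ and $F_{M(j)}(y)=\sum_{\textbf{e}}\chi(\Grass_{\textbf{e}}(M(j)))y^{\textbf{e}}$. The summand $\textbf{e}=0$ contributes $\chi(\Grass_0(M(j)))=1$, and since $M(j)$ is a string module, all Euler characteristics $\chi(\Grass_{\textbf{e}}(M(j)))$ are non-negative; hence the monomial $\textbf{x}^{g_{\Lambda}(M(j))}$ appears in $\mathcal{C}_{\Lambda}(M(j))$ with a strictly positive integer coefficient, while every other monomial has exponent of the form $g_{\Lambda}(M(j))+C_Q\textbf{e}$ with $\textbf{e}\in\mathbb{N}^n$. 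Now suppose a nontrivial relation $\sum_j c_j\mathcal{C}_{\Lambda}(M(j))=0$ exists. Pick a weight $\omega\in\mathbb{Q}^n$ satisfying $(\omega^T C_Q)_l\geq 0$ for every $l$; for the linear quiver $1\to 2\to\cdots\to n$ this amounts to $\omega_{l+1}\geq\omega_{l-1}$ for all $l$ (with $\omega_0=\omega_{n+1}=0$), and such $\omega$ exist by an elementary construction. After a small perturbation (still preserving the non-negativity) we may assume $\omega^T g_{\Lambda}(M(j))$ is pairwise distinct on the finite set $\{j:c_j\neq 0\}$, using that the $g$-vectors themselves are distinct. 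Let $j^*$ achieve the minimum of $\omega^T g_{\Lambda}(M(\cdot))$ on this set. Then for any $j\neq j^*$ with $c_j\neq 0$ and any $\textbf{e}\in\mathbb{N}^n$ we have
\[
\omega^T\bigl(g_{\Lambda}(M(j))+C_Q\textbf{e}\bigr)\geq \omega^T g_{\Lambda}(M(j))>\omega^T g_{\Lambda}(M(j^*)),
\]
so no monomial of $\mathcal{C}_{\Lambda}(M(j))$ can equal $\textbf{x}^{g_{\Lambda}(M(j^*))}$. Consequently the coefficient of $\textbf{x}^{g_{\Lambda}(M(j^*))}$ in the sum is a strictly positive integer multiple of $c_{j^*}$, forcing $c_{j^*}=0$ and contradicting the choice of $j^*$.

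The main technical point is the injectivity of the $g$-vector map: the case analysis is routine but must be done carefully at boundary instances (for example, showing that $i'_1$, whose $g$-vector is $-e_n$, does not coincide with any $g$-vector coming from cases (iii) or (iv)). The existence of the weight $\omega$ and the extremality argument are then elementary, using only the skew-symmetric shape of $C_Q$ and the non-negativity of Euler characteristics for string-module Grassmannians.
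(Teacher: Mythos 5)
Your first step (injectivity of $j\mapsto g_{\Lambda}(M(j))$ via the case analysis of the three lemmas) agrees with the paper, and your extremality framework is the right general idea. The gap is in the weight-perturbation step. The cone $\{\omega\in\mathbb{Q}^n\colon \omega^{T}C_Q\geq 0\}$ is not full-dimensional when $n$ is odd: your own constraints $\omega_{l+1}\geq\omega_{l-1}$ with $\omega_0=\omega_{n+1}=0$ chain up to $0\leq\omega_2\leq\omega_4\leq\cdots\leq\omega_{n-1}\leq 0$, forcing $\omega_2=\omega_4=\cdots=\omega_{n-1}=0$. Hence no admissible $\omega$, perturbed or not, can separate two $g$-vectors that differ only in even coordinates, and such pairs do occur: for $n=5$ the arcs $[v_1,v_3]^-$ and $[v_1,v_5]^-$ have dimension vectors $(1,1,2,2,2)$ and $(1,1,1,1,2)$ and $g$-vectors $e_2-e_5$ and $e_4-e_5$. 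So the minimizer $j^*$ need not be unique, and your strict inequality $\omega^{T}g_{\Lambda}(M(j))>\omega^{T}g_{\Lambda}(M(j^*))$ fails; the argument as written only goes through for $n$ even, which is exactly the case the paper dispatches by citing that $\ker(C_Q)=0$.

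The paper repairs this by using strictly more than $\textbf{e}\in\mathbb{N}^n$: the exponents that actually occur are dimension vectors of submodules of string modules, so they lie in the smaller cones $\mathbb{Q}^n_{\mathrm{succ}}$ (when $\soc M(j)=S_n$) or $\mathbb{Q}^{n-1}_{0}$ (otherwise). Dualizing these smaller cones yields two partial orders $\leq$ and $\preceq$ for which $\textbf{x}^{g_{\Lambda}(M(j))}$ is the unique maximal monomial of $\mathcal{C}_{\Lambda}(M(j))$, and the independence is then proved by a two-case maximality argument according to the socle, using that a monomial $\textbf{x}^{g}$ with $\soc=S_n$ cannot occur in a Caldero-Chapoton function with socle $S_i$, $i<n$. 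To salvage your proof you would need to import this finer information about which $\textbf{e}$ contribute (equivalently, to enlarge your dual cone of admissible weights by restricting the primal cone of exponents); with only $\textbf{e}\geq 0$ the separation you need is genuinely unavailable for odd $n$.
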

\begin{proof}
From the three lemmas above we have that  the $g$-vectors $g_{\Lambda}(M(j))$ are 
pairwise different. For $n$ even the result  follows directly from \cite[Proposition 
4.3]{CLS-15}  since $\ker(C_Q)=0$. For $n$ arbitrary we can adapt the argument in proof 
of \cite[Proposition 4.3]{CLS-15} as follows.
Define 
\[
\begin{split}
&\mathbb{Q}^n_{\geq 0}=\{(x_1,x_2,\ldots,x_n)\in \mathbb{Q}^n\colon x_i\geq 0 \mbox{ 
for all } i\},\\ 
&\mathbb{Q}^n_{\mbox{succ}}=\{(x_1,x_2,\ldots,x_n)\in \mathbb{Q}^n_{\geq 0}\colon x_i
\neq 0 \mbox{ implies } x_{i+1}\neq 0 \mbox{   } \},\\
&\mathbb{Q}^{n}_{0}=\{ (x_1,x_2,\ldots,x_n)\in \mathbb{Q}^n_{\geq 0}\colon x_n=0 \}.
\end{split}
\]
We can define two partial orders in $\mathbb{Z}^n$. Let $\textbf{a},\textbf{b}\in 
\mathbb{Z}^n$ be vectors. We say $\textbf{a}\leq \textbf{b}$ if there exist some $
\textbf{e}\in \mathbb{Q}^n_{\mbox{succ}}$ such that $\textbf{a}=\textbf{b} +C_Q
\textbf{e}$ and $\textbf{a}\preceq \textbf{b}$ if there exist some $\textbf{f}\in 
\mathbb{Q}^{n}_{0}$ such that $\textbf{a}=\textbf{b} +C_Q\textbf{f}$. These two 
orders induce two partial orders on the set of Laurent  monomials in $n$ variables 
$x_1, x_2,\ldots, x_n$. We say  $\textbf{x}^{\textbf{a}}\leq \textbf{x}^{\textbf{b}}$ 
if $\textbf{a}\leq  \textbf{b}$ and  $\textbf{x}^{\textbf{a}}\preceq \textbf{x}
^{\textbf{b}}$ if $\textbf{a}\preceq  \textbf{b}$. We define the \emph{degree} of $
\textbf{x}^{\textbf{a}}$ as $\deg(\textbf{x}^{\textbf{a}})=\textbf{a}$.

If $\soc(M(j))=S_n$ (the \emph{socle} of $M(j)$) , then  $\mathcal{C}_{\Lambda}(M(j))$ 
has an unique monomial of maximal degree with respect to $\leq$, namely $g_{\Lambda}
(M(j))$. If $\soc(M(j))=S_i$ with $i\neq n$, then  $\mathcal{C}_{\Lambda}(M(j))$ has an 
unique monomial of maximal degree with respect to $\preceq$ given by $g_{\Lambda}(M(j))
$. Since the $g$-vectors are pairwise different we have that the Caldero-Chapoton 
functions are pairwise different. 
Now assume $\lambda_1\mathcal{C}_{\Lambda}(M(j_1))+\cdots +\lambda_t\mathcal{C}
_{\Lambda}(M(j_t))=0$ for some $\lambda_l\in\mathbb{C}$. We can assume that  $\lambda_l
\neq 0$ for all $l$. 

It can be seen that if $\soc (M(j))=S_n$, then $\textbf{x}^{g_{\Lambda}(M(j))}$ does 
not occur as a summand of any $\mathcal{C}_{\Lambda}(M(k))$ with $\soc (M(k))=S_i$ and 
$i<n$. If there exist an index $s_0$ such that $M(j_{s_0})$ has socle $S_n$, then there 
exist an index $s$ such that $\textbf{x}^{g_{\Lambda}(M(j_s))}$ is $\leq$-maximal  in 
the set of $\{\textbf{x}^{g_{\Lambda}(M(j_l))}\colon \soc(M(j_l))=S_n \}$. Since the $g
$-vectors are pairwise different we can conclude that $\lambda_s=0$. Indeed, $
\textbf{x}^{g_{\Lambda}(M(j_s))}$ does not occur as a summand of any $\mathcal{C}
_{\Lambda}(M(j_l))$ with $l\neq s$, which is a contradiction.

If $\soc(M(j_l))\neq S_n$ for all $l$, then there exist an index $r$ such that $
\textbf{x}^{g_{\Lambda}(M(j_r))}$ is $\preceq$-maximal in the set of $\{\textbf{x}
^{g_{\Lambda}(M(j_l))}\colon 1\leq l\leq t \}$. Since the $g$-vectors are pairwise 
different we have that $\textbf{x}^{g_{\Lambda}(M(j_r))}$ does not occur as a summand 
of any of the $\mathcal{C}_{\Lambda}(M(j_l))$ with $l\neq r$. Thus $\lambda_r=0$, a 
contradiction. Therefore  $\mathcal{C}_{\Lambda}(M(j_1)), \ldots, \mathcal{C}_{\Lambda}
(M(j_t))$ are linearly independent.
\end{proof}

\subsection{Generic version}\label{GenericSect}
 In this section we obtain \emph{generic} version of the results of the 
last section. Given a  triangulation $\tau$ of $\Sigma_n$ we construct an 
\emph{strongly reduced irreducible component} of $\decrep(\Lambda)$, see Section 
\ref{SRsection}.

Denote by $Z_j$ the irreducible component of $\decrep(\Lambda)$ that contains  $
\mathcal{O}(M(j))$. We know $\mathcal{O}(M(j))$ is open, so it is dense in $Z_j$. Then 
$E_{\Lambda}(Z_j)=E_{\Lambda}(M(j))=0$. In the notation of Section \ref{SRsection} 
this means, in particular, that $Z_j$ is a strongly reduced  irreducible component  of $
\decrep(\Lambda)$. We can think that some generic homological data of $Z_j$ is encoded 
in the homological data of $M(j)$.  
\begin{proposition}\label{PropE1}
Given a triangulation $\tau$ of $\Sigma_n$ and two arcs $j_1,j_2\in \tau$ we have 
$E_{\Lambda}(Z_{j_1},Z_{j_2})=0$.
\end{proposition}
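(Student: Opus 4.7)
The plan is to reduce the component-level statement to the pointwise equality $E_{\Lambda}(M(j_1), M(j_2)) = 0$ already established in Lemma \ref{LemmaCompati}, via a standard generic/upper-semicontinuity argument.

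First I would unwind the definition of $E_{\Lambda}(Z_{j_1}, Z_{j_2})$: it is the generic value of $E_{\Lambda}(\mathcal{M}, \mathcal{N})$ for $(\mathcal{M}, \mathcal{N})$ ranging over $Z_{j_1} \times Z_{j_2}$, and this value is attained on a dense open subset. The key observation is that for fixed dimension vectors and fixed decoration vectors, the formula
\[
E_{\Lambda}(\mathcal{M}, \mathcal{N}) = \dim \Hom_{\Lambda}(M, N) + \sum_{i=1}^n \dim(M_i)\,g_i(\mathcal{N})
\]
depends, on the product $Z_{j_1} \times Z_{j_2}$, only on $\dim \Hom_{\Lambda}(M, N)$, because the underlying dimension vectors of the representations in $Z_{j_l}$ are constant and the $g$-vector on the strongly reduced component $Z_{j_2}$ generically equals $g_{\Lambda}(M(j_2))$. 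Upper semi-continuity of $\dim \Hom_{\Lambda}(\cdot, \cdot)$ therefore yields upper semi-continuity of $E_{\Lambda}(\cdot, \cdot)$ on $Z_{j_1} \times Z_{j_2}$.

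Second, I would locate the pair $(M(j_1), M(j_2))$ inside a dense open subset of $Z_{j_1} \times Z_{j_2}$. If $j_l \notin \tau_0$, then by Lemma \ref{LemmaE1} the representation $M(j_l)$ is $\Ext^1$-rigid, so its $G_{\mathbf{d}}$-orbit is open, and hence dense, in $Z_{j_l}$. If $j_l \in \tau_0$, then $M(j_l) = \mathcal{S}_{j_l}^{-}$ and $Z_{j_l}$ is the single point $\{\mathcal{S}_{j_l}^-\}$. In either case, $(M(j_1), M(j_2))$ belongs to a dense open subset of the product, on which $E_{\Lambda}$ attains its generic value. Applying Lemma \ref{LemmaCompati} to $\tau$ gives $E_{\Lambda}(M(j_1), M(j_2)) = 0$; combined with the nonnegativity of $E_{\Lambda}$ and the upper-semicontinuity above, this forces the generic value of $E_{\Lambda}$ on $Z_{j_1} \times Z_{j_2}$ to be zero. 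Hence $E_{\Lambda}(Z_{j_1}, Z_{j_2}) = 0$.

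The only mildly delicate point is the first paragraph: verifying that $E_{\Lambda}(\cdot, \cdot)$ really is upper semi-continuous and that $E_{\Lambda}(Z_{j_1}, Z_{j_2})$ is defined as a generic value. Both are by now standard in the CLS framework, and the second-argument $g$-vector is constant on the strongly reduced component $Z_{j_2}$ (so only the $\dim \Hom$ term varies), which makes the semi-continuity immediate. Once that is in place, the proof is a short density argument driven entirely by Lemmas \ref{LemmaE1} and \ref{LemmaCompati}.
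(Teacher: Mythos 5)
Your proposal is correct and follows essentially the same route as the paper: the paper likewise observes that $\mathcal{O}(M(j_1))\times\mathcal{O}(M(j_2))$ is open, hence dense by irreducibility, in $Z_{j_1}\times Z_{j_2}$, and concludes from Lemma \ref{LemmaCompati} that the generic value of $E_{\Lambda}$ is zero. Your extra detour through upper semi-continuity and nonnegativity is harmless but not needed, since constancy of $E_{\Lambda}$ on a dense open subset already determines the generic value.
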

\begin{proof}
By Lemma \ref{LemmaCompati} we know $E_{\Lambda}(M(j_1),M(j_2))=0$. It can be seen that 
the set 
$\mathcal{O}(M(j_1))\times \mathcal{O}(M(j_2))$ is open in $Z_{j_1}\times Z_{j_2}$. 
Indeed, from Lemma \ref{OrbitOpen} we know that $\mathcal{O}(M(j_l))$ is open in 
$Z_{j_l}$ for $l=1,2$.
The claim follows from the  equality of sets
\[
A\times B\setminus (C \times D)=[(A\setminus C)\times B] \cup [A\times (B\setminus D)], 
\]
because 
$Z_{j_1}\times Z_{j_2}\setminus [\mathcal{O}(M(j_1))\times \mathcal{O}(M(j_2))]$
would be the union of two closed sets.

If $(M,N)\in \mathcal{O}(M(j_1))\times \mathcal{O}(M(j_2))$, then $E_{\Lambda}(M,N)=0$. 
Since $Z_{j_1}$ and $Z_{j_2}$ are irreducible, we have $\mathcal{O}(M(j_1))\times 
\mathcal{O}(M(j_2))$ is dense in $Z_{j_1}\times Z_{j_2}$. Then $E_{\Lambda}(Z_{j_1}, 
Z_{j_2})=0$.
\end{proof}

The next result is a consequence of Theorem \ref{CDIsr} and Proposition \ref{PropE1}.

\begin{proposition}
Given a triangulation $\tau=\{j_1,\ldots ,j_n\}$ of $\Sigma_n$, the closed set
\[
Z_{\tau}=\overline{Z_{j_1}\oplus \cdots \oplus Z_{j_n}}
\]
is a strongly reduced irreducible component of $\decrep(\Lambda)$.
\end{proposition}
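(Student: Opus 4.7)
The plan is to deduce the proposition directly from the cited result \cite[Theorem 5.11]{CLS-15}, feeding it the two ingredients that have already been prepared.

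First, I would verify that each individual factor $Z_{j_i}$ is itself a strongly reduced irreducible component of $\decrep(\Lambda)$. For arcs $j_i \in \tau \setminus \tau_0$, Lemma \ref{LemmaE0} gives $E_{\Lambda}(M(j_i)) = 0$, and the preceding discussion (following Lemma \ref{LemmaE1}) shows that $\mathcal{O}(M(j_i))$ is open, and therefore dense, in $Z_{j_i}$; hence the generic value of $E_{\Lambda}$ on $Z_{j_i}$ is zero, which is the defining property of strong reducedness. For arcs $j_i \in \tau_0$, the component $Z_{j_i}$ reduces to the corresponding negative simple (or a simple orbit in the non-decorated factor), and the vanishing of $E_{\Lambda}$ is immediate from the definitions.

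Second, I would invoke Proposition \ref{PropE1} to get the pairwise compatibility $E_{\Lambda}(Z_{j_a}, Z_{j_b}) = 0$ for all pairs of arcs $j_a, j_b \in \tau$. Together with the strong reducedness of each factor, this is exactly the hypothesis of \cite[Theorem 5.11]{CLS-15}, whose conclusion states that under these conditions the closure $\overline{Z_{j_1} \oplus \cdots \oplus Z_{j_n}}$ (the image closure of the natural map from $\prod_i Z_{j_i}$ into the decorated representation variety of the total dimension vector) is a strongly reduced irreducible component of $\decrep(\Lambda)$, yielding the proposition.

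There is no genuine obstacle at this stage: all the nontrivial work (computing that $E_{\Lambda}$ vanishes on the orbits of the $M(j)$'s, and openness of these orbits) has been done in Lemmas \ref{LemmaE0}--\ref{LemmaE1} and Proposition \ref{PropE1}. The only thing to be careful about is making sure the hypotheses of \cite[Theorem 5.11]{CLS-15} are quoted in the exact form used there; in particular one should check that $\decrep(\Lambda)$ is set up with the same conventions (dimension vector grading, $G_{\textbf{d}}$-action, strong reducedness defined via generic vanishing of $E_{\Lambda}$) so that the theorem applies verbatim.
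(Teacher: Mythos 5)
Your proposal matches the paper's argument: the paper likewise notes (in the discussion preceding Proposition \ref{PropE1}) that each $\mathcal{O}(M(j))$ is open, hence dense, in $Z_j$, so that $E_{\Lambda}(Z_j)=E_{\Lambda}(M(j))=0$ and each $Z_j$ is strongly reduced, and then deduces the statement directly from Proposition \ref{PropE1} together with \cite[Theorem 5.11]{CLS-15}. The approach and the ingredients are essentially identical.
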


We introduce some notation that, albeit  \emph{non-standard},  shall
be useful to us. Given a vector $v=(v_1,\ldots, 
,v_n)^t\in \mathbb{Z}^n$ we write $v=v_1[1]+\cdots 
+v_n[n]$. 
Moreover, we write $[n_1, n_2]\subseteq [1,n]$ to 
simplify $1[n_1]+\cdots +1[n_2]$.  For example, with 
this notation,  $2[n_1,n_2]$ means $2[n_1]+\cdots 
+2[n_2]$.

The next proposition generalizes   \cite[Proposition 9.4]{CLS-15}.

\begin{proposition}\label{ProGen}
The set 
\[
\{\mathcal{C}_{\Lambda}(Z)\colon Z\in \firr(\Lambda), E_{\Lambda}(Z)=0\}
\]
generates the Caldero-Chapoton algebra $\mathcal{A}_{\Lambda}$ as $\mathbb{C}$-algebra, 
where $\firr(\Lambda)$ denotes the strongly reduced irreducible  components of $
\decrep(\Lambda)$.
\end{proposition}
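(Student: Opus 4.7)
My approach is to reduce to the indecomposable case and then split according to whether the $E$-invariant vanishes. First, Lemma \ref{Lemma4.1}(3) gives $\mathcal{C}_{\Lambda}(\mathcal{M}\oplus\mathcal{N})=\mathcal{C}_{\Lambda}(\mathcal{M})\mathcal{C}_{\Lambda}(\mathcal{N})$, and the text recalls that the set $\mathcal{C}_{\Lambda}$ already spans $\mathcal{A}_{\Lambda}$ as a $\mathbb{C}$-vector space. Combining these facts, every element of $\mathcal{A}_{\Lambda}$ is a $\mathbb{C}$-linear combination of products $\prod_{i}\mathcal{C}_{\Lambda}(M_i)$ with each $M_i$ an indecomposable decorated $\Lambda$-module. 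Hence it suffices to prove that $\mathcal{C}_{\Lambda}(M)$ lies in the $\mathbb{C}$-subalgebra generated by $\{\mathcal{C}_{\Lambda}(Z):Z\in\firr(\Lambda),\ E_{\Lambda}(Z)=0\}$ for every indecomposable $M\in\decrep(\Lambda)$.

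For indecomposable $M$ with $E_{\Lambda}(M)=0$ the statement is essentially immediate. By Proposition \ref{ClasE} we have $M\cong M(j)$ for some arc $j$ of $\Sigma_n$, and the open-orbit lemma preceding Proposition \ref{ClasE} together with the discussion in Section \ref{GenericSect} identifies $\mathcal{C}_{\Lambda}(M)$ with the generic Caldero-Chapoton function of the strongly reduced irreducible component $Z_j\in\firr(\Lambda)$, which satisfies $E_{\Lambda}(Z_j)=0$. Thus $\mathcal{C}_{\Lambda}(M)=\mathcal{C}_{\Lambda}(Z_j)$ already belongs to the generating set.

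The heart of the argument, and the main obstacle, is the case of an indecomposable $M$ with $E_{\Lambda}(M)>0$. Theorem \ref{BRTheorem} together with the string analysis in the proof of Proposition \ref{ClasE} says that every such $M$ is a string module $N(W)$ for a string $W$ belonging to a small, explicit family of non-arc shapes, e.g.\ $W=a_{n-1}\cdots a_l$ or $W=a_{n_W}^{-1}\cdots\varepsilon^{-1}\cdots a_{m_W}$. For each such $W$ the plan is to produce two arcs $j_1,j_2$ and a cluster-multiplication identity
\[
\mathcal{C}_{\Lambda}(M(j_1))\cdot\mathcal{C}_{\Lambda}(M(j_2))=\mathcal{C}_{\Lambda}(N(W))+\sum_{k}\mathcal{C}_{\Lambda}(X_k),
\]
in which each $X_k$ is a direct sum of string modules strictly shorter than $N(W)$; one then solves for $\mathcal{C}_{\Lambda}(N(W))$ and inducts on $\dim N(W)$, handling the $X_k$-terms via the induction hypothesis and the previous paragraph. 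Producing the identity is the delicate step, since no general cluster-multiplication theorem for basic algebras is available and one has to verify it by a direct comparison of $g$-vectors and Grassmannian Euler characteristics. The gentle structure of $\Lambda$ from Proposition \ref{PropGentAlg} and the sub-string combinatorics of Section \ref{SubSectionCam} reduce the bookkeeping to finitely many explicit cases, and this finite case analysis is the route I would follow.
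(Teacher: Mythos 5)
Your overall skeleton matches the paper's: reduce to indecomposable decorated representations via the multiplicativity of $\mathcal{C}_{\Lambda}$ over direct sums, dispose of the $E$-rigid ones by identifying them with the generic functions of the components $Z_j$ (using Proposition \ref{ClasE} and the density of the open orbit $\mathcal{O}(M(j))$ in $Z_j$), and then express the Caldero-Chapoton functions of the finitely many non-$E$-rigid string types in terms of the $E$-rigid ones. The first two steps are fine. The problem is that the third step --- which you yourself identify as ``the heart of the argument'' --- is not actually carried out. You propose to find, for each non-rigid string $W$, a multiplicative identity $\mathcal{C}_{\Lambda}(M(j_1))\mathcal{C}_{\Lambda}(M(j_2))=\mathcal{C}_{\Lambda}(N(W))+\sum_k\mathcal{C}_{\Lambda}(X_k)$ and to induct on dimension, but you concede that no such cluster-multiplication theorem is available for these algebras and that the identity would have to be verified case by case; you then stop at ``this is the route I would follow'' without exhibiting the arcs $j_1,j_2$, the correction terms $X_k$, or the verification. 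As it stands the proof has a hole exactly where the content lies.

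For comparison, the paper's proof of Proposition \ref{ProGen} does something simpler and purely additive: using the explicit list of non-rigid strings from the proof of Proposition \ref{ClasE} (they are of the form $a_{n_1}^{-1}\cdots\varepsilon^{-1}\cdots a_{m_1}$ or $a_{n-1}\cdots a_{m_2}$, i.e.\ interval strings reaching the loop vertex ``the wrong way''), it records two families of two-term relations, e.g.\ $\mathcal{C}_{\Lambda}(N(W_{[m_2,n]}))=\mathcal{C}_{\Lambda}(N(W_{[m_2,n-1]}))+\mathcal{C}_{\Lambda}(\mathcal{S}_{m_2-1}^-)$, in which each summand on the right is already the function of an $E$-rigid module; these are checked by a direct computation of $g$-vectors and submodule Grassmannians (see Remark \ref{ExPro1} for the $n=3$ instances). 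No induction and no exchange relation is needed. Your multiplicative idea is not wrong in spirit --- it is essentially how the paper handles arbitrary triangulations in Proposition \ref{PropoGenerate}, by lifting a Ptolemy relation to the covering polygon $\widetilde{\Sigma}_n$ and cancelling the common factor $\mathcal{C}(\pi_*(M(\widetilde{j}')))$ to land on an additive two-term relation --- but to make your argument complete you would either have to import that covering machinery or simply write down and verify the additive relations directly, as the paper does.
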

\begin{proof}
Only remains to prove that the Caldero-Chapoton functions of the non-$E$-rigid 
representations can be expressed in terms of the Caldero-Chapoton functions of the 
$E$-rigid representations. Let $L_1=a_{n_1}^{-1}\cdots\varepsilon \cdots a_{m_1}$ with 
$m_1<n_1$ be a string and  let $m_2\leq n$ be an integer. A direct calculation yields 
the following equations
\[
\begin{split}
&\mathcal{C}_{\Lambda}(N(L_1'))=\mathcal{C}_{\Lambda}(N(L_1))+ \mathcal{C}_{\Lambda}
(N(W_{[m_1,n_1-2]}))\\
&\mathcal{C}_{\Lambda}(N(W_{[m_2,n]}))=\mathcal{C}_{\Lambda}(N(W_{[m_2,n-1]}))+
\mathcal{C}_{\Lambda}(\mathcal{S}_{m_2-1}^-)
\end{split}
\]
Here we set $W_{\emptyset}=0$ and $\mathcal{S}_{0}^-:=0$. The proposition  follows 
from Proposition \ref{ClasE}.
Indeed, let us verify the first equality:
set $N_1=N(L_1)$, $N_1'=N(L_1')$ and $M=N(W_{[m_1,n_1-2]})$. With that convention we 
obtain that 
$g_{\Lambda}(N_1)=[n_1-1]+[m_1-1]-[n]=g_{\Lambda}(N_1')$ and $g_{\Lambda}(M)=-[n_1-2]+
[m_1-1]$.

For $M$: the dimension vector of sub-representations are given by $\textbf{h}_0=
\textbf{0}$ and 
$\textbf{h}_i=[n_1-i-1, n_1-2]$, where $i\in[1, n_1-m_1-1]$.  From this we get the 
vectors 
$C_Q\textbf{h}_0=\textbf{h}_0$ and $C_Q\textbf{h}_{i}= [n_1-1, n_1-2]-[n_1-2-i, n_1-1-
i]$, where 
$i\in\{1,\ldots, n_1-m_1-1\}$. Therefore,

\[
\mathcal{C}_{\Lambda}(M)=\textbf{x}^{-[n_1-2]+[m_1-1]}\sum_{i=0}^{n_1-m_1-1}{\textbf{x}
^{-[n_1-i-2,
n_1-i-1]+[n_1-1]+[n_1-2]}}.
\]

Note that all the dimension vectors of $N_1$ are dimension vector of $N_1'$ and the 
only vectors 
that are dimension vectors of $N_1'$ but not of $N_1$ are $\textbf{e}_j=[n-j+1, n]$, 
where $j\in 
[ n-n_1+2, n-m_1+1 ]$. Besides, if we consider $\textbf{a}$ as dimension vector of 
$N_1$, then 
$\chi(\Grass_{\textbf{a}}(N_1))=\chi(\Grass_{\textbf{a}}(N_1'))$ and   
$\chi(\Grass_{\textbf{e}_i}(N_1'))=1$, for all $i\in\{1,\ldots, n_1-m_1-1\}$.

In the following sum, $\textbf{a}$ runs over all the dimension vectors of $N_1$:

\[
\begin{split}
\mathcal{C}_{\Lambda}(N_1')&=\textbf{x}^{[n_1-1]+[m_1-1]-[n]}(\sum_{\textbf{a}}
{\chi(\Grass_{\textbf{a}}(N'_1))\textbf{x}^{C_Q\textbf{a}}}+\sum_{j=n-n_1+2}^{n-m_1+1}
{\textbf{x}^{C_Q\textbf{e}_j}})\\
&= \textbf{x}^{[n_1-1]+[m_1-1]-[n]}\sum_{\textbf{a}}
{\chi(\Grass_{\textbf{a}}(N_1))\textbf{x}^{C_Q\textbf{a}}} + 
\textbf{x}^{[n_1-1]+[m_1-1]-[n]}\sum_{j=n-n_1+2}^{n-m_1+1}
{\textbf{x}^{C_Q\textbf{e}_j}}\\
&=\mathcal{C}_{\Lambda}(N_1)+\textbf{x}^{[n_1-1]+[m_1-1]-[n]}\sum_{j=n-n_1+2}^{n-m_1+1}
{\textbf{x}^{-[n-i,n-i+1]+[n]}}\\
&=\mathcal{C}_{\Lambda}(N_1)+ \textbf{x}^{[n_1-1]+[m_1-1]-[n]}\sum_{i=0}^{n_1-m_1-1}
{\textbf{x}^{-[n_1-i-2, n_1-i-1]+[n]}}\\
&=\mathcal{C}_{\Lambda}(N_1)+ \textbf{x}^{[n_1-1]+[m_1-1]}\sum_{i=0}^{n_1-m_1-1}
{\textbf{x}^{-[n_1-i-2, n_1-i-1]}}\\
&=\mathcal{C}_{\Lambda}(N_1)+ 
\textbf{x}^{-[n_1-2]+[m_1-1]}\sum_{i=0}^{n_1-m_1-1}{\textbf{x}^{-[n_1-i-2,
n_1-i-1]+[n_1-1]+[n_1-2]}}\\
&=\mathcal{C}_{\Lambda}(N_1)+ \mathcal{C}_{\Lambda}(M).
\end{split}
\]

The first equality is completed.
Now we verify the second equality, recall the notation before this proposition. Set $N_2=N(W_{[m_2,n-1]})$ and $N_2'=N(W_{[m_2,n]})$. From 
definitions we get 
 that  $g_{\Lambda}(N_2)=[m_2-1]-[n-1]$  and  $g_{\Lambda}(N_2')=[m_2-1]$.

For $N_2$: the dimension vector are given by $\textbf{f}_0=\textbf{0}$ and 
$\textbf{f}_{j}=[n-j,n-1]$, where $j\in \{1, 2, \ldots,  n-m_2\}$.  Therefore 
$C_Q\textbf{f}_0=\textbf{f}_0$ and $C_Q\textbf{f}_j= -[n-j-1,n-j]+[n-1]+[n]$.
In this case we know that $\chi(\Grass_{\textbf{f}_j}(N_2))=1$ for all $j\in[0,n-m_2]$.
Then
\[
\mathcal{C}_{\Lambda}(N_2)=\textbf{x}^{[m_2-1]-[n-1]}\sum_{j=0}^{n-m_2}{\textbf{x}^{-[n-j-1,n-j]+[n-1
,n]}}
\]

For $N_2'$: the dimension vector are given by $\textbf{e}_0=\textbf{0}$ and 
$\textbf{e}_i=[n-(i-1),n]$, for $i\in \{1, 2, \ldots, n-m_2+1 \}$. From this, we get that 
$C_Q\textbf{e}_0=\textbf{e}_0$ and $C_Q\textbf{e}_i= -[n-i,n-i+1]+[n]$ where $i\in \{1, 2, \ldots, 
n-m_2+1 \}$.
Then
\[
\begin{split}
\mathcal{C}_{\Lambda}(N_2')&=\textbf{x}^{[m_2-1]}\sum_{i=0}^{n-m_2+1}{\textbf{x}^{-[n-i,
n-i+1]+[n]}}\\
&=\textbf{x}^{[m_2-1]}(1 + \sum_{i=1}^{n-m_2+1}{\textbf{x}^{-[n-i, n-i+1]+[n]}})\\
&=\textbf{x}^{[m_2-1]}(1 + \sum_{j=0}^{n-m_2}{\textbf{x}^{-[n-j-1, n-j]+[n]}})\\
&=\textbf{x}^{[m_2-1]} + \textbf{x}^{[m_2-1]} \sum_{j=0}^{n-m_2}{\textbf{x}^{-[n-j-1, n-j]+[n]}}\\
&=\textbf{x}^{[m_2-1]} + \textbf{x}^{[m_2-1]-[n-1]} \sum_{j=0}^{n-m_2}{\textbf{x}^{-[n-j-1, n-j]+ 
[n-1]+[n]}}\\
&=\textbf{x}^{[m_2-1]} + \mathcal{C}_{\Lambda}(N_2)\\
&=\mathcal{C}_{\Lambda}(\mathcal{S}_{m_2-1}^-)+ \mathcal{C}_{\Lambda}(N_2)
\end{split}
\]
The second equality is completed.
\end{proof}
\section{The Caldero-Chapoton algebra for an arbitrary triangulation}\label{Section13}

In this section we will extend the main results of the previous section. Let $\tau$ be 
any triangulation of $\Sigma_{n}$ and let $\Lambda(\tau)$ be the  algebra associated to 
$\tau$. We can find a triangulation $T$ of $\widetilde{\Sigma}_n$ such that $G\cdot T=
\tau$. By Lemma \ref{PushG} we have that the push-down functor $\pi_{*}:\Lambda(T)
\module \rightarrow \Lambda(\tau)\module$ is a $G$-precovering. Recall that $G=
\mathbb{Z}_3$ acts on $\widetilde{\Sigma}_n$ by an appropriate rotation. In this 
section we are going to prove that $\pi_{*}$ is a $G$-covering. We are going to use 
this to characterize the $E_{\Lambda(\tau)}$-rigid representations.

We are following the notation of \cite[Section 5]{PS-17}. For $\tau=\{t_1,t_2,\ldots , 
t_n\}$ we write, unless we say something else, the triangulation $T$ according to its 
orbits, namely $T=\{t_{1,1}, t_{1,2}, t_{1,3}, \ldots, t_{n,1}, t_{n,2}, t_{n,3}\}$.
If we denote by $x_{i,j}$ the initial cluster variable associated with the arc $t_{i,j}
$, then the initial cluster will be $\textbf{x}_0=(x_{1,1}, x_{1,2}, x_{1,3}, \ldots, 
x_{n,1}, x_{n,2}, x_{n,3})$. If we associated the variable $z_i$ to the arc $t_i$, then 
we obtain a morphism of algebras $\pi: \mathbb{C}[x_{i,j}^{\pm}]\rightarrow \mathbb{C}
[z_i^{\pm}]$, given by $\pi(x_{i,j})=z_i$ for $i=1,\ldots, n$ and $j=1,2, 3$.
The action of $\mathbb{Z}_3$ on $T$ allow us to define the following function
\[
\pi: \mathbb{N}^{3n}\rightarrow \mathbb{N}^{n}, \ \ \ \pi((a_{1,1}, a_{1,2}, a_{1,3}, 
\ldots, a_{n,1}, a_{n,2}, a_{n,3})^t)_i=a_{i,1}+a_{i,2}+a_{i,3}.
\]
We hope that the reader is not confused with our unfortunately choice of $\pi$ for 
different maps. 
For us the arc $t_n\in \tau$ will denote the pending of the triangulation  and $t_{n,
1}, t_{n,2}, t_{n,3}$ are the three sides of the triangle invariant under the action of 
$\mathbb{Z}_3$ on $\widetilde{\Sigma}_n$. Therefore the matrix $C_{Q(T)}$ has a 
decomposition in blocks of size  $3\times 3$. Recall that by definition $C_{Q(T)}$ is 
skew-symmetric, moreover  $C_{Q(T)}$ is skew-symmetric by blocks and every block is a 
multiple of the identity of size 3, except for the block $n,n$ that corresponds to the 
adjacency of the $3$-cycle of $Q(T)$ with vertices  $t_{n,1}, t_{n,2}, t_{n,3}$.

\begin{lemma}\label{Sec13Lem1}
The push down functor $\pi_{*}:\Lambda(T)\module \rightarrow \Lambda(\tau)\module$ is a 
Galois $G$-covering.
\end{lemma}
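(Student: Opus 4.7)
The plan is to verify the three defining properties of a Galois $G$-covering for the $G$-precovering $\pi_*$ supplied by Lemma~\ref{PushG}. Since $G=\mathbb{Z}_3$ acts freely on the vertices of $\widetilde{\Sigma}_n$ it acts freely on the quiver $Q(T)$ and therefore on $\Lambda(T)$, and Proposition~\ref{FinitedimPro} guarantees that $\Lambda(T)\module$ and $\Lambda(\tau)\module$ are Krull-Schmidt $\mathbb{C}$-categories. For every indecomposable $X$ in $\Lambda(T)\module$, the endomorphism ring $\Endo_{\Lambda(T)}(X)$ is local with nilpotent radical, so Lemma~\ref{LemBL1} immediately gives both condition (ii), namely the indecomposability of $\pi_*(X)$, and condition (iii), that any two indecomposables with isomorphic images under $\pi_*$ lie in a common $G$-orbit.

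The real work lies in almost-density, condition (i). By Proposition~\ref{PropGentAlg} the algebra $\Lambda(\tau)$ is gentle; moreover the only loop of $Q(\tau)$ is $\varepsilon$ at the pending arc, the dual graph of $\tau$ is a tree, and the relation $\varepsilon^2=0$ kills the only candidate for a cyclic closed walk, so $\Lambda(\tau)$ admits no bands. Theorem~\ref{BRTheorem} then tells us that every indecomposable $\Lambda(\tau)$-module is a string module $N(W)$, and it suffices to exhibit, for each such $W$, an indecomposable $\Lambda(T)$-module whose push-down is isomorphic to $N(W)$.

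Given $W=l_r\cdots l_1$, I would fix a preimage $\widetilde{v}_0\in\pi^{-1}(t(l_1))$ and successively lift each letter of $W$ through the quiver covering $\pi:Q(T)\to Q(\tau)$ to obtain a walk $\widetilde{W}=\widetilde{l}_r\cdots\widetilde{l}_1$ in $Q(T)$. The walk $\widetilde{W}$ contains no $ll^{-1}$ subword by uniqueness of the arrow lifts, and it contains no zero-relation subword because the cyclic-derivative relations of $\Lambda(T)$ (coming from the $G$-invariant 3-cycle $\varepsilon_1\varepsilon_2\varepsilon_3$ and from the 3-cycles attached to the non-invariant internal triangles of $T$) project via $\pi$ exactly onto the cyclic-derivative relations of $\Lambda(\tau)$; hence a zero relation occurring in $\widetilde{W}$ would produce one in $W$, contradicting that $W$ is a string. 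Therefore $\widetilde{W}$ is a string of $\Lambda(T)$, and unwinding the explicit formula in Remark~\ref{DefPush} yields $\pi_*(N(\widetilde{W}))\cong N(W)$ by matching dimensions along each $G$-orbit of vertices and linear maps along each orbit of arrows.

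The main obstacle will be the careful bookkeeping of how the potential $S(T)$ maps to $S(\tau)_G$ under $\pi$ and the resulting bijection between zero relations on both sides; this is a finite combinatorial check, distinguishing the invariant triangle (which produces the relation $\varepsilon^2$) from triangle orbits of size three (which produce the ordinary length-two relations of the 3-cycles of $\tau$). Once this correspondence is in place, (i)–(iii) combine to prove that $\pi_*$ is a Galois $G$-covering.
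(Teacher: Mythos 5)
Your proposal is correct and follows essentially the same route as the paper: reduce to almost-density via Lemma~\ref{LemBL1}, invoke Proposition~\ref{FinitedimPro}, Proposition~\ref{PropGentAlg} and Theorem~\ref{BRTheorem} to reduce to lifting strings, and then lift each string $W$ of $\Lambda(\tau)$ to a string $\widetilde{W}$ of $\Lambda(T)$ with $\pi_*(N(\widetilde{W}))\cong N(W)$. The only (cosmetic) difference is that the paper organizes the lift by writing $W=W_2\varepsilon^{k}W_1$ and placing the $\varepsilon$-free pieces in fundamental regions of $\widetilde{\Sigma}_n$, whereas you lift letter by letter using unique path lifting and the correspondence of relations under $\pi$.
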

\begin{proof}
By Lemma \ref{LemBL1} we only need to prove that $\pi_{*}$ is dense. Well, by 
Proposition  \ref{FinitedimPro} and Proposition \ref{PropGentAlg} we know that $
\Lambda(\tau)$ is  a finite-dimensional  gentle algebra. From Theorem \ref{BRTheorem} 
we have that the strings parametrize the indecomposable modules of $\Lambda(\tau)$. 
Since we work in Krull-Schmidt categories we need to prove that the Galois $G$-covering 
$\pi: \Lambda(T)\rightarrow \Lambda(\tau)$ induces a surjective function between the 
set of all string of those algebras and  that $\pi_*(N(\tilde{W}))\cong N(W)$ where  $
\tilde{W}$ is a string of $\Lambda(T)$ such that $G\cdot \tilde{W}=W$, recall that 
$N(W)$ denotes the string module associated to $W$. The last fact  follows from 
definitions.

Suppose the pending arc of $\tau$ is based at $v_i$. Assume $W=W_2\varepsilon^{k(W)}
W_1$ is a string for $\Lambda(\tau)$ with $W_i$ a string without the letter $
\varepsilon$ for $i=1,2$ and $k(W)\in\{-1, 0,+1\}$. Recall that $\varepsilon$ is the 
loop based at the pending arc of $\tau$. It is clear that if $W_1$ does not  contain 
the letter $\varepsilon$, then $W_1$ can be lifted to a string with letters contained 
in one of the three fundamental region divided by the dashed blue lines, see Figure 
\ref{Figstring}, say that is contained in the region that contains  to $[u_i, u_{i+n
+1}]$. Note that the final letter of $W_1$ must be $a_1$ or $b_1^{-1}$, see Figure 
\ref{Figstring}. Now, if $k(W)=0$, then $W$ itself can be lifted to a word in that 
region. If $k(W)=1$, we choose $\varepsilon_{3,1}$ and $W_2$ can be lifted to a string 
in the third fundamental region containing $[u_i, u_{i+2(n+1)}]$. If $k(W)=-1$, we put 
the letter $\varepsilon_{2,1}$ and it is clear that $W_2$ can be lifted to a string of 
$\Lambda(T)$ with letter of the second fundamental region containing $[u_{i+n+1},u_{i
+2(n+1)}]$. Therefore the string $W$ can be lifted to one string $\tilde{W}$ of $
\Lambda(T)$. Note that $\tilde{W}$ depends on where we lifted the tail point of $W_1$. 
The proof of the lemma is completed.
\end{proof}

\begin{lemma}\label{DecPushD}
The push down functor $\pi_{*}:\Lambda(T)\module \rightarrow \Lambda(\tau)\module$ 
induces a Galois $G$-covering $\pi_{*}:\decrep(\Lambda(T))\rightarrow 
\decrep(\Lambda(\tau))$.
\end{lemma}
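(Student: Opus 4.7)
The plan is to extend the push-down $\pi_*$ componentwise from modules to decorated representations and then verify each defining property of a Galois $G$-covering by reducing the module component to Lemma \ref{Sec13Lem1} and handling the decoration component separately through the freeness of the $G$-action on $Q(T)_0$.

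First I would define $\pi_*$ on objects by $\pi_*(M,V) = (\pi_*(M), \pi_*(V))$, where on decorations we set $\pi_*(V)_{G\cdot \tilde{i}} = \bigoplus_{g\in G} V_{g\cdot \tilde{i}}$ in direct analogy with Remark \ref{DefPush}; since $G$ acts freely on $Q(T)_0$, the choice of representative $\tilde{i}$ in each orbit is irrelevant up to canonical isomorphism. On morphisms, I would extend the diagonal recipe of Remark \ref{DefPush} to both components, so that $\pi_*(f,g) = (\pi_*(f), \pi_*(g))$ with $\pi_*(g)_{G\cdot \tilde{i}} = \diagonal(g_{h\cdot \tilde{i}}: h \in G)$. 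The functor so defined inherits a $G$-stabilizer $\delta$ from the one on the module component of Lemma \ref{PushG} together with the obvious shifting stabilizer on the decoration component.

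Next I would check the $G$-precovering condition. The Hom space in $\decrep$ splits as a direct sum
\[
\Hom_{\decrep(\Lambda(T))}\bigl((M,V), (N,W)\bigr) = \Hom_{\Lambda(T)}(M,N) \oplus \prod_{\tilde{i}\in Q(T)_0}\Hom_{\mathbb{C}}(V_{\tilde{i}}, W_{\tilde{i}}),
\]
and the precovering isomorphism is additive in this decomposition. On the first summand it holds by Lemma \ref{Sec13Lem1}; on the second, the freeness of the $G$-action together with the definition of $\pi_*(V)$ gives the natural identification
\[
\bigoplus_{g\in G}\prod_{\tilde{i}}\Hom_{\mathbb{C}}(V_{\tilde{i}}, W_{g\cdot \tilde{i}}) \;\cong\; \prod_{i\in Q(\tau)_0}\Hom_{\mathbb{C}}\Bigl(\bigoplus_{g\in G}V_{g\cdot \tilde{i}},\; \bigoplus_{h\in G}W_{h\cdot \tilde{i}}\Bigr),
\]
which is precisely the required precovering map.

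Finally I would verify conditions (i)--(iii) of a Galois $G$-covering. Every indecomposable object of $\decrep(\Lambda(\tau))$ is either an indecomposable module $(M,0)$ or a negative simple $\mathcal{S}_i^- = (0, S_i)$, and there are no mixed indecomposables. Indecomposable modules are handled directly by Lemma \ref{Sec13Lem1}. For negative simples, the lift $(0, \widetilde{S}_{\tilde{i}})$ for any $\tilde{i} \in \pi^{-1}(i)$ pushes to $\mathcal{S}_i^-$, each $(0, \widetilde{S}_{\tilde{i}})$ is indecomposable, and $\pi_*(0, \widetilde{S}_{\tilde{i}}) \cong \pi_*(0, \widetilde{S}_{\tilde{j}})$ forces $\tilde{i}$ and $\tilde{j}$ to lie in the same $G$-orbit, exhibiting the required $g \in G$ with $g \cdot \widetilde{S}_{\tilde{i}} \cong \widetilde{S}_{\tilde{j}}$. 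The main obstacle I anticipate is bookkeeping: ensuring that the stabilizer $\delta$ on the decorated push-down is truly compatible with the one already constructed for modules, and that the splitting of Hom spaces along module/decoration components is respected by the precovering maps $\pi_{*,X,Y}$ and $\pi_*^{X,Y}$. Once this compatibility is checked, the three Galois conditions reduce formally to Lemma \ref{Sec13Lem1} plus the elementary free-action statement for semisimple decorations.
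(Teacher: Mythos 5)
Your proposal is correct and follows essentially the same route as the paper: the paper defines $\pi_*$ componentwise using the vertex span $R=\mathbb{C}^{Q(T)_0}$ (so that decorations are $R$-modules and Remark \ref{DefPush} applies verbatim) and then deduces everything from the splitting $\Hom_{\decrep(\Lambda(T))}((M,V),(N,W))=\Hom_{\Lambda(T)}(M,N)\oplus\Hom_{R}(V,W)$, exactly as you do. You simply spell out the verification of the precovering isomorphism on the decoration summand and of conditions (i)--(iii) for the negative simples, details the paper leaves implicit.
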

\begin{proof}
Let $R=\mathbb{C}^{Q(T)_0}$ be the \emph{vertex span} of $\Lambda(T)$. We can see that 
$R_G=\mathbb{C}^{Q(\tau)_0}$ is the vertex span of $\Lambda(\tau)$. With this notation 
it is clear that a decorated representation $(M,V)$ is a pair where $M\in \Lambda(T)
\module$ and $V\in R\module$. For $V\in R\module$ we can define $\pi_* (V)\in R_G
\module$ as in Remark \ref{DefPush}. We put $\pi_*(M,V)=(\pi_*(M), \pi_*(V))$ and the 
lemma follows from the fact that $\Hom_{\decrep(\Lambda(T))}((M,V), (N,W))=
\Hom_{\Lambda(T)}(M,N)\bigoplus\Hom_{R}(V,W)$.
\end{proof}

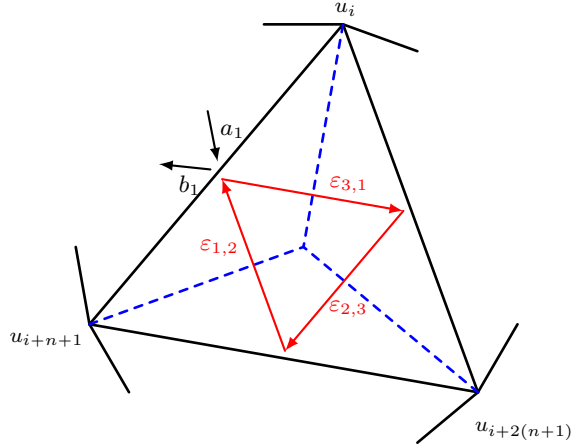
\begin{figure}[ht]
\centering
\begin{tikzpicture}[scale=2,cap=round,>=latex]
\begin{scope}
\node at (0,0) {$\cdot$};
\node at (80:1.6) {$u_i$};
\node at (200:1.8) {$u_{i+n+1}$};
\node at (320:1.9) {$u_{i+2(n+1)}$}; 
% Radius of regular polygons
  \newdimen\R
  \R=1.5cm
  \coordinate (center) at (0,0);
 \draw (0:\R);
      \draw[line width=1pt] (60:\R)--(80:\R);
      \draw[line width=1pt] (80:\R)--(100:\R);
	  \draw[line width=1pt] (200:\R)--(180:\R);
      \draw[line width=1pt] (200:\R)--(220:\R);
      \draw[line width=1pt] (300:\R)--(320:\R);
	  \draw[line width=1pt] (320:\R)--(340:\R);	
	  \draw[line width=1pt] (320:\R)--(200:\R);
	  \draw[line width=1pt] (200:\R)--(80:\R);
	  \draw[line width=1pt] (320:\R)--(80:\R);	
	  \draw[line width=1pt, blue, dashed] (320:\R)--(0,0);	
	  \draw[line width=1pt, blue, dashed] (80:\R)--(0,0);	
	  \draw[line width=1pt, blue, dashed] (200:\R)--(0,0);
	  \draw[->, line width=0.8pt, red] (140:0.7)--node[pos=0.7, above]{\small{$
	  \varepsilon_{3,1}$}}(20:0.7);
	  \draw[->, line width=0.8pt, red] (20:0.7)--node[pos=0.7, right]{\small{$
	  \varepsilon_{2,3}$}}(260:0.7);
	  \draw[->, line width=0.8pt, red] (260:0.7)--node[pos=0.6, left]{\small{$
	  \varepsilon_{1,2}$}}(140:0.7);
	  \draw[->, line width=0.8pt] (125:1.1)--node[pos=0.4, right]{\small{$a_1$}}
	  (135:0.8);
	  \draw[->, line width=0.8pt] (140:0.8)--node[pos=0.4, below] {\small{$b_1$}}
	  (150:1.1);
\end{scope}
\end{tikzpicture}
\caption{Fundamental regions in $\widetilde{\Sigma}_n$. The sub-index in the red arrows 
indicates which fundamental regions they connect.}	
\label{Figstring}	
\end{figure}

Now, we can extend Lemma \ref{LemmaTau} to other triangulations. Recall that we denote 
by $r^+(j)$ (resp. $r^-(j)$) the arc of $\Sigma_n$ that we obtain by rotating $j$ in 
counterclockwise (resp. clockwise) for an angle of $2\pi/(n+1)$.

\begin{corollary}\label{CoroTau+}
Let $\sigma$ be a triangulation of $\Sigma_n$ and let $j$ be an arc of $\Sigma_n$ not 
in $\sigma$.
\begin{itemize}
\item[(a)] Assume $M(j,\sigma)$ is not projective, then $\tau(M(j,\sigma))=M(r^{+}(j),
\sigma)$.
\item[(b)] Assume $M(j,\sigma)$ is not injective, then $\tau^{-}(M(j,\sigma))=M(r^{-}
(j),\sigma)$.
\end{itemize}
\end{corollary}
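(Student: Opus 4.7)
The plan is to transfer the statement from the ordinary polygon $\widetilde{\Sigma}_n$ down to the orbifold $\Sigma_n$ via the Galois $G$-covering $\pi_{*}\colon \Lambda(T)\module\to \Lambda(\sigma)\module$ supplied by Lemma~\ref{Sec13Lem1}. First, I would fix a lift $\tilde{j}\in\pi^{-1}(j)$ and verify, from the letter-by-letter description of $W_{j}(\sigma)$ given in the paragraph preceding Definition~\ref{DefMCuerda}, that $\pi_{*}(M(\tilde{j},T))\cong M(j,\sigma)$; this amounts to checking that reading the string of $j$ from the lift $\tilde{j}\subset\widetilde{\Sigma}_n$ is exactly the image under the canonical projection of the string of $\tilde{j}$ in $\Lambda(T)$. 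I would also record that if $r^{+}$ and $r^{-}$ denote the rotations of $\widetilde{\Sigma}_n$ by $\pm 2\pi/(3n+3)$, they commute with the action of $G$ and lift the corresponding rotations of $\Sigma_n$, so that $\pi(r^{\pm}(\tilde{j}))=r^{\pm}(j)$.

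Next, I would appeal to the classical description of the Auslander--Reiten translation for the polygon algebra $\Lambda(T)$: since $\widetilde{\Sigma}_n$ is an ordinary $(3n+3)$-gon, $\Lambda(T)$ is a gentle algebra attached to a triangulation of a polygon, and for this well-studied family the AR translation acts on string modules of arcs by rotation,
\[
\tau_{\Lambda(T)}(M(\tilde{j},T))\cong M(r^{+}(\tilde{j}),T)
\]
whenever $M(\tilde{j},T)$ is non-projective, and dually for $\tau^{-}$ and $r^{-}$ when it is non-injective. This is the content of Br\"ustle--Qiu~\cite{BQ} generalizing Caldero--Chapoton--Schiffler~\cite{CCS-04} (the footnote in the introduction), and it can alternatively be verified directly from the Butler--Ringel classification of almost split sequences of string modules \cite{BR-87} applied to this specific family.

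Finally, I would invoke Theorem~\ref{BLTheorem}(1): the push-down of an almost split sequence under a Galois $G$-covering is almost split. Applying $\pi_{*}$ to the AR sequence ending at $M(\tilde{j},T)$ yields an AR sequence in $\Lambda(\sigma)\module$ ending at $\pi_{*}(M(\tilde{j},T))\cong M(j,\sigma)$, whose starting term equals
\[
\pi_{*}(M(r^{+}(\tilde{j}),T))\cong M(r^{+}(j),\sigma),
\]
so $\tau_{\Lambda(\sigma)}(M(j,\sigma))\cong M(r^{+}(j),\sigma)$, proving (a); part (b) follows mutatis mutandis with $\tau^{-}$ and $r^{-}$.

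The main obstacle is the middle step: one must be sure that the rotation-equals-$\tau$ statement for the covering algebra $\Lambda(T)$ applies cleanly with our conventions for string modules, support, and the boundary cases on which arc representations fail to be projective or injective, matching the framework of \cite{BQ,CCS-04}. A smaller but essential point is that non-projectivity of $M(j,\sigma)$ forces non-projectivity of $M(\tilde{j},T)$, which holds because $\pi_{*}$ sends indecomposable projectives to indecomposable projectives and reflects indecomposability by the $G$-precovering property; an analogous remark handles the non-injective hypothesis in (b).
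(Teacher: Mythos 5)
Your proposal is correct and follows essentially the same route as the paper: lift $j$ to $\widetilde{\Sigma}_n$, use the $A_n$ result of Caldero--Chapoton--Schiffler that the Auslander--Reiten translation acts by rotation on arc modules of $\Lambda(T)$, and push the almost split sequence down through the Galois $G$-covering $\pi_*$ via Theorem~\ref{BLTheorem}. The extra details you record (compatibility of $\pi_*$ with arc representations and rotations, and that non-projectivity lifts) are exactly the points the paper leaves implicit.
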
 
\begin{proof}
This is a consequence of  the $A_n$ case from \cite[Theorem 2.13]{CCS-04} and Theorem 
\ref{BLTheorem}. Let $\widetilde{j}$ be a lifting of $j$ in $\widetilde{\Sigma}_n$ and 
let $\widetilde{\sigma}$ be the lifting of $\sigma$. 

(a). From \cite[Theorem 2.13]{CCS-04} we see that  $\tau(M(\widetilde{j},
\widetilde{\sigma}))=M(r^{+}(\tilde{j}),\widetilde{\sigma})$ and by applying $\pi_*$, 
from  \cite[Theorem 4.7 (1)]{BL-14}, we get what we want, $\tau(M(j,\sigma))=M(r^{+}
(j),\sigma)$. The proof of (b) is similar. 
\end{proof}

The reader can compare the next proposition and Proposition \cite[Proposition 7.15]
{PS-17}.

\begin{proposition}\label{PiErigid}
Let $\tau$ be a triangulation of $\Sigma_n$ and let $\Lambda(\tau)$ be the algebra 
associated to $\tau$. Suppose $T$ is the triangulation of  $\widetilde{\Sigma}_n$ such 
that $G\cdot T =\tau$. If $M$ is an indecomposable representation of $\Lambda\module$, 
then $\pi_*(M)$ is $E_{\Lambda(\tau)}$-rigid if and only if  $E_{\Lambda(T)}(M, g\cdot 
M)=0$ for any $g\in G$.
\end{proposition}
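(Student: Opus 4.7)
The plan is to prove the identity
$$E_{\Lambda(\tau)}(\pi_*(M)) = \sum_{g \in G} E_{\Lambda(T)}(M, g \cdot M),$$
and then to invoke Proposition \ref{PropoTrunc}, which exhibits every summand on the right as a sum of nonnegative integers, to conclude that the left-hand side vanishes precisely when every summand on the right vanishes. Both implications of the proposition will follow at once.

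For the Hom-part of the $E$-invariant, Lemma \ref{DecPushD} asserts that $\pi_*$ is a Galois $G$-covering of the decorated module category, so the defining $G$-precovering isomorphism yields directly
$$\dim \Hom_{\Lambda(\tau)}(\pi_*(M), \pi_*(M)) = \sum_{g \in G} \dim \Hom_{\Lambda(T)}(M, g \cdot M).$$
For the $g$-vector part, I would start from a minimal injective copresentation $0 \to M \to I_0 \to I_1$ in $\Lambda(T)\module$ with $I_l = \bigoplus_{i \in Q(T)_0} I_i^{c_{l,i}}$. Since $\pi_*$ is exact and sends the injective envelope $I_i$ of $S_i$ in $\Lambda(T)\module$ to the injective envelope $I_{[i]}$ of $S_{[i]}$ in $\Lambda(\tau)\module$ (a standard consequence of the Galois covering taking $P_i \mapsto P_{[i]}$ together with the $\mathbb{C}$-duality), we obtain an injective copresentation $0 \to \pi_*(M) \to \pi_*(I_0) \to \pi_*(I_1)$, and the alternating-count formula underlying Lemma \ref{gVectorLemma}---valid for any (not necessarily minimal) injective copresentation since $\dim \Hom(S_k, I_l) = \delta_{k,l}$---produces $g_{[i]}(\pi_*(M)) = \sum_{j \in [i]} g_j(M)$. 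Twisting the same argument by $g \in G$ gives the parallel identity $g_{g \cdot j}(M) = g_j(g^{-1} \cdot M)$ for every $j \in Q(T)_0$.

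Combining these ingredients, and using the freeness of the $G$-action on $Q(T)_0$ to index pairs $(j,k)$ in a common orbit by $(j,g)$ via $k = g \cdot j$, the $g$-vector contribution to $E_{\Lambda(\tau)}(\pi_*(M))$ rearranges as
\begin{align*}
\sum_{[i]} \Bigl(\sum_{j \in [i]} \dim M_j\Bigr)\Bigl(\sum_{k \in [i]} g_k(M)\Bigr) &= \sum_{g \in G} \sum_j \dim M_j \cdot g_{g \cdot j}(M) \\
&= \sum_{g \in G} \sum_j \dim M_j \cdot g_j(g^{-1} \cdot M),
\end{align*}
and the substitution $g \mapsto g^{-1}$ turns this into $\sum_g \sum_j \dim M_j \cdot g_j(g \cdot M)$; adding the Hom-part then delivers exactly $\sum_g E_{\Lambda(T)}(M, g \cdot M)$. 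The main obstacle I expect is the clean verification that the push-down of an injective envelope is again an injective envelope at the correct orbit, together with the careful $g$ versus $g^{-1}$ bookkeeping needed so that the final identity lands on $E_{\Lambda(T)}(M, g \cdot M)$ rather than on the asymmetric variant $E_{\Lambda(T)}(g \cdot M, M)$.
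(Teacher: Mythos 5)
Your identity $E_{\Lambda(\tau)}(\pi_*(M))=\sum_{g\in G}E_{\Lambda(T)}(M,g\cdot M)$ is exactly what the paper establishes, and your concluding step (each summand is a dimension of a Hom space by Proposition \ref{PropoTrunc}, hence nonnegative, so the sum vanishes iff every term does) is the same. But you reach the identity by a genuinely different route. The paper first converts the whole $E$-invariant into a single Hom space, $E_{\Lambda(\tau)}(\pi_*(M))=\dim\Hom_{\Lambda(\tau)}(\tau^{-}(\pi_*(M)),\pi_*(M))$, then uses the Bautista--Liu result (Theorem \ref{BLTheorem}) to commute $\pi_*$ past $\tau^{-}$ and the $G$-precovering isomorphism to split that Hom space into the $G$-indexed sum; the only covering-theoretic inputs are the Hom-isomorphism and compatibility with almost split sequences. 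You instead work from the raw definition of $E$, splitting it into the Hom part (same precovering isomorphism) and the $g$-vector part, which you control by pushing down a minimal injective copresentation; this trades the AR-translation compatibility for the injective-presentation compatibility (the paper's Lemmas \ref{LemInj}--\ref{LeMIP}, proved a bit later in the same section) plus the orbit/group-element bookkeeping, which you carry out correctly — the free action on $Q(T)_0$, the identity $g_{g\cdot j}(M)=g_j(g^{-1}\cdot M)$, and the substitution $g\mapsto g^{-1}$ all check out and land on $E_{\Lambda(T)}(M,g\cdot M)$ rather than its transpose. Your version is longer but more self-contained at the level of definitions; the paper's is shorter because Proposition \ref{PropoTrunc} packages the Hom part and the $g$-vector part into one object before the covering theory is applied.

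One sub-claim in your argument is false as stated: the alternating count $-a_i+b_i$ from an injective copresentation $0\to M\to J_0\to J_1$ does \emph{not} compute $g_i$ for an arbitrary (non-minimal) copresentation. Writing $C=\operatorname{coker}(M\to J_0)$, one only gets $-\dim\Hom(S_i,M)+\dim\Ext^1(S_i,M)=-\dim\Hom(S_i,J_0)+\dim\Hom(S_i,C)$, and $\dim\Hom(S_i,C)=\dim\Hom(S_i,J_1)$ requires $J_1$ to be an injective envelope of $C$; already for $\Lambda=\mathbb{C}$ and the copresentation $0\to\mathbb{C}\to\mathbb{C}\xrightarrow{0}\mathbb{C}$ the formula gives $0$ instead of $-1$. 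So you cannot sidestep minimality: you genuinely need that $\pi_*$ carries the minimal injective copresentation of $M$ to a \emph{minimal} one of $\pi_*(M)$, i.e.\ that $\operatorname{soc}\pi_*(I_1)$ still embeds in $\pi_*(C)$. You correctly flag this as the main obstacle, and it is precisely the content of the paper's Lemma \ref{LeMIP}; with that lemma substituted for the faulty parenthetical, your proof is complete.
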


\begin{proof}
The proposition follows from Proposition \ref{PropoTrunc} and the following equalities
\[
\begin{split}
\dimension\Hom_{\Lambda(\tau)}(\tau^{-}(\pi_*(M)),\pi_*(M))&=\dimension
\Hom_{\Lambda(\tau)}(\pi_*(\tau^{-}(M)),\pi_*(M))\\
&=\dimension\bigoplus_{g\in G}\Hom_{\Lambda(T)}(g\cdot\tau^{-}(M),M)\\
&=\sum_{g\in G}{\dimension\Hom_{\Lambda(T)}(\tau^{-}(g\cdot M),M)}.
\end{split}
\]
Since $\pi_*$ is a Galois $G$-covering, Lemma \ref{Sec13Lem1}, the first equality 
follows from Theorem \ref{BLTheorem}, see \cite[Theorem 4.7 (1)]{BL-14}. The second 
line follows from definition of $G$-precovering and the third line is a consequence 
that $g\cdot -$ is an isomorphism of categories. This conclude the proof.
\end{proof}

\begin{lemma}\label{Lem13.1}
Let $N$ be an indecomposable representation of $\Lambda(\tau)$. Then $N$ is 
$E_{\Lambda}$-rigid if and only if $N=M(j, \tau)$ for some arc $j$ of $\Sigma_n$. 
\end{lemma}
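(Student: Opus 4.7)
The plan is to descend the problem to the simply-laced covering algebra $\Lambda(T)$, apply the classical $A_n$-case classification of $E$-rigid indecomposables from \cite{CCS-04}, and lift the result back via the Galois $G$-covering $\pi_{*}$. The decisive bridge is Proposition \ref{PiErigid}, which converts $E_{\Lambda(\tau)}$-rigidity of $\pi_{*}(M)$ into the collection of vanishings $E_{\Lambda(T)}(M, g\cdot M) = 0$ for every $g \in G$. Throughout I use Lemma \ref{Sec13Lem1} (and its decorated extension Lemma \ref{DecPushD}) to know $\pi_{*}$ is a Galois $G$-covering, hence dense and indecomposability-preserving via Lemma \ref{LemBL1}, together with the naturality $g\cdot M(\tilde j, T) \cong M(g\cdot \tilde j, T)$ of the string-module construction under the $G$-action.

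For the implication $N\cong M(j,\tau) \Rightarrow E_{\Lambda(\tau)}(N)=0$: I would complete $j$ to a triangulation $\sigma$ of $\Sigma_n$. Its preimage $\widetilde\sigma := \pi^{-1}(\sigma)$ is a $G$-invariant triangulation of $\widetilde\Sigma_n$ which contains the three lifts $\tilde j,\ \theta\cdot\tilde j,\ \theta^{2}\cdot\tilde j$ of $j$, so these three arcs are pairwise compatible. Compatibility of arcs in $\widetilde\Sigma_n$ is equivalent to $E_{\Lambda(T)}$-orthogonality of the associated string modules (this is precisely the $A_n$-case of Lemma \ref{LemmaCompati}, available for $\Lambda(T)$ via \cite{CCS-04} plus Corollary \ref{CoroTau+} and Proposition \ref{PropoTrunc}). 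Hence $E_{\Lambda(T)}(M(\tilde j, T),\, g\cdot M(\tilde j, T)) = 0$ for every $g\in G$, and since by construction $\pi_{*}(M(\tilde j, T)) \cong M(j,\tau)$, Proposition \ref{PiErigid} gives $E_{\Lambda(\tau)}(N)=0$.

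For the converse: given $N$ indecomposable and $E$-rigid, density and indecomposability-preservation of $\pi_{*}$ yield an indecomposable $M$ of $\Lambda(T)\module$ with $\pi_{*}(M)\cong N$. Proposition \ref{PiErigid} then provides $E_{\Lambda(T)}(M, g\cdot M)=0$ for every $g\in G$; the case $g=e$ says $M$ is $E_{\Lambda(T)}$-rigid. Since $\Lambda(T)$ is the Jacobian algebra of a triangulation of a disk in the $A_n$ situation of \cite{CCS-04}, every $E$-rigid indecomposable is an arc representation, so $M\cong M(\tilde j, T)$ for some arc $\tilde j$ of $\widetilde\Sigma_n$. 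The remaining orbit vanishings translate, through the same $A_n$ dictionary, into compatibility of $\tilde j$ with $\theta\cdot\tilde j$ and $\theta^{2}\cdot\tilde j$; hence the $G$-orbit of $\tilde j$ is pairwise compatible, may be completed to a $G$-invariant triangulation of $\widetilde\Sigma_n$, and so $\tilde j$ is admissible. Its image $j=\pi(\tilde j)$ is an arc of $\Sigma_n$ and $N\cong \pi_{*}(M(\tilde j, T))\cong M(j,\tau)$.

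The main obstacle I foresee is the clean formulation and verification of the geometric dictionary \emph{compatibility of arcs $\Leftrightarrow$ vanishing of the $E$-invariant of the corresponding arc representations} for $\Lambda(T)$ when the two arcs belong to the same $G$-orbit rather than to a single given triangulation; that dictionary is what permits the passage between the purely algebraic statement (Proposition \ref{PiErigid}) and the purely geometric statement (admissibility of $\tilde j$). Once this is secured, through Corollary \ref{CoroTau+} together with Proposition \ref{PropoTrunc} and the explicit control of AR-translates of arc representations in type $A$ from \cite{CCS-04}, the rest of the argument is routine bookkeeping along the covering $\pi_{*}$, including the minor but necessary check that the identification $M(j,\tau)=\mathcal{S}_{j}^{-}$ for $j\in\tau$ is compatible with the $G$-action on negative simples.
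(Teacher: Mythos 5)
Your proposal follows essentially the same route as the paper: lift $N$ along the dense Galois $G$-covering $\pi_{*}$ to an indecomposable $M$ of $\Lambda(T)$, identify $M$ with an arc representation $M(\widetilde{j},T)$ via the $A_n$ classification of \cite{CCS-04}, use Proposition \ref{PiErigid} to translate $E_{\Lambda(\tau)}$-rigidity of $N$ into the vanishings $E_{\Lambda(T)}(M,g\cdot M)=0$, and convert these via the crossing-number/Ext dictionary of \cite{CCS-04} into admissibility of $\widetilde{j}$. The paper handles the dictionary for arcs in the same $G$-orbit exactly as you anticipate, by explicitly computing $\tau^{-1}(g\cdot M)$ as a rotated arc representation, so your argument matches the paper's proof in both structure and supporting ingredients.
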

\begin{proof}
Let $M$ be a representation of $\Lambda(\tau)$ such that $\pi_*(M)=N$, recall that $
\pi_*$ is dense. From \cite[Corollary 2.12]{CCS-04} we know that there is a bijection 
between the indecomposable representations of $\Lambda(T)$ and the diagonals of $
\widetilde{\Sigma}_n$ not in $T$. Then $M=M(\widetilde{j},T)$ for some arc $
\widetilde{j}$ of $\widetilde{\Sigma}_n$. By Proposition \ref{PiErigid} we know that $N
$ is $E_{\Lambda(T)}$-rigid if and only if $E_{\Lambda(T)}(M, g\cdot M)=0$. We need to 
analyze $\dimension\Hom_{\Lambda(T)}(\tau^{-1}(g\cdot M), M)$. Suppose $\widetilde{j}
=[u_l, u_{l+k}]$ for some $l\in[0, 3n+1]$ , then $g\cdot M=M([u_{l-(n+1)}, u_{l+k-(n
+1)}], T) $ and $\tau^{-1}(g\cdot M)=M([u_{l-n-2}, u_{l+k-n-2}],T)$. This means, in 
particular, that $g\cdot M$ is an arc representation. 
By \cite[Lemma 2.5]{CCS-04},  the Auslander-Reiten formulas  and \cite[Remark 2.15]
{CCS-04}  if $E_{\Lambda(T)}(M, g\cdot M)=0$ for any $g\in \mathbb{Z}_3$, then  we can 
conclude that $\widetilde{j}$ has to be an admissible arc of $\widetilde{\Sigma}_n$, 
therefore $G\cdot \widetilde{j}=j$ is an arc of $\Sigma_n$ and $N=
\pi_*(M(\widetilde{j}, T))=M(j, \tau)$. The proof of the lemma is completed.
\end{proof}

\begin{remark}
Let $\mathcal{F}:\mathcal{A}\rightarrow \mathcal{B}$ be a Galois $G$-precovering. Then 
$\mathcal{F}$ is faithful, see \cite[Lemma 2.6 (2)]{BL-14}.
\end{remark}

\begin{lemma}\label{LemInj}
Let $I_{l}$ be the indecomposable injective at $l\in Q(T)$. Then $\pi_{*}(I_l)\cong 
I_{G\cdot l}$, where $I_{G\cdot l}$ is the indecomposable injective at $G\cdot l$.
\end{lemma}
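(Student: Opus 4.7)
The plan is to identify $\pi_*(I_l)$ as the indecomposable injective with socle $S_{G\cdot l}$. I will proceed in three steps: indecomposability, socle computation, and injectivity.

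First, I would show that $\pi_*(I_l)$ is indecomposable. Since $\Lambda(T)$ is finite dimensional (Proposition \ref{FinitedimPro}), $\Endo_{\Lambda(T)}(I_l)$ is local with nilpotent radical. The action of $G$ on $Q(T)_0$ is free, so $g\cdot I_l \cong I_{g\cdot l}\not\cong I_l$ for $g\neq e$, i.e.\ the induced action on $\Lambda(T)\module$ is free on the indecomposable $I_l$. Lemma \ref{LemBL1} then gives that $\Endo_{\Lambda(\tau)}(\pi_*(I_l))$ is local with nilpotent radical, so $\pi_*(I_l)$ is indecomposable.

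Second, I would compute $\soc(\pi_*(I_l))$ via the $G$-precovering formula of Lemma \ref{PushG}. Given $k\in Q(\tau)_0$, fix any preimage $k'\in Q(T)_0$ and note $\pi_*(S_{k'})=S_k$; then
\[
\Hom_{\Lambda(\tau)}(S_k,\pi_*(I_l)) \;=\; \bigoplus_{g\in G}\Hom_{\Lambda(T)}(g\cdot S_{k'},I_l) \;=\; \bigoplus_{g\in G}\Hom_{\Lambda(T)}(S_{g\cdot k'},I_l),
\]
which is $\mathbb{C}$ if $g\cdot k'=l$ for some $g$ (equivalently $k=G\cdot l$) and $0$ otherwise. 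Hence $\soc(\pi_*(I_l))=S_{G\cdot l}$, one-dimensional.

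Third, I would show $\pi_*(I_l)$ is injective. The cleanest route is to use that for a finite Galois $G$-covering the pull-up functor $\pi^{*}$ is exact and $\pi_*$ is right adjoint to $\pi^{*}$, so $\pi_*$ preserves injective objects; this is the dual to the (standard, and easy) fact that $\pi_*$ sends the indecomposable projective $P_l$ to $P_{G\cdot l}$. Combined with the previous steps, $\pi_*(I_l)$ is an indecomposable injective with simple socle $S_{G\cdot l}$, and therefore isomorphic to the injective envelope $I_{G\cdot l}$.

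If one prefers to avoid appealing to the Frobenius adjunction (which is not developed explicitly in the paper), the alternative is a direct dimension check: for each $j\in Q(T)_0$ one has $\dimension(\pi_*(I_l))_{G\cdot j}=\sum_{g\in G}\dimension(I_l)_{g\cdot j}$, which by the identification $(I_l)_j = D(e_l\Lambda(T)e_j)$ equals the number of paths modulo relations from $G\cdot j$ to $G\cdot l$ in $Q(\tau)$, namely $\dimension(I_{G\cdot l})_{G\cdot j}$ (the covering $Q(T)\to Q(\tau)$ lifts paths uniquely once a starting vertex is chosen, producing exactly $|G|$ lifts of every path downstairs). The socle inclusion $S_{G\cdot l}\hookrightarrow \pi_*(I_l)$ extends, by injectivity of $I_{G\cdot l}$, to a map $\pi_*(I_l)\to I_{G\cdot l}$, which is injective on socles and hence injective on the indecomposable $\pi_*(I_l)$, and then an isomorphism by dimension count. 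The main obstacle is precisely step three: the path-counting/duality argument is elementary but requires unpacking the Galois covering structure between the two Jacobian algebras, while the adjunction argument is short but requires Frobenius machinery outside the scope of the paper as written.
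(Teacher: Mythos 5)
Your proof is correct, but it is organized differently from the paper's. The paper's own argument is a direct construction: writing $I_l(k)=D\Hom(e_k,e_l)$, it observes that $\pi_*(I_l)(G\cdot k)=\bigoplus_{g\in G}D\Hom(g\cdot e_k,e_l)$, dualizes the defining isomorphisms $\pi^{k,l}$ of the Galois $G$-covering $\pi\colon\Lambda(T)\to\Lambda(\tau)$ (Lemma \ref{LambdaGcovering}) to get vertex-wise isomorphisms $I_{G\cdot l}(G\cdot k)\to\pi_*(I_l)(G\cdot k)$, and then asserts (leaving the verification to the reader) that these commute with the arrow maps. Your second, ``dimension-check'' route rests on exactly the same identity $\dimension(\pi_*(I_l))_{G\cdot j}=\sum_{g\in G}\dimension(I_l)_{g\cdot j}=\dimension(I_{G\cdot l})_{G\cdot j}$, which is precisely the dual of the isomorphisms in Lemma \ref{LambdaGcovering}, so no separate path-lifting discussion is really needed; the genuine difference is that you replace the arrow-compatibility check by the abstract step of extending $S_{G\cdot l}\hookrightarrow I_{G\cdot l}$ along $S_{G\cdot l}=\soc(\pi_*(I_l))\hookrightarrow\pi_*(I_l)$ and concluding injectivity from the socle computation and bijectivity from dimensions. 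That is arguably a cleaner write-up than the paper's, and it makes essential use of your step two (the socle of $\pi_*(I_l)$ is simple and equal to $S_{G\cdot l}$, obtained from the precovering isomorphism of Lemma \ref{PushG}), which the paper never states explicitly. Your first route --- indecomposability via Lemma \ref{LemBL1}, the same socle computation, and injectivity from the fact that for finite $G$ the push-down is right adjoint to the exact pull-up --- is also valid and more structural, but, as you acknowledge, that adjunction is standard covering theory (Bongartz--Gabriel) not developed in this paper; within the paper's toolkit your second route is the one to keep.
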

\begin{proof}
Let $D=\Hom_{\mathbb{C}}(-, \mathbb{C})$ be the standar $\mathbb{C}$-dual functor. 
Remember that $I_l(k)=D\Hom (e_k, e_l)$ and $\pi_{*}(I_l)({G\cdot k})=\bigoplus_{g\in 
\mathbb{Z}_3}{D\Hom(g\cdot e_k, e_l)}$. By definition there exist  an isomorphism $
\pi_{*}^{k,l}:\bigoplus_{g\in \mathbb{Z}_3}{\Hom(g\cdot e_k, e_l)}\rightarrow \Hom(G
\cdot e_k, G\cdot e_l)$. In other words, for any $k\in Q(T)_0$ we get an isomorphism $
\overline{\pi}_{*}^{k,l}:I_{G\cdot l}(G\cdot k)\rightarrow \pi_{*}(I_l)({G\cdot k})$.
 Let $\alpha:k_1\rightarrow k_2\in Q(T)$ be an arrow. It can be shown that  $
 \overline{\pi}_{*}^{k_2,l}\circ (I_{G\cdot l})_{G\cdot \alpha}=\pi_{*}(I_l)_{G\cdot 
 \alpha}\circ \overline{\pi}_{*}^{k_1,l}$.
\end{proof}

\begin{lemma}\label{PiInj}
If $f:M\rightarrow N$ is  injective in $\Lambda(T)\module$, then $\pi_{*}(f):\pi_{*}(M)
\rightarrow \pi_{*}(N)$ is injective in $\Lambda(\tau)\module$. 
\end{lemma}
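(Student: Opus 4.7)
The plan is to argue this directly from the explicit description of the push-down functor recalled in Remark \ref{DefPush}. The key observation is that $\pi_{*}$ acts on morphisms by placing the components $f_{g\cdot i}$ on the diagonal, so injectivity is preserved vertex by vertex.

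More precisely, I would first recall that a morphism $h:X\to Y$ in $\Lambda(\tau)\module$ is injective if and only if $h_{G\cdot i}:X_{G\cdot i}\to Y_{G\cdot i}$ is injective as a $\mathbb{C}$-linear map for every orbit vertex $G\cdot i$. Applying this to $h=\pi_{*}(f)$, the problem reduces to showing that
\[
\pi_{*}(f)_{G\cdot i}\colon \bigoplus_{g\in G} M_{g\cdot i}\longrightarrow \bigoplus_{g\in G} N_{g\cdot i}
\]
is injective for each $i\in Q(T)_0$. By the morphism part of Remark \ref{DefPush}, this map is precisely $\mathrm{diag}(f_{g\cdot i}:g\in G)$.

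Next I would use the hypothesis: $f:M\to N$ is injective in $\Lambda(T)\module$, which means $f_k:M_k\to N_k$ is an injective $\mathbb{C}$-linear map for every $k\in Q(T)_0$. In particular $f_{g\cdot i}$ is injective for every $g\in G$ and every $i$. A finite direct sum of injective linear maps is injective, so the diagonal map $\mathrm{diag}(f_{g\cdot i}:g\in G)$ is injective. Running this argument at every orbit vertex $G\cdot i$ yields the injectivity of $\pi_{*}(f)$.

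There is essentially no obstacle here beyond careful unwinding of definitions; the statement is a direct consequence of the fact that $\pi_{*}$ is defined ``diagonally'' on morphisms, together with the elementary linear-algebra fact that a direct sum of monomorphisms is a monomorphism. The only point worth emphasizing in the write-up is that injectivity of a morphism of $\Lambda(\tau)$-representations can be checked vertex-wise, so that one never needs to worry about the action of the arrows in verifying injectivity.
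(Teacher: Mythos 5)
Your argument is correct and is essentially the proof given in the paper: both rely on the diagonal description of $\pi_{*}$ on morphisms from Remark \ref{DefPush} and the vertex-wise nature of injectivity, with the paper phrasing the final step as $\rnk \pi_{*}(f)_{G\cdot i}=\sum_{g\in G}\rnk f_{g\cdot i}$ rather than as ``a direct sum of monomorphisms is a monomorphism.'' No substantive difference.
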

\begin{proof}
Remember that $f=(f_i)_{i\in Q(T)_{0}}$ is a $3n$-tuple of linear transformations and  
by hypothesis we have that $\dimension \rnk f_i=\dimension  M_i$. The lemma follows 
from Remark \ref{DefPush} and that $\dimension \rnk \pi_{*}(f)_{G\cdot i}=\sum_{g\in
\mathbb{Z}_3}{\dimension\rnk f_{g\cdot i}}$.
\end{proof}

\begin{lemma}\label{LeMIP}
Let $\widetilde{j}$ be an arc of $\widetilde{\Sigma}_n$. For  $M:=M(\widetilde{j})\in 
\Lambda(T)\module$, let 
\[
0\rightarrow M\rightarrow I_{0}\rightarrow I_1
\]
be a minimal injective presentation of  $M$. Then 
\[
0\rightarrow \pi_{*}(M)\rightarrow \pi_{*}(I_{0})\rightarrow \pi_{*}(I_1)
\]
is a minimal injective presentation of $\pi_{*}(M(\alpha))$.
\end{lemma}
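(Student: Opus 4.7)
The plan is to verify three properties of the pushed-down sequence: exactness, injectivity of $\pi_{*}(I_0)$ and $\pi_{*}(I_1)$, and minimality. From the explicit description in Remark \ref{DefPush}, $\pi_{*}$ acts on vector-space components by a direct sum indexed by $G$, which is an exact operation; hence $\pi_{*}$ preserves exactness and the sequence $0 \to \pi_{*}(M) \to \pi_{*}(I_0) \to \pi_{*}(I_1)$ remains exact. By Lemma \ref{LemInj} and additivity, $\pi_{*}$ sends any direct sum of indecomposable injectives $I_{\ell}$ in $\Lambda(T)\module$ to a direct sum of indecomposable injectives $I_{G\cdot\ell}$ in $\Lambda(\tau)\module$, so $\pi_{*}(I_0)$ and $\pi_{*}(I_1)$ are injective. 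Together these steps confirm that the pushed-down sequence is \emph{an} injective presentation of $\pi_{*}(M)$.

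The core of the argument is the minimality. Write $I_0 = \bigoplus_{\ell} I_{\ell}^{a_{\ell}}$ with $a_{\ell} = \dim \Hom_{\Lambda(T)}(S_{\ell}, M)$, so the multiplicity of $I_{G\cdot \ell_0}$ in $\pi_{*}(I_0)$ is $\sum_{g \in G} a_{g\cdot \ell_0}$. To compare this with $\dim \Hom_{\Lambda(\tau)}(S_{G\cdot \ell_0}, \pi_{*}(M))$, I would first observe, directly from the description of $\pi_{*}$ on vertex spans, that $\pi_{*}(S_k) \cong S_{G\cdot k}$. Then the Galois $G$-covering property of $\pi_{*}$ established in Lemma \ref{Sec13Lem1} yields
\[
\Hom_{\Lambda(\tau)}(S_{G\cdot \ell_0}, \pi_{*}(M)) \cong \bigoplus_{g\in G} \Hom_{\Lambda(T)}(S_{\ell_0}, g\cdot M).
\]
Using that $g\cdot -$ is an auto-equivalence of $\Lambda(T)\module$, each summand on the right equals $\dim \Hom_{\Lambda(T)}(S_{g^{-1}\cdot \ell_0}, M) = a_{g^{-1}\cdot \ell_0}$, and reindexing shows that the total dimension is $\sum_{g} a_{g \cdot \ell_0}$. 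Hence $\pi_{*}(I_0)$ has precisely the multiplicities forced by the injective envelope of $\pi_{*}(M)$, so $\pi_{*}(M) \hookrightarrow \pi_{*}(I_0)$ is essential.

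For $\pi_{*}(I_1)$, since $\pi_{*}$ is exact the cokernel of $\pi_{*}(M) \to \pi_{*}(I_0)$ is $\pi_{*}(I_0/M)$, and the very same multiplicity comparison applied with $I_0/M$ in place of $M$ shows that $\pi_{*}(I_1)$ is the injective envelope of $\pi_{*}(I_0)/\pi_{*}(M)$. The main obstacle is the interplay of $\pi_{*}$ with $\Hom$ over the orbit algebra; once the Galois covering isomorphism is combined with the identification $\pi_{*}(S_k)\cong S_{G\cdot k}$, the multiplicity bookkeeping over $G$-orbits finishes the proof. Note that no special feature of $M(\widetilde{j})$ is used beyond the fact that $\pi_{*}$ is defined and exact; the statement really is a general fact about minimal injective presentations and Galois $G$-coverings between the relevant module categories.
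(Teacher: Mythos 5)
Your proof is correct. It rests on the same two facts the paper invokes --- Lemma \ref{PiInj} (exactness/monomorphism preservation, which you rederive directly from the vertex-wise direct-sum description of $\pi_{*}$ in Remark \ref{DefPush}) and Lemma \ref{LemInj} (push-down of indecomposable injectives) --- but it goes noticeably further than the paper's one-sentence proof, which simply asserts that the lemma ``follows from the fact that $\pi$ is dense, Lemma \ref{PiInj} and Lemma \ref{LemInj}'' and never addresses minimality. Your socle-multiplicity argument is exactly the missing content: identifying $\pi_{*}(S_k)\cong S_{G\cdot k}$, feeding it into the $G$-precovering isomorphism $\Hom_{\Lambda(\tau)}(\pi_{*}(X),\pi_{*}(Y))\cong\bigoplus_{g\in G}\Hom_{\Lambda(T)}(X,g\cdot Y)$ from Lemma \ref{PushG}, and matching the resulting count $\sum_{g}a_{g\cdot \ell_0}$ against the multiplicities in $\pi_{*}(I_0)$ (and, via exactness of $\pi_{*}$, in $\pi_{*}(I_1)$ relative to the cokernel). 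Combined with Krull--Schmidt this pins down $\pi_{*}(I_0)$ and $\pi_{*}(I_1)$ as the correct injective envelopes, so your argument is more rigorous than the paper's; you also correctly observe that density of $\pi_{*}$, which the paper cites, plays no role here, and that nothing about the module being an arc representation is needed.
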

\begin{proof}
The lemma follows from the fact that $\pi$ is dense, Lemma \ref{PiInj} and Lemma 
\ref{LemInj}.
\end{proof}

We shall discuss about the Caldero-Chapoton algebra associated to different 
triangulation. Let $T_1$ and $T_2$ be triangulations of $\widetilde{\Sigma}_n$. Denote 
by $\mathcal{A}_i:=\mathcal{A}_{\Lambda(T_i)}$ the Caldero-Chapoton algebra 
corresponding for $i=1,2$. Let $\mathcal{D}_i$ the $\mathbb{C}$-subalgebra of $
\mathcal{A}_i$ generated by $\mathcal{C}_{\Lambda(T_i)}(M(\widetilde{j}, T_i))$ for any  
admissible arc $\widetilde{j}$ of $\widetilde{\Sigma}_n$ for $i=1,2$.
\begin{lemma}\label{AdmLemma}
With the above notation $\mathcal{D}_1$ and $\mathcal{D}_{2}$ are isomorphic as $
\mathbb{C}$-algebras.
\end{lemma}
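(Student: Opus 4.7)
The plan is to exhibit a natural $\mathbb{C}$-algebra isomorphism $\Phi:\mathcal{A}_1\rightarrow\mathcal{A}_2$ that restricts to a bijection between the specified generating sets of $\mathcal{D}_1$ and $\mathcal{D}_2$. The key observation is that $\widetilde{\Sigma}_n$ is the $(3n+3)$-gon, an ordinary polygon with no orbifold points, so for any triangulation $T_i$ the Jacobian algebra $\Lambda(T_i)$ is a cluster-tilted algebra of type $A_{3n}$. Combining the original result of \cite{CCh-05} with the description of arc representations in \cite{CCS-04}, each $\mathcal{A}_i$ may be identified with the cluster algebra of type $A_{3n}$ attached to $\widetilde{\Sigma}_n$, under an identification that sends $\mathcal{C}_{\Lambda(T_i)}(M(\widetilde{j},T_i))$ to the cluster variable indexed by the diagonal $\widetilde{j}$; both when $\widetilde{j}\in T_i$ (it is then an initial cluster variable) and when $\widetilde{j}\notin T_i$.

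With this identification in place, I would invoke the fact that the flip graph of the polygon $\widetilde{\Sigma}_n$ is connected (a particular case of \cite[Theorem 4.2]{FeST-14}), so that $T_1$ and $T_2$ are linked by a finite sequence of flips, and that each flip corresponds to a cluster mutation. Composing the corresponding mutations produces a canonical $\mathbb{C}$-algebra isomorphism $\Phi:\mathcal{A}_1\rightarrow\mathcal{A}_2$ with the intrinsic property
\[
\Phi\bigl(\mathcal{C}_{\Lambda(T_1)}(M(\widetilde{j},T_1))\bigr)=\mathcal{C}_{\Lambda(T_2)}(M(\widetilde{j},T_2))
\]
for every diagonal $\widetilde{j}$ of $\widetilde{\Sigma}_n$, because both sides are the cluster variable indexed by $\widetilde{j}$, an intrinsic label that depends only on $\widetilde{\Sigma}_n$ and not on the chosen initial triangulation.

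To finish, I would observe that the notion of admissibility (belonging to some $G$-invariant triangulation of $\widetilde{\Sigma}_n$) is defined purely in terms of the pair $(\widetilde{\Sigma}_n,G)$ and is independent of $T_1$ and $T_2$. Consequently $\Phi$ restricts to a bijection between the generating set $\{\mathcal{C}_{\Lambda(T_1)}(M(\widetilde{j},T_1)):\widetilde{j}\text{ admissible}\}$ of $\mathcal{D}_1$ and the analogous generating set of $\mathcal{D}_2$, so $\Phi(\mathcal{D}_1)=\mathcal{D}_2$, giving the desired isomorphism.

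The main obstacle I foresee is the first step: rigorously identifying the Caldero-Chapoton algebra $\mathcal{A}_i$, as defined in Section \ref{section2}, with the polygon cluster algebra of type $A_{3n}$, and verifying that the CC function $\mathcal{C}_{\Lambda(T_i)}(M(\widetilde{j},T_i))$ really agrees, under this identification, with the cluster variable labeled by the diagonal $\widetilde{j}$, independently of which $T_i$ is taken as the initial seed. Once this compatibility between the representation-theoretic and the combinatorial labelings is established, the remainder of the argument is a formal consequence of the connectedness of the flip graph and the intrinsic character of admissibility.
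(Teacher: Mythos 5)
Your proposal is correct and follows essentially the same route as the paper: both arguments identify each $\mathcal{A}_i$ with the type $A_{3n}$ cluster algebra of the polygon $\widetilde{\Sigma}_n$, use the flip sequence joining $T_1$ to $T_2$ together with the mutation-invariance of Caldero-Chapoton functions (the paper cites \cite{DWZ-10} for this) to get $\varphi(\mathcal{C}_{\Lambda(T_1)}(M(\widetilde{j},T_1)))=\mathcal{C}_{\Lambda(T_2)}(M(\widetilde{j},T_2))$, and then restrict to the admissible arcs. The compatibility you flag as the main obstacle is exactly the step the paper handles by invoking \cite{CCh-05}, \cite{CCS-04} and \cite{DWZ-10}.
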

\begin{proof}
Let $\varphi:\mathcal{A}_1\rightarrow \mathcal{A}_2$ be the corresponding isomorphism 
of cluster algebras. This isomorphism sends a cluster variable to the corresponding 
Laurent Polynomial in the initial seed  associated to $T_2$, i.e $x_{\widetilde{i}}
\mapsto\mathcal{C}_{\Lambda(T_2)}(M(\tilde{i}, T_2))$ for an arc $\widetilde{i}$ in 
$T_1$. Suppose that we can get $T_2$ from $T_1$ by the flip sequence $(s_l, s_{l-1}, 
\ldots, s_1)$. By \cite{DWZ-10} we conclude that $\mathcal{C}_{\Lambda(T_1)}
(M(\widetilde{j}, T_1))=\mathcal{C}_{\Lambda(T_2)}(\mu_{s_1}\mu_{s_2}\cdots \mu_{s_l}
(M(\widetilde{j}, T_2)))$, then
\[\varphi(\mathcal{C}_{\Lambda(T_1)}(M(\widetilde{j}, T_1)))=\mathcal{C}_{\Lambda(T_2)}
(M(\widetilde{j}, T_2)).\] 
That means that $\varphi$ can be restricted to $\mathcal{D}_1$ and we obtain the 
isomorphism desired. The lemma is completed.
\end{proof}

\begin{remark}\label{RemEq1}
Let  $W$ be  a string of $\Sigma_n$. Consider a lifting $\widetilde{W}$ of $W$ on $
\widetilde{\Sigma}_n$.  If $\textbf{f}$ is a dimension vector of some 
sub-representation of $N(\widetilde{W})$, then $\pi(\textbf{y}^{C_{Q(T)}\cdot 
\textbf{f}})=
\textbf{z}^{C_{Q(\tau)}\cdot \pi(\textbf{f})}$.  Indeed, we need to prove that 
\begin{equation}\label{MatrixQ} 
\pi(y_{i,1}^{C_{Q(T)_{i,1}}\cdot \textbf{f}}y_{i,2}^{C_{Q(T)_{i,2}}\cdot \textbf{f}}
y_{i,3}^{C_{Q(T)_{i,3}}\cdot \textbf{f}})=z_{i}^{C_{Q(\tau)_i}\cdot \pi(\textbf{f})}
\end{equation}
for any $i\in[1,n]$, here $C_{Q(T)_{i,j}}$ denote the $(i,j)$-th row of the matrix 
$C_{Q(T)}$.  For $i<n$ the calculation is straightforward and we will concentrate in 
the case when $i=n$. Assume $i=n$, we orient  the arcs $t_{n,1}, t_{n,2}, t_{n,3}$ in 
counter clockwise on $\widetilde{\Sigma}_n$. This orientation determines the $(n,n)$ 
block of  $C_{Q(T)}$. In order to obtain \eqref{MatrixQ} we need that 
\begin{equation}\label{VectorE}
(f_{n,2}-f_{n,1})-(f_{n,3}-f_{n,1})+ (f_{n,3}-f_{n,2})=0.
\end{equation}
The observation is that \eqref{VectorE} is true in case $\textbf{f}$ is the dimension 
vector of an indecomposable representation of $\Lambda(T)$.
\end{remark}

\begin{lemma}\label{ProChar}
Let $\tau$ be a triangulation of $\Sigma_n$ and let $\Lambda(\tau)$ be the algebra 
associated to $\tau$. Let $W$ be a string on $Q(\tau)$ and let $\widetilde{W}$ be a 
lifting of $W$ in $Q(T)$, where $T$ is the triangulation of  $\widetilde{\Sigma}_n$ 
such that $G\cdot T =\tau$. Then for any dimension vector $\textbf{e}$ of $N(W)$ we get
\[
\sum_{\textbf{f}\colon \pi(\textbf{f})=\textbf{e}}{\chi(\Grass_{\textbf{f}}
(N(\widetilde{W})))}=\chi(\Grass_{\textbf{e}}(N(W))).
\]
\end{lemma}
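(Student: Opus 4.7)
The plan is to prove the equality via a torus action argument and invoke the Bia{\l}ynicki-Birula lemma (Lemma \ref{LemmaBi}). The idea is that the fiber of $\pi$ over $\textbf{e}$ parametrizes the fixed components of a natural torus action on $\Grass_{\textbf{e}}(N(W))$ that comes from the $G$-grading on the push-down.

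First, I would recall that by the density argument used in the proof of Lemma \ref{Sec13Lem1}, one can arrange the lifting so that $\pi_{*}(N(\widetilde{W}))\cong N(W)$ as $\Lambda(\tau)$-modules. By Remark \ref{DefPush}, at each vertex $G\cdot i$ of $Q(\tau)$ the push-down module has underlying vector space
\[
\pi_{*}(N(\widetilde{W}))_{G\cdot i}=\bigoplus_{g\in G}N(\widetilde{W})_{g\cdot i},
\]
and, because $\pi\colon \Lambda(T)\to \Lambda(\tau)$ is a Galois $G$-covering of quivers (Lemma \ref{LambdaGcovering}), for every arrow $\alpha\colon j\to i$ of $Q(T)$ the lifts of $G\cdot\alpha$ are exactly the arrows $g\cdot\alpha$ with $g\in G$, and each $g\cdot\alpha$ maps $N(\widetilde{W})_{g\cdot j}$ into $N(\widetilde{W})_{g\cdot i}$. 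Thus the matrix $(\alpha_{h,g})_{g,h\in G}$ appearing in Remark \ref{DefPush} is diagonal with respect to the $G$-decomposition above, i.e.\ the action of every arrow of $Q(\tau)$ on $\pi_{*}(N(\widetilde{W}))$ respects the $G$-grading.

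Next I would introduce the torus $\mathcal{T}=(\mathbb{C}^{*})^{G}$ acting on each vertex space by
\[
(t_{g})_{g\in G}\cdot (x_{g})_{g\in G}=(t_{g}x_{g})_{g\in G},
\]
and observe that, by the previous paragraph, this action commutes with all structure maps of $\pi_{*}(N(\widetilde{W}))$ and hence induces a $\mathcal{T}$-action on the variety $\Grass_{\textbf{e}}(N(W))$. A subspace $U\subseteq \bigoplus_{g}N(\widetilde{W})_{g\cdot i}$ is $\mathcal{T}$-stable precisely when it decomposes as $U=\bigoplus_{g\in G}U_{g}$ with $U_{g}\subseteq N(\widetilde{W})_{g\cdot i}$. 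Since the arrow maps are $G$-diagonal, a collection of such graded subspaces is a subrepresentation of $\pi_{*}(N(\widetilde{W}))$ if and only if, for every $g\in G$, the collection $(U_{g})$ is a subrepresentation of $N(\widetilde{W})$ considered as a $\Lambda(T)$-module. This gives a canonical identification
\[
\Grass_{\textbf{e}}(N(W))^{\mathcal{T}}\cong \bigsqcup_{\textbf{f}\colon \pi(\textbf{f})=\textbf{e}}\Grass_{\textbf{f}}(N(\widetilde{W})),
\]
where the disjoint union runs over the fiber of $\pi\colon \mathbb{N}^{3n}\to \mathbb{N}^{n}$ over $\textbf{e}$, simply because specifying a subrepresentation of $\pi_{*}(N(\widetilde{W}))$ with dimension vector $\textbf{e}$ together with a $\mathcal{T}$-fixed structure is the same data as a subrepresentation of $N(\widetilde{W})$ whose dimension vector $\textbf{f}$ collapses to $\textbf{e}$.

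Finally, applying Lemma \ref{LemmaBi} to the $\mathcal{T}$-action on $\Grass_{\textbf{e}}(N(W))$ and using the additivity of Euler characteristic on disjoint unions yields
\[
\chi(\Grass_{\textbf{e}}(N(W)))=\chi(\Grass_{\textbf{e}}(N(W))^{\mathcal{T}})=\sum_{\textbf{f}\colon \pi(\textbf{f})=\textbf{e}}\chi(\Grass_{\textbf{f}}(N(\widetilde{W}))),
\]
which is the desired equality. The one delicate point I expect to have to argue carefully is the second step of the second paragraph: that every arrow of $Q(\tau)$ acts on $\pi_{*}(N(\widetilde{W}))$ by a map that is truly $G$-diagonal in the chosen decomposition (so that a naive coordinate-wise torus action really does preserve the module structure); everything else is a direct application of Bia{\l}ynicki-Birula.
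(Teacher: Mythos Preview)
Your overall strategy---define a torus action on $\Grass_{\textbf{e}}(N(W))$ and invoke Lemma~\ref{LemmaBi}---is exactly the one the paper uses. The paper works with a single $\mathbb{C}^*$ defined geometrically (scaling the ``upper'' coordinate at every $2$-dimensional vertex, coming from the decomposition of the arc $j$ into a top, centre and bottom piece relative to the pending arc), and then matches the fixed points with sub\-representations of $N(\widetilde{W})$ by an explicit combinatorial bijection. Your more structural torus $(\mathbb{C}^*)^G$ is a nice idea, but the justification you give for the key step is wrong.

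The claim that every arrow of $Q(\tau)$ acts $G$-diagonally on $\pi_*(N(\widetilde{W}))$ fails precisely at the loop $\varepsilon$. The lifts of $\varepsilon$ are the three arrows of the $G$-invariant triangle in $Q(T)$; if $\varepsilon_1\colon k_1\to k_3$ with $k_1,k_3$ in the \emph{same} $G$-orbit, then in the decomposition $\pi_*(N(\widetilde{W}))_{G\cdot k}=\bigoplus_{g}N(\widetilde{W})_{g\cdot k}$ the map $\pi_*(\varepsilon)$ sends the $g$-summand to the $g\theta^{\pm 1}$-summand. This is a cyclic shift, not a diagonal map, so your $\phi_t$ does \emph{not} commute with $\varepsilon$ and is not a module automorphism. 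Consequently the sentence ``this action commutes with all structure maps'' is false as written, and the argument for why $t\cdot L$ is again a subrepresentation breaks down.

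The statement you want is nevertheless true, but it needs a direct check rather than diagonality. Since a string of $\Lambda(\tau)$ contains the letter $\varepsilon$ at most once, $N(\widetilde{W})$ is supported on at most two of the three vertices over the pending arc, so $N(W)_{G\cdot k}$ is at most $2$-dimensional and $\pi_*(\varepsilon)$ acts there as a rank-one nilpotent shifting one graded piece into the other. One then verifies by hand that if $L_{G\cdot k}$ is $\varepsilon$-stable then so is $\phi_t(L_{G\cdot k})$: for $(x_1,x_3)\in L_{G\cdot k}$ one has $\varepsilon\bigl(\phi_t(x_1,x_3)\bigr)=(0,t_1x_1)$, and $\phi_t^{-1}(0,t_1x_1)=(t_1/t_3)\,\varepsilon(x_1,x_3)\in L_{G\cdot k}$. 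With this repair your identification of the fixed locus with $\bigsqcup_{\pi(\textbf{f})=\textbf{e}}\Grass_{\textbf{f}}(N(\widetilde{W}))$ goes through and the proof concludes exactly as you wrote. The paper's $\mathbb{C}^*$-action has the same feature (it does not commute with $\varepsilon$ either) and the paper is equally terse at this point; the difference is only that the paper phrases the fixed-point count via the geometry of upper and lower crossing points rather than via the covering formalism.
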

\begin{proof}
 First, we are going to introduce some notation.  We write $j:=j(W)$ for the arc 
 determined by $W$, note that this arc can have self intersections. We will denote 
 $M(j):=N(W)$. Suppose $j$ connects $v_k$ and $v_l$ with $k\leq l$. So, we orient $j$ 
 from $v_k$ to $v_l$. 
Let $x_{p_1}$ be  the first intersection point between $j$ and the pending arc $p(\tau)
$ of $\tau$. Let $x_{p_2}$ be the second intersection point between $j$ and $p(\tau)$.
We divide the arc $j$ in three parts;
\begin{itemize}
\item The \emph{top} part $j_{1,0}=[v_k, x_{p_1}]$.
\item The \emph{center} part $j_{1,1}=[x_{p_1}, x_{p_2}]$.
\item The \emph{buttom} part $j_{0,1}=[x_{p_2}, v_l]$.
\end{itemize}
Let $\upp_j=\{x_i\colon x_i=j_{1,0}\cap i \mbox{ with } i\in \tau  \}$ be the 
\emph{upper} points of $j$. Let  $\bpp_j=\{y_i\colon y_i=j_{0,1}\cap i \mbox{ with } i
\in \tau  \}$ be the \emph{below} points of $j$. For convention if $j$ does not cross 
$p(\tau)$, then $x_{p_1}=v_l$, $x_{p_2}=v_k$ and $\bpp_j=\upp_j$, see Figure 
\ref{FigChar1}.

Let $L\in  \Grass_{\textbf{e}}(M(j))$ be a sub-representation of $M(j)$ with dimension 
vector $\textbf{e}$. We are going to define an action of $\mathbb{C}^*$ on $
\Grass_{\textbf{e}}(M(j))$. For $t\in \mathbb{C}^*$ we define $t\cdot L$ as follows:
\[ 
(t\cdot L)_k=
\left\{
\begin{array}{ll}
 {\scriptstyle \left( \begin{smallmatrix}
ta \\
b \end{smallmatrix} \right)}\cdot\mathbb{C} \mbox{  \ \  if } L_k={\scriptstyle \left( \begin{smallmatrix}
a \\
b \end{smallmatrix} \right)}\cdot\mathbb{C} \mbox{ and  } \dimension M(j)_k=2,\\
L_k \mbox{ \ \   in other wise}.\\
\end{array}
\right.
\]
Indeed, this define an action of $\mathbb{C}^*$ on $\Grass_{\textbf{e}}(M(j))$. By 
Lemma \ref{LemmaBi} we know that $\chi(\Grass_{\textbf{e}}(M(j))^{\mathbb{C}^*})=
\chi(\Grass_{\textbf{e}}(M(j)))$. In this case $\Grass_{\textbf{e}}(M(j))^{\mathbb{C}
^*}$ is a finite set, then the Euler characteristic is its cardinality. 
 Denote by $Q(j)$ the full  \emph{sub-quiver} of $Q(\tau)$ defined by $j$. We consider 
 the lifting $\widetilde{j}$ of $j$ on $\widetilde{\Sigma}_n$. 
  On $\widetilde{\Sigma}_n$ we can also define the corresponding top, center and bottom 
  part of $\widetilde{j}$.

Note that if the arc $j$ does not cross $p(\tau)$, then $\widetilde{j}$ is completely 
contained in one fundamental region of the action and $\pi$ acts as a bijection between 
dimension vectors of sub-representations of $M(\widetilde{j})$ and dimension vector of 
sub-representations of $M(j)$. In other words, there exist an unique $\textbf{f}$ such 
that $\pi(\textbf{f})=\textbf{e}$. Therefore $
\chi(\Grass_{\textbf{f}}(N(\widetilde{W})))=\chi(\Grass_{\textbf{e}}(N(W)))$.

Let  $L\in \Grass_{\textbf{e}}(M(j))^{\mathbb{C}^*}$ be a sub-representation of $M(j)
 $. It is clear that $L$ define a walk in $Q(j)$ and also an unique  subset $F(L)$ of 
 $\bpp_j\cup\upp_j$. Indeed, the action we have defined allows to identify every 
 subspace $L_i$ with points of $F(L)\subset \bpp_j\cup\upp_j$ in the following way. If 
 $L_i$ is generated by $(1,0)^t$, then we take  the corresponding upper point of $j$. 
 If $L_i$ is generated by $(0,1)^t$, then we take the corresponding below point of $j$. 
 In case $L_i$ is 2 dimensional, then we  take both, the upper and below point of $j$.  
 It is clear that $F(L)$ determines an unique vector $\textbf{f}_L$ of 
 $M(\widetilde{j})$ such that $\pi(\textbf{f}_L)=\textbf{e}$. This implies that 
\[
\sum_{\textbf{f}\colon \pi(\textbf{f})=\textbf{e}}{\chi(\Grass_{\textbf{f}}
(N(\widetilde{W})))}\geq\chi(\Grass_{\textbf{e}}(N(W))).
\]
For any vector $\textbf{f}$ of some sub-representation of $N(\widetilde{W})$ with $
\pi(\textbf{f})=\textbf{e}$ we can find a subset $D_{\textbf{f}}$ of $\bpp_j\cup\upp_j$ 
corresponding to a sub-representation of $N(W)$, namely we obtain the image under $
\pi_*$ of the representation given by $\textbf{f}$. We make this by cuting the arc $
\widetilde{j}$ on $\widetilde{\Sigma}_n$ along the boundary of the fundamental regions 
that it crosses and gluing that parts on $\Sigma_n$ according to the orientation we 
fixed on $\widetilde{j}$. This subset corresponds to a sub-representation 
$L_{\textbf{f}}$ of $N(W)$. By the definition of the action we can conclude that 
$L_{\textbf{f}}\in \Grass_{\textbf{e}}(M(j))^{\mathbb{C}^*}$.
This shows the another inequality. Hence the lemma is completed.
\end{proof}

\begin{figure}[h]
\includegraphics[scale=0.8]{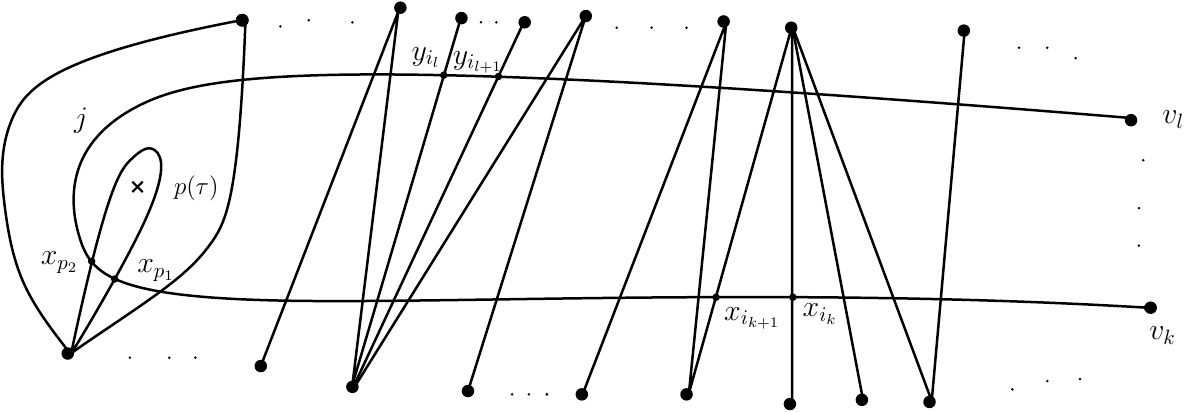}
\caption{An arc $j$ on $\Sigma_n$ with respect to a triangulation $\tau$.}
\label{FigChar1}
\end{figure}

The next proposition follows from Lemma \ref{LeMIP}, Remark \ref{RemEq1} and Lemma 
\ref{ProChar}. The reader can compare this result with the discussion of  \cite[Remark 
7.9]{PS-17}.
\begin{proposition}\label{MorphiP}
Let $\tau$ be a triangulation of $\Sigma_n$ and let $\Lambda(\tau)$ be the algebra 
associated to $\tau$. Assume $T$ is the triangulation of  $\widetilde{\Sigma}_n$ such 
that $G\cdot T =\tau$. Then for any string $W$ of $\Lambda(\tau)$ the following 
equation is true
\[
\pi(\mathcal{C}_{\Lambda(T)}(N(\widetilde{W})))=\mathcal{C}_{\Lambda(\tau)}(N(W)).
\]
\end{proposition}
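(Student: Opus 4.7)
The plan is to match the three ingredients of the Caldero-Chapoton function --- the $g$-vector monomial, the exchange monomials $\mathbf{x}^{C_Q \mathbf{e}}$, and the sum of Euler characteristics over Grassmannians --- separately, and then recombine them. Recall that
\[
\mathcal{C}_{\Lambda(T)}(N(\widetilde{W}))=\mathbf{y}^{g_{\Lambda(T)}(N(\widetilde{W}))}\sum_{\mathbf{f}}\chi(\Grass_{\mathbf{f}}(N(\widetilde{W})))\,\mathbf{y}^{C_{Q(T)}\mathbf{f}},
\]
so applying the ring homomorphism $\pi\colon \mathbb{C}[y_{i,j}^\pm]\to \mathbb{C}[z_i^\pm]$ term-by-term reduces the proposition to three separate verifications.

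First I would handle the $g$-vector part. By Lemma \ref{LeMIP}, a minimal injective presentation $0\to N(\widetilde{W})\to I_0\to I_1$ pushes down to a minimal injective presentation of $N(W)=\pi_*(N(\widetilde{W}))$, and by Lemma \ref{LemInj} we have $\pi_*(I_\ell)\cong I_{G\cdot \ell}$. Writing $I_0=\bigoplus_\ell I_\ell^{a_\ell}$ and $I_1=\bigoplus_\ell I_\ell^{b_\ell}$ and applying Lemma \ref{gVectorLemma} to both presentations shows
\[
g_k(N(W))\;=\;\sum_{\ell\in \pi^{-1}(k)}\bigl(-a_\ell+b_\ell\bigr)\;=\;\sum_{\ell\in \pi^{-1}(k)} g_\ell(N(\widetilde{W})),
\]
which is exactly the condition $\pi(\mathbf{y}^{g_{\Lambda(T)}(N(\widetilde{W}))})=\mathbf{z}^{g_{\Lambda(\tau)}(N(W))}$, since $\pi(y_{i,j})=z_i$.

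Second I would invoke Remark \ref{RemEq1} to obtain $\pi(\mathbf{y}^{C_{Q(T)}\mathbf{f}})=\mathbf{z}^{C_{Q(\tau)}\pi(\mathbf{f})}$ for every dimension vector $\mathbf{f}$ arising from a subrepresentation of $N(\widetilde{W})$. Grouping the summation over $\mathbf{f}$ according to the value of $\pi(\mathbf{f})=\mathbf{e}$ then gives
\[
\pi\Bigl(\sum_{\mathbf{f}}\chi(\Grass_{\mathbf{f}}(N(\widetilde{W})))\,\mathbf{y}^{C_{Q(T)}\mathbf{f}}\Bigr)=\sum_{\mathbf{e}}\Bigl(\sum_{\mathbf{f}\colon\pi(\mathbf{f})=\mathbf{e}}\chi(\Grass_{\mathbf{f}}(N(\widetilde{W})))\Bigr)\mathbf{z}^{C_{Q(\tau)}\mathbf{e}},
\]
and Lemma \ref{ProChar} replaces the inner sum by $\chi(\Grass_{\mathbf{e}}(N(W)))$. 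Combining with the $g$-vector computation from the previous step yields the desired equality $\pi(\mathcal{C}_{\Lambda(T)}(N(\widetilde{W})))=\mathcal{C}_{\Lambda(\tau)}(N(W))$.

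Since the three lemmas are already established, the proof is essentially a bookkeeping argument and the main potential obstacle is the subtlety already hidden in Remark \ref{RemEq1}: the block of $C_{Q(T)}$ corresponding to the $3$-cycle at the invariant triangle is not a scalar multiple of the identity, so one needs the vanishing of the alternating sum $(f_{n,2}-f_{n,1})-(f_{n,3}-f_{n,1})+(f_{n,3}-f_{n,2})$. This is automatic for indecomposable representations and, since $\chi(\Grass_{\mathbf{f}}(N(\widetilde{W})))$ vanishes outside such dimension vectors, the identity propagates to the full sum, so no further obstacle arises.
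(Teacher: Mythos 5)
Your proposal is correct and follows essentially the same route as the paper: the paper's proof likewise expands the Caldero-Chapoton function, uses Lemma \ref{LeMIP} to get $\pi(g_{\Lambda(T)}(N(\widetilde{W})))=g_{\Lambda(\tau)}(N(W))$, Remark \ref{RemEq1} for the exchange monomials, and Lemma \ref{ProChar} to regroup the Euler characteristics. You simply spell out the bookkeeping (and the $g$-vector additivity via Lemma \ref{gVectorLemma}) in more detail than the paper does.
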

\begin{proof}
By expanding  $\mathcal{C}_{\Lambda(T)}(N(\widetilde{W}))$ and rearranging its monomials 
as in Lemma \ref{ProChar}  we are able to apply Lemma \ref{LeMIP} and Remark 
\ref{RemEq1} to any monomial. Note that from Lemma \ref{LeMIP} we have that $
\pi(g_{\Lambda(T)}(N(\widetilde{W})))=g_{\Lambda(\tau)}(N(W))$. The proposition is 
completed.
\end{proof}
\section{The Caldero-Chapoton algebra is a generalized cluster algebra}\label{Sect15}

In this section we will state and prove our main result. Before that, we need some 
previous propositions. 

\begin{proposition}
Let $\tau$ be a triangulation of $\Sigma_n$ and let $\Lambda(\tau)$ be the algebra 
associated to $\tau$. Assume $T$ is the triangulation of  $\widetilde{\Sigma}_n$ such 
that $G\cdot T =\tau$ and  $j\notin \tau$. Then the $G_{\textbf{d}}$-orbit $\mathcal{O}
(M(j))$ is open in $\rep_{\textbf{d}}(\Lambda)$. 
\end{proposition}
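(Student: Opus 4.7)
The plan is to reduce the openness assertion to the Ext-vanishing $\Ext^{1}_{\Lambda(\tau)}(M(j,\tau),M(j,\tau))=0$, after which \cite[1.7 Corollary 3]{DP-95} yields that $\mathcal{O}(M(j,\tau))$ is open in $\rep_{\textbf{d}}(\Lambda(\tau))$, exactly as in the analogous lemma for $\tau_{0}$ proved in Section \ref{Section6}. So the whole task reduces to proving this Ext-vanishing for arbitrary $\tau$.

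To produce it, I would mimic the strategy of Proposition \ref{Prop2}, but with the Galois covering $\pi_{*}\colon\Lambda(T)\module\to\Lambda(\tau)\module$ playing the role that \cite{GLS-15} played in Lemma \ref{LemmaA}. Fix a preimage $\widetilde{j}$ of $j$ in $\widetilde{\Sigma}_{n}$, so that $M(j,\tau)\cong\pi_{*}(M(\widetilde{j},T))$. Since $\Lambda(T)$ is the Jacobian algebra associated to a triangulation of the polygon $\widetilde{\Sigma}_{n}$, which has \emph{no} orbifold point, the arc module $M(\widetilde{j},T)$ is rigid in the classical type-$A_{3n+2}$ setting of \cite{CCS-04}; in particular it satisfies $\pd_{\Lambda(T)}M(\widetilde{j},T)\leq 1$ and $\injd_{\Lambda(T)}M(\widetilde{j},T)\leq 1$. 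Lemma \ref{LeMIP}, together with its projective analogue (which follows verbatim using Lemma \ref{PiInj} and the dual of Lemma \ref{LemInj}), then transfers these bounds to $\pd_{\Lambda(\tau)}M(j,\tau)\leq 1$ and $\injd_{\Lambda(\tau)}M(j,\tau)\leq 1$. The Auslander--Reiten formula used in Proposition \ref{Prop2} now gives
\[
\Ext^{1}_{\Lambda(\tau)}(M(j,\tau),M(j,\tau))\cong\Hom_{\Lambda(\tau)}(\tau^{-}M(j,\tau),M(j,\tau)).
\]

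To finish, I would invoke Lemma \ref{Lem13.1}, which yields $E_{\Lambda(\tau)}(M(j,\tau))=0$; combined with Proposition \ref{PropoTrunc} (applied for $p$ large enough, which is legal thanks to Proposition \ref{FinitedimPro}) this gives $\dimension\Hom_{\Lambda(\tau)}(\tau^{-}M(j,\tau),M(j,\tau))=0$. Feeding this into the displayed isomorphism produces $\Ext^{1}_{\Lambda(\tau)}(M(j,\tau),M(j,\tau))=0$ and the proposition follows.

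I expect the main obstacle to be the descent of the homological dimensions along $\pi_{*}$, especially on the projective side, as the paper records only the injective version in Lemma \ref{LeMIP}; the projective analogue should go through by the same argument, but needs to be spelled out. A cleaner alternative route, avoiding Auslander--Reiten formulas altogether, would be to establish the Ext-version of the $G$-precovering isomorphism,
\[
\Ext^{1}_{\Lambda(\tau)}(\pi_{*}X,\pi_{*}X)\cong\bigoplus_{g\in G}\Ext^{1}_{\Lambda(T)}(X,g\cdot X),
\]
and to observe that the three preimages of $j$ under $\pi$ are pairwise compatible arcs of $\widetilde{\Sigma}_{n}$ (because $j$ is an arc of $\Sigma_{n}$), so that every summand on the right vanishes by the rigidity of arc modules in the type-$A_{3n+2}$ case.
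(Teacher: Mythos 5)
Your proposal is correct and follows essentially the same route as the paper: reduce openness to $\Ext^1_{\Lambda(\tau)}(M(j),M(j))=0$ via \cite[1.7 Corollary 3]{DP-95}, then deduce this vanishing from $E_{\Lambda(\tau)}(M(j))=\dimension\Hom_{\Lambda(\tau)}(\tau^{-}M(j),M(j))=0$ using the Auslander--Reiten formulas. The detour through $\pd,\injd\leq 1$ is not actually needed, since the Auslander--Reiten formula already gives $\dimension\Ext^1_{\Lambda(\tau)}(M(j),M(j))\leq\dimension\Hom_{\Lambda(\tau)}(\tau^{-}M(j),M(j))$ without any assumption on homological dimensions.
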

\begin{proof}
By \cite[1.7 Corollary 3]{DP-95} we need to prove that  for any arc $j$ of $\Sigma_n$ 
we have that $\Ext_{\Lambda(\tau)}(M(j), M(j))=0$. This is clear by the 
Auslander-Reiten formulas since $E(M(j))=0=\dimension\Hom(\tau^{-}(M(j)), M(j))$.
\end{proof}
If we denote by $Z(j)$ the irreducible component containing $M(j)$, then $\mathcal{O}
(M(j))$ is dense in $Z(j)$. Therefore $M(j)$ is generic and all its homological data is 
generic in $Z(j)$. We can take generic versions of the results of the above section as 
in Section \ref{Section6}.

Repeating the arguments of Proposition \ref{PiErigid} and applying what we know for the 
$A_n$ case, for instance see \cite[Remark 2.15]{CCS-04}, we have
\begin{proposition}
Given a triangulation $\sigma$ of $\Sigma_n$ and two arcs $j_1,j_2\in \sigma$ we have 
$E_{\Lambda(\tau)}(Z_{j_1},Z_{j_2})=0$.
\end{proposition}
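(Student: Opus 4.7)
The plan is to follow essentially the same two–step reduction used in Proposition \ref{PiErigid} and Proposition \ref{PropE1} (which handled the special triangulation $\tau_0$), replacing the final explicit case analysis of Lemma \ref{LemmaCompati} by an appeal to the known $A_n$ compatibility statement on $\widetilde{\Sigma}_n$. First I would reduce the assertion about the irreducible components to the corresponding assertion about the single points $M(j_1,\tau)$ and $M(j_2,\tau)$. Since the previous proposition shows that $\mathcal{O}(M(j_l,\tau))$ is open, hence dense, in $Z_{j_l}$ for $l=1,2$, the product $\mathcal{O}(M(j_1,\tau))\times\mathcal{O}(M(j_2,\tau))$ is dense in $Z_{j_1}\times Z_{j_2}$; because $E_{\Lambda(\tau)}$ is upper semicontinuous on $\decrep_{\textbf{d},\textbf{v}}(\Lambda(\tau))\times\decrep_{\textbf{d}',\textbf{v}'}(\Lambda(\tau))$, it suffices to verify $E_{\Lambda(\tau)}(M(j_1,\tau),M(j_2,\tau))=0$. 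The cases in which at least one of $j_1,j_2$ is itself an initial arc of $\tau_0$–type (so that $M(j_l,\tau)$ is a negative simple) are handled exactly as Cases 1 and 2 of Lemma \ref{LemmaCompati}, since by Proposition \ref{PropoTrunc} the $E$-invariant reduces to counting dimensions of a suitable component of $M(j_{3-l},\tau)$, which vanishes because $j_1,j_2\in\sigma$.

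For the remaining case, with $j_1,j_2\in\sigma$ both non-initial, I would lift the configuration to $\widetilde{\Sigma}_n$ via the Galois $G$-covering of Lemma \ref{LambdaGcovering}. Let $T$ be the $G$-invariant triangulation of $\widetilde{\Sigma}_n$ with $G\cdot T=\tau$, let $\widetilde{\sigma}$ be the $G$-invariant lifting of $\sigma$, and choose lifts $\widetilde{j}_1,\widetilde{j}_2\in\widetilde{\sigma}$ of $j_1,j_2$; then for every $g\in G$ the arc $g\cdot\widetilde{j}_2$ also lies in $\widetilde{\sigma}$, so the pair $\{\widetilde{j}_1,g\cdot\widetilde{j}_2\}$ is compatible in $\widetilde{\Sigma}_n$. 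Setting $M=M(\widetilde{j}_1,T)$ and $N=M(\widetilde{j}_2,T)$, so that $\pi_{*}M=M(j_1,\tau)$ and $\pi_{*}N=M(j_2,\tau)$, the Galois covering commutation $\tau^{-}\pi_{*}=\pi_{*}\tau^{-}$ (Corollary \ref{CoroTau+} and Theorem \ref{BLTheorem}) combined with the $G$-precovering property yields, exactly as in Proposition \ref{PiErigid},
\[
\dimension\Hom_{\Lambda(\tau)}(\tau^{-}(\pi_{*}N),\pi_{*}M)=\sum_{g\in G}\dimension\Hom_{\Lambda(T)}(\tau^{-}(g\cdot N),M).
\]

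By Proposition \ref{PropoTrunc}, the left-hand side equals the non-decorated part of $E_{\Lambda(\tau)}(M(j_1,\tau),M(j_2,\tau))$, and the remaining summand $\sum_{i}\dimension M(j_1,\tau)_i\cdot \dimension(V_{j_2})_i$ is zero because the decorated parts are trivial when $j_l\notin\tau_0$-type pending arcs; it is handled separately exactly as in Case 1 of Lemma \ref{LemmaCompati} when needed. Each summand on the right is precisely (up to the decorated correction, which vanishes since the arcs are not initial) $E_{\Lambda(T)}(M(\widetilde{j}_1,T),M(g\cdot\widetilde{j}_2,T))$, and this is known to vanish because $\{\widetilde{j}_1,g\cdot\widetilde{j}_2\}\subset\widetilde{\sigma}$ and the $A_n$ case gives $E$-compatibility of arc representations associated to compatible arcs on a polygon without orbifold points (\cite[Remark 2.15]{CCS-04}, together with the Auslander--Reiten formulas of Proposition \ref{Prop2} transported to $\Lambda(T)$). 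Summing over $g\in G$ gives the desired vanishing.

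The main obstacle I expect is the bookkeeping needed to transport the $A_n$ $E$-compatibility statement of \cite[Remark 2.15]{CCS-04} through the Galois covering: one must check that the shifted injective/projective dimension hypotheses underlying the Auslander--Reiten formulas still hold for each twisted representation $g\cdot M(\widetilde{j}_2,T)$, and that the decorated summand coming from $\mathcal{S}^{-}$-contributions (when one of the arcs happens to be a boundary of a fundamental region) does not spoil the identification of each term in the $G$-sum with an honest $E$-invariant on $\Lambda(T)$. Once those compatibilities are verified, the conclusion is immediate from the $A_n$ compatibility applied to each $g$.
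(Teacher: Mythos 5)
Your proposal is correct and follows essentially the same route as the paper: the paper's own justification is precisely ``repeat the arguments of Proposition \ref{PiErigid} and apply the $A_n$ case via \cite[Remark 2.15]{CCS-04}'', i.e.\ reduce to the arc representations by density of their orbits, push the $E$-invariant through the Galois $G$-covering as a sum over $g\in G$, and kill each summand using compatibility of $\widetilde{j}_1$ with $g\cdot\widetilde{j}_2$ in the $G$-invariant lift of $\sigma$. Your write-up simply supplies the details the paper leaves implicit.
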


The next proposition shows that the $E$-rigid representations generate the 
corresponding Caldero-Chapoton algebra.
\begin{proposition}\label{PropoGenerate}
The set 
\[
\{\mathcal{C}_{\Lambda(\tau)}(Z)\colon Z\in \firr(\Lambda), E_{\Lambda(\tau)}(Z)=0\}
\]
generates the Caldero-Chapoton algebra $\mathcal{A}_{\Lambda(\tau)}$ as 
$\mathbb{C}$-algebra.
\end{proposition}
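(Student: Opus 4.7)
The plan is to adapt the argument of Proposition~\ref{ProGen} to a general triangulation $\tau$. By Lemma~\ref{Lemma4.1}, every $\mathcal{C}_{\Lambda(\tau)}(\mathcal{M})$ is a product of CC functions of indecomposable decorated representations, and decorations split off as negative simples $\mathcal{S}_i^{-}=M(i,\tau)$, which are $E$-rigid arc representations by Lemma~\ref{Lem13.1}. Since every $E$-rigid indecomposable has an open $G_{\mathbf{d}}$-orbit whose closure is a strongly reduced irreducible component, it is enough to prove that for every non-$E$-rigid indecomposable $M$ of $\Lambda(\tau)$, the Laurent polynomial $\mathcal{C}_{\Lambda(\tau)}(M)$ lies in the $\mathbb{C}$-subalgebra generated by CC functions of $E$-rigid strongly reduced components.

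By Proposition~\ref{PropGentAlg}, $\Lambda(\tau)$ is gentle, so by Theorem~\ref{BRTheorem} its indecomposables are string or band modules. I would first observe that $\Lambda(\tau)$ admits no bands: Lemma~\ref{Sec13Lem1} makes the push-down functor a Galois $G$-covering, every indecomposable of $\Lambda(\tau)$ lifts to an indecomposable of $\Lambda(T)$, and $\Lambda(T)$ (the gentle algebra of a triangulated unpunctured polygon of type $A$) admits only string modules. Hence every indecomposable of $\Lambda(\tau)$ is a string module $N(W)$, and by Lemma~\ref{Lem13.1} combined with the description of admissible strings analogous to Remark~\ref{Obs2}, the non-$E$-rigid ones are precisely those whose string contains the inverse loop letter $\varepsilon^{-1}$.

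For each such string $W=W_2\varepsilon^{-1}W_1$ the key step is an explicit identity of the shape
\[
\mathcal{C}_{\Lambda(\tau)}(N(W))=\mathcal{C}_{\Lambda(\tau)}(N(W'))+\mathcal{C}_{\Lambda(\tau)}(N(W'')),
\]
where $W'=W_2\varepsilon W_1$ gives an arc representation (hence an $E$-rigid CC function by Lemma~\ref{Lem13.1}) and $W''$ is a strictly shorter string obtained from $W$ by excising the segment immediately flanking $\varepsilon^{-1}$. Iterating this identity expresses every non-$E$-rigid $\mathcal{C}_{\Lambda(\tau)}(N(W))$ as a $\mathbb{Z}$-linear combination of CC functions of arc representations, completing the proof.

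The main obstacle is establishing this identity uniformly in $\tau$. In the case $\tau_0$ it is the first displayed equation in the proof of Proposition~\ref{ProGen} and is obtained by a direct Euler characteristic computation on the quiver Grassmannians of the three string modules involved. For general $\tau$ the cleanest strategy is to lift $W$ to a string $\widetilde{W}$ on $Q(T)$ corresponding to a non-admissible arc of $\widetilde{\Sigma}_n$, derive the corresponding (classical exchange-type) identity in the type-$A$ cluster-tilted algebra $\Lambda(T)$, and push it down to $\Lambda(\tau)$ via Proposition~\ref{MorphiP}; the resulting relation can then be read off directly from the intersection combinatorics on $\Sigma_n$.
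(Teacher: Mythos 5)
Your overall strategy in the final paragraph --- lift to $\widetilde{\Sigma}_n$, use the type-$A$ exchange relation in $\Lambda(T)$, and push down via Proposition \ref{MorphiP} --- is exactly the paper's strategy, and your identity for strings containing $\varepsilon^{-1}$ is correct (it is the first displayed relation in the proof of Proposition \ref{ProGen}, and matches $\mathcal{C}_{\Lambda}(N(b^{-1}\varepsilon c^{-1}))=\mathcal{C}_{\Lambda}(N(c\varepsilon b))+\mathcal{C}_{\Lambda}(N(a))$ in Remark \ref{ExPro1}). However, there is a genuine gap in your reduction step: the claim that the non-$E$-rigid indecomposables are \emph{precisely} the string modules whose string contains $\varepsilon^{-1}$ is false. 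Already for $\tau_0$, Proposition \ref{ClasE} exhibits a second family of non-$E$-rigid strings, namely $a_{n-1}\cdots a_l$ (and the simple $S_n$ at the pending vertex), which reach the loop vertex without using the loop letter at all; in the example of Section \ref{Section12} these are $1_3$, $b$ and $c$. Your single identity cannot touch them --- it always produces an arc string by flipping $\varepsilon^{-1}$ to $\varepsilon$, whereas these modules require relations whose right-hand side involves \emph{negative simple} decorated representations, e.g. $\mathcal{C}_{\Lambda}(S_3)=\mathcal{C}_{\Lambda}(\mathcal{S}_1^{-})+\mathcal{C}_{\Lambda}(\mathcal{S}_2^{-})$ and, for $\tau_0$, the second relation $\mathcal{C}_{\Lambda}(N(W_{[m_2,n]}))=\mathcal{C}_{\Lambda}(N(W_{[m_2,n-1]}))+\mathcal{C}_{\Lambda}(\mathcal{S}_{m_2-1}^-)$ in the proof of Proposition \ref{ProGen}. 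As written, your argument leaves this entire family unexpressed in terms of $E$-rigid components.

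The paper avoids the string-level case analysis altogether and treats all non-$E$-rigid indecomposables uniformly and geometrically: by Proposition \ref{PiErigid} and Lemma \ref{Sec13Lem1}, every such module is $\pi_*(M(\widetilde{j},T))$ for a diagonal $\widetilde{j}$ of $\widetilde{\Sigma}_n$ belonging to no $G$-invariant triangulation. One then forms the quadrilateral whose diagonals are $\widetilde{j}$ and the side $\widetilde{j}'$ of the triangle spanned by the $G$-orbit of an endpoint $u_i$ of $\widetilde{j}$ (non-admissibility forces the other endpoint of $\widetilde{j}$ to lie in the opposite third of the boundary, so this really is a quadrilateral), writes the classical Ptolemy relation in $\mathcal{A}_{\Lambda(T)}$, pushes it down by Proposition \ref{MorphiP}, and cancels the common factor $\mathcal{C}_{\Lambda(\tau)}(\pi_*(M(\widetilde{j}')))$ using that $\widetilde{j}_1,\widetilde{j}',\widetilde{j}_4$ lie in one $G$-orbit and that the ambient Laurent ring is a domain. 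The two surviving terms correspond to arcs contained in a single fundamental domain, hence admissible, hence $E$-rigid (or boundary segments). If you want to keep your string-theoretic formulation, you must either add the missing relations for the loop-free non-$E$-rigid strings or replace the classification step by the covering-space argument above.
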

\begin{proof}
As in Proposition \ref{ProGen} we are going to prove that the Caldero-Chapoton 
functions of $E$-rigid representations generate the remaining Caldero-Chapoton 
functions. Let $\widetilde{j}$ be an arc of $\widetilde{\Sigma}_n$  such that it does 
not belong to any triangulation invariant under the action of $\mathbb{Z}_3$. By 
Proposition \ref{PiErigid} from these arcs come all the non-$E$-rigid representations 
of $\Lambda(\tau)$, so what we need to do is to prove the result in this case. In other 
words, we are going to prove that the Caldero-Chapoton function of  $
\pi_*(M(\widetilde{j}))$ can be expressed in terms of the Caldero-Chapoton functions of 
$E$-rigid representations. For $\widetilde{j}$ we construct a quadrilateral in the 
following way; first, we choose an ending point of $\widetilde{j}$, say $u_i$. Then we 
draw the triangle invariant under the $\mathbb{Z}_3$-action incident to $u_i$ with 
sides given by $\widetilde{j}_1$, $\widetilde{j}'$ and $\widetilde{j}_4$. Finally, we 
complete the quadrilateral with the other ending point of $\widetilde{j}$ such that $
\widetilde{j}$ and $\widetilde{j}'$ are the respective  diagonals. We label  the 
remaining sides with $\widetilde{j}_2$ and $\widetilde{j}_3$.  This construction is 
depicted in  Figure \ref{FigPTol}. From \cite{CCh-05,CCS-04}  we have that 
\[
\mathcal{C}_{\Lambda(T)}(M(\widetilde{j}))\mathcal{C}_{\Lambda(T)}(M(\widetilde{j}'))=
\mathcal{C}_{\Lambda(T)}(M(\widetilde{j}_1))\mathcal{C}_{\Lambda(T)}(M(\widetilde{j}
_3))+ \mathcal{C}_{\Lambda(T)}(M(\widetilde{j}_2))\mathcal{C}_{\Lambda(T)}
(M(\widetilde{j}_4)).
\]
Note that  $\widetilde{j}_1$, $\widetilde{j}'$ and $\widetilde{j}_4$ are in the same 
orbit. By applying the algebras homomorphism $\pi$ to the above equation, from 
Proposition \ref{MorphiP}, we obtain 
 \[
\mathcal{C}_{\Lambda(\tau)}(\pi_*(M(\widetilde{j})))\mathcal{C}_{\Lambda(\tau)}
(\pi_*(M(\widetilde{j}')))=\mathcal{C}_{\Lambda(\tau)}(\pi_*(M(\widetilde{j}')))
\mathcal{C}_{\Lambda(\tau)}(\pi_*(M(\widetilde{j}_3)))+ \mathcal{C}_{\Lambda(\tau)}
(\pi_*(M(\widetilde{j}_2)))\mathcal{C}_{\Lambda(\tau)}(\pi_*(M(\widetilde{j}'))).
\]
Since we are in an integral domain, we have the desired relation
\[
\mathcal{C}_{\Lambda(\tau)}(\pi_*(M(\widetilde{j})))=\mathcal{C}_{\Lambda(\tau)}
(\pi_*(M(\widetilde{j}_3)))+\mathcal{C}_{\Lambda(\tau)}(\pi_*(M(\widetilde{j}_2))).
\]
The proposition is completed.
\end{proof}
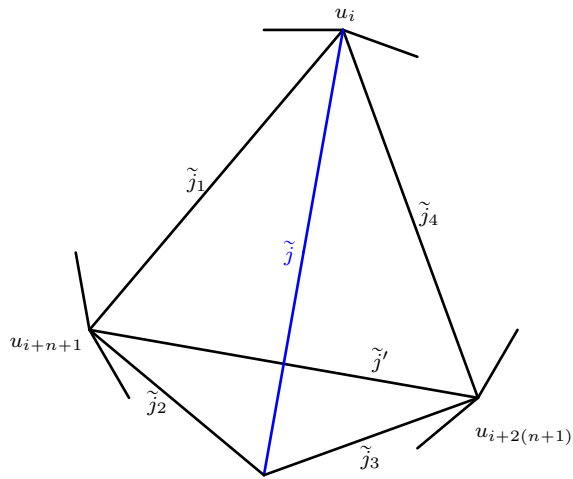
\begin{figure}[ht]
\centering
\begin{tikzpicture}[scale=2,cap=round,>=latex]
\begin{scope}
\node at (0,0) {$\cdot$};
\node at (80:1.6) {$u_i$};
\node at (200:1.8) {$u_{i+n+1}$};
\node at (320:1.9) {$u_{i+2(n+1)}$}; 
% Radius of regular polygons
  \newdimen\R
  \R=1.5cm
  \coordinate (center) at (0,0);
 \draw (0:\R);
      \draw[line width=1pt] (60:\R)--(80:\R);
      \draw[line width=1pt] (80:\R)--(100:\R);
	  \draw[line width=1pt] (200:\R)--(180:\R);
      \draw[line width=1pt] (200:\R)--(220:\R);
      \draw[line width=1pt] (300:\R)--(320:\R);
	  \draw[line width=1pt] (320:\R)--(340:\R);	
	  \draw[line width=1pt] (320:\R)--node[pos=0.25, above]{$\widetilde{j}'$}(200:\R);
	  \draw[line width=1pt] (200:\R)--node[pos=0.5, left]{$\widetilde{j}_1$}(80:\R);
	  \draw[line width=1pt] (320:\R)--node[pos=0.5, right]{$\widetilde{j}_4$}(80:\R);	
	  \draw[line width=1pt, blue] (80:\R)-- node[pos=0.5, left]{$\widetilde{j}$}(260:
	  \R);
	  \draw[line width=1pt] (200:\R)--node[pos=0.5, left]{$\widetilde{j}_2$}(260:\R);
	  \draw[line width=1pt] (260:\R)--node[pos=0.5, below]{$\widetilde{j}_3$}(320:\R);
\end{scope}
\end{tikzpicture}
\caption{The quadrilateral with $\widetilde{j}$ as diagonal and with two adjacent sides 
of one invariant triangle of $\widetilde{\Sigma}_n$.}
\label{FigPTol}	
\end{figure}

From the above proposition we obtain our main result.

\begin{theorem}\label{MainResult}
For any triangulation $\tau$ of $\Sigma_n$ we have that the Caldero-Chapoton algebra $
\mathcal{A}_{\Lambda(\tau)}$ is isomorphic to the generalized cluster  algebra $
\mathcal{A}(B(\tau_0))$.
\end{theorem}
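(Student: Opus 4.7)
The plan is to reduce the theorem to a single flip identity and then transport the Chekhov-Shapiro structure from $\tau_0$ to $\tau$ via iterated mutation. By Proposition~\ref{PropoGenerate} and Lemma~\ref{Lem13.1}, both $\mathcal{A}_{\Lambda(\tau)}$ and $\mathcal{A}_{\Lambda(\tau_0)}$ are generated as $\mathbb{C}$-algebras by the Caldero-Chapoton functions of arc representations, so that the candidate isomorphism sending $\mathcal{C}_{\Lambda(\tau)}(M(j,\tau)) \mapsto \mathcal{C}_{\Lambda(\tau_0)}(M(j,\tau_0))$ has, a priori, matching generating sets on both sides.

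The key technical statement I would establish is the following \emph{flip identity}: for every triangulation $\sigma$ of $\Sigma_n$ and every arc $k \in \sigma$ with $k' = \flip_\sigma(k)$,
\[
\mathcal{C}_{\Lambda(\sigma)}(M(k,\sigma))\cdot \mathcal{C}_{\Lambda(\sigma)}(M(k',\sigma)) = \theta_k(v_k^+, v_k^-),
\]
where $\theta_k$ is the Chekhov-Shapiro exchange polynomial attached to $B(\sigma)$ at the index $k$, evaluated in the variables $\mathcal{C}_{\Lambda(\sigma)}(M(j,\sigma))$ for $j \in \sigma \setminus \{k\}$. This is a binomial for non-pending arcs ($d_k=1$) and the trinomial $(v_k^+)^2 + v_k^+ v_k^- + (v_k^-)^2$ for the pending arc ($d_k=2$).

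To prove the flip identity I would rely on the Galois $\mathbb{Z}_3$-covering developed in Section~\ref{Section13}. Lift $\sigma$ to the $G$-invariant triangulation $T$ of $\widetilde{\Sigma}_n$ and the arc $k$ to $\widetilde{k}_1, \widetilde{k}_2, \widetilde{k}_3$. The classical Caldero-Chapoton cluster character for type $A$, as in~\cite{CCS-04}, gives binomial exchange identities for each flip of $\widetilde{k}_i$ in $\widetilde{\Sigma}_n$. Applying the surjection $\pi$ of Proposition~\ref{MorphiP} pushes these identities down to $\mathcal{A}_{\Lambda(\sigma)}$ under the variable identifications $x_{i,1} = x_{i,2} = x_{i,3} = z_i$. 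For non-pending $k$ any single such pushdown already produces the required binomial. For pending $k$, the three lifts form the $G$-invariant triangle in $T$; one must flip all three in $\widetilde{\Sigma}_n$ (in sequence) in order to realize the pending flip on $\Sigma_n$, and the product of the three resulting binomial identities must telescope, after variable identifications, into the trinomial $a^2 + ab + b^2$ with $a = \mathcal{C}_{\Lambda(\sigma)}(M(k,\sigma))$ and $b = \mathcal{C}_{\Lambda(\sigma)}(M(k',\sigma))$. This pending-arc computation---the place where the generalized (not merely classical) cluster structure first shows up---is the main obstacle, and will require careful bookkeeping of the intermediate quiver Grassmannian Euler characteristics controlled by Lemma~\ref{ProChar}.

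With the flip identity in hand, the conclusion is standard. Iterated mutation from $\tau_0$ in the Chekhov-Shapiro algebra reaches $\tau$ through a sequence of flips of triangulations of $\Sigma_n$; by the flip identity each Chekhov-Shapiro cluster variable obtained in the process coincides with $\mathcal{C}_{\Lambda(\tau_0)}(M(j,\tau_0))$ for the corresponding arc $j$. Combined with Proposition~\ref{PropoGenerate} and Lemma~\ref{AdmLemma}, this identifies $\mathcal{A}_{\Lambda(\tau_0)}$ with the underlying Chekhov-Shapiro algebra $\mathcal{A}(B(\tau_0))$, and symmetrically $\mathcal{A}_{\Lambda(\tau)}$ with $\mathcal{A}(B(\tau))$. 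Since $\mathcal{A}(B(\tau_0)) \cong \mathcal{A}(B(\tau))$ by iterated Chekhov-Shapiro mutation along any flip sequence joining $\tau_0$ to $\tau$, the desired isomorphism $\mathcal{A}_{\Lambda(\tau)} \cong \mathcal{A}_{\Lambda(\tau_0)}$ follows, with linear independence at each step furnished by an adaptation of the $g$-vector argument of Proposition~\ref{PropIndLin}.
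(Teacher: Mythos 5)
Your strategy is genuinely different from the one the paper follows. The paper's proof of Theorem~\ref{MainResult} never establishes the flip identity directly: it defines, for each $G$-invariant triangulation $T_i$ of $\widetilde{\Sigma}_n$, the subalgebra $\mathcal{D}_i$ of the type-$A$ cluster algebra $\mathcal{A}_{\Lambda(T_i)}$ generated by the Caldero-Chapoton functions of \emph{admissible} arcs, shows $\mathcal{D}_1\cong\mathcal{D}_2$ by Lemma~\ref{AdmLemma}, identifies $\mathcal{A}_{\Lambda(\tau_i)}$ with $\pi(\mathcal{D}_i)$ using Proposition~\ref{PropoGenerate} together with Proposition~\ref{MorphiP}, and then outsources the identification of the exchange polynomials with those of Chekhov--Shapiro to \cite[Lemmas 5.6 and 5.7]{PS-17}. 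Your route --- prove the exchange relation $\mathcal{C}_{\Lambda(\sigma)}(M(k,\sigma))\,\mathcal{C}_{\Lambda(\sigma)}(M(k',\sigma))=\theta_k(v_k^+,v_k^-)$ directly by pushing down Ptolemy relations, then run the mutation recursion --- is more self-contained and would directly substantiate the product formula announced in the abstract; the paper's route is shorter precisely because the hard computation is delegated to the reference.

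That said, the step you single out as the main obstacle is also the one you describe inaccurately, and as literally stated it would not go through. The flip of the pending arc is \emph{not} realized upstairs by flipping its three lifts once each: those lifts form the $G$-invariant triangle, they are pairwise adjacent in $Q(T)$ and pairwise crossing after one flip, the intermediate triangulations of $\widetilde{\Sigma}_n$ are not $G$-invariant, and the intermediate arcs are not admissible, so there is no ``product of three binomial identities'' available to telescope. What does work is substitution through a non-admissible intermediate arc inside the invariant hexagon. Label its vertices $1,\dots,6$ so that the invariant triangle is $\{[1,3],[3,5],[5,1]\}$, with $\pi$ sending these three variables to $a$, the sides $[1,2],[3,4],[5,6]$ to $u$ and $[2,3],[4,5],[6,1]$ to $v$. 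Flipping $[1,3]$ gives $x_{[1,3]}x_{[2,5]}=x_{[1,2]}x_{[3,5]}+x_{[2,3]}x_{[5,1]}$, whence $\pi(x_{[2,5]})=u+v$ after cancelling $a$ (applying $\pi$ here is legitimate because Proposition~\ref{MorphiP} covers arbitrary strings, not only admissible arcs); flipping $[3,5]$ in the resulting triangulation gives $ab=v^2+u(u+v)=u^2+uv+v^2$. Two flips and one substitution thus produce the trinomial, and the further flips needed to actually reach the other invariant triangle are irrelevant to the identity. If you carry out this corrected computation, and add the standard compatibility $B(\flip_k\sigma)=\mu_k(B(\sigma))$ that your final induction on flip sequences silently uses, your argument closes.
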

\begin{proof}
Let $T_1$ and $T_2$ be triangulations of $\widetilde{\Sigma}_n$. Denote by $\mathcal{A}
_i:=\mathcal{A}_{\Lambda(T_i)}$ the Caldero-Chapoton algebra corresponding for $i=1,2$. 
Let $\mathcal{D}_i$ the $\mathbb{C}$-subalgebra of $\mathcal{A}_i$ generated by $
\mathcal{C}_{\Lambda(T_i)}(M(\widetilde{j}, T_i))$ for any  admissible arc $
\widetilde{j}$ of $\widetilde{\Sigma}_n$ for $i=1,2$.
By  Lemma \ref{AdmLemma} we have that $\mathcal{D}_1$ and $\mathcal{D}_2$ are 
isomorphic. The previous Proposition and Proposition \ref{MorphiP} show that the 
Caldero-Chapoton algebra associated to $\tau_i=G\cdot T_i$ is $\pi(\mathcal{D}_i)$ for 
$i=1,2$. We conclude that the Caldero-Chapoton algebras $\mathcal{A}_1$ and $
\mathcal{A}_2$ are isomorphic. Now, from \cite{PS-17} we know that the image of $
\Lambda(\tau_0)$ under $\pi$ is a Chekhov-Shapiro generalized cluster  algebra with 
initial seed $(B(\tau_0), \textbf{d}_{\tau_0})$. Indeed, from \cite[Lemma 5.6]{PS-17} 
and \cite[Lemma 5.7]{PS-17} we know that the exchange polynomial are those of 
Chekhov-Shapiro. The theorem is completed. 
\end{proof}
\section{Example}\label{Section12}

The example in this section is meant to illustrate our main result. It can also be 
considered a complement to \cite[Example 9.4.2]{CLS-15}.
Let $\tau_0$ be the special triangulation of $\Sigma_3$ 
and let $\tau$ be the triangulation of Example \ref{FirExamLam}, see Figure 
\ref{FigExamMR}.

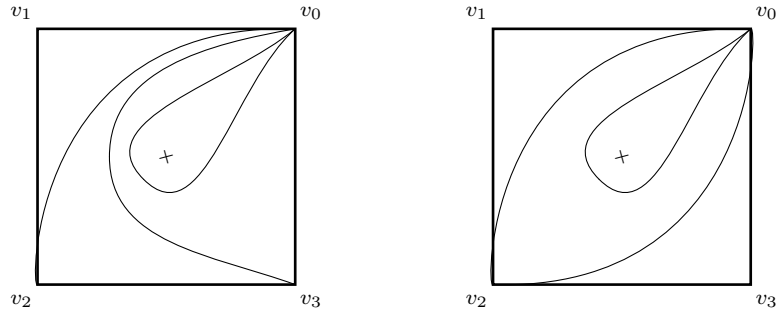
\begin{figure}[ht]
\centering
\begin{tikzpicture}[scale=3,cap=round,>=latex]
\begin{scope}
\node at (0,0) {\rotatebox{60}{$\times$}};
  \newdimen\R
  \R=0.8cm
  \coordinate (center) at (0,0);
 \draw[line width=1.0pt] (45:\R)
     \foreach \x in {45,135,225,315} {-- (\x:\R) }
              -- cycle (315:\R)
              -- cycle (225:\R) 
              -- cycle (135:\R) 
              -- cycle (45:\R);
    \draw (45:\R) to[out=220,in=135] (-0.1,-0.1);
	\draw (-0.1,-0.1) to[out=315,in=222] (45:0.8);
    \draw[line width=0.4pt](45:\R)   .. controls (-0.6, 0.6) and (-0.6,-0.6)  .. 
	    (225:\R);  
	\draw[line width=0.4pt, rotate=-90](135:\R) to[out=280,in=180] (0,-0.25);
	\draw[line width=0.4pt, rotate=-90](0,-0.25) to[out=0,in=250] (45:0.8);    
			\node at (45:0.9) {$v_0$};	
			\node at (135:0.9) {$v_1$};
			\node at (225:0.9) {$v_2$};
			\node at (315:0.9) {$v_3$};
\end{scope}
\begin{scope}[xshift=2cm]	
\node at (0,0) {\rotatebox{60}{$\times$}};
  \newdimen\R
  \R=0.8cm
  \coordinate (center) at (0,0);
 \draw[line width=1.0pt] (45:\R)
     \foreach \x in {45,135,225,315} {-- (\x:\R) }
              -- cycle (315:\R)
              -- cycle (225:\R) 
              -- cycle (135:\R) 
              -- cycle (45:\R);
	    \draw (45:\R) to[out=220,in=135] (-0.1,-0.1);
			\draw (-0.1,-0.1) to[out=315,in=222] (45:0.8);
	    \draw[line width=0.4pt](45:\R)   .. controls (-0.6, 0.6) and (-0.6,-0.6)  .. 
	    (225:\R);	
	    \draw[line width=0.4pt](45:\R) ..controls (0.6,0.6) and (0.6,-0.6) .. (225:\R);
 	\node at (45:0.9) {$v_0$};	
	\node at (135:0.9) {$v_1$};
	\node at (225:0.9) {$v_2$};
	\node at (315:0.9) {$v_3$};	    
\end{scope}
\end{tikzpicture}
\caption{On the left side we can see the special triangulation $\tau_0$ and on the 
right side we have the  triangulation $\tau$ of $\Sigma_3$.}
\label{FigExamMR}
\end{figure}

 It is clear that $\tau$ and $\tau_0$ are related by a flip at one arc. To ease the 
 notation we  set $\Lambda=\Lambda(\tau)$, see Example \ref{FirExamLam}. From Theorem 
 \ref{BRTheorem} we know that the indecomposable $\Lambda$-modules are parametrized by 
 the strings of $\Lambda$. We say that a string $W$ is $E$-\emph{rigid} if its string 
 module $N(W)$ is $E$-rigid. 
There are 12 indecomposable $E$-rigid decorated representations of $\Lambda$ of which 9 
are given by the $E$-rigid strings $1_1$, $1_2$, $\varepsilon$, $a$, $\varepsilon b$, 
$c\varepsilon$, $c\epsilon b$, $b^{-1}\varepsilon b$ and $c\varepsilon c^{-1}$;  and 
the remaining three are the negative simple representations of $\Lambda$. The 
non-$E$-rigid strings are $1_3$, $b$, $c$, $b^{-1}\varepsilon$, $\varepsilon c^{-1}$ and  
$b^{-1}\varepsilon c^{-1}.$

By definition $\mathcal{C}_{\Lambda}(\mathcal{S}_i^{-})=y_i$ for $i=1,2,3$. In Figure 
\ref{FigString} we write the  string module corresponding to every arc of $\Sigma_3$.  
The Caldero-Chapoton functions associated to the 9 $E$-rigid strings of $\Lambda$ are
\begin{figure}[ht]
\centering
\begin{tikzpicture}[scale=1.5,cap=round,>=latex]
\begin{scope}
\node at (0,0) {\rotatebox{60}{$\times$}};
\node at (0,-0.8) {$S_1$};
  \newdimen\R
  \R=0.8cm
  \coordinate (center) at (0,0);
 \draw (45:\R)
     \foreach \x in {45,135,225,315} {-- (\x:\R) }
              -- cycle (315:\R)
              -- cycle (225:\R) 
              -- cycle (135:\R) 
              -- cycle (45:\R);
      \draw (45:\R) to[out=220,in=135] (-0.1,-0.1);
			\draw (-0.1,-0.1) to[out=315,in=222] (45:0.8);
	    \draw[line width=0.4pt](45:\R)   .. controls (-0.6, 0.6) and (-0.6,-0.6)  .. 
	    (225:\R);	
	    \draw[line width=0.4pt](45:\R) ..controls (0.6,0.6) and (0.6,-0.6) .. (225:\R);
			\draw[blue, line width=1pt](135:\R) to[out=280,in=180] (0,-0.25);
			\draw[blue, line width=1pt](0,-0.25) to[out=0,in=250] (45:0.8);
		\node at (0.6,0.7) {$v_0$};
\end{scope}	
\begin{scope}[xshift=2cm]
\node at (0,0) {\rotatebox{60}{$\times$}};
\node at (0,-0.8) {$S_2$};
  \newdimen\R
  \R=0.8cm
  \coordinate (center) at (0,0);
 \draw (45:\R)
     \foreach \x in {45,135,225,315} {  -- (\x:\R) }
              -- cycle (315:\R)
              -- cycle (225:\R) 
              -- cycle (135:\R) 
              -- cycle (45:\R);
	    \draw (45:\R) to[out=220,in=135] (-0.1,-0.1);
			\draw (-0.1,-0.1) to[out=315,in=222] (45:0.8);
	    \draw[line width=0.4pt](45:\R)   .. controls (-0.6, 0.6) and (-0.6,-0.6)  .. 
	    (225:\R);	
	    \draw[line width=0.4pt](45:\R) ..controls (0.6,0.6) and (0.6,-0.6) .. (225:\R);
			\draw[blue, line width=1pt, rotate=-90](135:\R) to[out=280,in=180] 
			(0,-0.25);
			\draw[blue, line width=1pt, rotate=-90](0,-0.25) to[out=0,in=250] (45:0.8);
		\node at (0.6,0.7) {$v_0$};
\end{scope}	
\begin{scope}[xshift=4cm]
\node at (0,0) {\rotatebox{60}{$\times$}};
\node at (0,-0.8) {$N(c\varepsilon b)$};
  \newdimen\R
  \R=0.8cm
  \coordinate (center) at (0,0);
 \draw (45:\R)
     \foreach \x in {45,135,225,315} {  -- (\x:\R) }
              -- cycle (315:\R)
              -- cycle (225:\R) 
              -- cycle (135:\R) 
              -- cycle (45:\R);
	    \draw (45:\R) to[out=220,in=135] (-0.1,-0.1);
			\draw (-0.1,-0.1) to[out=315,in=222] (45:0.8);
	    \draw[line width=0.4pt](45:\R)   .. controls (-0.6, 0.6) and (-0.6,-0.6)  .. 
	    (225:\R);	
	    \draw[line width=0.4pt](45:\R) ..controls (0.6,0.6) and (0.6,-0.6) .. (225:\R);
			\draw[blue, line width=1pt](135:\R) ..controls (45:0.4)    ..  (315:\R);
		\node at (0.6,0.7) {$v_0$};
\end{scope}	
\begin{scope}[xshift=6cm]
\node at (0,0) {\rotatebox{60}{$\times$}};
\node at (0,-0.8) {$N(\varepsilon)$};
  \newdimen\R
  \R=0.8cm
  \coordinate (center) at (0,0);
 \draw (45:\R)
     \foreach \x in {45,135,225,315} {  -- (\x:\R) }
              -- cycle (315:\R)
              -- cycle (225:\R) 
              -- cycle (135:\R) 
              -- cycle (45:\R);
	    \draw (45:\R) to[out=220,in=135] (-0.1,-0.1);
			\draw (-0.1,-0.1) to[out=315,in=222] (45:0.8);
	    \draw[line width=0.4pt](45:\R)   .. controls (-0.6, 0.6) and (-0.6,-0.6)  .. 
	    (225:\R);	
	    \draw[line width=0.4pt](45:\R) ..controls (0.6,0.6) and (0.6,-0.6) .. (225:\R);
			\draw[blue, line width=1pt, rotate=180] (45:\R) to[out=220,in=135] 
			(-0.1,-0.1);
			\draw[blue, line width=1pt, rotate=180] (-0.1,-0.1) to[out=315,in=222] 
			(45:0.8);
		\node at (0.6,0.7) {$v_0$};
\end{scope}	
\begin{scope}[xshift=8cm]
\node at (0,0) {\rotatebox{60}{$\times$}};
\node at (0,-0.8) {$N(a)$};
  \newdimen\R
  \R=0.8cm
  \coordinate (center) at (0,0);
 \draw (45:\R)
     \foreach \x in {45,135,225,315} {  -- (\x:\R) }
              -- cycle (315:\R)
              -- cycle (225:\R) 
              -- cycle (135:\R) 
              -- cycle (45:\R);
	    \draw (45:\R) to[out=220,in=135] (-0.1,-0.1);
			\draw (-0.1,-0.1) to[out=315,in=222] (45:0.8);
	    \draw[line width=0.4pt](45:\R)   .. controls (-0.6, 0.6) and (-0.6,-0.6)  .. 
	    (225:\R);	
	    \draw[line width=0.4pt](45:\R) ..controls (0.6,0.6) and (0.6,-0.6) .. (225:\R);
			\draw[blue, line width=1pt](135:\R) ..controls (225:0.4)    ..  (315:\R);
		\node at (0.6,0.7) {$v_0$};
\end{scope}	
\begin{scope}[yshift=-2cm]
\node at (0,0) {\rotatebox{60}{$\times$}};
\node at (0,-0.8) {$N(c\varepsilon c^{-1})$};
  \newdimen\R
  \R=0.8cm
  \coordinate (center) at (0,0);
 \draw (45:\R)
     \foreach \x in {45,135,225,315} {  -- (\x:\R) }
              -- cycle (315:\R)
              -- cycle (225:\R) 
              -- cycle (135:\R) 
              -- cycle (45:\R);
	    \draw (45:\R) to[out=220,in=135] (-0.1,-0.1);
			\draw (-0.1,-0.1) to[out=315,in=222] (45:0.8);
	    \draw[line width=0.4pt](45:\R)   .. controls (-0.6, 0.6) and (-0.6,-0.6)  .. 
	    (225:\R);	
	    \draw[line width=0.4pt](45:\R) ..controls (0.6,0.6) and (0.6,-0.6) .. (225:\R);
			\draw[blue, line width=1pt, rotate=-90] (45:\R) to[out=220,in=135] 
			(-0.1,-0.1);
			\draw[blue, line width=1pt, rotate=-90] (-0.1,-0.1) to[out=315,in=222] 
			(45:0.8);
		\node at (0.6,0.7) {$v_0$};
\end{scope}	
\begin{scope}[xshift=2cm, yshift=-2cm]
\node at (0,0) {\rotatebox{60}{$\times$}};
\node at (0,-0.8) {$N(\varepsilon b)$};
  \newdimen\R
  \R=0.8cm
  \coordinate (center) at (0,0);
 \draw (45:\R)
     \foreach \x in {45,135,225,315} {  -- (\x:\R) }
              -- cycle (315:\R)
              -- cycle (225:\R) 
              -- cycle (135:\R) 
              -- cycle (45:\R);
	    \draw (45:\R) to[out=220,in=135] (-0.1,-0.1);
			\draw (-0.1,-0.1) to[out=315,in=222] (45:0.8);
	    \draw[line width=0.4pt](45:\R)   .. controls (-0.6, 0.6) and (-0.6,-0.6)  .. 
	    (225:\R);	
	    \draw[line width=0.4pt](45:\R) ..controls (0.6,0.6) and (0.6,-0.6) .. (225:\R);
			\draw[blue, line width=1pt, rotate=90](135:\R) to[out=280,in=180] 
			(0,-0.25);
			\draw[blue, line width=1pt, rotate=90](0,-0.25) to[out=0,in=250] (45:0.8);
		\node at (0.6,0.7) {$v_0$};
\end{scope}	
\begin{scope}[xshift=4cm, yshift=-2cm]
\node at (0,0) {\rotatebox{60}{$\times$}};
\node at (0,-0.8) {$N(c\varepsilon )$};
  \newdimen\R
  \R=0.8cm
  \coordinate (center) at (0,0);
 \draw (45:\R)
     \foreach \x in {45,135,225,315} {  -- (\x:\R) }
              -- cycle (315:\R)
              -- cycle (225:\R) 
              -- cycle (135:\R) 
              -- cycle (45:\R);
	    \draw (45:\R) to[out=220,in=135] (-0.1,-0.1);
			\draw (-0.1,-0.1) to[out=315,in=222] (45:0.8);
	    \draw[line width=0.4pt](45:\R)   .. controls (-0.6, 0.6) and (-0.6,-0.6)  .. 
	    (225:\R);	
	    \draw[line width=0.4pt](45:\R) ..controls (0.6,0.6) and (0.6,-0.6) .. (225:\R);
			\draw[blue, line width=1pt, rotate=180](135:\R) to[out=280,in=180] 
			(0,-0.25);
			\draw[blue, line width=1pt, rotate=180](0,-0.25) to[out=0,in=250] (45:0.8);
		\node at (0.6,0.7) {$v_0$};
\end{scope}	
\begin{scope}[xshift=6cm, yshift=-2cm]
\node at (0,0) {\rotatebox{60}{$\times$}};
\node at (0,-0.8) {$N(b^{-1}\varepsilon b)$};
  \newdimen\R
  \R=0.8cm
  \coordinate (center) at (0,0);
 \draw (45:\R)
     \foreach \x in {45,135,225,315} {  -- (\x:\R) }
              -- cycle (315:\R)
              -- cycle (225:\R) 
              -- cycle (135:\R) 
              -- cycle (45:\R);
	    \draw (45:\R) to[out=220,in=135] (-0.1,-0.1);
			\draw (-0.1,-0.1) to[out=315,in=222] (45:0.8);
	    \draw[line width=0.4pt](45:\R)   .. controls (-0.6, 0.6) and (-0.6,-0.6)  .. 
	    (225:\R);	
	    \draw[line width=0.4pt](45:\R) ..controls (0.6,0.6) and (0.6,-0.6) .. (225:\R);
			\draw[blue, line width=1pt, rotate=90] (45:\R) to[out=220,in=135] 
			(-0.1,-0.1);
			\draw[blue, line width=1pt, rotate=90] (-0.1,-0.1) to[out=315,in=222] 
			(45:0.8);
		\node at (0.6,0.7) {$v_0$};
\end{scope}
	\end{tikzpicture}
\caption{The nine $E$-rigid representations of $\Lambda$ with respect to the 
triangulation $\tau$ on $\Sigma_3$.}
\label{FigString}
\end{figure}
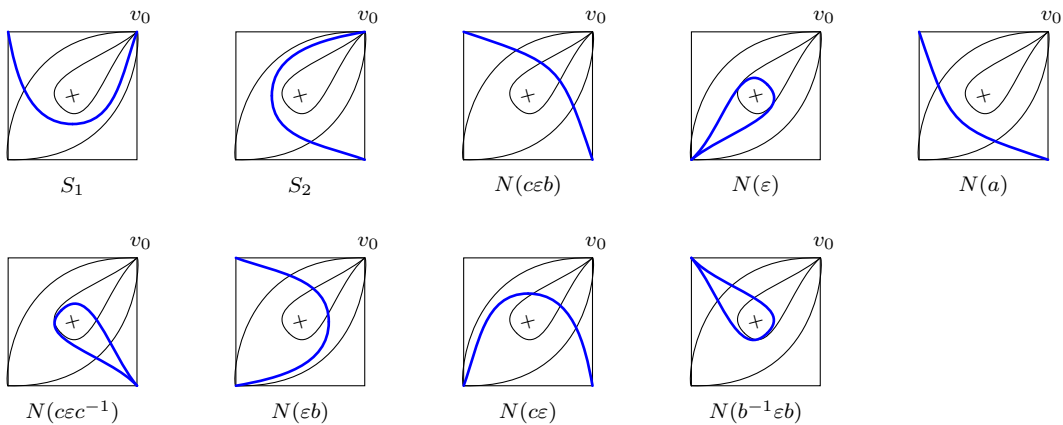 

\begin{flalign*}
&\mathcal{C}_{\Lambda}(N(\varepsilon b))=\frac{y_1^2+ y_1y_2+y_2^2 
+y_2y_3}{y_1y_3}, && \mathcal{C}_{\Lambda}(S_1)=\frac{y_2+y_3}{y_1},
\\
&\mathcal{C}_{\Lambda}(N(c\varepsilon ))=\frac{y_1y_3 + y_1^2+ 
y_1y_2+y_2^2 }{y_2y_3}, &&
\mathcal{C}_{\Lambda}(S_2)=\frac{y_1+y_3}{y_2},
\\
&\mathcal{C}_{\Lambda}(N(c\varepsilon b ))=\frac{y_1y_3 + y_1^2+ 
y_1y_2+y_2^2 + y_2y_3 }{y_1y_2y_3}, && 
\mathcal{C}_{\Lambda}(N(a))=\frac{y_2+y_3+y_1}{y_1y_2}, 
\\
&\mathcal{C}_{\Lambda}(N(c\varepsilon c^{-1} ))=\frac{y_3^2 + 2y_1y_3 + 
y_1^2 + y_2y_3+y_1y_2 +y_2^2}{y_2^2y_3}, && \mathcal{C}_{\Lambda}
(N(\varepsilon))=\frac{y_1^2+ y_1y_2+y_2^2}{y_3},
\\
\end{flalign*}
%\vspace{-1.7cm}
\begin{flalign*}
&\mathcal{C}_{\Lambda}(N(b^{-1}\varepsilon b))=\frac{y_1^2 + y_1y_2 + 
y_2^2 + 2y_2y_3+y_3^2 +y_1y_3}{y_1^2y_3}.&&
\end{flalign*}

 The Caldero-Chapoton functions associated to the non-$E$-rigid strings 
 of $\Lambda$ 
 are
\begin{flalign*}
&\mathcal{C}_{\Lambda}(N(b))=\frac{y_1+y_2+y_3}{y_1},& 
\mathcal{C}_{\Lambda}(S_3)=y_1+y_2,& \ \ \ \ \ \ \ \ 
\ \  \  \ \ \ \ \ \ \ \ \ \ \ \  \ 
\mathcal{C}_{\Lambda}(N(c))=\frac{y_3+y_1+y_2}{y_2},
\end{flalign*}
\begin{flalign*}
&\mathcal{C}_{\Lambda}(N(b^{-1}\varepsilon ))=\frac{y_1^2+ 
y_1y_2+y_1y_3+y_2^2 +y_2y_3}{y_1y_3},&&
\end{flalign*}
\begin{flalign*}
&\mathcal{C}_{\Lambda}(N(\varepsilon c^{-1} ))=\frac{y_1y_3 + y_1^2+ 
y_1y_2+y_2y_3+y_2^2}{y_2y_3},&&
\end{flalign*}
\begin{flalign*}
&\mathcal{C}_{\Lambda}(N(b^{-1}\varepsilon c^{-1} ))=\frac{ 
y_1y_3+y_1^2+y_1y_2+y_2^2+y_2y_3+ y_3^2+y_1y_3+y_2y_3}{y_1y_2y_3}.&&
\end{flalign*}
%\mathcal{C}_{\Lambda}(N(b^{-1}\varepsilon ))=\frac{y_1^2+ 
%y_1y_2+y_1y_3+y_2^2 +y_2y_3}
%{y_1y_3}, 

%\mathcal{C}_{\Lambda}(N(\varepsilon c^{-1} ))=\frac{y_1y_3 + y_1^2+ 
%y_1y_2+y_2y_3+y_2^2 
%}{y_2y_3},
\begin{remark}\label{ExPro1}
According to Proposition \ref{PropoGenerate} we have that the Caldero-Chapoton 
functions of indecomposable $E$-rigid representations generate the remaining 
Caldero-Chapoton functions. In this case we get the following relations
\begin{align*} 
\mathcal{C}_{\Lambda}(S_3)&=\mathcal{C}_{\Lambda}(\mathcal{S}_1^{-})+\mathcal{C}
_{\Lambda}(\mathcal{S}_2^{-}),\\ 
\mathcal{C}_{\Lambda}(N(b))&=\mathcal{C}_{\Lambda}(S_1)+1,\\
\mathcal{C}_{\Lambda}(N(c))&=\mathcal{C}_{\Lambda}(S_2)+1,\\
\mathcal{C}_{\Lambda}(N(b^{-1}\varepsilon ))&=\mathcal{C}_{\Lambda}(N(\varepsilon b )) 
+1,\\
\mathcal{C}_{\Lambda}(N(\varepsilon c^{-1} ))&=\mathcal{C}_{\Lambda}(N(c\varepsilon  ))
+1,\\
\mathcal{C}_{\Lambda}(N(b^{-1}\varepsilon c^{-1} ))&=\mathcal{C}_{\Lambda}(N(c
\varepsilon b ))+\mathcal{C}_{\Lambda}(N(a)).
\end{align*}
These relations correspond to the procedure of the proof of Proposition 
\ref{PropoGenerate}. With the notation of \cite[Example 9.4.2]{CLS-15} we can define 
the following isomorphism of the corresponding Caldero-Chapoton algebras $\varphi:
\mathcal{A}_{\Lambda(\tau)}\rightarrow\mathcal{A}_{\Lambda(\tau_0)}$. We set 
$y_1\mapsto x_1$, $y_2\mapsto C_{\Lambda(\tau_0)}(2)=\frac{x_1+x_3}{x_2}$ and 
$y_3\mapsto x_3$. This morphism sends the Caldero-Chapoton function of the arc 
representation associated to one arc of $\tau$ to the Caldero-Chapoton function with 
respect to $\Lambda(\tau_0)$ of the same arc.
\end{remark}

\begin{acknowledgements}
We owe thanks to Andrew T. Carroll, Christof Gei\ss, Jan 
Geuenich, Pierre-Guy Plamondon and Jan Schr\"{o}er for helpful discussions.
The first author was  supported by grants \emph{CONACyT-238754} and 
\emph{PAPIIT-IA102215}. The second author was supported by CONACyT-269197 scholarship.
\end{acknowledgements}

\end{document}